\documentclass[11pt]{amsart}
\usepackage[margin=1in]{geometry}
\usepackage{amsmath,amssymb,amsthm,mathtools}
\usepackage{booktabs,array,longtable}
\usepackage{enumerate}
\usepackage{mathrsfs}
\usepackage[colorlinks=true,linkcolor=blue,citecolor=blue,urlcolor=blue]{hyperref}
\allowdisplaybreaks

\newtheorem{theorem}{Theorem}[section]

\newtheorem{corollary}[theorem]{Corollary}

\newtheorem{lemma}[theorem]{Lemma}
\newtheorem{proposition}[theorem]{Proposition}

\numberwithin{equation}{section}

\newcommand{\eps}{\varepsilon}

\newcommand{\HH}{\mathcal H}
\newcommand{\la}{\lambda}

\newcommand{\E}{\mathcal E}

\newcommand{\ls}{\lesssim}
\newcommand{\gs}{\gtrsim}
\newcommand{\LL}{\mathcal L}
\newcommand{\II}{\mathrm{II}}

\newcommand{\R}{\mathbb R}

\newcommand{\supp}{\operatorname{supp}}

\newcommand{\sig}{\operatorname{sig}}
\newcommand{\rank}{\operatorname{rank}}
\newcommand{\diam}{\operatorname{diam}}
\newcommand{\lth}{\operatorname{Length}}

\newcommand{\Sym}{\operatorname{Sym}}

\newcommand{\norm}[1]{\left\lVert #1\right\rVert}
\newcommand{\abs}[1]{\left\lvert #1\right\rvert}

\begin{document}
\title[sharp endpoint multilinear estimates]{Sharp Endpoint Multilinear Estimates for Oscillatory Integrals and Spectral Clusters}
\author{Shengwen Gan}
\address{Department of Mathematics, Sun Yat-sen University, Guangzhou, 510275, P.R. China}
\email{shengwengan2018@gmail.com}

\author{Cheng Zhang}
\address{Yau Mathematical Sciences Center, Tsinghua University, Beijing, 100084, P.R. China}
\email{czhang98@tsinghua.edu.cn}

\author{Zhifei Zhu}
\address{Yau Mathematical Sciences Center, Tsinghua University, Beijing, 100084, P.R. China}
\email{zhifeizhu@tsinghua.edu.cn} 
	\date{}
	\begin{abstract}
		We prove sharp $k$-linear $L^p$ estimates for Carleson--Sj\"olin oscillatory integral operators with arbitrary separated frequency scales for all $k\ge 2$ and $1\le p\le \infty$.  The estimates are sharp, including the endpoint logarithmic behavior for general Carleson--Sj\"olin phases.  Moreover,  we obtain log-free endpoint bilinear spectral cluster estimates on every closed three-dimensional Riemannian manifold, resolving a problem of Burq--G\'erard--Tzvetkov \cite{bgt2004,bgt2005}.  As a consequence, we establish sharp $k$-linear $L^p$ spectral cluster estimates  for all $k\ge 2$ and $1\le p\le \infty$.
		
	\end{abstract}
	\maketitle

    \section{Introduction}\label{sec:main-estimates}
	Oscillatory integral operators with curvature form a central bridge between Fourier restriction and spectral cluster estimates. In this paper, we prove sharp multilinear estimates for such operators in the full range of Lebesgue exponents ($1\le p\le \infty$) and frequency configurations relevant to products of spectral clusters.  One of our results shows that the endpoint behavior in three dimensions depends on a genuine geometric distinction: spectral clusters admit  log-free endpoint estimates, while general  Carleson--Sj\"olin oscillatory integral operators may exhibit an unavoidable logarithmic loss.
	
	To motivate the question, we first introduce the model case which is formulated in terms of the Fourier extension operator. Let $n\ge 2$. For $\la>1$, define
	\[ E_\la f(y):=\la^{\frac{n-1}{2}}\int_{[0,1]^{n-1}}e^{i\la  (x\cdot\xi+t|\xi|^2)}f(\xi)d\xi, \]
	where $y=(t,x)\in\mathbb{R}\times \mathbb{R}^{n-1},\xi\in\mathbb{R}^{n-1}$. Given frequency scales $1\le \la_1\le \la_2\le \dots\le \la_k$, we study estimates of the form
	\begin{equation}\label{modelklinear}
		\left\|\prod_{j=1}^kE_{\la_j}f_j\right\|_{L^p(B^n(0,1))}\lesssim C(n,k,p;\la_1,\dots,\la_k)\prod_{j=1}^k\|f_j\|_{L^2([0,1]^{n-1})}.
	\end{equation}
	We refer to such an estimate as the \textit{$k$-linear restriction estimate with separated frequency scales}. This type of estimate is fundamentally different from the $k$-linear estimate developed by Bennett--Carbery--Tao \cite{bct2006} and others; see Section~\ref{subsec:remarks-exponents}.  We shall study \eqref{modelklinear} in a more general setting, with the Fourier extension operators replaced by Carleson--Sj\"olin oscillatory integral operators.
	
	Let $n\ge2$.  A Carleson--Sj\"olin operator has the form
	\[
	T_{\la,\phi}^a f(y):=\la^{\frac{n-1}2}\int_{\mathbb{R}^{n-1}}e^{i\la\phi(y,\xi)}a(y,\xi)f(\xi)d\xi,
	\]
	where $y=(t,x)\in \mathbb{R}\times \mathbb{R}^{n-1}$, $\xi\in \mathbb{R}^{n-1}$, $a\in C_0^\infty(\mathbb{R}\times \mathbb{R}^{n-1}\times \mathbb{R}^{n-1})$, and $\phi\in C^\infty(\mathbb{R}\times\mathbb{R}^{n-1}\times\mathbb{R}^{n-1})$ is real-valued.  We assume throughout that each phase satisfies the \textbf{Carleson--Sj\"olin condition}. For each operator, this means that the phase $\phi(y,\xi)$ satisfies the nondegeneracy condition:
	\begin{equation}
		\text{rank}\frac{\partial^2\phi}{\partial \xi\partial y}=n-1
	\end{equation}
	and the curvature condition: for each $y$, the hypersurface
	\begin{equation}\label{sphi}
		S_\phi(y)=\{\nabla_{y}\phi(y,\xi):\xi\in \text{supp}_\xi a\}
	\end{equation}
	has everywhere nonvanishing Gaussian curvature.  
	
	We say that the phase satisfies the \textbf{elliptic Carleson--Sj\"olin condition} if the curvature condition is replaced by the stronger elliptic condition that the second fundamental form of $S_\phi(y)$ is uniformly definite.  Equivalently, writing
	\[
	P(y,\xi)=\nabla_{y}\phi(y,\xi),
	\quad A_j(y,\xi)=\partial_{\xi_j}P(y,\xi),\quad N=\frac{\bigwedge_{j=1}^{n-1}A_j}{|\bigwedge_{j=1}^{n-1}A_j|},
	\]
	the matrix 
	\[\big\langle \partial^2_{\xi\xi} P(y,\xi),N(y,\xi)\big\rangle\]
	is definite for all $(y,\xi)\in\operatorname{supp} a$.  The model elliptic phase is $\phi(t,x,\xi)=x\cdot\xi+\frac12t|\xi|^2$.  The local Fourier integral operators arising in Sogge's spectral cluster parametrix for Laplace eigenfunctions on smooth closed manifolds also satisfy this elliptic Carleson--Sj\"olin condition. See \cite[Section 5]{fio} and Lemma~\ref{ellip}.
    
	\subsection{Main results}
	For $n\ge3$ define
	\[
	p_0=\frac{n+1}{n-1},\qquad
	p_1=\frac{n-1}{n-2},\qquad
	p_2=\frac{n+1}{n},\qquad
	p_3=\frac{2(n+1)}{3(n-1)}.
	\]
	
	We first present the bilinear estimates. This case already contains the main endpoint phenomenon: in dimension three, general Carleson--Sj\"olin phases have a sharp logarithmic loss at $p=2$.
	
	\begin{theorem}[Bilinear estimates]\label{thm1}
		Let $n\ge2$, $1\le p\le2$ and $1\ll\lambda\le\mu$. Let $T_{\lambda,\phi_1}^{a_1}$ and $T_{\mu,\phi_2}^{a_2}$ be Carleson--Sj\"olin operators as above. Then
		\begin{equation}\label{bie}
			\|T_{\lambda,\phi_1}^{a_1} f\,T_{\mu,\phi_2}^{a_2} g\|_{L^p(\mathbb R\times\mathbb R^{n-1})}
			\ls\,\E(n,2,p;\lambda,\mu)\|f\|_{L^2(\mathbb{R}^{n-1})}\|g\|_{L^2(\mathbb{R}^{n-1})}.
		\end{equation}
		
		If $n\ge4$, then
		\[
		\E(n,2,p;\lambda,\mu)=
		\begin{cases}
			\lambda^{\frac{n-1}{2}(1-\frac1p)},
			&1\le p\le p_1,\\[2mm]
			\displaystyle
			\min\left\{
			\lambda^{1/2}\mu^{\frac{n-2}{2}-\frac{n-1}{2p}},
			\lambda^{n-\frac32-\frac{n-1}{p}}
			\right\},
			&p_1\le p\le p_0,\\[4mm]
			\displaystyle
			\min\left\{
			\lambda^{\frac{2n-1}{2}-\frac{n+1}{p}}\mu^{\frac1p-\frac12},
			\lambda^{n-\frac32-\frac{n-1}{p}}
			\right\},
			&p_0\le p\le2.
		\end{cases}
		\]
		In the last two ranges, if $\mu\le \la^2$, then the minimum takes the first term. 
		
		If  $(n,p)=(3,2)$, then 
		\begin{equation}
			\E(n,2,p;\lambda,\mu)=\la^{\frac12}\sqrt{\log\la}.
		\end{equation}
		The logarithmic factor is sharp. 
		
		If $n=2,3$ and $(n,p)\ne (3,2)$, then
		\begin{equation}
			\E(n,2,p;\lambda,\mu)=\lambda^{\frac{n-1}{2}(1-\frac1p)}.
		\end{equation}

		Moreover, the estimates are sharp.
	\end{theorem}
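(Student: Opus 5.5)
The plan is to prove the upper bounds by interpolating between a small number of anchor estimates at $p=1$, $p=p_1$, $p=p_0$ and $p=2$, and then to prove sharpness with explicit Knapp-type and focusing examples.

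\emph{The $p=1$ anchor.} By Cauchy--Schwarz and the $L^2\to L^2(B)$ bound $\|T^{a}_{\la,\phi}f\|_{L^2(B)}\ls \la^{0}\|f\|_2$ (Hörmander's nondegeneracy suffices here, given the normalization $\la^{(n-1)/2}$ and the compact support of $a$), we get
\[
\|T^{a_1}_{\la,\phi_1}f\,T^{a_2}_{\mu,\phi_2}g\|_{L^1}\ls \|f\|_2\|g\|_2,
\]
which matches $\la^{0}$.

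\emph{The $p=2$ anchor.} Decompose space into $\la^{-1}$-cubes $Q$. On each $Q$, $T_\la f$ is essentially constant, so
\[
\|T_\la f\,T_\mu g\|_{L^2(Q)}\ls \sup_Q|T_\la f|\,\|T_\mu g\|_{L^2(Q)}.
\]
We then bound $\sum_Q \|T_\la f\|_{L^\infty(Q)}^2\|T_\mu g\|^2_{L^2(Q)}$ using two facts:
\begin{itemize}
\item $T_\mu g$ restricted to $\la^{-1}$-slabs behaves like a wave at scale $\mu$, with local energy $\|T_\mu g\|_{L^2(Q)}^2\ls \la^{-1}\|g\|_2^2$ along time slices;
\item $\sup_t\|T_\la f(t,\cdot)\|_{L^\infty}\ls\la^{(n-1)/2}\|f\|_2$, improved by Strichartz or $L^2$-restriction in the spatial variable.
\end{itemize}
More precisely, I would write $\|T_\la f T_\mu g\|_{L^2}^2=\int |T_\la f|^2|T_\mu g|^2$ and apply a ``bilinear Kakeya/transversality'' argument. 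The $\la^{-1}$-tubes of $T_\la f$ (length 1, width $\la^{-1/2}$) meet the $\mu^{-1/2}$-tubes of $T_\mu g$. Because the curvature condition gives the transversality count, a wave-packet decomposition reduces the problem to a tube incidence sum, and this sum gives $\la^{(n-1)/4}\cdot(\text{stuff})$. For $n\ge4$ this yields the minimum of two bounds:
\begin{itemize}
\item the first term comes from using $\mu$-scale dispersion (wave packets of $T_\mu g$ dispersing as $(\mu t)^{-(n-1)/2}$, integrated in $t$ against $T_\la f$);
\item the second term comes from treating $T_\mu g$ only through its $L^2$ norm on $\la^{-1/2}$-tubes.
\end{itemize}
The intermediate anchors at $p_1$ and $p_0$ come from the kink points of these bounds: $p_0$ is the Stein--Tomas exponent $2(n+1)/(n-1)$ halved, obtained by Hölder with Stein--Tomas for each factor, and $p_1$ comes from the dispersive integral $\int_0^1 t^{-(n-1)(p-1)/p}\dots$ changing from convergent to divergent.

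\emph{The endpoint $(n,p)=(3,2)$.} This integral is exactly $\int_{\la^{-1}}^1 t^{-1}\,dt=\log\la$, which gives the $\sqrt{\log\la}$ factor. For sharpness I would construct hyperbolic-type phases where a packet of $T_\mu g$ stays tangent to a $T_\la f$ focusing set, so that the logarithm is saturated. For $n=2,3$ away from the endpoint, the integral converges and the bound $\la^{\frac{n-1}{2}(1-\frac1p)}$ follows by interpolating between $p=1$ and the $p=2$ bound (or $p_0$).

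\emph{Lower bounds.} Sharpness of the power bounds uses three examples:
\begin{itemize}
\item $f$ focusing, with $T_\la f\sim\la^{(n-1)/2}$ on a $\la^{-1}$-ball, paired with $g$ a Knapp packet through that ball;
\item both functions chosen as Knapp examples;
\item $g$ focusing at scale $\mu$.
\end{itemize}
Each choice yields one of the listed terms. The condition $\mu\le\la^2$ is checked by direct comparison of the two terms.

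\emph{Main obstacle.} I expect the hardest step to be the sharp $p=2$ estimate with the correct $\sqrt{\log\la}$ in $n=3$, together with the matching lower bound for a general (non-elliptic) phase. First I would try to get the upper bound via the $TT^*$ kernel of $|T_\mu g|^2$ and stationary phase giving $|t-s|^{-(n-1)/2}$ decay, combined with Hardy--Littlewood--Sobolev in one variable.
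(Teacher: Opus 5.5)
Your $p=1$ anchor and the overall interpolation scheme are fine. However, two of your four anchors do not give the stated bounds. The missing ingredient is mixed-norm (Keel--Tao) Strichartz estimates, applied \emph{asymmetrically} to the two factors in a common splitting $y=(s,z)$; after localizing, choose $s$ transverse to both images of $\partial_\xi\nabla_y\phi_j$.

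With $d=n-1$, H\"older plus Stein--Tomas at $p_0$ gives $(\la\mu)^{\frac{n-1}{2(n+1)}}$. This exceeds the claimed $\la^{1/2}\mu^{\frac{n-3}{2(n+1)}}$ by $(\mu/\la)^{\frac1{n+1}}$, and for $\mu\ge\la^2$ it also exceeds the branch $\la^{\frac{3n-5}{2(n+1)}}$. The paper instead uses, with $q_1=\frac{2d}{d-2}$, $q_0=\frac{2(d+2)}{d-2}$, $r_0=\frac{2d(d+2)}{d^2+4}$:
\begin{itemize}
\item at $p_0$: $\|T_\la f\|_{L^2_sL^{q_1}_z}\|T_\mu g\|_{L^{q_0}_sL^{r_0}_z}\lesssim\la^{1/2}\mu^{1/q_0}$;
\item at $p_1$: $\|T_\la f\|_{L^2_sL^{q_1}_z}\|T_\mu g\|_{L^\infty_sL^2_z}\lesssim\la^{1/2}$;
\item at $p=2$: $\|T_\la f\|_{L^2_sL^\infty_z}\|T_\mu g\|_{L^\infty_sL^2_z}\lesssim\la^{\frac{d-1}{2}}$.
\end{itemize}
The first half of your $p=2$ discussion is essentially this last estimate. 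The bilinear-Kakeya/tube-incidence step is unnecessary and, as written, fixes no bound. The two branches of the minimum come from interpolating $p=2$ with the $\mu$-dependent $p_0$ anchor or with the $\mu$-free $p_1$ anchor. At $p=2$ itself the two branches coincide, so your account of where the minimum arises is off.

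More seriously, your own heuristic places $p_1$ exactly where $\int_{\la^{-1}}^1t^{-(n-1)(1-1/p)}\,dt$ diverges logarithmically. So a $TT^*$/Young argument only gives $\la^{1/2}\sqrt{\log\la}$ there. Getting $\la^{1/2}$ for $n\ge4$ is precisely the Keel--Tao endpoint theorem (decay exponent $d/2>1$), the same endpoint that fails for $n=3$. Likewise, for $n=3$ and $1<p<2$, interpolating between $p=1$ and the log-lossy $p=2$ bound leaves a factor $(\log\la)^{1-1/p}$. You must instead apply the non-endpoint estimate $\|T_\la f\|_{L^{p'}_sL^{2p/(2-p)}_z}\lesssim\la^{1-1/p}$ directly for each such $p$.

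Your sharpness examples also fall short for $n\ge4$ and $p>p_1$. The examples you list give at most $\la^{d-\frac{d+1}{p}}$ (focusing $\times$ Knapp) and $\la^{\frac d2(1-\frac1p)}$ (Knapp $\times$ Knapp); the $\mu$-focusing example is smaller still. These do not grow with $\mu$, whereas $\la^{1/2}\mu^{\frac{d-1}{2}-\frac{d}{2p}}$ and $\la^{\frac{2d+1}{2}-\frac{d+2}{p}}\mu^{\frac1p-\frac12}$ do. For $\mu\ge\la^2$, the branch $\la^{d-\frac12-\frac dp}$ exceeds focusing $\times$ Knapp by $\la^{\frac1p-\frac12}$, and exceeds Knapp $\times$ Knapp whenever $p>p_1$.

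The paper needs \emph{trains}: $N$ focused packets (zonal, or envelopes of length $r=\nu/R$) placed at $Dr$-separated times along one geodesic and normalized by $N^{-1/2}$. This is legitimate because $|\langle P_0,P_\tau\rangle|\lesssim(1+|\tau|/r)^{-d/2}$ is summable when $d\ge3$. The extremal pairs are:
\begin{itemize}
\item envelope train $\times$ beam on $L\approx1$, $R=\mu$;
\item zonal train $\times$ envelope train on $L=\mu/\la^2$, $R=\la^2$;
\item zonal train $\times$ beam block on $L\approx1$, $R=\la^2$.
\end{itemize}

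For the logarithm, ``tangency'' is not the mechanism. With $\Phi=z\cdot\xi+s\xi_1\xi_2$ and $\mu=\la^2$, one superposes $J\approx\log\la$ disjoint rectangles of dimensions $2^j\la^{-1/2}\times2^{-j}\la^{-1/2}$ along the asymptotic axes, where $|\xi_1\xi_2|\lesssim\la^{-1}$. Every piece then stays coherent on $[0,1]\times[-c\la^{-1},c\la^{-1}]^2$, giving $|T_\la f|\gtrsim\la^{1/2}\sqrt J$ there. This is paired with a box packet satisfying $|T_\mu g|\approx\la$ on the same box.
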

	
	Having established the bilinear case, we now extend these results to products of an arbitrary number of functions. The following theorem provides sharp multilinear estimates for $k\ge 3$ frequency scales in the range $1\le p\le 2$.
	
	\begin{theorem}[Multilinear estimates for $ p\le2$]\label{thm2}
		Let $n\ge2$, $k\ge3$, $1\le p\le2$, $1\ll\lambda_1\le\cdots\le\lambda_k$. Let $T_{\lambda_j,\phi_j}^{a_j}$, $1\le j\le k$, be Carleson--Sj\"olin operators as above.  Then
		\begin{equation}\label{goal}
			\left\|\prod_{j=1}^k T_{\lambda_j,\phi_j}^{a_j}f_j\right\|_{L^p(\mathbb R\times\mathbb R^{n-1})}
			\ls\,\E(n,k,p;\lambda_1,\ldots,\lambda_k)\prod_{j=1}^k\|f_j\|_{L^2(\mathbb{R}^{n-1})} .
		\end{equation}
		
		If $n\ge5$, then
		\[
		\E =
		\begin{cases}
			\prod_{j=1}^{k-4}\la_j^{\frac{n-1}{2}}\cdot
			\min\Big\{
			\la_{k-3}^{\frac{n-1}{2}}
			\la_{k-2}^{\frac{3n-2}{2}-\frac{n+1}{p}}
			\la_{k-1}^{\frac1p-\frac12},\ \la_{k-3}^{\frac{2n-1}{2}-\frac{n+1}{2p}}
			\la_{k-2}^{\frac{n-2}{2}}
			\la_{k-1}^{\frac{n-1}{2}-\frac{n-1}{2p}}
			\Big\},
			&  1\le p\le p_2,\\
			\prod_{j=1}^{k-3}\la_j^{\frac{n-1}{2}}\cdot
			\la_{k-2}^{n-1-\frac{n+1}{2p}}
			\la_{k-1}^{\frac{n-1}{2}-\frac{n-1}{2p}},
			&  p_2\le p\le p_1,
			\\
			\prod_{j=1}^{k-3}\la_j^{\frac{n-1}{2}}\cdot
			\min\Big\{
			\la_{k-2}^{n-1-\frac{n+1}{2p}}
			\la_{k-1}^{\frac12}
			\la_k^{\frac{n-2}{2}-\frac{n-1}{2p}},\ 
			\la_{k-2}^{\frac n2-\frac1p}
			\la_{k-1}^{n-\frac32-\frac{n-1}{p}}
			\Big\},
			&  p_1\le p\le p_0,
			\\
			\prod_{j=1}^{k-3}\la_j^{\frac{n-1}{2}}\cdot
			\min\Big\{
			\la_{k-2}^{\frac{n-1}{2}}
			\la_{k-1}^{\frac{2n-1}{2}-\frac{n+1}{p}}
			\la_k^{\frac1p-\frac12},\ 
			\la_{k-2}^{\frac n2-\frac1p}
			\la_{k-1}^{\frac{2n-3}{2}-\frac{n-1}{p}}
			\Big\},
			& p_0\le p\le 2.
		\end{cases}
		\]
		In the first range, if $\la_{k-3}\la_{k-1}\le \la_{k-2}^2$ then the minimum takes the first term.  In the last two ranges, if $\la_{k-2}\la_{k}\le \la_{k-1}^2$ then the minimum takes the first term.
		
		If $n = 4$, then we have an extra endpoint $p_3=10/9$ and 
		\[
		\E=
		\begin{cases}
			\prod_{j=1}^{k-4}\la_j^{\frac32}\cdot
			\min\Big\{
			\la_{k-3}^{\frac{15}{4}-\frac{5}{2p}}
			\la_{k-2}^{\frac12}
			\la_{k-1}^{\frac74-\frac{3}{2p}},\ 
			\la_{k-3}^{\frac72-\frac{5}{2p}}
			\la_{k-2}
			\la_{k-1}^{\frac32-\frac{3}{2p}}
			\Big\},
			&  1\le p\le \frac{10}{9},
			\\
			\prod_{j=1}^{k-4}\la_j^{\frac32}\cdot
			\min\Big\{
			\la_{k-3}^{\frac32}
			\la_{k-2}^{5-\frac5p}
			\la_{k-1}^{\frac1p-\frac12},
			\,
			\la_{k-3}^{\frac72-\frac{5}{2p}}
			\la_{k-2}
			\la_{k-1}^{\frac32-\frac{3}{2p}}
			\Big\},
			&  \frac{10}{9}\le p\le \frac54,
			\\
			\prod_{j=1}^{k-3}\la_j^{\frac32}\cdot
			\la_{k-2}^{3-\frac{5}{2p}}
			\la_{k-1}^{\frac32-\frac{3}{2p}},
			&  \frac54\le p\le \frac32,
			\\
			\prod_{j=1}^{k-3}\la_j^{\frac32}\cdot
			\min\Big\{
			\la_{k-2}^{3-\frac{5}{2p}}
			\la_{k-1}^{\frac12}
			\la_k^{1-\frac{3}{2p}},
			\,
			\la_{k-2}^{2-\frac1p}
			\la_{k-1}^{\frac52-\frac3p}
			\Big\},
			&  \frac32\le p\le \frac53,
			\\
			\prod_{j=1}^{k-3}\la_j^{\frac32}\cdot
			\min\Big\{
			\la_{k-2}^{\frac32}
			\la_{k-1}^{\frac72-\frac5p}
			\la_k^{\frac1p-\frac12},
			\,
			\la_{k-2}^{2-\frac1p}
			\la_{k-1}^{\frac52-\frac3p}
			\Big\},
			& \frac53\le p\le 2.
		\end{cases}
		\]
		In the first two ranges, if $\la_{k-3}\la_{k-1}\le \la_{k-2}^2$ then the minimum takes the first term.  In the last two ranges, if $\la_{k-2}\la_{k}\le \la_{k-1}^2$ then the minimum takes the first term.
		
		If $n=3$, then $p_0=p_1=2$, $p_2=p_3=4/3$, and
		\[
		\E=
		\begin{cases}
			\E_0 \sqrt{\log\la_{k-2}}\sqrt{\log\la_{k-3}} ,& p=1,\\
			\E_0 \sqrt{\log\la_{k-2}}&  1< p\le \frac43,
			\\
			\E_0
			&  \frac43< p< 2,\\
			\E_0\sqrt{\log \la_{k-1}},& p=2,
		\end{cases}
		\] with 
	\begin{equation}\label{Eps0}
		\E_0=
		\begin{cases}
			\prod_{j=1}^{k-4}\lambda_j\cdot 
			\lambda_{k-3}^{\frac52-\frac{2}{p}}\,
			\lambda_{k-2}^{\frac12}\,
			\lambda_{k-1}^{1-\frac1p} ,
			&  1\le p\le \frac43,
			\\
			\prod_{j=1}^{k-3}\lambda_j\cdot 
			\lambda_{k-2}^{2-\frac{2}{p}}\,
			\lambda_{k-1}^{1-\frac1p},
			&  \frac43< p\le 2,
		\end{cases}
	\end{equation}
		The logarithmic factors are sharp. 

		If $n=2$, then
		\[\E=\begin{cases}
			\prod_{j=1}^{k-6}\lambda_j^{\frac12}\cdot
			\lambda_{k-5}^{\,\frac74-\frac{3}{2p}}\,
			\lambda_{k-4}^{\frac14}\,
			\lambda_{k-3}^{\frac14}\,
			\lambda_{k-2}^{\frac14}\,
			\lambda_{k-1}^{\,\frac12-\frac{1}{2p}}, & 1\le p\le \frac65,\\
			\prod_{j=1}^{k-5}\lambda_j^{\frac12}\cdot
			\lambda_{k-4}^{\,\frac32-\frac{3}{2p}}\,
			\lambda_{k-3}^{\frac14}\,
			\lambda_{k-2}^{\frac14}\,
			\lambda_{k-1}^{\,\frac12-\frac{1}{2p}},& \frac65\le p\le \frac32,\\
			\prod_{j=1}^{k-4}\lambda_j^{\frac12}\cdot
			\lambda_{k-3}^{\,\frac54-\frac{3}{2p}}\,
			\lambda_{k-2}^{\frac14}\,
			\lambda_{k-1}^{\,\frac12-\frac{1}{2p}},& \frac32\le p\le 2.
		\end{cases} \]
		The convention $\lambda_j=2$ for $j\le0$ keeps the displayed expression meaningful for small values of $k$. 
		
		Moreover, the estimates are sharp.
	\end{theorem}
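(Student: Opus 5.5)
The plan is to reduce Theorem~\ref{thm2} to Theorem~\ref{thm1} and to a few elementary single-operator bounds, combined by Hölder's inequality and interpolation. Three inputs are used for a single Carleson--Sj\"olin operator: the trivial bound $\|T^{a}_{\la,\phi}f\|_{L^\infty}\ls \la^{\frac{n-1}{2}}\|f\|_{L^2}$; the local $L^2$ bound $\|T^a_{\la,\phi}f\|_{L^2}\ls \|f\|_{L^2}$, which follows from the nondegeneracy condition and H\"ormander's $L^2$ estimate on fixed-$t$ slices; and the Stein--Tomas type bound $\|T^a_{\la,\phi}f\|_{L^{q}}\ls \la^{\frac{n-1}{2}-\frac{n}{q}}\|f\|_{L^2}$ for $q\ge \frac{2(n+1)}{n-1}$, with the Lebesgue-space interpolants in between.

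For $k$ factors, the lowest frequencies $\la_1,\dots,\la_{k-4}$ (or $\la_1,\dots,\la_{k-3}$, depending on the range of $p$) are placed in $L^\infty$, which accounts for the prefactor $\prod\la_j^{\frac{n-1}{2}}$. The remaining three or four factors are then estimated by H\"older's inequality, splitting $L^p$ into a product of a bilinear norm from \eqref{bie} and one or two linear norms. For instance, in the range $p_0\le p\le 2$ the top factors $\la_{k-2},\la_{k-1},\la_k$ are handled by writing
\[
\Big\|\prod_{j=k-2}^{k}T_jf_j\Big\|_{L^p}\le \|T_{k-2}f_{k-2}\|_{L^\infty}\,\|T_{k-1}f_{k-1}T_kf_k\|_{L^p}.
\]
Applying Theorem~\ref{thm1} to the second factor gives exactly $\la_{k-2}^{\frac{n-1}2}\min\{\la_{k-1}^{\frac{2n-1}{2}-\frac{n+1}{p}}\la_k^{\frac1p-\frac12},\,\cdots\}$. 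The second term of the minimum instead comes from placing $T_{k-2}$ in a suitable $L^r$ with a Stein--Tomas bound and pairing $T_{k-1}T_k$ in $L^s$, with $\frac1p=\frac1r+\frac1s$.

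The lower ranges $p<p_1$ and $p<p_2$ arise because the bilinear bound saturates at $\la^{\frac{n-1}{2}(1-\frac1p)}$ when $p\le p_1$. There it becomes better to distribute integrability over four factors, for example two bilinear pairs $(T_{k-3}T_{k-1})$ and $(T_{k-2}T_k)$ each in $L^{2p}$, or a pair combined with linear $L^q$ factors. For each range the exponents of H\"older's inequality will be optimized and the resulting exponent vectors checked against the stated formulas. The minimum structure, together with the criterion $\la_{k-2}\la_k\le\la_{k-1}^2$, should come from comparing the two competing allocations.

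In $n=3$ the logarithms come from the $(3,2)$ endpoint of Theorem~\ref{thm1}, applied once for $p=2$ and through two bilinear pairs for $p=1$. The $n=2$ case uses the $L^4$ bilinear and $L^6$ linear estimates, together with more factors absorbing the $\frac14$ powers, giving products of up to six factors.

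Sharpness requires examples, and this is the step I expect to be the main obstacle, together with matching the exact exponents. For each term the plan is to combine Knapp-type wave packets (tubes of dimensions $\la_j^{-1/2}$ by $1$) with point-focusing data concentrated in a $\la_j^{-1}$-ball. The factors are arranged so that the highest frequencies are focused and the lower ones are spread, and the log sharpness is inherited from the bilinear sharp example in Theorem~\ref{thm1}. Showing that these configurations realize every corner of the piecewise exponents, and in particular the extra endpoint $p_3$ when $n=4$, requires a careful case-by-case construction. I would first try to take tensor-type products of the bilinear extremizers with constant and Knapp functions.
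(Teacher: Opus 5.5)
\textbf{Upper bounds: the toolkit is too weak.} Lebesgue norms $L^q_{s,z}$ of single factors, Theorem~\ref{thm1}, H\"older and interpolation do not reach several of the stated endpoints. What is missing is the mixed-norm Strichartz input the paper uses (Proposition~\ref{kt}): $\|T_{\lambda,\Phi}u\|_{L^a_sL^b_z}\lesssim\lambda^{\frac d2-\frac1a-\frac db}$ for $\frac2a+\frac db\le\frac d2$, $d=n-1$. This includes the Keel--Tao endpoint $L^2_sL^{2d/(d-2)}_z$ and the energy bound $L^\infty_sL^2_z$. These are combined by mixed H\"older, so different factors carry different time and space integrability, and the top frequency sits for free in $L^\infty_sL^2_z$. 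You even derive your $L^2$ bound from fixed-$t$ slices, i.e.\ from $L^\infty_sL^2_z$, but then keep only its $L^2_{s,z}$ consequence.

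A concrete failure: take $n=5$, $k=3$, $p=p_1=4/3$. The claim is $\|u_1u_2u_3\|_{L^{4/3}}\lesssim\lambda_1^{7/4}\lambda_2^{1/2}$ uniformly in $\lambda_3$, and the paper gets it from $\|u_1\|_{L^4_sL^\infty_z}\|u_2\|_{L^2_sL^4_z}\|u_3\|_{L^\infty_sL^2_z}$. In your toolkit, every $\lambda_3$-free split is no better than $\min\{\lambda_1^{2}\lambda_2^{1/2},\,\lambda_1^{3/2}\lambda_2^{3/4}\}$, coming from $\|u_1\|_\infty\|u_2u_3\|_{4/3}$ and $\|u_1u_3\|_2\|u_2\|_4$. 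This loses $\lambda_2^{1/8}$ at $\lambda_1=\lambda_2^{1/2}$. Since $p_1$ is a genuine corner of the sharp exponent (the $\lambda_{k-1}$-exponent changes slope there), interpolating bounds from other exponents cannot recover it.

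The same defect appears elsewhere. For $n=2$, $k=3$, $p=2$, the claim $\lambda_1^{1/4}\lambda_2^{1/4}$ needs $L^4_sL^\infty_z$ on two factors. Any $L^q$ norm with $q>2$ that touches $u_3$, linear or bilinear, costs a positive power of $\lambda_3$, so at best you get $\lambda^{3/4}$ instead of $\lambda^{1/2}$ when $\lambda_1=\lambda_2=\lambda$. For $n=3$, interpolating between the logarithmic endpoints leaves fractional logarithms. The theorem, however, asserts $\E=\E_0$ for $4/3<p<2$ and no $\log\lambda_{k-3}$ for $1<p\le4/3$. The paper gets these from non-endpoint norms such as $L^{q/2}_sL^\infty_z$ and $L^{p'}_sL^{q}_z$ with $q=2p/(2-p)$.

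\textbf{Sharpness: the extremal profiles are missing.} Knapp tubes (beams) and focused data (zonals) are only the two extreme profiles. The piecewise exponents come from further objects. First, \emph{envelopes}: superpositions of beams over a cap of width $(\nu r)^{-1/2}$, giving a packet of length $r$, width $(r/\nu)^{1/2}$ and height $(\nu/r)^{d/4}$. Second, \emph{beam blocks}. Third, for $d\ge3$, \emph{trains} of $N$ almost orthogonal rotated envelopes or zonals along one geodesic. A train has norm $\lesssim N^{1/2}$ because the overlaps decay like $(1+|\tau|/r)^{-d/2}$ with $d/2>1$. All factors must then be made simultaneously large on one common box $E_{L,R}$, which the paper does by averaging over shifts, with $(L,R)$ tuned to each range. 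For example, the first branch for $p_1\le p\le p_0$ uses $L=\lambda_{k-2}^{-1}$, $R=\lambda_{k-2}\lambda_k$, with the $\lambda_{k-1}$-factor an envelope train. Tensoring bilinear extremizers with Knapp or constant data does not generate these configurations. The logarithms also need more than the bilinear example. They require the hyperbolic phase $z\cdot\xi+s\xi_1\xi_2$ (for elliptic phases they are absent), with dyadic stacks of rectangles along its asymptotic directions. For $p=1$ two such factors, and for $1<p\le4/3$ one, must be large on a single adapted box $Q_{L,R}$ together with the other packets, as in Proposition~\ref{prop:hyperbolic-klinear-lp}.
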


	The preceding theorems  cover the range $1\le p\le2$.  For $p>2$, the multilinear estimates \eqref{goal} continue to hold with the following simpler expressions for $\E$. 
	
	\begin{theorem}[Multilinear estimates for $p>2$]\label{thm3}
		Let $n\ge2$, $k\ge2$, $p>2$, $1\ll\lambda_1\le\cdots\le\lambda_k$, and $p_c=\frac{2(n+1)}{n-1}.$	Let $T_{\lambda_j,\phi_j}^{a_j}$, $1\le j\le k$, be Carleson--Sj\"olin operators as above. 
		
		If $n\ge3$, then we have \eqref{goal} with
		\[
		\E =
		\begin{cases}
			\prod_{j=1}^{k-2}\lambda_j^{\frac{n-1}{2}}\cdot
			\lambda_{k-1}^{\frac{3(n-1)}4-\frac{n+1}{2p}}
			\lambda_k^{\frac{n-1}4-\frac{n-1}{2p}},
			& 2< p\le p_c,\\
			\prod_{j=1}^{k-1}\lambda_j^{\frac{n-1}{2}}\cdot
			\lambda_k^{\frac{n-1}{2}-\frac np},
			& p_c\le p\le\infty.
		\end{cases}
		\]
		
		If $n=2$, then we have \eqref{goal} with
		\[
		\E =
		\begin{cases}
			\prod_{j=1}^{k-3}\lambda_j^{\frac12}\cdot
			\lambda_{k-2}^{1-\frac{3}{2p}}
			\lambda_{k-1}^{\frac14}
			\lambda_k^{\frac14-\frac{1}{2p}},
			& 2< p\le3,\\
			\prod_{j=1}^{k-2}\lambda_j^{1/2}\cdot
			\lambda_{k-1}^{\frac34-\frac{3}{2p}}
			\lambda_k^{\frac14-\frac{1}{2p}},
			& 3\le p\le6,\\
			\prod_{j=1}^{k-1}\lambda_j^{\frac12}\cdot
			\lambda_k^{\frac12-\frac2p},
			& 6\le p\le\infty.
		\end{cases}
		\]
		The convention $\lambda_j=2$ for $j\le0$ keeps the displayed expression meaningful for small values of $k$. 
		
			Moreover, the estimates are sharp.
	\end{theorem}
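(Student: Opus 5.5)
Theorem~\ref{thm3} follows from two ingredients: the sharp linear Stein--Tomas type estimates for Carleson--Sj\"olin operators, and the bilinear $L^2$ estimate of Theorem~\ref{thm1}. The factors $\lambda_j^{\frac{n-1}{2}}$ for small $j$ come from the trivial bound $\|T^{a_j}_{\lambda_j,\phi_j}f_j\|_{L^\infty}\ls\lambda_j^{\frac{n-1}{2}}\|f_j\|_{L^2}$ (Cauchy--Schwarz plus compact support of $a_j$). Using H\"older, we place all but the top one, two, or three factors in $L^\infty$.

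\emph{Case $p\ge p_c$ ($n\ge3$) and $p\ge6$ ($n=2$).} We put $T_{\lambda_1},\dots,T_{\lambda_{k-1}}$ in $L^\infty$. It remains to bound $\|T_{\lambda_k}f_k\|_{L^p}\ls \lambda_k^{\frac{n-1}{2}-\frac np}\|f_k\|_2$. This is H\"ormander/Stein's $L^2\to L^{p}$ oscillatory integral theorem for Carleson--Sj\"olin phases, valid for $p\ge p_c$ and sharp at $p_c$. The normalization $\lambda^{\frac{n-1}{2}}$ gives the exponent $\frac{n-1}{2}-\frac np$, and interpolation with $L^\infty$ covers the rest of this range.

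\emph{Case $2<p\le p_c$ ($n\ge3$) and $3\le p\le 6$ ($n=2$).} We bound all but the top two factors in $L^\infty$. For the top pair we interpolate between two endpoints.
\begin{itemize}
\item At $p=2$ we use the bilinear $L^2$ bound. For $n\ge4$, Theorem~\ref{thm1} at $p=2$ with $\mu\le\lambda^2$ gives $\lambda_{k-1}^{\frac{n-2}{2}}$; otherwise the minimum gives something no worse. For $n=2$ it gives $\lambda_{k-1}^{1/4}$.
\item At $p=p_c$ we use Cauchy--Schwarz together with the linear $L^{2p}$ bounds at $2p=2p_c$ for each factor. Note that a bound of the type $\lambda_{k-1}^{\frac{n-1}{2}-\frac n{p_c}}\lambda_k^{\frac{n-1}{2}-\frac{n}{p_c}}$ is not what we want. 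Instead we write $\|FG\|_{p_c}\le\|F\|_\infty\|G\|_{p_c}$ and use the Stein--Tomas bound $\lambda_k^{\frac{n-1}{2}-\frac{n}{p_c}}$ on the top factor.
\end{itemize}
Checking the exponents at $p=p_c$ shows that the claimed formula matches $\lambda_{k-1}^{\frac{n-1}{2}}\lambda_k^{\frac{n-1}{2}-\frac{n}{p_c}}$. For $n\ge3$ at $p=2$ the formula reads $\lambda_{k-1}^{\frac{3(n-1)}{4}-\frac{n+1}{4}}\lambda_k^{0}=\lambda_{k-1}^{\frac{n-2}{2}}$, which agrees with Theorem~\ref{thm1}. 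The formula is of the form $\lambda_{k-1}^{\alpha(1/p)}\lambda_k^{\beta(1/p)}$ with $\alpha,\beta$ affine in $1/p$. Hence multilinear complex interpolation (Riesz--Thorin applied to the $k$-linear operator with $L^2$ inputs fixed) between these two endpoints gives the result. Interpolating two bounds of different shapes directly is awkward, so the cleanest way is to fix $f_1,\dots,f_{k-2}$ and interpolate the bilinear operator $(f_{k-1},f_k)\mapsto T f_{k-1}\,T f_k$ from $L^2\times L^2$ into $L^2$ and into $L^{p_c}$. I will verify that the affine exponents agree at both endpoints.

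\emph{Case $n=2$, $2<p\le3$.} Here we need three factors. We interpolate between $p=2$ and $p=3$.
\begin{itemize}
\item At $p=2$ we use the $k=3$ instance of Theorem~\ref{thm2} with $n=2$, namely $\lambda_{k-2}^{\frac14}\lambda_{k-1}^{\frac14}$, the other factors in $L^\infty$.
\item At $p=3$ we use $L^\infty$ on $\lambda_{k-2}$ and the bilinear $L^3$ bound from the previous case.
\end{itemize}
The formula gives $\lambda_{k-2}^{1/4}\lambda_{k-1}^{1/4}\lambda_k^0$ at $p=2$ and $\lambda_{k-2}^{1/2}\lambda_{k-1}^{1/4}\lambda_k^{1/4-1/6}$ at $p=3$. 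These are consistent with the endpoints, so trilinear interpolation finishes this case.

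\emph{Sharpness.} This is the main obstacle in the sense of requiring care. For each regime we construct examples:
\begin{itemize}
\item Knapp-type wave packets (for $f_j$ a bump on a $\lambda_j^{-1/2}$ cap) give tubes of dimensions $1\times\lambda_j^{-1/2}$.
\item Point-focusing examples (for $f_j\equiv$ a bump of size $1$) concentrate on a $\lambda_j^{-1}$ ball.
\end{itemize}
We arrange the lower-frequency factors to be focusing, so that they are essentially constant of size $\lambda_j^{\frac{n-1}{2}}$ on a $\lambda_{k-1}^{-1}$ ball. The top two factors are taken as
\begin{itemize}
\item a focusing/Knapp pair aligned along a common tube of width $\lambda_{k-1}^{-1/2}$ and length $\sim1$ for $2<p\le p_c$;
\item a Knapp tube at scale $\lambda_k$ intersected with a focusing ball for $p\ge p_c$.
\end{itemize}
Computing $L^p$ norms on these intersections reproduces each exponent. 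The delicate point is checking that, for general phases, these wave packets can be aligned simultaneously. I expect this to follow from the Carleson--Sj\"olin condition by choosing $\xi$-caps with parallel normals at a common point $y_0$.
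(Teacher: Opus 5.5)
\textbf{Upper bounds.} Your argument for $p\ge p_c$ is the paper's. For $n\ge4$, your interpolation in $2<p\le p_c$ is a correct alternative to the paper's route. For the fixed product $F$ one has $\|F\|_p\le\|F\|_2^{1-\theta}\|F\|_{p_c}^{\theta}$ with $\frac1p=\frac{1-\theta}2+\frac{\theta}{p_c}$, and the bilinear bound $\lambda_{k-1}^{\frac{n-2}{2}}$ of Theorem~\ref{thm1} lies exactly on the affine line of the claimed exponents.

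The scheme fails, however, in the two dimensions your consistency check skipped.
\begin{itemize}
\item For $n=3$, Theorem~\ref{thm1} at $p=2$ gives only $\lambda^{1/2}\sqrt{\log\lambda}$ for general Carleson--Sj\"olin phases, and this loss is sharp (Proposition~\ref{prop:hyperbolic-log-sharp}). Interpolating with $p_c=4$ therefore leaves a factor $(\log\lambda_{k-1})^{\frac2p-\frac12}$ for every $2<p<4$, whereas the theorem is log-free there.
\item For $n=2$, the claimed exponent $\lambda_{k-1}^{\frac34-\frac3{2p}}\lambda_k^{\frac14-\frac1{2p}}$ equals $1$ at $p=2$, while the true (sharp) bilinear $L^2$ bound is $\lambda_{k-1}^{1/4}$. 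Interpolating that with $\lambda_{k-1}^{1/2}\lambda_k^{1/6}$ at $p=6$ gives $\lambda_{k-1}^{3/8}\lambda_k^{1/12}$ at $p=3$, instead of the required $\lambda_{k-1}^{1/4}\lambda_k^{1/12}$.
\end{itemize}
So the whole range $3\le p<6$ loses a power of $\lambda_{k-1}$. Consequently the bilinear $L^3$ input you feed into the case $2<p\le3$ is not available. H\"older with two $L^6$ bounds does not rescue it, since it gives only $\lambda_{k-1}^{1/6}\lambda_k^{1/6}$, which is larger.

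\textbf{Missing ingredient.} What you need is the mixed-norm Strichartz estimate of Proposition~\ref{kt}, applied in the common normal coordinates $(s,z)$ of Lemma~\ref{lem:normal-form-products} directly at each $p$, with no interpolation from $p=2$. Set $d=n-1$, $a=\frac{4p}{4-d(p-2)}$ and $b=\frac{4p}{d(p-2)}$. Mixed H\"older then gives $\|u_{k-1}u_k\|_{L^p}\le\|u_{k-1}\|_{L^a_sL^\infty_z}\|u_k\|_{L^b_sL^p_z}\lesssim\lambda_{k-1}^{\frac{3d}4-\frac{d+2}{2p}}\lambda_k^{\frac d4-\frac d{2p}}$.
\begin{itemize}
\item For $d=2$ one has $a>2$, so the logarithmic endpoint $(2,\infty,2)$ is never touched.
\item For $d=1$, admissibility of $(a,\infty)$ forces $p\ge3$. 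At $p=3$ this is the $L^4_sL^\infty_z$ estimate, which is what produces $\lambda_{k-1}^{1/4}$.
\item For $d=1$ and $2<p<3$ one uses three factors: $u_{k-2}\in L^c_sL^\infty_z$ with $c=\frac{2p}{3-p}$, $u_{k-1}\in L^4_sL^\infty_z$, and $u_k\in L^b_sL^p_z$.
\end{itemize}

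\textbf{Sharpness.} Your examples are misassigned.
\begin{itemize}
\item For $p\ge p_c$, a Knapp tube at scale $\lambda_k$ contributes only $\lambda_k^{\frac{n-1}{4}}$ at $p=\infty$, not $\lambda_k^{\frac{n-1}{2}}$. The top factor must itself focus at scale $\lambda_k^{-1}$.
\item Packets on a common tube of length $\sim1$ are small-$p$ extremizers. For instance, with $k=2$ and $\lambda_1=\lambda_2=\lambda$, two Knapp packets on one such tube give $\lambda^{\frac{n-1}2(1-\frac1p)}$. This is strictly below the claimed $\lambda^{n-1-\frac np}$ for every $p>2$ when $n\ge3$.
\end{itemize}
The correct extremizer for $2<p\le p_c$ has the lower factors focused at scales $\lambda_j^{-1}$. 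The top factor is concentrated on a short tube of length $\lambda_{k-1}^{-1}$ and width $(\lambda_{k-1}\lambda_k)^{-1/2}$; this is the envelope of Lemma~\ref{lem:coherent-short-packets} with $r=\lambda_{k-1}^{-1}$. For $n=2$ and $2<p\le3$, both top factors are such envelopes with $r=\lambda_{k-2}^{-1}$. Finally, sharpness requires exhibiting only one operator, so no alignment for general phases is needed. The paper uses spherical harmonics on $S^n$, whose spectral cluster parametrix is a Carleson--Sj\"olin operator.
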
 
	Throughout, we denote the power expression in $\mathcal{E}(n,k,p;\la_1,\ldots,\la_k)$ by $\mathcal{E}_0(n,k,p;\la_1,...,\la_k)$. Equivalently,  $\E_0$ is given by \eqref{Eps0} if $n=3$ and $p\le 2$, and $\E_0=\E$ otherwise.  We establish sharp $k$-linear spectral cluster estimates for all $k\ge2$ and $1\le p\le \infty$. In particular, this resolves the problem raised by Burq--G\'erard--Tzvetkov regarding the logarithmic loss in three dimensions.
	
	\begin{theorem}[Multilinear spectral cluster estimates]\label{thm4}
		Let $(M,g)$ be a closed smooth Riemannian manifold of dimension $n\ge2$. Let $\Delta$ denote the Laplace operator associated to the metric $g$.  Let $\chi\in\mathcal{S}(\mathbb{R})$ and $\chi_\la=\chi(\sqrt{-\Delta}-\la)$. Let $k\ge2$, $1\le p\le\infty$ and $1\ll\lambda_1\le\cdots\le\lambda_k$.  Then
		\begin{equation}\label{eig}
			\left\|\prod_{j=1}^k \chi_{\lambda_j}f_j\right\|_{L^p(M)}
			\ls\,\E_0(n,k,p;\lambda_1,\ldots,\lambda_k)\prod_{j=1}^k\|f_j\|_{L^2(M)} .
		\end{equation}
		Moreover, the estimates are sharp.
	\end{theorem}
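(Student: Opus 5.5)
The plan is to reduce \eqref{eig} to the oscillatory integral estimates of Theorems~\ref{thm1}--\ref{thm3} through Sogge's spectral cluster parametrix. The only new ingredient is the removal of the logarithms when $n=3$.

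\textbf{Step 1: reduction to local operators.} Write $\chi_\la=\tilde\chi_\la+R_\la$. Here $\tilde\chi_\la=\int \hat\chi(t)\rho(t)e^{it\la}e^{-it\sqrt{-\Delta}}\,dt$ with $\rho$ supported in $|t|<\eps$ (smaller than the injectivity radius), and $\|R_\la\|_{L^2\to L^\infty}=O(\la^{-N})$. Since each $\chi_{\la_j}$ is bounded from $L^2$ to $L^\infty$ with norm $\ls\la_j^{(n-1)/2}$, any product containing a remainder factor is harmless by H\"older on the compact manifold $M$. After a partition of unity and local coordinates, $\tilde\chi_\la f$ is a finite sum of operators $T^{a}_{\la,\phi}F$, where $F$ is an $L^2$ function on $\R^{n-1}$ (a restriction to a hypersurface of a Fourier integral operator applied to $f$) with $\|F\|_{L^2}\ls\|f\|_{L^2(M)}$, and the phase is elliptic Carleson--Sj\"olin (Lemma~\ref{ellip}, \cite[Section 5]{fio}). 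This factorization $\tilde\chi_\la=T_{\la}\circ S_\la$ with $S_\la$ bounded $L^2(M)\to L^2(\R^{n-1})$ is the standard route used for the $L^2\to L^p$ spectral cluster bound. Applying Theorems~\ref{thm1}--\ref{thm3} to each product of such pieces gives \eqref{eig} with $\E$ in place of $\E_0$. This already settles every case except $n=3$, $p=1,2$ and $1<p\le 4/3$.

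\textbf{Step 2: log-free bilinear estimate in 3D at $p=2$.} This is the main obstacle, namely the Burq--G\'erard--Tzvetkov problem. In the proof of Theorem~\ref{thm1} the logarithm presumably comes from a dyadic sum over angular separations between the two wave packets, or equivalently over the scale $\la^{-1}\le\theta\le 1$ of transversality, each scale contributing $\la^{1/2}$ in square-function form. For a general Carleson--Sj\"olin phase, sharpness shows that tangential interactions at all scales can coexist. For spectral clusters, I would exploit the extra structure that the phase is the Riemannian distance function, so that the curves $\nabla_\xi\phi=$const are geodesics and the bicharacteristic flow is Hamiltonian and uniform in $y$. The plan is to decompose the low-frequency factor $\chi_\la f$ microlocally into tubes of width $\la^{-1/2}$ along geodesic segments of length $\la^{-1/2}$, or alternatively to use the $L^2$ product via $TT^*$. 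Computing $\|\chi_\la f\,\chi_\mu g\|_{L^2}^2$ then amounts to bounding $\int |\chi_\la f|^2|\chi_\mu g|^2$. One would use the sharp local $L^2$ bound $\|\chi_\mu g\|_{L^2(T)}\ls \|g\|_{L^2}$ on $\la^{-1}$-neighborhoods of geodesic segments of length $1$ in 3D, and bound $|\chi_\la f|^2$ by an averaged sum over such tubes using orthogonality of wave packets at scale $\la$. The point is to show that the tube-restriction bound for spectral clusters has no $\log$, unlike generic phases where the corresponding geodesic (Kakeya-type) overlap produces it. I would first try to obtain the log-free $L^2(\text{tube})$ estimate via the Bourgain--Shao--Sogge--Yau / Blair--Sogge argument on restrictions to $\la^{-1}$-tubes, which in dimension $3$ is log-free.

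\textbf{Step 3: propagate to $k$-linear and other $p$.} With the log-free bilinear $L^2$ bound $\la^{1/2}$, I would rerun the $k$-linear induction of Theorem~\ref{thm2}. Bound the top two factors at $p=2$ log-free, and place the remaining factors in $L^\infty$ with norm $\la_j$, or in $L^4$ via Sogge's bound. The $p=1$ and $1<p\le 4/3$ cases follow by Cauchy--Schwarz, pairing $(k-3,k-2)$ and $(k-1,k)$ into two log-free bilinear $L^2$ bounds, and then by interpolation with the $4/3<p<2$ range, which is already log-free. Sharpness comes from the standard examples, zonal or highest-weight spherical harmonics and Knapp-type wave packets on the sphere, whose computed products match each exponent of $\E_0$.
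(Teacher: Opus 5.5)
Your overall architecture (Sogge's parametrix, Theorems~\ref{thm1}--\ref{thm3} away from the three-dimensional endpoints, then H\"older assembly from a log-free bilinear bound) matches the paper. The decisive gap is Step~2, which is the only genuinely new input, and your sketch contains no mechanism that removes the logarithm.

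\textbf{Step 2 uses the wrong structure, and the tube scheme fails as stated.}
\begin{itemize}
\item The structure you propose to exploit is not the relevant one. The hyperbolic model $z\cdot\xi+s\xi_1\xi_2$ of Section~\ref{sec2} is translation invariant and has straight bicharacteristics, yet Proposition~\ref{prop:hyperbolic-log-sharp} shows that $\sqrt{\log\lambda}$ is sharp for it. What distinguishes the distance phase is ellipticity (definite second fundamental form, Lemma~\ref{ellip}), and your plan never uses it.
\item The tube scheme, as you describe it, cannot reach $\lambda^{1/2}$. If $\chi_\lambda f$ is zonal, then $\int_{B(x_0,c\lambda^{-1})}|\chi_\lambda f|^2\gtrsim\lambda^{-1}$. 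So any domination $|\chi_\lambda f|^2\le\sum_T c_T\mathbf{1}_T$ by $\lambda^{-1}$-tubes forces $\sum_T c_T\gtrsim\lambda^2$.
\item For $\mu\ge\lambda^2$, a beam block satisfies $\|\chi_\mu g\|_{L^2(T)}\approx\|g\|_2$ on a tube. Pairing $\sum_T c_T$ with $\sup_T\|\chi_\mu g\|_{L^2(T)}^2$ therefore returns only the trivial bound $\lambda$. There is also no tube estimate for the high-frequency factor left to improve.
\end{itemize}

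\textbf{What the paper does instead.}
\begin{itemize}
\item It linearizes, sets $\sigma=\alpha/\beta$, and changes variables to $v=\eta+\sigma\xi$, $u=\sigma|\xi-\eta|^2/(2(1+\sigma))$, $\omega=(\xi-\eta)/|\xi-\eta|$.
\item Ellipticity makes $(u,v)\mapsto\nabla_x\{\sigma\varphi(x,\xi)+\varphi(x,\eta)\}$ uniformly bi-Lipschitz. H\"ormander's $L^2$ lemma in three variables then gives $\beta^{-3/2}$, and the Jacobian $1/(\sigma(1+\sigma))$ turns this into $\alpha^{1/2}$ with no dyadic decomposition (Proposition~\ref{prop:singlephase}).
\item That estimate needs a common phase, but Sogge's frozen pieces have different radii. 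The paper therefore also proves an outer-radius factorization through a common outer geodesic sphere, using stationary phase and an exact critical-value identity from geodesic extension.
\item Antipodal caps are handled with signed phases, and transverse caps with Lemma~\ref{lem:transverse-frozen}.
\end{itemize}
None of this is anticipated in your plan.

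\textbf{Errors in the routine parts.}
\begin{itemize}
\item Step~3 at $p=1$: pairing $(k-3,k-2)$ with $(k-1,k)$ gives $\lambda_{k-3}^{1/2}\lambda_{k-1}^{1/2}$. This exceeds $\E_0=\prod_{j\le k-4}\lambda_j\cdot\lambda_{k-3}^{1/2}\lambda_{k-2}^{1/2}$. Each of $u_{k-3},u_{k-2}$ must instead be paired with one of $u_{k-1},u_k$.
\item Step~3 for $1<p\le 4/3$: interpolating between $p=1$ and a fixed $p_1>4/3$ overshoots $\E_0$. The reason is that $\log\E_0$ is convex and piecewise affine in $1/p$, with its corner at $p=4/3$.
\item You also cannot let $p_1\downarrow 4/3$. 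The bounds you import from Theorem~\ref{thm2} rest on $L_s^{p/(2-p)}L_z^\infty$ estimates approaching the excluded $L^2_sL^\infty_z$ endpoint. By Proposition~\ref{prop:hyperbolic-klinear-lp}(I), their constants cannot stay bounded as $p\downarrow4/3$ for general phases. You need a log-free endpoint at $p=4/3$, for instance $\|u_{k-3}\|_\infty\|u_{k-2}u_k\|_2\|u_{k-1}\|_4$.
\item Step~1: $\chi_\lambda-\tilde\chi_\lambda$ is not $O(\lambda^{-N})$ for general $\chi\in\mathcal S(\mathbb R)$. Its multiplier is a fixed function of $\tau-\lambda$ whose Fourier transform is, up to normalization, $\hat\chi\,(1-\rho)$, and this is not small near $\tau=\lambda$. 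One must first reduce to a single $\chi$ with $\hat\chi$ supported in $[\eps,2\eps]$ (Lemma~\ref{chilem}).
\item Step~1, continued: the passage to Carleson--Sj\"olin operators is a superposition over the radial variable, handled by Minkowski in each $r_j$. It is not a composition $T_\lambda\circ S_\lambda$ with $S_\lambda$ bounded on $L^2$.
\item Sharpness for $p\le2$ does not follow from zonal, highest-weight and Knapp examples alone. For $n\ge4$, several branches require envelope trains and zonal trains. These are sums of $Dr$-separated envelopes along one geodesic, almost orthogonal because $d/2>1$, placed on a common packet box together with the other profiles (Sections~\ref{sec11}--\ref{sec12}).
\end{itemize}
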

	
The bilinear and trilinear spectral cluster estimates for $p=2$ were proved by Burq--G\'erard--Tzvetkov \cite{bgtinv,bgt2004,bgt2005}, with the logarithmic loss in three dimensions.	The bilinear quasimode estimates for $p>2$ were proved by Guo-Han-Tacy \cite{ght}.

	\subsection{Background and relation to earlier work}\label{subsec:remarks-exponents}
	
	The linear versions of our oscillatory integral estimates $(\lambda_1=\cdots=\lambda_k)$ go back to the foundational work of H\"ormander \cite{hor1960,hor1973}, Carleson--Sj\"olin \cite{cs1972}, and Stein \cite{stein1986}.  Under the Carleson--Sj\"olin condition one has
	\begin{equation}\label{linosc}
		\|T_{\lambda,\phi}^a f\|_{L^p(\mathbb R\times\mathbb R^{n-1})}
		\lesssim \lambda^{\sigma(p)}\|f\|_{L^2(\mathbb R^{n-1})},
	\end{equation}
	where
	\begin{equation}
		\sigma(p)=
		\begin{cases}
			\frac{n-1}{2}\left(\frac12-\frac1p\right),&2\le p\le p_c,\\
			\frac{n-1}{2}-\frac np,&p_c\le p\le\infty,
		\end{cases}
		\qquad p_c=\frac{2(n+1)}{n-1}.
	\end{equation}
	This circle of results is closely connected with the restriction problem, curvature-driven oscillatory integrals, and multilinear harmonic analysis; see, for example, Wolff \cite{wolff}, Tao \cite{tao2001,tao2003,tao2004}, Bennett--Carbery--Tao \cite{bct2006}, Lee \cite{lee2006}, Bourgain--Guth \cite{bourgain-guth2011}, and Hani \cite{hani2012}.
	
	The corresponding linear spectral cluster estimates are due to Sogge \cite{soggeduke,sogge88,fio}.  If $(M,g)$ is a closed smooth Riemannian manifold, $\Delta$ is the Laplace operator associated to the metric $g$, $\chi\in\mathcal S(\mathbb R)$, and $\chi_\lambda=\chi(\sqrt{-\Delta}-\lambda)$, then
	\begin{equation}\label{sogge}
		\|\chi_\lambda f\|_{L^p(M)}
		\lesssim \lambda^{\sigma(p)}\|f\|_{L^2(M)},\qquad 2\le p\le\infty.
	\end{equation}
	
	Multilinear $L^2$ estimates for eigenfunctions were first established by Burq--G\'erard--Tzvetkov \cite{bgtinv,bgt2004,bgt2005}.  In particular, for $1\ll\lambda\le\mu$ they proved
	\begin{equation}\label{bgt}
		\|\chi_\lambda f\,\chi_\mu g\|_{L^2(M)}
		\lesssim \mathcal B(n,\lambda)\|f\|_{L^2(M)}\|g\|_{L^2(M)},
	\end{equation}
	where
	\[
	\mathcal B(n,\lambda)=
	\begin{cases}
		\lambda^{\frac{n-2}{2}},&n\ge4,\\
		\lambda^{\frac12}\sqrt{\log\lambda},&n=3,\\
		\lambda^{\frac14},&n=2.
	\end{cases}
	\]
	Except for the logarithmic loss in dimension three, these estimates are sharp on the round sphere by the standard highest-weight and zonal examples. For the three-dimensional logarithmic loss, Burq--G\'erard--Tzvetkov pointed out that  they have the same difficulty as the endpoint Strichartz estimates on $\mathbb{R}^2$ \cite[Remark 2.13]{bgt2005}.  Deng--Zhang--Zhao \cite{dzz} recently removed this loss on the round sphere $S^3$ by using the compact Lie group structure of $SU(2)$.
	
	Our approach is different and applies to arbitrary closed three-dimensional Riemannian manifolds. We prove log-free endpoint bilinear  spectral cluster estimates on every closed three-manifold.  At the same time, we show that ellipticity of the Riemannian distance phase function is essential: a hyperbolic Carleson--Sj\"olin model has sharp endpoint examples with the logarithmic factor.  Thus the paper identifies the precise geometric mechanism behind the endpoint logarithm. This  provides a robust harmonic-analytic tool for studying critical problems of dispersive equations on curved spaces.
	
	Beyond this endpoint issue, we establish sharp multilinear $L^p$ spectral cluster estimates for all $p\ge1$.  The range $1\le p\le2$ is substantially more delicate than the range $p>2$, because the sharp lower bounds are not generated by a single family of spherical harmonics.  We show that the extremal behavior is obtained by combining six model packet profiles---beam, beam block, envelope, envelope train, zonal, and zonal train.  See Sections \ref{sec11} and \ref{sec12}.

	\subsection{New phenomena in the range $p\le 2$}
	
	The estimates in Theorem~\ref{thm1} are substantially more subtle in the range $p\le 2$. For $p>2$, the estimates are closely related to the classical linear restriction and spectral cluster theory, where the sharp examples are already well understood. In this regime, the multilinear estimates are largely governed by the same concentration mechanisms that appear in the linear problem.
	
	New phenomena emerge once $p\le2$. Observe that if one sets $\lambda=\mu$ and $f=g$ in \eqref{bie}, then the bilinear estimate reduces to an $L^{2p}$ estimate for a single oscillatory integral operator. More generally, if all frequency scales are equal and all functions coincide, then \eqref{goal} becomes an $L^{kp}$ estimate. Thus the multilinear problem is naturally connected to the linear theory at exponent $kp$. The classical theory largely concerns the range $kp\ge 2k$, corresponding to $p\ge2$. In that regime the extremizing mechanisms are already known. However, Theorems~\ref{thm1} and~\ref{thm2} extend into the range
	\[
	2\le kp\le 2k,
	\qquad\text{equivalently}\qquad
	\frac{2}{k}\le p\le 2,
	\]
	where the linear theory provides much less guidance. In particular, there is no a priori reason to expect the same extremizing configurations that govern the classical $L^q$ estimates to remain dominant. As we shall see, this range exhibits new multilinear phenomena arising from interactions between different concentration profiles.
	
	For eigenfunctions on the sphere, the classical extremizers are of two distinct types. The first are the zonal harmonics, which concentrate near a point. The second are the highest-weight harmonics (or Gaussian beams), which concentrate along a geodesic. For linear estimates, these two families dominate in different exponent ranges: zonal harmonics are extremal at larger exponents, while Gaussian beams become extremal at smaller exponents. The transition occurs at the critical exponent
	\[
	p_c=\frac{2(n+1)}{n-1}.
	\]
	
	In the multilinear setting, however, one is no longer restricted to a single extremizing profile. The product structure allows different factors to exhibit different concentration patterns simultaneously. Consequently, a much richer collection of sharp examples becomes available. These mixed configurations have no analogue in the linear theory. Their interaction produces several competing lower bounds, and different configurations dominate in different regions of the $(p,\lambda_1,\dots,\lambda_k)$-parameter space.
	
	One of the main messages of this paper is that the threshold exponents 
		\[
	p_0=\frac{n+1}{n-1},\qquad
	p_1=\frac{n-1}{n-2},\qquad
	p_2=\frac{n+1}{n},\qquad
	p_3=\frac{2(n+1)}{3(n-1)}.
	\] arise from transitions between these competing extremal geometries. Heuristically, the endpoints $p_0=p_c/2$ and $p_3=p_c/3$ correspond to the classical transition between zonal-type and Gaussian-beam-type behavior inherited from the linear theory. In contrast, the endpoints $p_1$ and $p_2$ reflect a genuinely multilinear phenomenon arising from interactions between distinct concentration patterns. The piecewise structure of $\mathcal E(n,k,p;\lambda_1,\dots,\lambda_k)$ shows that no single family of extremizers can account for the entire range $1\le p\le2$.  Table \ref{tb6} summarizes the geometric nature of the sharp examples for bilinear estimates in the different regimes. See Sections \ref{sec11} and \ref{sec12}.	
	\begin{table}[h]
		\centering
		\begin{tabular}{lll}
			\hline
			Range & $\la$-factor & $\mu$-factor \\ \hline
			$1\le p\le p_1$ & Beam & Beam block \\
			$p_1\le p\le p_0,\ \mu\le \la^2$ & Envelope train & Beam \\
			$p_0\le p\le 2,\ \mu\le \la^2$ &
			Zonal train &
			Envelope train \\
			$p_1\le p\le 2,\ \mu\ge \la^2$ &
			Zonal train&
			Beam block \\
			\hline
		\end{tabular}
		\vspace{5pt}
		\caption{Extremizing configurations in Theorem~\ref{thm1}.}
		\label{tb6}
	\end{table}
	
	This phenomenon becomes even more pronounced for the $k$-linear estimates of Theorem~\ref{thm2}. In contrast to the linear theory, where only the zonal and Gaussian beam examples need to be considered, the multilinear problem requires one to analyze all possible combinations of these concentration profiles among the different factors. The various endpoints appearing in Theorem~\ref{thm2} may therefore be interpreted as transitions between different multilinear extremizing configurations. 
	
	To the best of our knowledge, this systematic interaction between zonal and Gaussian beam profiles has not previously appeared in the literature on restriction estimates, oscillatory integrals, or spectral cluster bounds. One of the principal novelties of the present paper is the identification of these mixed extremal configurations and the demonstration that they completely determine the sharp multilinear estimates in the range $p\le2$.
	
\subsection{Log-free endpoint estimate}\label{subsec:logfree-endpoint}

We now explain the proof idea behind the log-free endpoint estimate for three-dimensional spectral clusters.  The estimate is
\begin{equation}\label{eq:intro-logfree-bilinear}
	\|\chi_\lambda f\,\chi_\mu g\|_{L^2(M)}
	\lesssim \lambda^{1/2}\|f\|_{L^2(M)}\|g\|_{L^2(M)},
	\qquad 1\ll\lambda\le\mu,
\end{equation}
for every closed three-dimensional Riemannian manifold.  This is the only point in the spectral cluster theory where the general oscillatory-integral estimates of Theorem~\ref{thm1} are not sufficient: for arbitrary three-dimensional Carleson--Sj\"olin phases the endpoint bound has a sharp factor $\sqrt{\log\lambda}$.  Section~\ref{sec2} shows that this logarithm is forced by hyperbolic asymptotic directions.  The spectral cluster phase is different.  It is the Riemannian distance phase, and its ellipticity can be exploited in a way that avoids the dyadic summation which produces the logarithm.

The first ingredient is a common-phase bilinear estimate.  Suppose two operators have the same elliptic Carleson--Sj\"olin phase $\varphi(x,\xi)$ in dimension three, but possibly different amplitudes and frequencies $1\le \alpha\le\beta$:
\[
T^b_\nu h(x)=\nu\int e^{i\nu\varphi(x,\xi)}b(x,\xi,\nu)h(\xi)\,d\xi .
\]
Proposition~\ref{prop:singlephase} proves
\[
\|T^{b_1}_\alpha f\,T^{b_2}_\beta g\|_{L^2}\lesssim
\alpha^{1/2}\|f\|_2\|g\|_2 .
\]
The proof linearizes the product and writes $\sigma=\alpha/\beta$.  For the pair of frequency variables $(\xi,\eta)$ one introduces
\[
v=\eta+\sigma\xi,
\qquad
u=\frac{\sigma|\xi-\eta|^2}{2(1+\sigma)},
\qquad
\omega=\frac{\xi-\eta}{|\xi-\eta|}.
\]
For fixed $\omega$, the variables $(u,v)$ are three-dimensional.  The definiteness of the second fundamental form implies that the map
\[
(u,v)\longmapsto \nabla_x\{\sigma\varphi(x,\xi(u,v))+\varphi(x,\eta(u,v))\}
\]
is uniformly nondegenerate.  H\"ormander's $L^2$ oscillatory integral theorem then gives a gain $\beta^{-3/2}$ in these three variables, and the Jacobian of the above change of variables converts this into the factor $\alpha^{1/2}$.  No angular-scale decomposition is used, and hence no logarithmic loss appears.

The second ingredient is that the distance phases occurring in Sogge's spectral cluster parametrix can be reduced to this common-phase situation.  After localizing in a small normal-coordinate ball and writing the integration variable in polar coordinates $y=\exp_o(r\omega)$, a frozen radial piece of the spectral projector has the form
\[
T_{\lambda,r}h(x)=\lambda\int e^{-i\lambda d_g(x,Y_r(\xi))}
b_r(x,\xi,\lambda)h(\xi)\,d\xi,
\qquad Y_r(\xi)=\exp_o(r\Omega(\xi)).
\]
The phases for different radii $r$ are not literally the same, so Proposition~\ref{prop:singlephase} cannot be applied directly.  We introduce a larger outer radius $R$ and the outer phase
\[
\Phi_R(x,\theta)=-d_g(x,Y_R(\theta)).
\]
The scaled normal-coordinate expansion for the distance function and Gauss's lemma show that $\Phi_R$ is an elliptic Carleson--Sj\"olin phase on each sufficiently small cap.  Stationary phase in the intermediate angular variable then gives the outer-radius factorization
\[
T_{\lambda,r}=E_{\lambda,R}^{A_r}U_{\lambda,r}+R_{\lambda,r}.
\]
Here $E_{\lambda,R}^{A_r}$ has the common outer phase $\Phi_R$, the angular propagator $U_{\lambda,r}$ is uniformly $L^2$-bounded, and the remainder $R_{\lambda,r}$ is harmless.  Thus two frozen pieces with radii $r_1,r_2$ are compared only after they have both been propagated to the same outer sphere.  The common-phase estimate then yields the uniform frozen bound
\[
\|T_{\alpha,r_1}f\,T_{\beta,r_2}g\|_{L^2(U)}
\lesssim \alpha^{1/2}\|f\|_2\|g\|_2,
\qquad 1\le\alpha\le\beta .
\]

There are two geometric configurations which require separate bookkeeping.  If the two angular caps are separated from both the parallel and antipodal relations, the corresponding covector surfaces are uniformly transverse, and a direct H\"ormander $L^2$ argument gives the same frozen estimate.  If the caps are antipodal, one uses signed phases $\Phi_r^\sigma(x,\xi)=-\sigma d_g(x,Y_r^\sigma(\xi))$, $\sigma=\pm1$.  Section~\ref{sec7} proves the signed factorization and verifies the exact critical-value identity needed to reduce the antipodal case again to the same positive outer phase.

Finally, Sogge's spectral cluster parametrix writes $\chi_\lambda=S_\lambda+R_\lambda$, with a rapidly smoothing remainder.  The main term $S_\lambda$ is decomposed into finitely many angular caps and a bounded interval of radii.  The frozen two-radius estimate is uniform in the radii, while the polar-coordinate decomposition preserves the $L^2$ norm after summing over caps and integrating in $r$.  Minkowski's inequality and Cauchy's inequality in the radial variables therefore assemble the frozen estimates into \eqref{eq:intro-logfree-bilinear}.

	\subsection{Paper structure} In Section \ref{sec2}, we construct a hyperbolic Carleson--Sj\"olin phase and prove that the logarithmic factors in three-dimensional oscillatory integral theorems are sharp.  In Section \ref{sec3}, we prove a log-free endpoint bilinear oscillatory integral theorem with a common elliptic Carleson--Sj\"olin phase. In Section \ref{sec4}, we prove some useful estimates on the Riemannian distance function, which is exactly the phase in Sogge's spectral cluster parametrix for Laplace eigenfunctions on smooth closed manifolds. In Section \ref{sec5}, we prove an outer-radius factorization for Sogge's spectral cluster parametrix using stationary phase, and then  prove a two-radius bilinear estimate on a common cap. In Section \ref{sec7}, we prove a two-radius bilinear estimate on antipodal caps. In Section \ref{sec8},  we complete the proof of log-free  spectral cluster estimates in all dimensions. In Section \ref{sec10}, we prove the multilinear oscillatory integral estimates. In Section \ref{sec11}, we construct the model packet profiles on the sphere. In Section \ref{sec12}, we use the model packet profiles to prove the sharpness assertions in Theorems \ref{thm1}--\ref{thm4}. 
	
	\subsection{Notation} 	Throughout the paper, \(X\lesssim Y\) means that \(X\le CY\), where the
	constant \(C\) is independent of the frequencies \(\lambda_j\).  We write \(X\approx Y\) if
	\(X\lesssim Y\) and \(Y\lesssim X\).  Moreover, \(X\gg Y\) means that
	\(X\ge CY\) for a sufficiently large constant \(C>0\). 
    
    The implicit constants $C$ may depend on $n,k,p$, on the fixed local amplitudes, on finitely many seminorms of the participating phases, and on the quantitative constants in the Carleson--Sj\"olin hypotheses.   

    For two sets $U,V$, we write $U\Subset V$, if $\overline{U}$ is compact and $\overline{U}\subset V$.
    
    The empty product is interpreted as $1$, and we set $\lambda_j=2$ for $j\le0$.
	\subsection{Acknowledgments}
	
	The authors are supported in part by the National Key R\&D Program of China
	2024YFA1015300.  C.Z. is also supported in part by NSFC Grant 12371097.
	Z.Z. is also supported in part by NSFC Grant 12501065.

	\section{Sharpness of the logarithm in three dimensions}\label{sec2}
	
	In this section, we prove that the logarithmic factors in three-dimensional multilinear oscillatory integral estimates cannot be removed if we only assume the Carleson--Sj\"olin condition. The logarithmic factors come from hyperbolic asymptotic directions. This example suggests that one should exploit the ellipticity of the phase to prove log-free estimates (see Section \ref{sec3}).
	\subsection{The bilinear endpoint}
	\begin{proposition}\label{prop:hyperbolic-log-sharp}
		Let
		\[
		\Phi_{\mathrm{hyp}}(s,z,\xi)=z\cdot\xi+s\xi_1\xi_2,
		\qquad y=(s,z)\in\mathbb R\times\mathbb R^2,
		\quad \xi\in\mathbb R^2,
		\]
		and let $b\in C_0^\infty$ be equal to one for $0\le s\le1$, $|z|\le c_0$, and $|\xi|\le c_0$, with $c_0>0$ small.  Define
		\[
		T_{\nu,\Phi_{\mathrm{hyp}}} h(y)=\nu\int e^{i\nu\Phi_{\mathrm{hyp}}(y,\xi)}b(y,\xi)h(\xi)\,d\xi .
		\]
		Then, for every large $\lambda$, with $\mu=\lambda^2$, there are functions $f_\lambda,g_\mu$ with
		$\|f_\lambda\|_2=\|g_\mu\|_2=1$ such that
		\begin{equation}\label{eq:hyperbolic-log-lower}
			\|T_{\lambda,\Phi_{\mathrm{hyp}}} f_\lambda\,T_{\mu,\Phi_{\mathrm{hyp}}} g_\mu\|_{L^2(\mathbb R^3)}
			\gtrsim \lambda^{1/2}\sqrt{\log\lambda}.
		\end{equation}
		
	\end{proposition}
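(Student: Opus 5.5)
The plan is to exploit the fact that the hyperbolic phase is linear in $y$, so that both factors are (truncated) Fourier extension operators from the saddle surface $S=\{(\xi_1\xi_2,\xi):\xi\in\R^2\}$. The bilinear $L^2$ norm can then be computed by a Plancherel/change-of-variables argument. Writing $P(\xi)=(\xi_1\xi_2,\xi)$, on the region where $b\equiv1$ we have
\[
T_{\lambda}f\,T_{\mu}g(y)=\lambda\mu\iint e^{iy\cdot(\lambda P(\xi)+\mu P(\eta))}f(\xi)g(\eta)\,d\xi\,d\eta .
\]
Hence, for $y$ restricted to $0\le s\le 1$, $|z|\le c_0$, this is essentially the inverse Fourier transform of the pushforward of $f\otimes g$ under
\[
\Psi(\xi,\eta)=\lambda P(\xi)+\mu P(\eta)\in\R^3 ,
\]
which maps $\R^4\to\R^3$. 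I will not bound it above. Instead I will choose $f,g$ so that this pushforward is a nonnegative function whose mass piles up near a line. The lower bound is then obtained by testing against an $L^2$-normalized bump adapted to the spatial box, after mollifying at the unit frequency scale that the spatial localization $s\in[0,1]$, $|z|\le c_0$ permits.

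The key computation is the coarea/Jacobian factor of $\Psi$. Parametrize the fiber through $(\xi,\eta)$ by the direction $\omega=(\xi-\eta)/|\xi-\eta|$, in analogy with the elliptic change of variables in Subsection~\ref{subsec:logfree-endpoint}. The $3\times3$ transverse Jacobian of $\Psi$ then contains, in place of the elliptic factor $\lambda|\xi-\eta|$, the hyperbolic quadratic form $\lambda\,|\omega_1\omega_2|\,|\xi-\eta|$. This form vanishes exactly along the two asymptotic (ruling) directions $\omega=e_1,e_2$ of $S$. Heuristically,
\[
\|T_\lambda f\,T_\mu g\|_2^2\approx \lambda^2\mu^2\int\Big|\int_{\Psi^{-1}(w)}\frac{f g}{J}\,d\sigma\Big|^2dw .
\]
So for $f$ a normalized bump on a cap of size $\lambda^{-1/2}$ (a Knapp example at frequency $\lambda$) and $g$ spread at scale $\mu^{-1/2}=\lambda^{-1}$, the $\lambda^{1/2}$ bound of the elliptic case acquires an extra angular integral of the form
\[
\int_{\lambda^{-1}\lesssim|\omega_1|\lesssim 1}\frac{d\omega_1}{|\omega_1|}\approx\log\lambda ,
\]
coming from $\omega$ near a ruling. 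The lower cutoff $\lambda^{-1}$ is where uncertainty at scale $\mu^{-1/2}=\lambda^{-1}$ (this is the role of $\mu=\lambda^2$) stops the divergence. This produces $\|T_\lambda f\,T_\mu g\|_2^2\gtrsim\lambda\log\lambda$.

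To make this rigorous without relying on the heuristic fiber formula, I would construct $g_\mu$ as an $\ell^2$-normalized sum over dyadic scales $2^{-j}\in[\lambda^{-1},1]$, with about $\log\lambda$ pieces. Each piece $g_j$ is a packet adapted to a $2^{-j}\times(2^{j}\lambda^{-2})$ rectangle aligned with the ruling $\xi_2=0$ near the cap of $f$; on such rectangles the phase $\mu\eta_1\eta_2$ is essentially linear. I would then verify two claims.
\begin{enumerate}[(i)]
\item Each product $T_\lambda f\,T_\mu g_j$ has $L^2$ norm $\gtrsim\lambda^{1/2}$ times the normalized weight of $g_j$, by a direct stationary-phase/Knapp computation on a tube.
\item The products for different $j$ are essentially orthogonal in $L^2$, because the sets $\Psi(\supp f\times\supp g_j)$ of output frequencies are essentially disjoint.
\end{enumerate}
Summing the squares gives $\lambda\log\lambda$. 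The main obstacle is the orthogonality step (ii), since the output frequency supports of the dyadic pieces are only approximately disjoint. It must be justified with $L^2$ almost-orthogonality after the spatial cutoff, with Schwartz tails controlled uniformly in $j$. The precise placement of the rectangles, and the choice of $f$ as a Knapp bump versus a full unit-scale function, may need adjusting to get disjointness cleanly. The signature $(1,1)$ of the Hessian is used exactly to guarantee that the surface contains these lines, along which the dyadic packets do not lose curvature decay.
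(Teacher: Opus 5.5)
There is a genuine gap: the construction you describe cannot produce the logarithm, for two independent reasons. Write $T_\nu$ for $T_{\nu,\Phi_{\mathrm{hyp}}}$.

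\emph{The choice of $f$ already caps the product at $\lambda^{1/2}$.} If $f$ is an $L^2$-normalized bump on a cap of diameter $\lambda^{-1/2}$, then $\|T_\lambda f\|_{L^\infty}\le\lambda\|b\|_\infty\|f\|_{L^1}\lesssim\lambda^{1/2}$. Since $\partial^2_{z\xi}\Phi_{\mathrm{hyp}}=I$, one also has $\|T_\mu g(s,\cdot)\|_{L^2_z}\lesssim\|g\|_2$ uniformly in $s$, and $b$ confines $s$ to a bounded set. Hence $\|T_\lambda f\,T_\mu g\|_2\lesssim\lambda^{1/2}$ for \emph{every} normalized $g$, however the dyadic pieces $g_j$ are arranged. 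More generally, $\|T_\lambda f\,T_\mu g\|_2\le\|T_\lambda f\|_{L^2_sL^\infty_z}\sup_s\|T_\mu g(s,\cdot)\|_{L^2_z}$. So the logarithm must come from the low-frequency factor, as a failure of the $L^2_sL^\infty_z$ endpoint for $T_\lambda$, and putting the multiscale structure on $g$ puts it in the wrong place. Switching to a unit-scale bump for $f$ does not help either: then $\sup_z|T_\lambda f(s,z)|\lesssim\min(\lambda,s^{-1})$, whose $L^2_s$ norm is again $\approx\lambda^{1/2}$.

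\emph{The orthogonality step (ii) works against you.} Suppose the products for different $j$ are orthogonal and $g=\sum_jc_jg_j$ with $\sum_j|c_j|^2\approx1$. Then $\|T_\lambda f\,T_\mu g\|_2^2=\sum_j|c_j|^2\|T_\lambda f\,T_\mu g_j\|_2^2\le\max_j\|T_\lambda f\,T_\mu g_j\|_2^2$. With your (i), $J\approx\log\lambda$ terms of size $\lambda J^{-1}$ sum to $\lambda$, not $\lambda\log\lambda$. An $\ell^2$-normalized superposition can beat its best single piece only through constructive interference, never through orthogonality.

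The missing idea is coherence on the low-frequency side, which is what the paper uses. Take $J\approx\frac1{10}\log_2\lambda$ disjoint rectangles $R_j=\{2^j\lambda^{-1/2}\le\xi_1\le(1+c)2^j\lambda^{-1/2},\ 0\le\xi_2\le c2^{-j}\lambda^{-1/2}\}$ stacked along the asymptotic direction. Each has area $\approx\lambda^{-1}$ and satisfies $|\xi_1\xi_2|\lesssim\lambda^{-1}$. Set $f_\lambda=J^{-1/2}\sum_j|R_j|^{-1/2}{\bf 1}_{R_j}$. On the tube $Q_\lambda=\{0\le s\le1,\ |z|\le c\lambda^{-1}\}$, the phase $\lambda(z\cdot\xi+s\xi_1\xi_2)$ varies by only a small constant over each $R_j$. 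So all pieces have real parts of one sign and add up, giving $|T_\lambda f_\lambda|\gtrsim\lambda J^{-1/2}\sum_j|R_j|^{1/2}\approx\lambda^{1/2}J^{1/2}$ there. The logarithm is exactly the gap between the $\ell^1$ and $\ell^2$ sums of the coefficients. The high-frequency factor is just a Knapp bump on a square of side $c\lambda^{-1}=c\mu^{-1/2}$, which gives $|T_\mu g_\mu|\gtrsim\lambda$ on the same tube; this is where $\mu=\lambda^2$ enters. Since $|Q_\lambda|\approx\lambda^{-2}$, this yields $\|T_\lambda f_\lambda\,T_\mu g_\mu\|_{L^2(Q_\lambda)}\gtrsim\lambda^{1/2}\sqrt J$. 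No Plancherel, fiber/coarea computation, stationary phase or almost-orthogonality is needed: the whole proof is a pointwise lower bound on one set.
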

	
	\begin{proof}
		For the model phase,
		\[
		P(y,\xi)=\nabla_y\Phi_{\mathrm{hyp}}(y,\xi)=(\xi_1\xi_2,\xi_1,\xi_2),
		\]
		and the second fundamental form  $\text{II}[\omega,\omega]$ is a nonzero multiple of
		$\omega_1\omega_2$.  Thus it is Carleson--Sj\"olin but not elliptic.  It has the two asymptotic directions parallel to the coordinate axes.  The example below stacks wave packets along these two directions.
		
		Let
		\[
		J=\left\lfloor \frac{1}{10}\log_2\lambda\right\rfloor .
		\]
		Choose a small number $c>0$, depending only on the amplitude support.  For $1\le j\le J$ set
		\[
		R_j=
		\left\{\xi:
		2^j\lambda^{-1/2}\le \xi_1\le (1+c)2^j\lambda^{-1/2},
		0\le \xi_2\le c2^{-j}\lambda^{-1/2}
		\right\}.
		\]
		The rectangles are disjoint, lie in $\{|\xi|\le c_0\}$ for $\lambda$ large, and satisfy
		$|R_j|\approx\lambda^{-1}$ and
		\[
		|\xi_1\xi_2|\ls\lambda^{-1},\qquad \xi\in R_j.
		\]
		Let $f_j=|R_j|^{-1/2}{\bf 1}_{R_j}$ and
		\[
		f_\lambda=J^{-1/2}\sum_{j=1}^J f_j.
		\]
		Then $\|f_\lambda\|_2=1$.  For $0\le s\le1$ and $|z_1|,|z_2|\le c\lambda^{-1}$, the phase variation of
		$\lambda(z\cdot\xi+s\xi_1\xi_2)$ on each $R_j$ is bounded by a small absolute constant if $c$ is chosen small.  Hence all integrals have the same sign after taking real parts, and
		\begin{equation}\label{eq:hyperbolic-low-large}
			|T_{\lambda,\Phi_{\mathrm{hyp}}} f_\lambda(s,z)|
			\gtrsim
			\lambda J^{-1/2}\sum_{j=1}^J |R_j|^{1/2}
			\gtrsim \lambda^{1/2}\sqrt J .
		\end{equation}
		
		For the high-frequency factor take
		\[
		G=\{\eta:0\le\eta_1,\eta_2\le c\mu^{-1/2}\}
		=
		\{\eta:0\le\eta_1,\eta_2\le c\lambda^{-1}\},
		\qquad
		g_\mu=|G|^{-1/2}{\bf 1}_G .
		\]
		Then $\|g_\mu\|_2=1$ and, on the same box $0\le s\le1$, $|z_1|,|z_2|\le c\lambda^{-1}$, the phase variation of
		$\mu(z\cdot\eta+s\eta_1\eta_2)$ is also bounded by a small constant.  Therefore
		\begin{equation}\label{eq:hyperbolic-high-large}
			|T_{\mu,\Phi_{\mathrm{hyp}}} g_\mu(s,z)|
			\gtrsim \mu |G|^{1/2}
			\approx \lambda .
		\end{equation}
		The box
		\[
		Q_\lambda=\{0\le s\le1,\ |z_1|,|z_2|\le c\lambda^{-1}\}
		\]
		has measure $|Q_\lambda|\approx\lambda^{-2}$.  Combining \eqref{eq:hyperbolic-low-large} and \eqref{eq:hyperbolic-high-large},
		\[
		\|T_{\lambda,\Phi_{\mathrm{hyp}}} f_\lambda T_{\mu,\Phi_{\mathrm{hyp}}} g_\mu\|_{L^2}
		\ge
		\|T_{\lambda,\Phi_{\mathrm{hyp}}} f_\lambda T_{\mu,\Phi_{\mathrm{hyp}}} g_\mu\|_{L^2(Q_\lambda)}
		\gtrsim
		(\lambda^{1/2}\sqrt J)\lambda |Q_\lambda|^{1/2}
		\gtrsim \lambda^{1/2}\sqrt J.
		\]
		Since $J\approx\log\lambda$, this proves \eqref{eq:hyperbolic-log-lower}.  Replacing the characteristic functions above by nonnegative smooth cutoffs changes only the constants.
	\end{proof}
	
	\subsection{The multilinear lower bounds}\label{subsec:hyperbolic-klinear-lp}

	For $0<L\le1$ and $R\ge1$ write
	\[
	Q_{L,R}=\{(s,z):0\le s\le \gamma L,\ |z_1|,|z_2|\le \gamma R^{-1/2}\},
	\]
	where $\gamma>0$ is fixed sufficiently small.  We shall use the following elementary packets for the hyperbolic Carleson--Sj\"olin phase
	$z\cdot\xi+s\xi_1\xi_2$.
	
	\begin{lemma}[Local packets for the hyperbolic model]\label{lem:hyperbolic-local-packets}
		Let $\nu\ge1$.
		\begin{enumerate}[(i)]
			\item If $\nu L\ls 1$ and $\nu^2\ls R$, then there is $h$ with $\|h\|_2\approx1$ such that
			\[
			|T_{\nu,\Phi_{\mathrm{hyp}}} h(y)|\gtrsim \nu,
			\qquad y\in Q_{L,R}.
			\]
			\item If $LR\ls\nu$ and $R\ls\nu^2$, then there is $h$ with $\|h\|_2\approx1$ such that
			\[
			|T_{\nu,\Phi_{\mathrm{hyp}}} h(y)|\gtrsim R^{1/2},
			\qquad y\in Q_{L,R}.
			\]
			\item If $\nu L\gg1$, $\nu^2\ls R$, then there is $h$ with $\|h\|_2\approx1$ such that
			\[
			|T_{\nu,\Phi_{\mathrm{hyp}}} h(y)|
			\gtrsim
			\Big(\frac{\nu}{L}\Big)^{1/2}\sqrt{\log(\nu L)},
			\qquad y\in Q_{L,R}.
			\]
		\end{enumerate}
	\end{lemma}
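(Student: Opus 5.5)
All three parts follow the scheme of Proposition~\ref{prop:hyperbolic-log-sharp}. We take $h$ to be an $L^2$-normalized sum of indicator functions of rectangles in $\xi$-space. On each rectangle, the phase variation of $\nu(z\cdot\xi+s\xi_1\xi_2)$ over $y\in Q_{L,R}$ is at most a small absolute constant, so all contributions have essentially the same argument and cannot cancel. For a rectangle $R'$ with sides $a\times b$ and $\xi$ near a point $\xi^0$, the variation is controlled by three quantities:
\[
\nu\gamma R^{-1/2}a,\qquad \nu\gamma R^{-1/2}b,\qquad \nu\gamma L\,\sup_{R'}|\xi_1\xi_2|.
\]
For the third we center the rectangles on the coordinate axes, where $\xi_1\xi_2$ is small. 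If $h=|R'|^{-1/2}\mathbf 1_{R'}$ and the variation is small, then $|T_{\nu,\Phi_{\mathrm{hyp}}}h|\gtrsim \nu|R'|^{1/2}$ on $Q_{L,R}$. The amplitude $b\equiv1$ there, and $Q_{L,R}$ sits inside $0\le s\le1$, $|z|\le c_0$ once $\gamma$ is small.

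\emph{Part (i).} Take $R'=[0,c\nu^{-1}]^2$. Then $\nu R^{-1/2}\cdot\nu^{-1}=R^{-1/2}\le1$ and $\nu L\nu^{-2}\ls\nu^{-1}$, so the variation is small. This gives $|T h|\gtrsim\nu\cdot\nu^{-1}\cdot\nu=\nu$; the hypothesis $\nu^2\ls R$ is not even needed beyond $R\ge1$.

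\emph{Part (ii).} Take $R'=[0,cR^{1/2}\nu^{-1}]^2$. The linear terms give $\nu R^{-1/2}\cdot R^{1/2}\nu^{-1}=1$, small with $c$. The quadratic term gives $\nu L\cdot R\nu^{-2}=LR/\nu\ls1$. Moreover $R'\subset\{|\xi|\le c_0\}$ since $R\ls\nu^2$. Hence $|Th|\gtrsim\nu\cdot R^{1/2}\nu^{-1}=R^{1/2}$.

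\emph{Part (iii)} is the log-stacking construction of Proposition~\ref{prop:hyperbolic-log-sharp}, with $\lambda$ replaced by $\nu/L$ in the quadratic constraint. Set $\rho=(\nu L)^{-1/2}$ and $J=\lfloor\frac1{10}\log_2(\nu L)\rfloor\gtrsim\log(\nu L)$, which is large since $\nu L\gg1$. For $1\le j\le J$ let
\[
R_j=\{2^j\rho\nu^{-1/2}\cdot\nu^{0}\le\xi_1/(\ldots)\}
\]
in the following sense: $\xi_1\in[2^j\sigma,(1+c)2^j\sigma]$ and $\xi_2\in[0,c2^{-j}\tau]$, with $\sigma\tau=c/(\nu L)$ so that $\nu L|\xi_1\xi_2|\ls c$. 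The linear terms require the side lengths to be at most $\nu^{-1}R^{1/2}$: the $\xi_1$-side is $c2^j\sigma$ and the $\xi_2$-side is $c2^{-j}\tau$. The simplest consistent choice is $\sigma=\tau=(\nu L)^{-1/2}$. The sides are then at most $2^{J}(\nu L)^{-1/2}\le(\nu L)^{-0.4}$, and we need this to be at most $\nu^{-1}R^{1/2}$. This holds because $R\gtrsim\nu^2$ gives $\nu^{-1}R^{1/2}\gtrsim1$; this is exactly where the hypothesis $\nu^2\ls R$ enters. Also $\xi_1\le(\nu L)^{-0.4}\le c_0$. Each $|R_j|\approx(\nu L)^{-1}$, and the $R_j$ are disjoint.

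With $h=J^{-1/2}\sum_j|R_j|^{-1/2}\mathbf 1_{R_j}$, all $J$ contributions are nearly aligned in phase. We therefore get
\[
|Th|\gtrsim\nu J^{-1/2}\cdot J(\nu L)^{-1/2}=(\nu/L)^{1/2}\sqrt J,
\]
which is the claimed bound.

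The main obstacle is the bookkeeping in (iii): the rectangles must simultaneously keep $\nu L|\xi_1\xi_2|$ small, keep the linear-phase variation small via $R\gtrsim\nu^2$, and still admit $\approx\log(\nu L)$ dyadic scales inside $\{|\xi|\le c_0\}$. The $\frac1{10}$ safety margin in $J$ handles this, exactly as in the proof of Proposition~\ref{prop:hyperbolic-log-sharp}.
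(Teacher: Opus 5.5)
Your parts (ii) and (iii) are correct and essentially coincide with the paper's constructions. Part (ii) uses a square of side $R^{1/2}/\nu$. Part (iii) uses the dyadic stack of rectangles with sides $\approx 2^j(\nu L)^{-1/2}$ and $c2^{-j}(\nu L)^{-1/2}$, with $R\gtrsim\nu^2$ making $\nu|z\cdot\xi|\lesssim\gamma$ for $|\xi|\lesssim1$.

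\textbf{Part (i) does not prove the claim.} With $h=|R'|^{-1/2}\mathbf{1}_{R'}$, the aligned-phase bound gives $|T_{\nu,\Phi_{\mathrm{hyp}}}h|\gtrsim\nu|R'|^{1/2}$. For $R'=[0,c\nu^{-1}]^2$ this equals $\nu\cdot c\nu^{-1}=c$, not $\nu$; the product $\nu\cdot\nu^{-1}\cdot\nu$ contains a spurious factor $\nu$. Your square is exactly the part (ii) square with $R=1$, which you correctly evaluate there as $R^{1/2}=1$.

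Your remark that $\nu^2\lesssim R$ is not needed is the symptom. At fixed $s$ one has $\|T_{\nu,\Phi_{\mathrm{hyp}}}h(s,\cdot)\|_{L^2_z}\lesssim\|h\|_2$ uniformly in $\nu$ (Plancherel in $z$, or H\"ormander's bound with the amplitude). So $|T_{\nu,\Phi_{\mathrm{hyp}}}h|\gtrsim\nu$ on the $z$-slice of $Q_{L,R}$, whose area is $\approx\gamma^2R^{-1}$, forces $\nu^2R^{-1}\lesssim1$ for every normalized $h$. No construction can give (i) without that hypothesis.

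\textbf{The fix is the paper's choice.} Let $h$ be a normalized cutoff to a fixed square $[0,c]^2$, with $c$ independent of $\nu$. Then $\nu\int h\approx\nu$, and the phase check genuinely uses both hypotheses:
\begin{itemize}
\item $\nu|z\cdot\xi|\le2\nu\gamma R^{-1/2}c\lesssim\gamma c$, because $R\gtrsim\nu^2$;
\item $0\le\nu s\xi_1\xi_2\le\nu\gamma Lc^2\lesssim\gamma c^2$, because $\nu L\lesssim1$.
\end{itemize}

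\textbf{Minor points in (iii).}
\begin{itemize}
\item The displayed definition of $R_j$ is garbled.
\item The analogy with Proposition~\ref{prop:hyperbolic-log-sharp} is $\lambda\to\nu L$, not $\nu/L$.
\item $\sigma\tau=c/(\nu L)$ is inconsistent with $\sigma=\tau=(\nu L)^{-1/2}$. This is harmless, since the $\xi_2$-side already carries the factor $c$.
\item Across different $R_j$ you need the phase itself to be $O(\gamma)$ on the whole union, not merely small variation on each rectangle. Your bounds $|\xi|\lesssim(\nu L)^{-0.4}$ and $R\gtrsim\nu^2$ do give this.
\end{itemize}
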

	
	\begin{proof}
		The first two assertions are the standard box packets.  For (i), take $h$ to be a normalized smooth cutoff on a fixed small frequency square.  The assumptions make the phase variation bounded on $Q_{L,R}$, and the integral has size $\nu$.  For (ii), take $h$ supported in a square of side $R^{1/2}/\nu$.  The normalized integral then has size
		\[
		\nu\left(\frac{R}{\nu^2}\right)^{1/2}=R^{1/2},
		\]
		and the conditions $LR\lesssim\nu$ and $R\lesssim\nu^2$ make the $s\xi_1\xi_2$ and $z\cdot\xi$ phase variations bounded.
		
		For (iii), choose dyadic rectangles
		\[
		R_m=\Big\{\xi:
		2^m(\nu L)^{-1/2}\le \xi_1\le (1+\gamma)2^m(\nu L)^{-1/2},\quad
		0\le \xi_2\le \gamma2^{-m}(\nu L)^{-1/2}
		\Big\},
		\]
		with both coordinates  $\ls 1$.  There are $J\approx \log(\nu L)$ such rectangles, they are disjoint, and each has area $\approx(\nu L)^{-1}$.  Let
		\[
		h=J^{-1/2}\sum_m |R_m|^{-1/2}{\bf 1}_{R_m},
		\]
		with smooth nonnegative cutoffs in place of characteristic functions if desired.  On $Q_{L,R}$ we have
		\[
		\nu |s\xi_1\xi_2|\lesssim 1,
		\qquad
		\nu |z\cdot\xi|\lesssim 1,
		\]
		for $\xi\in R_m$.  Thus the real parts of the integrals are positive and
		\[
		|T_{\nu,\Phi_{\mathrm{hyp}}} h(y)|
		\gtrsim
		\nu J^{-1/2}\sum_m |R_m|^{1/2}
		\approx
		\Big(\frac{\nu}{L}\Big)^{1/2}J^{1/2}.
		\]
		This proves the lemma.
	\end{proof}
	
	\begin{proposition}\label{prop:hyperbolic-klinear-lp}
		Let $k\ge3$. Set
		\[
		m=\la_{k-4},\qquad a=\lambda_{k-3},\qquad b=\lambda_{k-2},\qquad c=\lambda_{k-1},\qquad e=\lambda_k .
		\]
		Assume $1\ll m\ll a\ll b\ll c\ll e$.
		\begin{enumerate}[(I)]
			\item Let $1<p\le4/3$ and 
			assume $c\gs b^2/a$.  Then there are $L^2$-normalized inputs such that
			\begin{equation}\label{eq:hyperbolic-klinear-low-p}
				\left\|\prod_{j=1}^k T_{\lambda_j,\Phi_{\mathrm{hyp}}}f_j\right\|_{L^p}
				\gtrsim
				\left(\prod_{j=1}^{k-4}\lambda_j\right)
				a^{\frac52-\frac2p}
				b^{1/2}
				c^{1-\frac1p}
				\sqrt{\log(b/a)} .
			\end{equation}
			In particular, if $a\le b^{1/2}$, the last factor is $\gtrsim\sqrt{\log b}$.
			
			\item 
			Assume  $e\gs c^2/b$.  Then there are $L^2$-normalized inputs such that
			\begin{equation}\label{eq:hyperbolic-klinear-p2}
				\left\|\prod_{j=1}^k T_{\lambda_j,\Phi_{\mathrm{hyp}}}f_j\right\|_{L^2}
				\gtrsim
				\left(\prod_{j=1}^{k-3}\lambda_j\right)
				b c^{1/2}
				\sqrt{\log(c/b)}.
			\end{equation}
			In particular, if $b\le c^{1/2}$, the last factor is $\gtrsim\sqrt{\log c}$.
			
			\item   Assume $c,e\gs b^2/m$. Then there are $L^2$-normalized inputs such that
			\begin{equation}\label{eq:hyperbolic-fourlinear-p1}
				\left\|\prod_{j=1}^k T_{\lambda_j,\Phi_{\mathrm{hyp}}}f_j\right\|_{L^1}
				\gtrsim
				\left(\prod_{j=1}^{k-4}\lambda_j\right)  a^{1/2}b^{1/2}
				\sqrt{\log (a/m)}\sqrt{\log (b/m)}.
			\end{equation}
			In particular, if $m\le a^{1/2}$,  the product of the last two factors is $\gtrsim\sqrt{\log a}\sqrt{\log b}$.
		\end{enumerate}
		
	\end{proposition}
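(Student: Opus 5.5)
Each of (I)--(III) will follow from Lemma~\ref{lem:hyperbolic-local-packets} by a single-box argument. Fix parameters $0<L\le1\le R$ and the box $Q_{L,R}$, with $|Q_{L,R}|\approx LR^{-1}$. For each factor $j$ we choose one of the packets (i)--(iii) of the lemma, so that $|T_{\lambda_j,\Phi_{\mathrm{hyp}}}f_j|\gtrsim A_j$ pointwise on the \emph{same} box $Q_{L,R}$. Since we bound the modulus of the product by the product of the moduli, no cancellation between factors can occur. Therefore
\[
\Big\|\prod_j T_{\lambda_j,\Phi_{\mathrm{hyp}}}f_j\Big\|_{L^p}\ge \Big\|\prod_j T_{\lambda_j,\Phi_{\mathrm{hyp}}}f_j\Big\|_{L^p(Q_{L,R})}\gtrsim \Big(\prod_j A_j\Big)(LR^{-1})^{1/p}.
\]
The whole task is then to choose $L,R$ and assign packet types so that the hypotheses of the lemma hold and the exponents match the targets. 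The rule is: low frequencies get type (i), which needs $\nu L\lesssim1$ and $\nu^2\lesssim R$. One or two middle frequencies get the logarithmic type (iii), which needs $\nu L\gg1$ and $\nu^2\lesssim R$. The top frequencies get type (ii), which needs $LR\lesssim\nu$ and $R\lesssim\nu^2$.

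\emph{Case (I).} Take $L=a^{-1}$ and $R=ac$. Assign type (i) to all $\lambda_j$ with $j\le k-3$ (these are all $\le a$), giving $A_j=\lambda_j$. Assign type (iii) to $b$, with $\nu L=b/a\gg1$, giving $(ab)^{1/2}\sqrt{\log(b/a)}$. Its condition $b^2\lesssim R=ac$ is exactly the hypothesis $c\gtrsim b^2/a$. Assign type (ii) to $c$ and $e$: here $LR=c\le c\le e$ and $R=ac\le c^2\le e^2$, so each gives $R^{1/2}=(ac)^{1/2}$. Multiplying, we get
\[
\prod_{j\le k-4}\lambda_j\cdot a\cdot(ab)^{1/2}\cdot ac\cdot (a^2c)^{-1/p}\sqrt{\log(b/a)}=\prod_{j\le k-4}\lambda_j\, a^{\frac52-\frac2p}b^{\frac12}c^{1-\frac1p}\sqrt{\log(b/a)},
\]
which is \eqref{eq:hyperbolic-klinear-low-p}.

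\emph{Case (II), $p=2$.} Take $L=b^{-1}$ and $R=be$. Assign type (i) to all $\lambda_j\le b$. Assign type (iii) to $c$, giving $(bc)^{1/2}\sqrt{\log(c/b)}$; its condition $c^2\lesssim be$ is the hypothesis $e\gtrsim c^2/b$. Assign type (ii) to $e$: here $LR=e$ and $R\le e^2$, so it gives $(be)^{1/2}$. With $|Q|^{1/2}=b^{-1}e^{-1/2}$, the product is $\prod_{j\le k-3}\lambda_j\, b\,c^{1/2}\sqrt{\log(c/b)}$, which is \eqref{eq:hyperbolic-klinear-p2}.

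\emph{Case (III), $p=1$.} Take $L=m^{-1}$ and $R=b^2$. Assign type (i) to all $\lambda_j\le m$. Assign type (iii) to both $a$ and $b$: each satisfies $\nu L\gg1$ and $\nu^2\le b^2=R$, and they give $(am)^{1/2}\sqrt{\log(a/m)}$ and $(bm)^{1/2}\sqrt{\log(b/m)}$. Assign type (ii) to $c$ and $e$: this needs $LR=b^2/m\lesssim c,e$, which is exactly the hypothesis, and $R\le c^2$. Each gives $R^{1/2}=b$. Then the product $\prod_{j\le k-4}\lambda_j\cdot m\,(ab)^{1/2}\,b^2\cdot m^{-1}b^{-2}$ together with the two logarithms gives \eqref{eq:hyperbolic-fourlinear-p1}.

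The ``in particular'' statements follow because $a\le b^{1/2}$ (respectively $b\le c^{1/2}$, $m\le a^{1/2}$) gives $\log(b/a)\ge\frac12\log b$, and similarly in the other cases. For small $k$ the convention $\lambda_j=2$ only affects constants.

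The only delicate point is bookkeeping. We must check that every packet's constraints hold simultaneously on one common box, in particular the type (iii) condition $\nu^2\lesssim R$ and the type (ii) condition $LR\lesssim\nu$. These two conditions are exactly where the separation hypotheses $c\gtrsim b^2/a$, $e\gtrsim c^2/b$ and $c,e\gtrsim b^2/m$ enter, and the choices of $L$ and $R$ above are forced by making them hold with equality.
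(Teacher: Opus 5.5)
Your proposal is correct and follows essentially the same argument as the paper. It uses the same common boxes $Q_{a^{-1},ac}$, $Q_{b^{-1},be}$, $Q_{m^{-1},b^2}$ and the same assignment of packet types (i)/(ii)/(iii) from Lemma~\ref{lem:hyperbolic-local-packets}. Your explicit check of each packet's hypotheses on the shared box is a more detailed version of what the paper does.
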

	
	\begin{proof}
		For (I), use the common box $Q=Q_{a^{-1},ac}$, so $|Q|\approx a^{-2}c^{-1}$.  For $j\le k-4$ use Lemma~\ref{lem:hyperbolic-local-packets}(i), giving factors of size $\lambda_j$ on $Q$.  The $a$-factor is also of type (i), hence has size $a$ on $Q$. Since $c\gs b^2/a$,  Lemma~\ref{lem:hyperbolic-local-packets}(iii) gives the $b$-factor with size
		\[
		(ab)^{1/2}\sqrt{\log(b/a)}
		\]
		on $Q$.  Finally, Lemma~\ref{lem:hyperbolic-local-packets}(ii) gives the $c$- and $e$-factors with size $(ac)^{1/2}$ on $Q$.  Multiplying the pointwise lower bounds and then multiplying by $|Q|^{1/p}$ gives \eqref{eq:hyperbolic-klinear-low-p}.
		
		For (II), use $Q=Q_{b^{-1},be}$, so $|Q|\approx b^{-2}e^{-1}$.  The factors $j\le k-3$ and the $b$-factor are of type (i).  Since $e\gs c^2/b$, Lemma~\ref{lem:hyperbolic-local-packets}(iii) gives the $c$-factor with size
		\[
		(bc)^{1/2}\sqrt{\log(c/b)}
		\]
		on $Q$. Finally, Lemma~\ref{lem:hyperbolic-local-packets}(ii) gives the $e$-factor with  size $(be)^{1/2}$ on $Q$.  Since $|Q|^{1/2}\approx b^{-1}e^{-1/2}$, this gives \eqref{eq:hyperbolic-klinear-p2}.
		
		For (III), use $Q=Q_{m^{-1},b^2}$, so $|Q|\approx m^{-1}b^{-2}$. For $j\le k-4$ use Lemma~\ref{lem:hyperbolic-local-packets}(i), giving factors of size $\lambda_j$ on $Q$.  Lemma~\ref{lem:hyperbolic-local-packets}(iii) gives the $a$- and $b$-factors with sizes
		\[
		(am)^{1/2}\sqrt{\log (a/m)},
		\qquad
		(bm)^{1/2}\sqrt{\log (b/m)},
		\]
		respectively. Since $c,e\gs b^2/m$, Lemma~\ref{lem:hyperbolic-local-packets}(ii) gives the $c$- and $e$-factors with size $b$ on $Q$.  Since $|Q|\approx m^{-1}b^{-2}$, this gives  \eqref{eq:hyperbolic-fourlinear-p1}.
	\end{proof}
		
	\section{Common-phase bilinear estimate}\label{sec3}	
		
	In this section, we prove a simplified version of the log-free bilinear estimates in three dimensions by exploiting the ellipticity of the phase. We will reduce the general version to this simple version by factorization (see Section \ref{sec5}). Recall H\"ormander's $L^2$ oscillatory integral theorem. We include a short proof of this standard result for completeness.
	\begin{lemma}[H\"ormander \cite{hor1973}]\label{lem:l2}
		Let $K_1\Subset W\subset\R^3$, with $W$ open, and let $K_0\subset\R^3$ be bounded and measurable. Suppose $\psi(x,y)$ is real-valued and that $\psi$ and $c$ are bounded in $C^\infty$ in the $x$-variable on $W$, uniformly for $y\in K_0$.  Suppose also that $c(x,y)$ is supported in $K_1$ in the $x$-variable, uniformly for $y\in K_0$.
		Assume that for all $x\in W$ and all $y,y'\in K_0$,
		\begin{align}
			|\nabla_x\psi(x,y)-\nabla_x\psi(x,y')| &\ge c_0|y-y'|,\label{eq:l2-lower}\\
			\partial_x^\gamma\{\nabla_x\psi(x,y)-\nabla_x\psi(x,y')\} &=O_\gamma(|y-y'|)\qquad\text{for every multi-index }\gamma.\label{eq:l2-upper}
		\end{align}
		Then
		\[
		S_\lambda h(x)=\int e^{i\lambda\psi(x,y)}c(x,y)h(y)\,dy
		\]
		satisfies
		\[
		\|S_\lambda h\|_{L^2}\le C\lambda^{-3/2}\|h\|_{L^2(K_0)},\qquad \lambda\ge1 .
		\]
	\end{lemma}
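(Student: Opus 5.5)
We use the standard $TT^*$ argument. It reduces the $L^2$ bound to a pointwise estimate on a kernel, which we obtain by integrating by parts with a vector field adapted to the phase difference.

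\textbf{Step 1: reduction to the kernel.} Since $\|S_\lambda h\|_2^2=\langle S_\lambda^*S_\lambda h,h\rangle$, it suffices to show $\|S_\lambda^*S_\lambda\|_{L^2(K_0)\to L^2(K_0)}\lesssim\lambda^{-3}$. The operator $S_\lambda^*S_\lambda$ has kernel
\[
K(y',y)=\int e^{i\lambda(\psi(x,y)-\psi(x,y'))}\,\overline{c(x,y')}\,c(x,y)\,dx,\qquad y,y'\in K_0 .
\]
The aim is the bound
\[
|K(y',y)|\le C_N(1+\lambda|y-y'|)^{-N}
\]
for some $N>3$. Granting this, $\sup_y\int_{K_0}|K(y',y)|\,dy'\lesssim\lambda^{-3}$, and by symmetry the same holds with the roles of $y,y'$ swapped. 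Schur's test then gives the operator bound $\lambda^{-3}$, and hence $\|S_\lambda h\|_2\lesssim\lambda^{-3/2}\|h\|_2$.

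\textbf{Step 2: the kernel bound.} Write $\Theta(x)=\psi(x,y)-\psi(x,y')$ and $\delta=|y-y'|$. By \eqref{eq:l2-lower} we have $|\nabla_x\Theta|\ge c_0\delta$ on $W$. By \eqref{eq:l2-upper} we have $|\partial_x^\gamma\nabla_x\Theta|\le C_\gamma\delta$ for every multi-index $\gamma$.

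Consider the operator $L=\dfrac{\nabla_x\Theta\cdot\nabla_x}{i\lambda|\nabla_x\Theta|^2}$, which satisfies $Le^{i\lambda\Theta}=e^{i\lambda\Theta}$. The amplitude $\bar c(x,y')c(x,y)$ is compactly supported in $K_1\Subset W$, so integrating by parts with the transpose $L^t$ produces no boundary terms.

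Each application of $L^t$ gains a factor $(\lambda\delta)^{-1}$. The reason is that the coefficients $\nabla\Theta/|\nabla\Theta|^2$ have size $\delta^{-1}$. Each derivative of these coefficients costs at most a bounded factor, because every derivative of $\nabla\Theta$ is $O(\delta)$ while $|\nabla\Theta|\gtrsim\delta$; hence the normalized quantity $\nabla\Theta/\delta$ is bounded in $C^\infty$ with a lower bound on its modulus.

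Applying $L^t$ a total of $N$ times gives $|K|\lesssim(\lambda\delta)^{-N}$. Combining this with the trivial bound $|K|\lesssim|K_1|$ yields the claimed estimate, with constants uniform in $y,y'\in K_0$ by the uniform $C^\infty$ hypotheses.

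\textbf{Main obstacle.} The only delicate point is the bookkeeping in Step 2, namely checking that the iterated transposes remain uniformly bounded after normalizing by $\delta$. This is exactly what the upper bound \eqref{eq:l2-upper} is designed to guarantee: it lets us write $\nabla\Theta=\delta\,V(x)$ with $V$ bounded in $C^\infty$ and $|V|\ge c_0$. With this normalization, $L=(i\lambda\delta)^{-1}\,V\cdot\nabla/|V|^2$, and the standard non-stationary phase argument applies directly.
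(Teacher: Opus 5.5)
Your proposal is correct and follows essentially the same route as the paper: you form the kernel of $S_\lambda^*S_\lambda$, integrate by parts with the vector field $L$ adapted to the phase difference to obtain $|K(y,y')|\le C_N(1+\lambda|y-y'|)^{-N}$ with $N>3$, and apply Schur's test in the three $y$-variables. Your normalization $\nabla_x\Theta=\delta V$, with $V$ bounded in $C^\infty$ and $|V|\ge c_0$, is a clean way of spelling out the coefficient bookkeeping that the paper states in a single line.
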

	
	\begin{proof}
		Choose $\chi\in C_0^\infty(W)$ with $\chi=1$ on $K_1$. The kernel of $S_\lambda^*S_\lambda$ is
		\[
		K(y,y')=\int_W e^{i\lambda(\psi(x,y')-\psi(x,y))}\chi(x)\overline{c(x,y)}c(x,y')\,dx .
		\]
		The trivial estimate gives $|K(y,y')|\le C$. If $\lambda|y-y'|>1$, use the vector field
		\[
		L=\frac1{i\lambda}\frac{\nabla_x(\psi(x,y')-\psi(x,y))}{|\nabla_x(\psi(x,y')-\psi(x,y))|^2}\cdot\nabla_x .
		\]
		Then $L e^{i\lambda(\psi(x,y')-\psi(x,y))}=e^{i\lambda(\psi(x,y')-\psi(x,y))}$. Choose an integration-by-parts order $N>3$.  By \eqref{eq:l2-lower}--\eqref{eq:l2-upper} and the bounded $C^\infty$ control of $c$, each coefficient generated by $(L^*)^N$ is bounded by $C(\lambda|y-y'|)^{-N}$. Hence
		\[
		|K(y,y')|\le C_N(1+\lambda|y-y'|)^{-N}.
		\]
		Schur's test in the three-dimensional $y$ variable gives $\|S_\lambda^*S_\lambda\|_{2\to2}\ls\lambda^{-3}$, proving the claim.
	\end{proof}

	Let $\varphi\in C^\infty(\Omega_x\times\Omega_\xi;\R)$, where $x\in\R^3$ and $\xi\in\R^2$. Put
	\[
	P(x,\xi)=\nabla_x\varphi(x,\xi),\qquad A(x,\xi)=\partial_\xi P(x,\xi).
	\]
	Assume $\rank A=2$ and that, for each fixed $x$, the surface $\{P(x,\xi)\}$ has definite second fundamental form. Define
	\[
	T^b_\nu h(x)=\nu\int e^{i\nu\varphi(x,\xi)}b(x,\xi,\nu)h(\xi)\,d\xi
	\]
	with amplitudes supported in a sufficiently small compact patch and belonging to a bounded $C^\infty$ order-zero family.
	
	\begin{proposition}[Common-phase bilinear estimate]\label{prop:singlephase}
		After shrinking the patch, for $1\le \alpha\le\beta$,
		\[
		\|T^{b_1}_\alpha f\,T^{b_2}_\beta g\|_{L^2}
		\le C\alpha^{1/2}\|f\|_2\|g\|_2 .
		\]
	\end{proposition}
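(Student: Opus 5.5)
We follow the outline sketched in Subsection~\ref{subsec:logfree-endpoint}. First we linearize the product. Writing $\sigma=\alpha/\beta\in(0,1]$, we have
\[
T^{b_1}_\alpha f(x)\,T^{b_2}_\beta g(x)=\alpha\beta\iint e^{i\beta(\sigma\varphi(x,\xi)+\varphi(x,\eta))}b_1(x,\xi,\alpha)b_2(x,\eta,\beta)f(\xi)g(\eta)\,d\xi\,d\eta .
\]
This is an oscillatory integral operator with large parameter $\beta$ acting on $F(\xi,\eta)=f(\xi)g(\eta)$, which lives in four variables, while the output lives in three variables.

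Next we reduce to three variables by fixing the direction $\omega=(\xi-\eta)/|\xi-\eta|\in S^1$. We pass to the coordinates $(u,v,\omega)$ with $v=\eta+\sigma\xi$ and $u=\sigma|\xi-\eta|^2/(2(1+\sigma))$. Setting $\rho=|\xi-\eta|$, we have $d\xi\,d\eta\approx \rho\,d\rho\,d\omega\,dv$ and $du=\sigma\rho\,d\rho/(1+\sigma)$, so $d\xi\,d\eta\approx\sigma^{-1}du\,dv\,d\omega$. By Minkowski's inequality in $\omega$,
\[
\|T_\alpha f\,T_\beta g\|_2\le \alpha\beta\int_{S^1}\|S^\omega_\beta F_\omega\|_{L^2_x}\,d\omega,
\]
where $S^\omega_\beta$ has phase $\psi_\omega(x,u,v)=\sigma\varphi(x,\xi)+\varphi(x,\eta)$ and $F_\omega(u,v)=\sigma^{-1}F(\xi(u,v,\omega),\eta(u,v,\omega))$.

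The core step is to verify the hypotheses \eqref{eq:l2-lower}--\eqref{eq:l2-upper} of Lemma~\ref{lem:l2} for $\psi_\omega$, uniformly in $\omega$ and $\sigma$. We Taylor expand around $v$. Since $\xi=v+\rho\omega/(1+\sigma)$ and $\eta=v-\sigma\rho\omega/(1+\sigma)$, the first-order terms cancel exactly:
\[
\nabla_x\psi_\omega=(1+\sigma)P(x,v)+\frac{\sigma\rho^2}{2(1+\sigma)}\partial^2_{\xi\xi}P(x,v)[\omega,\omega]+O(\sigma\rho^3).
\]
The coefficient of the second term is precisely $u$, so
\[
\nabla_x\psi_\omega=(1+\sigma)P(x,v)+u\,\partial^2_{\omega\omega}P(x,v)+u\,O(\rho).
\]
Differentiating in $(u,v)$, the $v$-derivatives give $(1+\sigma)A(x,v)$, which spans the tangent plane. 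The $u$-derivative has normal component $\langle\partial^2_{\xi\xi}P[\omega,\omega],N\rangle$, and this is bounded below by definiteness of the second fundamental form; here ellipticity is used crucially, since a hyperbolic phase fails at asymptotic $\omega$, as in Section~\ref{sec2}. The main obstacle is to make the error $uO(\rho)$, and its $(u,v)$-derivatives, small uniformly. One has $\partial_u\rho=(1+\sigma)/(\sigma\rho)$, which is singular as $\rho\to0$ or $\sigma\to0$. We handle this by writing the Taylor remainder as $u\,R(x,v,\rho\omega)$ with $R$ smooth and $R(\cdot,0)=0$. Then
\[
\partial_u[uR]=R+u\,\partial_\rho R\,\partial_u\rho=R+O(\rho)\cdot\tfrac{\sigma\rho^2}{2(1+\sigma)}\cdot\tfrac{1+\sigma}{\sigma\rho}=O(\rho),
\]
which is small after shrinking the patch. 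This yields the uniform Jacobian lower bound, and hence the global injectivity bound \eqref{eq:l2-lower} on the small patch, by the mean value theorem. Bound \eqref{eq:l2-upper} follows similarly, since all $x$-derivatives preserve the same structure.

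Finally, Lemma~\ref{lem:l2} gives $\|S^\omega_\beta F_\omega\|_2\lesssim\beta^{-3/2}\|F_\omega\|_{L^2(du\,dv)}$. We compute
\[
\|F_\omega\|^2_{L^2(du\,dv)}=\sigma^{-2}\int|F|^2\,du\,dv\approx\sigma^{-1}\int|F|^2\,\rho\,d\rho\,dv .
\]
Applying Cauchy--Schwarz in $\omega$ and integrating gives $\int\|F_\omega\|\,d\omega\lesssim\sigma^{-1/2}\|f\|_2\|g\|_2$. The total bound is therefore
\[
\alpha\beta\cdot\beta^{-3/2}\sigma^{-1/2}=\alpha\beta^{-1/2}(\beta/\alpha)^{1/2}=\alpha^{1/2},
\]
as claimed. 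The $\nu$-dependence of the amplitudes is harmless, since the amplitudes form a bounded $C^\infty$ family.
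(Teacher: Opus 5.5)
Your proposal is correct and follows the paper's proof essentially step for step. It uses the same change of variables $(\xi,\eta)\mapsto(u,v,\omega)$, the same H\"ormander $L^2$ lemma in the three variables $(u,v)$ for each fixed $\omega$ to gain $\beta^{-3/2}$, and the same Minkowski plus Cauchy--Schwarz in $\omega$, where the $\sigma^{-1}$ Jacobian turns $\alpha\beta\cdot\beta^{-3/2}\sigma^{-1/2}$ into $\alpha^{1/2}$.

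The only real difference is how the nondegeneracy is verified. The paper uses the exact identities $\partial_u\{\sigma P(x,\xi)+P(x,\eta)\}=\int_0^1\partial^2_{\xi\xi}P(x,\eta+s\rho\omega)[\omega,\omega]\,ds$ and $\partial_{v_j}\{\sigma P(x,\xi)+P(x,\eta)\}=(\sigma A_j(x,\xi)+A_j(x,\eta))/(1+\sigma)$, which avoid your Taylor-remainder bookkeeping. It also notes that $\rho_t^2$ is a convex combination of $\rho^2$ and $(\rho')^2$, so segments in $(u,v)$ stay in the good frequency region. Two harmless slips: your inverse formulas correspond to $v=(\eta+\sigma\xi)/(1+\sigma)$ rather than $v=\eta+\sigma\xi$, and $\partial_\rho R$ is $O(1)$, not $O(\rho)$; neither affects the conclusion $\partial_u[uR]=O(\rho)$.
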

	
	\begin{proof}
		It is enough to prove the linearized estimate
		\[
		\|B_{\alpha,\beta}F\|_{L^2}\le C\alpha^{1/2}\|F\|_{L^2_{\xi,\eta}},
		\]
		where
		\[
		B_{\alpha,\beta}F(x)=\alpha\beta\iint e^{i\alpha\varphi(x,\xi)+i\beta\varphi(x,\eta)}b_1(x,\xi)b_2(x,\eta)F(\xi,\eta)\,d\xi d\eta .
		\]
		Set $\sigma=\alpha/\beta\in(0,1]$. At $\xi_0$, after shrinking the $x$ patch, the definiteness of the second fundamental form implies that
		\[
		M(x,\omega)=\bigl(\partial^2_{\xi\xi}P(x,\xi_0)[\omega,\omega],A_1(x,\xi_0),A_2(x,\xi_0)\bigr),\qquad \omega\in S^1,
		\]
		is invertible and there exists $c_0>0$, independent of $x$ and $\omega$, such that 
		\[ |M(x,\omega) z| \geq c_0 |z|, \qquad \forall z\in \R^3.\] 
		Choose a small number
		$\delta>0$ and concentric balls
		\[
		V_0=B(\xi_0, \delta) \Subset V_1=B(\xi_0, 4\delta)
		\]
		so that $A$ and $\partial^2_{\xi\xi}P$ on $V_1$ are close to their values at
		$\xi_0$.
		
		For $\xi\ne\eta$, set
		\[
		v=\eta+\sigma\xi,
		\qquad u=\frac{\sigma|\xi-\eta|^2}{2(1+\sigma)},
		\qquad \omega=\frac{\xi-\eta}{|\xi-\eta|}\in S^1 .
		\]
		The diagonal $\xi=\eta$ has four-dimensional measure zero and is discarded during the change of variables, and the final identity is recovered by density. The inverse map is
		\[
		\rho=\left(\frac{2(1+\sigma)u}{\sigma}\right)^{1/2},
		\qquad \xi=\frac{v+\rho\omega}{1+\sigma},
		\qquad \eta=\frac{v-\sigma\rho\omega}{1+\sigma},
		\]
		and the Jacobian is
		\[
		d\xi d\eta=\frac{1}{\sigma(1+\sigma)}\,du\,dv\,d\sigma_{S^1}(\omega).
		\]
		For fixed $\sigma,\omega$, define, for $u>0$,
		\[
		\Psi_{\sigma,\omega}(x,u,v)=\sigma\varphi(x,\xi(u,v))+\varphi(x,\eta(u,v)),\qquad R_{\sigma,\omega}=\nabla_x\Psi_{\sigma,\omega},
		\]
		and extend them to $u=0$ by their one-sided limits.  
		The function \(\Psi_{\sigma,\omega}\) need not be smooth in \(u\) at
\(u=0\).  This is harmless since Lemma~\ref{lem:l2} only requires smoothness
in the \(x\)-variable, together with the estimates for
\(R_{\sigma,\omega}\) as a function of \(y=(u,v)\).  These estimates will be
verified below.  

For each fixed \((u,v)\), the phase and the transformed
amplitude are bounded in \(C^\infty\) in \(x\), uniformly in \(\sigma\) and
\(\omega\). Let \(K_{\sigma,\omega}\) be the set of \(y=(u,v)\) for which the frequencies
\(\xi(y,\omega)\) and \(\eta(y,\omega)\) from the inverse formulas belong to
\(V_0\).  Extend the transformed amplitude by zero to a fixed bounded set
containing all \(K_{\sigma,\omega}\). If \(y=(u,v)\) and \(y'=(u',v')\) lie in \(K_{\sigma,\omega}\), then the
segment \(y_t=(1-t)y+ty'\) remains in a region where the frequencies lie in
\(V_1\).  Indeed, writing
\[
\rho_t=\left(\frac{2(1+\sigma)u_t}{\sigma}\right)^{1/2},
\]
one has
\[
\frac{v_t}{1+\sigma}\in V_0,
\qquad
\rho_t^2=(1-t)\rho^2+t(\rho')^2\le (2\delta)^2,
\]
and therefore
\[
\xi(y_t,\omega)=\frac{v_t+\rho_t\omega}{1+\sigma},
\qquad
\eta(y_t,\omega)=\frac{v_t-\sigma\rho_t\omega}{1+\sigma}
\]
belong to \(V_1\).

For every smooth vector-valued \(G(\xi)\), the following identities follow
from the inverse formulas and the chain rule.  Since \(\rho\) is independent
of \(v\),
\begin{align}
\partial_{v_j}\{\sigma G(\xi)+G(\eta)\}
&=
\frac{\sigma\partial_{\xi_j}G(\xi)+\partial_{\xi_j}G(\eta)}{1+\sigma}.
\label{eq:vder}
\end{align}
For the \(u\)-derivative, using
\[
\partial_u\rho=\frac{1+\sigma}{\sigma\rho},
\qquad
\xi-\eta=\rho\omega,
\]
we have, for \(u>0\),
\begin{align}
\partial_u\{\sigma G(\xi)+G(\eta)\}
&=
\frac{\{DG(\xi)-DG(\eta)\}[\omega]}{\rho} \notag\\
&=
\int_0^1
\partial^2_{\xi\xi}G(\eta+s\rho\omega)[\omega,\omega]\,ds .
\label{eq:uder}
\end{align}
Both derivative formulas have continuous one-sided extensions to \(u=0\). Writing
\[
\xi_t=\xi(y_t,\omega),\qquad \eta_t=\eta(y_t,\omega),
\]
we have
\[
D_{(u,v)}R_{\sigma,\omega}(x,y_t)
=
\bigl(
\partial_u R_{\sigma,\omega}(x,y_t),
\partial_{v_1}R_{\sigma,\omega}(x,y_t),
\partial_{v_2}R_{\sigma,\omega}(x,y_t)
\bigr).
\]
Applying \eqref{eq:vder}--\eqref{eq:uder} to \(G=P(x,\cdot)\), we obtain
\[
\partial_u R_{\sigma,\omega}(x,y_t)
=
\int_0^1
\partial^2_{\xi\xi}P
\bigl(x,(1-r)\eta_t+r\xi_t\bigr)[\omega,\omega]\,dr
=
\partial^2_{\xi\xi}P(x,\xi_0)[\omega,\omega]
+
e_0(x,y_t),
\]
and, for \(j=1,2\),
\[
\partial_{v_j}R_{\sigma,\omega}(x,y_t)
=
\frac{\sigma A_j(x,\xi_t)+A_j(x,\eta_t)}{1+\sigma}
=
A_j(x,\xi_0)+e_j(x,y_t).
\]
Since \(\xi_t,\eta_t\in V_1\), and since
\((1-r)\eta_t+r\xi_t\in V_1\) for \(0\le r\le1\), the choice of \(V_1\)
gives
\[
|e_0(x,y_t)|+|e_1(x,y_t)|+|e_2(x,y_t)|\ll c_0.
\]
Thus, we get
\[
D_{(u,v)}R_{\sigma,\omega}(x,y_t)
=
M(x,\omega)+E(x,y_t),
\]
where 
\(
E(x,y_t)=\bigl(e_0,e_1,e_2\bigr)
\).
Hence, by the mean value theorem along the
segment \(y_t\),
\[
R_{\sigma,\omega}(x,y')-R_{\sigma,\omega}(x,y)
=
\int_0^1D_{(u,v)}R_{\sigma,\omega}(x,y_t)(y'-y)\,dt,
\]
and therefore
\[
|R_{\sigma,\omega}(x,y')-R_{\sigma,\omega}(x,y)|
\gtrsim c_0|y'-y|,
\]
which gives the required estimate for Lemma~\ref{lem:l2}. 
		
Now using the Jacobian, $B_{\alpha,\beta}F$ becomes
		\[
		B_{\alpha,\beta}F(x)=\frac{\beta^2}{1+\sigma}\int_{S^1}S_{\beta,\sigma,\omega}H_{\sigma,\omega}(x)\,d\sigma_{S^1}(\omega),
		\]
		where
		\[
		S_{\beta,\sigma,\omega}h(x)=\int e^{i\beta\Psi_{\sigma,\omega}(x,u,v)}c_{\sigma,\omega}(x,u,v)h(u,v)\,du\,dv
		\]
		and $H_{\sigma,\omega}(u,v)=F(\xi(u,v),\eta(u,v))$. Lemma~\ref{lem:l2} gives
		\[
		\|S_{\beta,\sigma,\omega}h\|_2\le C\beta^{-3/2}\|h\|_2 .
		\]
		Minkowski's inequality, Cauchy's inequality in $\omega$, and the Jacobian yield
		\begin{align*}
			\|B_{\alpha,\beta}F\|_2
			&\le C\frac{\beta^{1/2}}{1+\sigma}
			\left(\int_{S^1}\int |H_{\sigma,\omega}(u,v)|^2\,du\,dv\,d\sigma_{S^1}(\omega)\right)^{1/2} \\
			&= C\frac{\beta^{1/2}}{1+\sigma}\bigl(\sigma(1+\sigma)\bigr)^{1/2}\|F\|_2
			= C\alpha^{1/2}\|F\|_2 .
		\end{align*}
		Taking $F=f\otimes g$ and using density proves the proposition.
	\end{proof}
	
	\section{Scaled geometry of the shell}\label{sec4}
	In this section, we obtain some useful estimates for the Riemannian distance function, which is exactly the phase in Sogge's spectral cluster parametrix for Laplace eigenfunctions on smooth closed manifolds (see Lemma \ref{soggelem}). These estimates will be used to  prove the factorization in Section \ref{sec5} by stationary phase.
	
	Let $(M,g)$ be a smooth closed three-dimensional Riemannian manifold and fix a point $o\in M$. Consider a strongly geodesically convex normal coordinate ball centered at $o$. In these coordinates, $o=0$ and
	\( \exp_o \) is a diffeomorphism on the domain $B(0,\eps_*)\subset T_oM$. 
	
	Fix $C_0\ge 2$. After choosing $\eps>0$ sufficiently small, set
	\[
	I=[\eps/C_0,C_0\eps],\qquad R=4C_0\eps .
	\]
	Thus $R-r\ge 3C_0\eps$ for every $r\in I$. Let
	\[
	\Omega:V\subset \R^2\longrightarrow S^2\subset \R^3
	\]
	be a smooth angular chart near a unit vector $\omega_0$. We fix one compact angular support $K_2$ and one convex open $V_0$, called a cap region, with
	\[
	K_2\Subset V_0\Subset V .
	\]
	All amplitudes are supported in $K_2$ in the angular variable.  The fixed inclusion $K_2\Subset V_0$ provides the angular margin used below for the small constants and for the inverse critical map.
	
	For $s\in I\cup\{R\}$, define
\begin{equation}\label{Phidef}
		Y_s(\theta)=\exp_o(s\Omega(\theta)),\qquad
	\Phi_s(x,\theta)=-d_g(x,Y_s(\theta)).
\end{equation}
	For $R>r$, we refer to the annular region between the geodesic spheres $Y_r$ and $Y_R$ as the \emph{shell region}. The $x$-support lies in
	\[
	U=\{x: |x|<\tau\eps\},
	\]
	where $\tau>0$ is chosen small after $C_0$ and the cap are fixed. We fix a compact amplitude support and a slightly larger open set
	\[
	K_1\Subset U_0\Subset U,
	\]
	and take all amplitudes supported in $K_1$ in the $x$ variable.  All geometric constructions below are made uniformly on the compact set $\overline{U_0}$.
	
	For $r\in I$ and $\lambda\ge 1$, let
	\begin{equation}\label{defop}
		T^a_{\lambda,r}h(x)=\lambda\int_{K_2} e^{i\lambda \Phi_r(x,\xi)}a_r(x,\xi,\lambda)h(\xi)\,d\xi .
	\end{equation}
	The amplitudes $a_r(x,\xi,\lambda)$ form a bounded $C^\infty$ order-zero family, uniformly in $r\in I$ and $\lambda\ge1$, and are supported in $K_1\times K_2$.  We regard $a_r$ as a smooth zero-extension to $U\times V_0$.  Thus later expressions such as $a_r(x,\Xi_r(x,\theta),\lambda)$ are well-defined.  
	\subsection{Scaled normal-coordinate estimates}
	Consider the normal coordinates centered at $o$ as before, and let $z=\eps Z\in B(0,\eps_*) \subset T_oM$, where $B(0,\eps_*)$ is the strong geodesically convex ball discussed above. For
	$0<\eps\ll1$, set
	\[
	g_{\eps,ij}(Z):=g_{ij}(\eps Z),
	\qquad \text{ where } Z \in B(0,\eps_*).
	\]
	If $z(t)=\eps Z(t)$, then length and distance scale as
	\[
	\lth_g(z)=\eps \lth_{g_\eps}(Z),
	\qquad
	d_g(\eps P,\eps Q)=\eps d_{g_\eps}(P,Q).
	\]
	We denote by $|Z|$ the length of $Z$ in the Euclidean metric. The normal-coordinate expansion gives, on every fixed ball,
	\[
	g_\eps(Z)=I_3+\eps^2h_\eps(Z),
	\]
	where the family $h_\eps$ is bounded in $C^\infty$.
	Here $I_3$ denotes the $3\times3$ Euclidean identity matrix in these coordinates.
	Consequently, on every fixed bounded set, the Euler--Lagrange equation for
	$g_\eps$-geodesics is a smooth $O(\eps^2)$ perturbation of the Euclidean
	geodesic equation:
	\[
	\ddot Z=\eps^2\mathcal N_\eps(Z,\dot Z).
	\]
	Here $\mathcal N_\eps$ is bounded in $C^\infty$ on bounded subsets of $(Z,\dot Z)$.  
	All constants below are uniform for sufficiently small $\eps$. They may depend on $\eps_*$ and on finitely many derivatives of $g$. We first show some fundamental results in Riemannian geometry.
	
	\begin{lemma}[Distance in scaled normal coordinates]\label{lem:scaled-distance}
		Fix $A,d_0>0$ and suppose
		\[
		P,Q\in B(0,A),
		\qquad |P-Q|\ge d_0 .
		\]
		For all sufficiently small $\eps$,
		\[
		d_g(\eps P,\eps Q)
		=\eps |P-Q|+\eps^3E_\eps(P,Q),
		\]
		where $E_\eps$ is bounded in $C^\infty$ on the separated set.  Equivalently,
		\[
		d_{g_\eps}(P,Q)=|P-Q|+\eps^2E_\eps(P,Q).
		\]
		The same estimate remains true after composing $P,Q$ with bounded $C^\infty$ parametrizations whose images stay in the separated set $|P-Q|\ge d_0$.
	\end{lemma}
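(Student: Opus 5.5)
The plan is to work entirely in the rescaled metric $g_\eps$ on the fixed ball $B(0,A')$ with $A'$ slightly larger than $A$. Since $g_\eps=I_3+\eps^2h_\eps$ with $h_\eps$ bounded in $C^\infty$, the second form of the claim, $d_{g_\eps}(P,Q)=|P-Q|+\eps^2E_\eps(P,Q)$, is the natural target. The first form then follows from the scaling identity $d_g(\eps P,\eps Q)=\eps d_{g_\eps}(P,Q)$. We take $\eps$ small enough that $\eps A'<\eps_*$, so that all points stay inside the strongly convex normal ball.

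\emph{Step 1: uniqueness and smooth dependence of the minimizing geodesic.} Because the ball is strongly geodesically convex, $d_{g_\eps}(P,Q)$ is realized by the unique $g_\eps$-geodesic $Z(t)$, $t\in[0,1]$, joining $P$ to $Q$. This geodesic lies in the scaled convex ball, on which $d_{g_\eps}$ is smooth off the diagonal. To get uniform control we solve the two-point boundary value problem
\[
\ddot Z=\eps^2\mathcal N_\eps(Z,\dot Z),\qquad Z(0)=P,\ Z(1)=Q,
\]
perturbatively around the Euclidean segment $Z_0(t)=P+t(Q-P)$. Setting $Z=Z_0+W$ with $W(0)=W(1)=0$, this becomes the fixed-point equation $W=\eps^2\mathcal G[\mathcal N_\eps(Z_0+W,\dot Z_0+\dot W)]$, where $\mathcal G$ is the Dirichlet Green operator of $d^2/dt^2$ on $[0,1]$, a bounded map $C^0\to C^2$. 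For small $\eps$ this is a contraction on a ball in $C^1$, which gives a unique solution with $\|W\|_{C^2}\lesssim\eps^2$. The implicit function theorem in Banach spaces, with parameters $(P,Q)$, shows that $W$ depends smoothly on $(P,Q)$ with all derivatives $O(\eps^2)$. This is uniform because $\mathcal N_\eps$ is bounded in $C^\infty$ on bounded sets of $(Z,\dot Z)$ and $|\dot Z_0|=|Q-P|\le 2A$. This solution coincides with the minimizing geodesic: it stays in the convex ball and is the unique geodesic there.

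\emph{Step 2: expand the length.} Write
\[
d_{g_\eps}(P,Q)=\int_0^1\bigl(|\dot Z|^2+\eps^2h_\eps(Z)[\dot Z,\dot Z]\bigr)^{1/2}dt .
\]
Since $|\dot Z|\ge |Q-P|-O(\eps^2)\ge d_0/2$ on the separated set, the square root is a smooth function of its argument there. A Taylor expansion therefore gives $\int_0^1|\dot Z_0+\dot W|\,dt+\eps^2(\text{smooth})$. Because $W$ vanishes at the endpoints, $\int_0^1\dot W\,dt=0$, so
\[
\int_0^1|\dot Z_0+\dot W|\,dt=|Q-P|+O(|\dot W|^2/|Q-P|),
\]
and the remainder is $O(\eps^4)$, smoothly in $(P,Q)$. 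In fact any $O(\eps^2)$ smooth term suffices. Collecting the terms gives $\eps^2E_\eps(P,Q)$ with $E_\eps$ bounded in $C^\infty$.

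The separation $|P-Q|\ge d_0$ is exactly what keeps $|\dot Z|$, and hence $d$ itself, away from zero, which is where the square-root singularity of the distance lives. The statement for bounded $C^\infty$ parametrizations then follows from the chain rule.

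The main obstacle is the uniform $C^\infty$ control of derivatives in $(P,Q)$ in Step 1. I would handle it by differentiating the fixed-point equation. Each derivative $\partial^\gamma_{P,Q}W$ satisfies a linear equation of the form $(I-\eps^2\mathcal L)\partial^\gamma W=\eps^2(\text{lower-order terms})$, and $I-\eps^2\mathcal L$ is uniformly invertible for small $\eps$. Induction on $|\gamma|$ then gives $\|\partial^\gamma W\|_{C^1}\lesssim_\gamma\eps^2$.
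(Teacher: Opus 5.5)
Your proof is correct, but it takes a different route from the paper's. The paper never constructs the geodesic. It interpolates the metrics $g_{\eps,u}=I_3+uh_\eps$, $0\le u\le\eps^2$, and takes as standard that the minimizing $g_{\eps,u}$-geodesic and its length depend smoothly on $(P,Q,u)$ with $\eps$-uniform bounds. It justifies this by strong convexity and closeness to the Euclidean metric. It then writes $d_{g_\eps}(P,Q)-|P-Q|=\int_0^{\eps^2}\partial_u d_{g_{\eps,u}}(P,Q)\,du$, so that $E_\eps=\int_0^1\partial_u d_{g_{\eps,s\eps^2}}\,ds$.

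You instead perturb the geodesic itself. You solve the Dirichlet problem for $\ddot Z=\eps^2\mathcal N_\eps(Z,\dot Z)$ around the straight segment by a contraction with Lipschitz constant $O(\eps^2)$. The implicit function theorem plus induction gives $\partial^\gamma_{P,Q}W=O(\eps^2)$. You then expand the length integrand directly, using the separation only to keep the square root smooth.

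The paper's version is shorter and gives a clean first-variation formula for $E_\eps$. However, its central step is not proved: uniform $C^\infty$ dependence of the minimizing geodesic on the endpoints and on $u$, for auxiliary metrics that are not pullbacks of $g$. Your fixed-point argument supplies exactly this uniformity, working only with $g_\eps$ itself. As a by-product it gives $Z=P+t(Q-P)+O(\eps^2)$ and $\dot Z(1)=Q-P+O(\eps^2)$ in $C^\infty$. That is the content of Lemma~\ref{lem:flow}, obtained without the inverse-function-theorem step used there.

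One point should be stated more carefully: the identification of your boundary-value solution with the minimizing geodesic. The cleanest reason is that $\eps Z$ is a $g$-geodesic of $g$-length $O(\eps)$, below the injectivity radius. Injectivity of $\exp_{\eps P}$ on that ball then forces its initial velocity to equal that of the minimizing geodesic from $\eps P$ to $\eps Q$. Your phrase ``the unique geodesic in the convex ball'' implicitly relies on this shortness argument.
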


	\begin{proof}
		For fixed \(\eps\), interpolate between the Euclidean metric and \(g_\eps\) by
		\[
		g_{\eps,u}:=I_3+u h_\eps,\qquad 0\le u\le \eps^2 .
		\]
		These metrics are bounded in \(C^\infty\), uniformly in \(\eps\) and \(u\), and
		are uniformly close to the Euclidean metric on every fixed ball.  Hence, on the
		separated compact set
		\[
		|P|,|Q|\le A,\qquad |P-Q|\ge d_0,
		\]
		the minimizing geodesic, and therefore its length, depends smoothly on
		\((P,Q,u)\), with bounds uniform in \(\eps\), since we assume $B(0,A)$ is strongly geodesically convex.
		
		Because \(g_{\eps,0}=I_3\),
		\[
		d_{g_{\eps,0}}(P,Q)=|P-Q|.
		\]
		By the fundamental theorem of calculus in \(u\),
		\[
		d_{g_\eps}(P,Q)-|P-Q|
		=
		\int_0^{\eps^2}
		\partial_u d_{g_{\eps,u}}(P,Q)\,du
		=
		\eps^2E_\eps(P,Q),
		\]
		where
		\[
		E_\eps(P,Q)
		:=
		\int_0^1
		\partial_u d_{g_{\eps,s\eps^2}}(P,Q)\,ds
		\]
		is bounded in \(C^\infty\) on the separated set.  Finally,
		\[
		d_g(\eps P,\eps Q)=\eps d_{g_\eps}(P,Q),
		\]
		which gives
		\[
		d_g(\eps P,\eps Q)=\eps |P-Q|+\eps^3E_\eps(P,Q).
		\]
		The parametrized version follows from the chain rule.
	\end{proof}

	\begin{lemma}\label{lem:flow}
		Fix $A,T>0$.  Let $Z_\eps(t;P,v)$ be the $g_\eps$-geodesic satisfying
		\[
		Z_\eps(0;P,v)=P,
		\qquad
		\dot Z_\eps(0;P,v)=v,
		\qquad
		|P|+|v|\le A .
		\]
		Then, for $|t|\le T$,
		\[
		Z_\eps(t;P,v)=P+tv+\eps^2R_\eps(t;P,v),
		\qquad
		\dot Z_\eps(t;P,v)=v+\eps^2\dot R_\eps(t;P,v),
		\]
		where $R_\eps$ is bounded in $C^\infty$ on the relevant compact set.
		Moreover, if $X,Y$ remain in a fixed bounded set and $|Y-X|\ge d_0>0$, then
		the terminal $g_\eps$-unit tangent vector at $Y$ of the minimizing
		$g_\eps$-geodesic from $X$ to $Y$, pointing from $X$ to $Y$, is
		\[
		\frac{Y-X}{|Y-X|}+O(\eps^2),
		\]
		with bounded $C^\infty$ dependence on the parameters.
	\end{lemma}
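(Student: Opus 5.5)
The plan is to treat the geodesic equation $\ddot Z=\eps^2\mathcal N_\eps(Z,\dot Z)$ as a smooth $O(\eps^2)$ perturbation of $\ddot Z=0$, whose solution is $P+tv$. First I would set $R_\eps:=\eps^{-2}(Z_\eps-P-tv)$. It satisfies $\ddot R_\eps=\mathcal N_\eps(Z_\eps,\dot Z_\eps)$ with $R_\eps(0)=\dot R_\eps(0)=0$. So I need an a priori bound keeping $(Z_\eps,\dot Z_\eps)$ in a fixed bounded set for $|t|\le T$. This comes from a continuity/Gronwall argument: as long as $|Z_\eps|+|\dot Z_\eps|\le 2A+2AT+1$, we have $|\ddot Z_\eps|\le C\eps^2$, so $Z_\eps$ and $\dot Z_\eps$ stay within $O(\eps^2T^2)$ and $O(\eps^2T)$ of $P+tv$ and $v$. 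For $\eps$ small the bound therefore never saturates. Integrating twice then gives $|R_\eps|+|\dot R_\eps|\le C$.

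Next I would get $C^\infty$ bounds in $(t,P,v)$. Differentiating the ODE in the parameters gives the linearized (variational) equations. For the $k$-th derivatives these have the form $\ddot W=\eps^2(\text{bounded})W+\eps^2(\text{polynomial in lower derivatives})$, because $\mathcal N_\eps$ is bounded in $C^\infty$ on bounded sets. Induction on $k$ with Gronwall gives uniform bounds on all parameter derivatives of $Z_\eps-P-tv$, each carrying the factor $\eps^2$. Time derivatives come from the equation itself. This proves the first two expansions.

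For the terminal tangent vector, the minimizing geodesic from $X$ to $Y$ is $Z_\eps(t;X,v)$ with $t\in[0,1]$ and $Z_\eps(1;X,v)=Y$. By the first part, the map $v\mapsto Z_\eps(1;X,v)=X+v+\eps^2R_\eps(1;X,v)$ is an $O(\eps^2)$ $C^\infty$-perturbation of $v\mapsto X+v$ on a bounded set. The implicit function theorem, or equivalently a contraction argument on $v=Y-X-\eps^2R_\eps(1;X,v)$, then gives a unique solution $v=Y-X+O(\eps^2)$, depending smoothly on $(X,Y)$ with uniform bounds. Strong convexity of the ball ensures this geodesic is the minimizing one. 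The velocity at $Y$ is $\dot Z_\eps(1)=v+O(\eps^2)=Y-X+O(\eps^2)$. The $g_\eps$-length of this vector is $|Y-X|+O(\eps^2)$, since $g_\eps=I_3+\eps^2h_\eps$. Because $|Y-X|\ge d_0$, normalizing is a smooth operation with uniform bounds, which yields $\frac{Y-X}{|Y-X|}+O(\eps^2)$ in $C^\infty$.

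I expect the main obstacle to be the a priori bound that keeps trajectories in the region where $\mathcal N_\eps$ is controlled. The inductive derivative bookkeeping is the other delicate point. Both are routine but must be done uniformly in $\eps$.
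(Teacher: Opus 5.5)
Your proposal is correct and follows essentially the same route as the paper. You treat $\ddot Z=\eps^2\mathcal N_\eps(Z,\dot Z)$ as a perturbation of $\ddot Z=0$ to get the expansion, then invert the time-one endpoint map $v\mapsto Z_\eps(1;X,v)=X+v+O(\eps^2)$ and normalize using $g_\eps=I_3+O(\eps^2)$ and $|Y-X|\ge d_0$, supplying the bootstrap and variational-equation details the paper compresses into ``standard ODE estimates'' and deriving the endpoint-map expansion directly from the first part of the lemma (the paper's text attributes it to Lemma~\ref{lem:scaled-distance}).
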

	
	\begin{proof}
		The rescaled geodesic equation is
		\[
		\ddot Z=\eps^2\mathcal N_\eps(Z,\dot Z),
		\]
		where $\mathcal N_\eps$ is bounded in $C^\infty$ on bounded sets.  Its
		integral form gives
		\[
		Z_\eps(t;P,v)=P+tv+
		\eps^2\int_0^t(t-s)\mathcal N_\eps
		(Z_\eps(s;P,v),\dot Z_\eps(s;P,v))\,ds .
		\]
		Standard ODE estimates on the fixed interval $|t|\le T$ give
		\[
		Z_\eps(t;P,v)=P+tv+\eps^2R_\eps(t;P,v),
		\qquad
		\dot Z_\eps(t;P,v)=v+\eps^2\dot R_\eps(t;P,v),
		\]
		with $R_\eps$ bounded in $C^\infty$ for $|P|+|v|\le A$.
		
		For the terminal tangent estimate, consider the time-one endpoint map
		\[
		\mathcal E_\eps(X,v):=Z_\eps(1;X,v).
		\]
		Then Lemma~\ref{lem:scaled-distance} gives
		\[
		\mathcal E_\eps(X,v)=X+v+O(\eps^2),
		\qquad
		D_v\mathcal E_\eps=I_3+O(\eps^2).
		\]
		By the inverse function theorem, up to reparametrization, the initial velocity of the geodesic from $X$ to $Y$ is
		\[
		v_\eps(X,Y)=Y-X+O(\eps^2).
		\]
		Therefore, we have the terminal velocity
		\[
		\dot Z_\eps(1;X,v_\eps(X,Y))=Y-X+O(\eps^2).
		\]
		Since $g_\eps=I_3+O(\eps^2)$ and $|Y-X|\ge d_0$, normalizing this vector with
		respect to $g_\eps$ changes the Euclidean normalization only by $O(\eps^2)$.
		This proves the stated terminal tangent estimate.
	\end{proof}
	
	\subsection{Shell phase and propagated phase}
	As before, the angular variables lie in the
	single chart
	\[
	K_2\Subset V_0\Subset V,
	\]
	and all geometric estimates below are made on the compact set
	$\overline{U_0}$, while amplitudes are supported in $K_1$.  We shall shrink
	$\tau$, $\eps$, and the convex cap $V_0$ only finitely many times,
	always preserving these fixed compact inclusions.
	
	It is convenient in this section to use scaled variables.  Write
	\[
	s=\eps\rho,
	\qquad
	\rho\in[1/C_0,C_0]\cup\{\rho_R\},
	\qquad
	\rho_R:=R/\eps=4C_0,
	\qquad
	x=\eps X .
	\]
	For $r=\eps\rho\in I$, define the shell phase and the propagated phase by
	\[
	S_r(\theta,\xi)=d_g(Y_R(\theta),Y_r(\xi)),
	\qquad
	\Psi_r(x,\theta,\xi)=\Phi_R(x,\theta)+S_r(\theta,\xi).
	\]
	Here $\Phi$ is defined in \eqref{Phidef}.
	The only distance input used in this section is the scaled expansion from
	Lemma~\ref{lem:scaled-distance}: on every separated compact set,
	\[
	d_g(\eps P,\eps Q)=\eps |P-Q|+\eps^3E_\eps(P,Q),
	\]
	where $E_\eps$ is bounded in $C^\infty$ on the relevant compact set.

	\begin{lemma}\label{lem:outer-elliptic}
		After shrinking $\tau$, $\eps$, and the convex angular cap $V_0\Subset V$, the following hold uniformly for
		\[
		s\in I\cup\{R\},
		\qquad x=\eps X\in U,
		\qquad \theta,\eta\in V_0.
		\]
		\begin{enumerate}[(i)]
			\item In the $C^2$ topology with respect to the variable $\theta$,
			\begin{equation}
				\nabla_x\Phi_s(\eps X,\theta)
				=\Omega(\theta)+O(\tau+\eps^2).
				\label{eq:grad-Phi-close-Omega}
			\end{equation}
			\item The map $\theta\mapsto\nabla_x\Phi_s(x,\theta)$ is bi-Lipschitz on
			$V_0$:
			\[
			|\nabla_x\Phi_s(x,\theta)-\nabla_x\Phi_s(x,\eta)|
			\ge c|\theta-\eta|.
			\]
			Moreover, the corresponding $x$-differentiated upper bounds hold:
			\begin{equation}
				|\partial_x^\gamma\{\nabla_x\Phi_s(x,\theta)-\nabla_x\Phi_s(x,\eta)\}|
				\le C_{\gamma,\eps}|\theta-\eta|
				\quad\text{for every multi-index }\gamma.
				\label{eq:x-derivative-difference}
			\end{equation}
			\item The outer phase $\Phi_R$ satisfies the elliptic Carleson--Sj\"olin
			condition on $U\times V_0$:
			\[
			\rank(\partial_\theta\nabla_x\Phi_R(x,\theta))=2,
			\]
			and the surface $\{\nabla_x\Phi_R(x,\theta):\theta\in V_0\}$ has
			definite second fundamental form.
		\end{enumerate}
	\end{lemma}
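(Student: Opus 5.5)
The plan is to work entirely in the scaled variables $x=\eps X$, $s=\eps\rho$, and to reduce every statement to the Euclidean model $\Phi^{0}(X,\theta)=-|X-\rho\Omega(\theta)|$ plus controlled errors.

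\emph{Step (i).} By the scaling identity $d_g(\eps P,\eps Q)=\eps d_{g_\eps}(P,Q)$ we have
\[
\Phi_s(\eps X,\theta)=-\eps\, d_{g_\eps}(X,\rho\Omega(\theta)),
\]
so $\nabla_x\Phi_s(\eps X,\theta)=-\nabla_X d_{g_\eps}(X,\rho\Omega(\theta))$. Since $|X|<\tau$ and $\rho\ge 1/C_0$, the points are separated by $d_0\approx 1/(2C_0)$ once $\tau$ is small. Lemma~\ref{lem:scaled-distance} then gives, in $C^\infty$ (in particular $C^2$ in $\theta$),
\[
\nabla_X d_{g_\eps}=\nabla_X|X-\rho\Omega(\theta)|+O(\eps^2).
\]
The Euclidean gradient equals $(X-\rho\Omega)/|X-\rho\Omega|$, which is $-\Omega(\theta)+O(\tau)$ with all $\theta$-derivatives, because $|X|\le\tau$ and $\rho$ ranges over a compact set away from $0$. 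The sign conventions combine to give $\Omega(\theta)+O(\tau+\eps^2)$. One could alternatively use the terminal tangent statement of Lemma~\ref{lem:flow}: the gradient of the distance is the unit tangent of the minimizing geodesic.

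\emph{Step (ii).} Write
\[
\nabla_x\Phi_s(x,\theta)-\nabla_x\Phi_s(x,\eta)=\int_0^1 D_\theta\nabla_x\Phi_s\bigl(x,\eta+t(\theta-\eta)\bigr)\,dt\,(\theta-\eta),
\]
which uses convexity of $V_0$. By (i) in $C^1$, $D_\theta\nabla_x\Phi_s=D\Omega(\theta_0)+O(\tau+\eps^2+\diam V_0)$. Since $\Omega$ is an immersion, $D\Omega(\theta_0)$ is injective, so after shrinking $V_0$, $\tau$ and $\eps$ the averaged matrix has a lower bound $c$, which gives the bi-Lipschitz estimate. The upper bounds \eqref{eq:x-derivative-difference} follow from the same integral formula, since $\Phi_s$ is smooth on the separated set with $x$-derivatives bounded depending on $\eps$; the scaling costs powers of $\eps^{-1}$, which is permitted by $C_{\gamma,\eps}$.

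\emph{Step (iii).} The rank condition is exactly the injectivity of $D_\theta\nabla_x\Phi_R$ obtained in (ii). For the second fundamental form, note that $\nabla_x\Phi_R(x,\theta)$ is a $g$-unit covector, so the surface lies in the cosphere $\{|p|_{g(x)}=1\}$, an ellipsoid; this does not immediately give definiteness of the image surface itself. Instead I compare with the model. By (i) in $C^2$, the surface is a $C^2$-small perturbation, of size $O(\tau+\eps^2)$, of the unit-sphere patch $\{\Omega(\theta)\}$. The sphere has second fundamental form equal to the identity in the sense of $\langle\partial^2_{\theta\theta}\Omega,N\rangle=-\langle D\Omega\cdot,D\Omega\cdot\rangle$, which is definite with a uniform bound on the compact cap. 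The normal $N$ depends continuously on first derivatives and the form on second derivatives, so definiteness persists under $C^2$-small perturbation.

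The main obstacle is making (i) genuinely $C^2$ in $\theta$ uniformly in $x$, $\eps$ and $s$. This requires the expansion of Lemma~\ref{lem:scaled-distance} to hold with $x$-gradient and $\theta$-derivatives simultaneously, i.e.\ in the mixed $C^3$ sense. I would handle this by invoking the ``bounded $C^\infty$ parametrizations'' clause of that lemma with the parametrization $(X,\theta)\mapsto (X,\rho\Omega(\theta))$, which yields mixed derivatives of $E_\eps$ bounded uniformly.
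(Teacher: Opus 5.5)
Your proposal is correct and follows essentially the same route as the paper. You scale to $X=x/\eps$, use Lemma~\ref{lem:scaled-distance} to reduce $\nabla_x\Phi_s$ to the Euclidean unit vector $(\rho\Omega(\theta)-X)/|\rho\Omega(\theta)-X|=\Omega(\theta)+O(\tau)$ in $C^2$, deduce the bi-Lipschitz and rank statements from $D_\theta\nabla_x\Phi_s=D\Omega+O(\tau+\eps^2)$, and obtain definiteness by a $C^2$-perturbation of the sphere chart. (Your dismissed cosphere observation would in fact settle the second fundamental form part of (iii) on its own: once the rank is $2$, the image is an open piece of the ellipsoid $\{g^{ij}(x)p_ip_j=1\}$, whose second fundamental form is uniformly definite.)
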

	
	\begin{proof}
		For $s=\eps\rho$, put $P=X$ and $Q=\rho\Omega(\theta)$.  Since
		$|X|\le\tau$ and $\rho\ge1/C_0$, choosing $\tau$ small gives
		\[
		|X-\rho\Omega(\theta)|\ge \frac1{2C_0}.
		\]
		Thus
		\[
		\eps^{-1}\Phi_s(\eps X,\theta)
		=-|X-\rho\Omega(\theta)|+
		\eps^2E_\eps(X,\rho\Omega(\theta)).
		\]
		Since $\nabla_x\Phi_s(\eps X,\theta)$ equals the $X$-gradient of the scaled
		phase,
		\[
		\nabla_x\Phi_s(\eps X,\theta)
		=\frac{\rho\Omega(\theta)-X}{|\rho\Omega(\theta)-X|}
		+O(\eps^2).
		\]
		The first term is $\Omega(\theta)+O(\tau)$, proving
		\eqref{eq:grad-Phi-close-Omega}.
		
		Let
		\[
		n_{\rho,X}(\theta)=\frac{\rho\Omega(\theta)-X}{|\rho\Omega(\theta)-X|}.
		\]
		Let \(\theta=(\theta_1,\theta_2)\) be local coordinates on \(S^2\). Then
		\[
		\partial_{\theta_a}n_{\rho,X}(\theta)
		=\frac{\rho}{|\rho\Omega-X|}
		(I-n_{\rho,X}\otimes n_{\rho,X})\partial_{\theta_a}\Omega,\quad a=1,2.
		\]
		At $X=0$ this is exactly $\partial_{\theta_a}\Omega$, since $\Omega\cdot\Omega=1,\ \Omega\cdot \partial_{\theta_a}\Omega=0$. Hence
		\[
		\partial_\theta\nabla_x\Phi_s(\eps X,\theta)
		=D\Omega(\theta)+O(\tau+\eps^2).
		\]
		After shrinking the angular cap and then $\tau,\eps$, the bi-Lipschitz estimate
		follows from the corresponding estimate for the chart $\Omega$.  The derivative
		bound \eqref{eq:x-derivative-difference} follows from
		$\partial_x^\gamma=\eps^{-|\gamma|}\partial_X^\gamma$ and the mean-value theorem
		in $\theta$ on the separated set.
		
		For $s=R$, the surface $\theta\mapsto\nabla_x\Phi_R(x,\theta)$ is a
		$C^2$-small perturbation of the unit sphere chart $\theta\mapsto\Omega(\theta)$.
		The rank condition and definiteness of the second fundamental form therefore
		persist after the same shrinking.
	\end{proof}
	
	\subsection{The Euclidean shell model}
	Set $\rho_R=4C_0$, and let $\rho\in[1/C_0,C_0]$. In this section, we consider the shell region between the radii $\rho$ and $\rho_R$ in Euclidean distance. More precisely, define
	\begin{align*}
		F_{\rho,X}(\theta,\xi)
		&:=-|X-\rho_R\Omega(\theta)|
		+|\rho_R\Omega(\theta)-\rho\Omega(\xi)|,
		\\
		G_\rho(\theta,\xi)
		&:=|\rho_R\Omega(\theta)-\rho\Omega(\xi)| .
	\end{align*}
	Let $G_S(\xi)=D\Omega(\xi)^TD\Omega(\xi)$ be the spherical metric in the chart. For a square matrix $A$, we write
	\[
	\Sym A=\frac12(A+A^T).
	\]
	
	\begin{lemma}\label{lem:model}
		After possibly shrinking $V_0\Subset V$, there are numbers $c_1>0$, $\tau_0>0$, and constants
		$c,C>0$ such that, whenever
		\[
		\rho\in[1/C_0,C_0],\qquad |X|\le\tau_0,
		\qquad \xi\in V_0,
		\qquad \theta\in V,
		\qquad |\theta-\xi|\le c_1,
		\]
		one has
		\begin{align*}
			\partial_{\theta\theta}^2F_{\rho,X}(\theta,\xi)
			&\ge cI_2,
			\\
			-\Sym(\partial_{\theta\xi}^2G_\rho(\theta,\xi))
			&\ge cI_2.
		\end{align*}
		At $X=0$ and $\theta=\xi$,
		\begin{equation}
			\partial_{\theta\theta}^2F_{\rho,0}(\xi,\xi)
			=\frac{\rho_R\rho}{\rho_R-\rho}G_S(\xi),
			\qquad
			\partial_{\theta\xi}^2G_\rho(\xi,\xi)
			=-\frac{\rho_R\rho}{\rho_R-\rho}G_S(\xi).
			\label{eq:Euclidean-identities}
		\end{equation}
	\end{lemma}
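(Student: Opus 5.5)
The plan is to compute both Hessians exactly on the diagonal at $X=0$, which gives \eqref{eq:Euclidean-identities}, and then get the two lower bounds by uniform continuity on a compact parameter set.

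\textbf{Reduction at $X=0$.} Since $|\Omega|=1$, we have $-|0-\rho_R\Omega(\theta)|=-\rho_R$, which does not depend on $\theta$. Hence $\partial^2_{\theta\theta}F_{\rho,0}=\partial^2_{\theta\theta}G_\rho$, and both identities reduce to Hessian computations for $G_\rho$.

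\textbf{Exact computation on the diagonal.} Put $q(\theta,\xi)=\Omega(\theta)\cdot\Omega(\xi)$ and $b=\rho_R\rho$. Then
\[
G_\rho=\bigl(\rho_R^2+\rho^2-2bq\bigr)^{1/2},
\]
and the chain rule gives
\[
\partial^2G_\rho=-\frac{b\,\partial^2q}{G_\rho}-\frac{b^2\,\partial q\otimes\partial q}{G_\rho^3}.
\]
At $\theta=\xi$ we have $G_\rho=\rho_R-\rho$.

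\begin{enumerate}[(a)]
\item The first derivative vanishes: $\partial_\theta q=D\Omega(\theta)^T\Omega(\xi)=0$ at $\theta=\xi$, because $\Omega\cdot\partial_{\theta_a}\Omega=0$. So the quadratic term drops out.
\item For the pure $\theta$-Hessian, differentiate $\Omega\cdot\partial_{\theta_a}\Omega=0$ once more. This gives $\partial^2_{\theta_a\theta_b}\Omega\cdot\Omega=-\partial_{\theta_a}\Omega\cdot\partial_{\theta_b}\Omega$, so $\partial^2_{\theta\theta}q(\xi,\xi)=-G_S(\xi)$.
\item For the mixed Hessian, $\partial^2_{\theta\xi}q=D\Omega(\theta)^TD\Omega(\xi)$, which equals $G_S(\xi)$ on the diagonal.
\end{enumerate}

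Substituting these into the formula for $\partial^2 G_\rho$ gives exactly \eqref{eq:Euclidean-identities}. The mixed Hessian is already symmetric on the diagonal, so $-\Sym\partial^2_{\theta\xi}G_\rho(\xi,\xi)=\frac{b}{\rho_R-\rho}G_S(\xi)$.

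\textbf{Uniform lower bounds.} Shrink $V_0$ so that $\overline{V_0}$ is compact in $V$, and choose $c_1$ smaller than the distance from $\overline{V_0}$ to $\partial V$. Then $\theta$ ranges over a compact set. The following quantities are bounded above and below uniformly:
\begin{itemize}
\item $G_S(\xi)\ge c_SI_2$ on $\overline{V_0}$, since the chart is nondegenerate;
\item $\frac{b}{\rho_R-\rho}\ge\frac{1/C_0\cdot 4C_0}{4C_0}>0$ and $\rho_R-\rho\ge 3C_0$;
\item $|\rho_R\Omega(\theta)-\rho\Omega(\xi)|\ge\rho_R-\rho\ge3C_0$ and $|X-\rho_R\Omega(\theta)|\ge\rho_R-\tau_0$.
\end{itemize}
Because both norms stay away from zero, $F_{\rho,X}$ and $G_\rho$ are smooth in $(\rho,X,\theta,\xi)$ on the compact set $[1/C_0,C_0]\times\overline{B(0,\tau_0)}\times\{(\theta,\xi):\xi\in\overline{V_0},\ |\theta-\xi|\le c_1\}$. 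The diagonal values are therefore $\ge 2cI_2$ for some $c>0$. By uniform continuity of the second derivatives, choosing $c_1$ and $\tau_0$ small enough keeps $\partial^2_{\theta\theta}F_{\rho,X}$ and $-\Sym\partial^2_{\theta\xi}G_\rho$ within $cI_2$ of their values at $(X,\theta)=(0,\xi)$. This yields both lower bounds $\ge cI_2$.

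\textbf{Main obstacle.} The only delicate point is bookkeeping for uniformity: the constants must be independent of $\rho$, and the $c_1$-neighbourhood of $V_0$ must stay inside the chart $V$. Both are handled by the compactness argument above; the rest is routine calculus.
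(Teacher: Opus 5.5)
Your proposal is correct and follows essentially the same route as the paper: compute the Hessians exactly on the diagonal at $X=0$, then obtain the lower bounds by continuity on a compact parameter set. The only difference is bookkeeping: you write $G_\rho=(\rho_R^2+\rho^2-2\rho_R\rho\,\Omega(\theta)\cdot\Omega(\xi))^{1/2}$ and differentiate through the inner product, whereas the paper applies the general second-derivative formula for $|U|$ with $U=\rho_R\Omega(\theta)-\rho\Omega(\xi)$; both yield the same identities.
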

	
	\begin{proof}
		Write $U(\theta,\xi)=\rho_R\Omega(\theta)-\rho\Omega(\xi)$ and
		$L=|U|$.  For any parameters $a,b$,
		\begin{equation}
			\partial_{ab}L
			=\frac{\partial_aU\cdot\partial_bU+U\cdot\partial_{ab}U}{L}
			-\frac{(U\cdot\partial_aU)(U\cdot\partial_bU)}{L^3}.
			\label{eq:distance-Hessian-formula}
		\end{equation}
		At $\theta=\xi$,
		\[
		U=(\rho_R-\rho)\Omega(\xi),
		\qquad L=\rho_R-\rho.
		\]
		Using
		\[
		\Omega\cdot\Omega=1,\qquad \Omega\cdot\partial_{\xi_a}\Omega=0,
		\qquad
		\Omega\cdot\partial_{\xi_a}\partial_{\xi_b}\Omega=-\partial_{\xi_a}\Omega\cdot\partial_{\xi_b}\Omega,
		\]
		formula \eqref{eq:distance-Hessian-formula} gives
		\begin{align*}
			\partial_{\theta_a\theta_b}^2G_\rho(\xi,\xi)
			&=\frac{\rho_R^2\partial_{\xi_a}\Omega\cdot\partial_{\xi_b}\Omega
				+(\rho_R-\rho)\rho_R\Omega\cdot\partial_{\xi_a}\partial_{\xi_b}\Omega}{\rho_R-\rho}  \\
			&=\frac{\rho_R\rho}{\rho_R-\rho}\partial_{\xi_a}\Omega\cdot\partial_{\xi_b}\Omega,\\
			\partial_{\theta_a\xi_b}^2G_\rho(\xi,\xi)
			&=\frac{-\rho_R\rho\,\partial_{\xi_a}\Omega\cdot\partial_{\xi_b}\Omega}{\rho_R-\rho}
			=-\frac{\rho_R\rho}{\rho_R-\rho}\partial_{\xi_a}\Omega\cdot\partial_{\xi_b}\Omega.
		\end{align*}
		This gives the second identity in \eqref{eq:Euclidean-identities}. Since $-|0-\rho_R\Omega(\theta)|=-\rho_R$ is constant, the first identity in
		\eqref{eq:Euclidean-identities} follows as well.
		
		After shrinking the cap, $G_S\ge m_SI_2$ on $V$.  Also
		\[
		0<\kappa_-\le \frac{\rho_R\rho}{\rho_R-\rho}\le \kappa_+<\infty,
		\qquad \rho\in[1/C_0,C_0],
		\]
		because $\rho_R-\rho\ge3C_0$.  Hence the two matrices in
		\eqref{eq:Euclidean-identities} are uniformly positive at $(X,\theta)=(0,\xi)$.
		The functions are smooth on the shell, and by continuity, the lower
		bounds persist for $|X|\le\tau_0$ and $|\theta-\xi|\le c_1$ after choosing
		$V_0$, $c_1$, and $\tau_0$ sufficiently small.
	\end{proof}

	\subsection{Critical map and nonstationary phase estimates}
	\begin{lemma}\label{lem:critical-geometry}
		After choosing $V_0\Subset V$ and $c_1$ as above, taking $\tau\le\tau_0$, and
		taking $\eps$ sufficiently small, the following hold uniformly for
		\[
		r\in I,
		\qquad x\in\overline{U_0},
		\qquad \xi\in V_0 .
		\]
		\begin{enumerate}[(i)]
			\item The equation
			\[
			\partial_\theta\Psi_r(x,\theta,\xi)=0
			\]
			has a unique solution $\theta=\Theta_r(x,\xi)$ in $|\theta-\xi|\le c_1$, and
			\begin{equation}
				|\Theta_r(x,\xi)-\xi|\le C(\tau+\eps^2).
				\label{eq:Theta-close}
			\end{equation}
			\item On the support region $|\theta-\xi|\le c_1$,
			\begin{align}
				\partial_{\theta\theta}^2\Psi_r(x,\theta,\xi)
				&\ge c\eps I_2,                                      \label{eq:Psi-Hess-lower}\\
				-\Sym(\partial_{\theta\xi}^2S_r(\theta,\xi))
				&\ge c\eps I_2.                                      \label{eq:S-mixed-lower}
			\end{align}
			Consequently,
			\begin{equation}
				|\partial_\theta\Psi_r(x,\theta,\xi)|
				\ge c\eps|\theta-\Theta_r(x,\xi)|
				\label{eq:theta-nonstationary}
			\end{equation}
			whenever the segment from $\theta$ to $\Theta_r(x,\xi)$ stays in
			$|\vartheta-\xi|\le c_1$.
			\item If the segment between $\theta$ and $\theta'$ stays in
			$|\vartheta-\xi|\le c_1$, then
			\begin{equation}
				|\nabla_\xi(S_r(\theta,\xi)-S_r(\theta',\xi))|
				\ge c\eps|\theta-\theta'|.
				\label{eq:xi-nonstationary}
			\end{equation}
			\item For fixed $x,r$, the map $\xi\mapsto\Theta_r(x,\xi)$ is a diffeomorphism
			from $V_0$ onto its image.  In scaled variables $X=x/\eps$,
			\begin{equation}
				\partial_\xi\Theta_r(\eps X,\xi)
				=I_2+O(\tau+\diam V_0 +\eps^2).
				\label{eq:Dxi-Theta-close}
			\end{equation}
			The family $\Theta_r$ is bounded in $C^\infty$ in the scaled variables $(X,\xi)$, uniformly in $r\in I$. The unscaled $x$-derivatives may cost powers of $\eps^{-1}$.
		\end{enumerate}
	\end{lemma}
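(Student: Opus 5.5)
The plan is to pass to scaled variables $x=\eps X$, $s=\eps\rho$. By Lemma~\ref{lem:scaled-distance}, and its parametrized version applied with the separated points $X,\rho_R\Omega(\theta)$ and $\rho_R\Omega(\theta),\rho\Omega(\xi)$, the propagated phase becomes
\[
\eps^{-1}\Psi_r(\eps X,\theta,\xi)=F_{\rho,X}(\theta,\xi)+\eps^2\widetilde E_\eps(X,\theta,\xi),
\qquad
\eps^{-1}S_r(\theta,\xi)=G_\rho(\theta,\xi)+\eps^2E'_\eps(\theta,\xi).
\]
The error terms are bounded in $C^\infty$ in $(X,\theta,\xi)$, uniformly in $\rho\in[1/C_0,C_0]$. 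Separation holds because $|X|\le\tau$ is small, and $\rho_R-\rho\ge 3C_0$ forces $|\rho_R\Omega(\theta)-\rho\Omega(\xi)|\ge 3C_0$.

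\textbf{Step 1: part (ii).} Lemma~\ref{lem:model} gives $\partial^2_{\theta\theta}F_{\rho,X}\ge cI_2$ and $-\Sym\partial^2_{\theta\xi}G_\rho\ge cI_2$ on $|\theta-\xi|\le c_1$. Adding the $O(\eps^2)$ perturbations keeps the bounds, with $c/2$ in place of $c$. Multiplying back by $\eps$ gives \eqref{eq:Psi-Hess-lower} and \eqref{eq:S-mixed-lower}.

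For \eqref{eq:theta-nonstationary}, write
\[
\partial_\theta\Psi_r(x,\theta,\xi)=\partial_\theta\Psi_r(x,\theta,\xi)-\partial_\theta\Psi_r(x,\Theta_r,\xi)=\int_0^1\partial^2_{\theta\theta}\Psi_r\,dt\,(\theta-\Theta_r).
\]
The integrated Hessian is positive definite with lower bound $c\eps$, because the segment stays in the convex region.

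\textbf{Step 2: part (iii).} The estimate \eqref{eq:xi-nonstationary} is the same argument applied to $\nabla_\xi S_r(\theta,\xi)-\nabla_\xi S_r(\theta',\xi)=\int_0^1\partial^2_{\xi\theta}S_r\,dt\,(\theta-\theta')$. Only the symmetric part of the matrix matters when pairing with $\theta-\theta'$, so $|Mv|\,|v|\ge\langle -Mv,v\rangle\ge c\eps|v|^2$.

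\textbf{Step 3: part (i).} Existence and uniqueness come from the scaled picture. At $X=0$ the Euclidean function $F_{\rho,0}(\cdot,\xi)=-\rho_R+G_\rho(\cdot,\xi)$ has $\theta=\xi$ as a critical point: $G_\rho$ is minimized in $\theta$ when $\Omega(\theta)=\Omega(\xi)$, and also $\partial_\theta|\rho_R\Omega(\theta)|=0$. For general $X$ and $\eps$, the gradient at $\theta=\xi$ is
\[
\partial_\theta(\eps^{-1}\Psi_r)(\eps X,\xi,\xi)=O(|X|+\eps^2)=O(\tau+\eps^2).
\]
Since the scaled Hessian is $\ge cI_2$ on the convex ball $|\theta-\xi|\le c_1$, the map $\theta\mapsto\partial_\theta(\eps^{-1}\Psi_r)$ is strongly monotone. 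It is therefore injective, and by a degree or contraction argument (Newton iteration starting at $\xi$) it has a unique zero. That zero satisfies $|\Theta_r-\xi|\le C(\tau+\eps^2)$, provided $C(\tau+\eps^2)<c_1$. The implicit function theorem then gives smooth dependence on $(X,\xi,\rho)$, with uniform $C^\infty$ bounds in scaled variables, since all derivatives of the scaled phase are bounded and its Hessian is uniformly invertible. Unscaled $x$-derivatives cost $\eps^{-1}$ per derivative through $\partial_x=\eps^{-1}\partial_X$.

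\textbf{Step 4: part (iv).} Differentiating $\partial_\theta\Psi_r(x,\Theta_r(x,\xi),\xi)=0$ in $\xi$ gives
\[
\partial_\xi\Theta_r=-(\partial^2_{\theta\theta}\Psi_r)^{-1}\partial^2_{\theta\xi}\Psi_r .
\]
In scaled form, evaluated at $X=0$ and $\theta=\xi$, identities \eqref{eq:Euclidean-identities} give
\[
-\Bigl(\tfrac{\rho_R\rho}{\rho_R-\rho}G_S\Bigr)^{-1}\Bigl(-\tfrac{\rho_R\rho}{\rho_R-\rho}G_S\Bigr)=I_2,
\]
using $\partial^2_{\theta\xi}F_{\rho,X}=\partial^2_{\theta\xi}G_\rho$. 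The actual evaluation point differs from this by $O(\tau+\eps^2)$ in $X$ and in $\Theta_r-\xi$, and all perturbations are $O(\eps^2)$. This gives $I_2+O(\tau+\eps^2)$. I would include the $\diam V_0$ term to absorb uniform continuity over the cap. Injectivity then follows from convexity of $V_0$:
\[
\Theta_r(\xi)-\Theta_r(\xi')=\int_0^1\partial_\xi\Theta_r\,dt\,(\xi-\xi'),
\]
and the integrand is a small perturbation of $I_2$, so $\Theta_r$ is injective with invertible differential, hence a diffeomorphism onto its image.

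The main obstacle is Step 3: making the existence and uniqueness quantitative and uniform in $r$, $x$ and $\xi$. In particular I must ensure that the zero stays well inside $|\theta-\xi|\le c_1$, so that strong monotonicity applies, with $\tau$ and $\eps$ chosen after $c_1$. The order of the smallness choices is $V_0,c_1$ first, then $\tau$, then $\eps$, and I would track it carefully.
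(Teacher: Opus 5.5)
Your proposal is correct and follows essentially the same route as the paper: you compare the scaled phase with the Euclidean model $F_{\rho,X}$, use Lemma~\ref{lem:model} for the Hessian bounds, use strong monotonicity for uniqueness and for $|\Theta_r-\xi|\le C(\tau+\eps^2)$, and use implicit differentiation together with \eqref{eq:Euclidean-identities} for (iv). The differences are minor: the paper proves existence of the critical point by a Brouwer fixed-point argument with the closest-point projection instead of your contraction/degree argument, and you are right that the $\diam V_0$ term in \eqref{eq:Dxi-Theta-close} is unnecessary, since the model identities hold exactly at every $\xi$.
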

	
	\begin{proof}
		Write $x=\eps X$ and $r=\eps\rho$.		
        
		(i) and (ii).  On the shell,
		\begin{align*}
			\eps^{-1}\Phi_R(\eps X,\theta)
			&=-|X-\rho_R\Omega(\theta)|+O(\eps^2),\\
			\eps^{-1}S_r(\theta,\xi)
			&=|\rho_R\Omega(\theta)-\rho\Omega(\xi)|+O(\eps^2).
		\end{align*}
		Therefore
		\begin{equation}
			\eps^{-1}\Psi_r(\eps X,\theta,\xi)
			=F_{\rho,X}(\theta,\xi)+O(\eps^2).
			\label{eq:Psi-model}
		\end{equation}
		Differentiating twice and using Lemma~\ref{lem:model} gives
		\eqref{eq:Psi-Hess-lower} and \eqref{eq:S-mixed-lower} for small $\eps$.
		
		By definition, at $X=0$, $\partial_\theta F_{\rho,0}(\xi,\xi)=0$.  Hence, for
		$|X|\le\tau$,
		\[
		|\partial_\theta\Psi_r(\eps X,\xi,\xi)|
		\le C\eps(\tau+\eps^2).
		\]
		Let $g(\theta)=\partial_\theta\Psi_r(\eps X,\theta,\xi)$.  The Hessian lower
		bound gives the strong monotonicity estimate
		\begin{equation}
			(g(\theta)-g(\eta))\cdot(\theta-\eta)
			\ge c\eps|\theta-\eta|^2
			\label{eq:strong-monotone}
		\end{equation}
		whenever the segment between $\theta$ and $\eta$ lies in $|\vartheta-\xi|\le c_1$.
		Choose $R_0=A_0(\tau+\eps^2)<c_1/2$, with $A_0$ large.  On
		$|\theta-\xi|=R_0$,
		\begin{equation}\label{eq:outwardpt}
		g(\theta)\cdot(\theta-\xi)
		\ge c\eps R_0^2-C\eps(\tau+\eps^2)R_0>0.
		\end{equation}
		
		Let \(B=\overline{B(\xi,R_0)}\), and let \(\pi:\mathbb R^2 \to B\) be the closest-point projection onto \(B\).  In other words, $\pi(x)=p$ if and only if $(x-p)\cdot(y-p)\le0$ for all $y\in B$. Define
\[
T(\theta)=\pi(\theta-g(\theta)).
\]
Then $T$ is continuous and by Brouwer's fixed point theorem, \(T\) has a fixed point
\(\theta_*\in B\).  If \(\theta_*\in\partial B\), then \(
g(\theta_*)\cdot(\theta_*-\xi)\le0, \)
contradicting \eqref{eq:outwardpt}.  Hence
\(\theta_*\in B(\xi,R_0)\) and \(g(\theta_*)=0\).
		
		The strict monotonicity \eqref{eq:strong-monotone} gives
		uniqueness in $|\theta-\xi|\le c_1$, and \eqref{eq:Theta-close} follows.
		
		   For \eqref{eq:theta-nonstationary}, integrate the Hessian along the segment
		from $\Theta_r(x,\xi)$ to $\theta$:
		\[
		\partial_\theta\Psi_r(x,\theta,\xi)
		=\left(\int_0^1
		\partial_{\theta\theta}^2\Psi_r(x,\Theta_r+t(\theta-\Theta_r),\xi)\,dt
		\right)(\theta-\Theta_r).
		\]
		The lower bound \eqref{eq:Psi-Hess-lower} gives the claim.  
        
        (iii). For \eqref{eq:xi-nonstationary}, write $h=\theta-\theta'$ and
		$\theta_t=\theta'+th$.  Then
		\[
		\nabla_\xi(S_r(\theta,\xi)-S_r(\theta',\xi))
		=\left(\int_0^1\partial_{\theta\xi}^2S_r(\theta_t,\xi)^T\,dt\right)h.
		\]
		The symmetric part of the averaged matrix is negative definite with size
		$\eps$ by \eqref{eq:S-mixed-lower}, and hence its action on $h$ has norm at least
		$c\eps|h|$, proving \eqref{eq:xi-nonstationary}.
		
		(iv). Differentiating the critical equation with respect to $\xi$ gives
		\[
		H_r(x,\xi)\partial_\xi\Theta_r(x,\xi)+B_r(x,\xi)=0,
		\]
		where
		\[
		H_r=\partial_{\theta\theta}^2\Psi_r(x,\Theta_r(x,\xi),\xi),
		\qquad
		B_r=\partial_{\theta\xi}^2S_r(\Theta_r(x,\xi),\xi).
		\]
		Using \eqref{eq:Theta-close}, the model identities
		\eqref{eq:Euclidean-identities}, and the comparison \eqref{eq:Psi-model},
		\begin{align*}
			H_r(\eps X,\xi)&=\eps B_0(\xi,\rho)
			+O(\eps(\tau+\diam V_0+\eps^2)),\\
			B_r(\eps X,\xi)&=-\eps B_0(\xi,\rho)
			+O(\eps(\tau+\diam V_0+\eps^2)),
		\end{align*}
		where
		\[
		B_0(\xi,\rho)=\frac{\rho_R\rho}{\rho_R-\rho}G_S(\xi)\ge c_0I_2.
		\]
		Thus
		\[
		\partial_\xi\Theta_r=-H_r^{-1}B_r
		=I_2+O(\tau+\diam V_0+\eps^2),
		\]
		which is \eqref{eq:Dxi-Theta-close}.  After shrinking the parameters,
		$\|\partial_\xi\Theta_r-I_2\|\le1/10$, so $\Theta_r(x,\cdot)$ is injective on
		the convex set $V_0$ and is a local diffeomorphism everywhere.
		
		Higher derivatives are obtained by differentiating
		$\partial_\theta\Psi_r(x,\Theta_r(x,\xi),\xi)=0$.  At each order, the highest
		new derivative of $\Theta_r$ appears linearly with coefficient $H_r$. All other
		terms involve only derivatives of $\Psi_r$ and lower-order derivatives of
		$\Theta_r$.  In scaled variables the derivatives of $\Psi_r$ are $O(\eps)$,
		while $H_r^{-1}=O(\eps^{-1})$.  Induction gives the stated bounded $C^\infty$ control in the scaled variables.
	\end{proof}

	\subsection{Inverse critical map and fixed angular cutoff}\label{sec:fixed-collar}\label{sec:fixed-margin}
	\begin{lemma}\label{lem:collar}
		There is an open set $W$ with
		\begin{equation}
			K_2\Subset W\Subset V_0,
			\label{eq:W-choice}
		\end{equation}
		together with a cutoff $\chi\in C_0^\infty(V_0\times V_0)$ such that, after shrinking
		$\tau$, $\eps$, and the angular cap if necessary, the following hold for every
		$x\in\overline{U_0}$ and $r\in I$.
		\begin{enumerate}[(i)]
			\item
			\begin{equation}
				\Theta_r(x,K_2)\Subset W\Subset \Theta_r(x,V_0).
				\label{eq:collar-inclusions}
			\end{equation}
			Thus the inverse critical map
			\[
			\xi=\Xi_r(x,\theta)
			\quad\Longleftrightarrow\quad
			\Theta_r(x,\xi)=\theta
			\]
			is well-defined on $\overline{U_0}\times W$, and its derivatives are bounded uniformly
			in $r$. The unscaled $x$-derivatives may cost powers of $\eps^{-1}$.
			\item The cutoff $\chi$ equals one on a fixed neighborhood of all critical pairs
			\[
			\{(\Theta_r(x,\xi),\xi):x\in\overline{U_0},\ r\in I,\ \xi\in K_2\}
			\]
			and satisfies
			\[
			\supp\chi
			\Subset \{(\theta,\xi)\in V_0\times V_0:|\theta-\xi|<c_1\}.
			\]
			Consequently, if $\chi(\theta,\xi)\chi(\theta',\xi)\ne0$ and
			$\theta_t=(1-t)\theta'+t\theta$, then
			\begin{equation}
				|\theta_t-\xi|<c_1,
				\qquad 0\le t\le1.
				\label{eq:segment-condition}
			\end{equation}
		\end{enumerate}
	\end{lemma}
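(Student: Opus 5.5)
The plan is to use the closeness of $\Theta_r(x,\cdot)$ to the identity, from Lemma~\ref{lem:critical-geometry}, to fit a fixed open set $W$ between $K_2$ and $V_0$ with margin. Set $d:=\operatorname{dist}(K_2,\partial V_0)>0$. This is a fixed number once $K_2\Subset V_0$ is chosen; if the cap must be shrunk, we shrink $V_0$ while keeping a fixed margin around $K_2$, so $d$ stays bounded below. Define
\[
W=\{\theta:\operatorname{dist}(\theta,K_2)<d/2\},
\]
which satisfies $K_2\Subset W\Subset V_0$.

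\emph{Step 1: the first inclusion in (i).} By \eqref{eq:Theta-close}, $|\Theta_r(x,\xi)-\xi|\le C(\tau+\eps^2)$ uniformly in $r\in I$ and $x\in\overline{U_0}$. Shrinking $\tau,\eps$ so that $C(\tau+\eps^2)<d/4$ gives $\Theta_r(x,K_2)\subset\{\operatorname{dist}(\cdot,K_2)\le d/4\}$. This set is compactly contained in $W$.

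\emph{Step 2: the second inclusion in (i).} I need $\overline W\subset\Theta_r(x,V_0)$. By Lemma~\ref{lem:critical-geometry}(iv), $\Theta_r(x,\cdot)$ is an injective local diffeomorphism on $V_0$, so it is open, with $\|\partial_\xi\Theta_r-I\|\le 1/10$. Fix $\theta\in\overline W$ and consider the contraction
\[
\xi\mapsto \xi-(\Theta_r(x,\xi)-\theta)
\]
on the closed ball $\overline{B(\theta,d/4)}\subset V_0$. Its Lipschitz constant is at most $1/10$. It maps the ball to itself because
\[
|\xi-\Theta_r(\xi)+\theta-\theta|\le |\xi-\theta|/10+\sup|\Theta_r-\mathrm{id}|\le d/40+d/4\cdot(\text{small}),
\]
which is $<d/4$ once $C(\tau+\eps^2)$ is small relative to $d$. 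A more careful form of the bound is
\[
\bigl|\theta-\Theta_r(\xi)+\xi-\theta\bigr|\le|\Theta_r(\theta)-\theta|+\tfrac1{10}|\xi-\theta|.
\]
The fixed point $\xi$ satisfies $\Theta_r(x,\xi)=\theta$. Hence $\overline W\subset\Theta_r(x,V_0)$, and $\Xi_r(x,\theta)$ is well defined on $\overline{U_0}\times W$ and lies in $B(\theta,d/4)$.

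\emph{Step 3: smoothness of $\Xi_r$.} The inverse function theorem, applied with $\partial_\xi\Theta_r$ uniformly invertible ($\|(\partial_\xi\Theta_r)^{-1}\|\le 10/9$), gives smoothness of $\Xi_r$. Its derivatives are obtained by differentiating $\Theta_r(x,\Xi_r(x,\theta))=\theta$. The bounded $C^\infty$ control of $\Theta_r$ in scaled variables $(X,\xi)$ then transfers to $\Xi_r$, uniformly in $r$. Unscaled $x$-derivatives cost $\eps^{-|\gamma|}$.

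\emph{Step 4: the cutoff in (ii).} The set of critical pairs lies in $\{|\theta-\xi|\le C(\tau+\eps^2)\}\cap(V_0\times K_2)$, and its $\theta$-component lies in $W$. Choose $\chi(\theta,\xi)=\chi_1(\theta)\chi_2(\xi)\chi_3(|\theta-\xi|)$, where:
\begin{itemize}
\item $\chi_2\in C_0^\infty(V_0)$ equals $1$ near $K_2$;
\item $\chi_1\in C_0^\infty(V_0)$ equals $1$ near $\overline W$;
\item $\chi_3$ equals $1$ on $[0,c_1/4]$ and is supported in $[0,c_1/2)$.
\end{itemize}
After making $C(\tau+\eps^2)<c_1/8$, $\chi=1$ on a fixed neighborhood of the critical pairs, and $\supp\chi\Subset\{|\theta-\xi|<c_1\}$.

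For \eqref{eq:segment-condition}: if $\chi(\theta,\xi)\chi(\theta',\xi)\ne0$, then $|\theta-\xi|,|\theta'-\xi|<c_1/2$. Since the ball $B(\xi,c_1/2)$ is convex, every $\theta_t$ on the segment satisfies $|\theta_t-\xi|<c_1/2<c_1$.

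The only delicate point is bookkeeping: $d$, $c_1$ and $W$ must be fixed before the final shrinking of $\tau,\eps$. Any later shrinking of the cap must keep $K_2\Subset V_0$ with the same margin, so that the quantitative requirements $C(\tau+\eps^2)\ll\min(d,c_1)$ are consistent.
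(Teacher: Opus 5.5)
Your proposal is correct and follows essentially the same route as the paper: the $C^0$ bound $\Theta_r(x,\xi)=\xi+O(\tau+\eps^2)$ gives $\Theta_r(x,K_2)\Subset W$, the contraction $\xi\mapsto\xi+\theta-\Theta_r(x,\xi)$ on a ball of fixed radius around $\theta$ gives $W\Subset\Theta_r(x,V_0)$, differentiating $\Theta_r(x,\Xi_r(x,\theta))=\theta$ gives the bounds on $\Xi_r$, and convexity of $\{\theta:|\theta-\xi|<c_1\}$ gives the segment condition. Your explicit $W$ (a $d/2$-neighborhood of $K_2$) and product cutoff $\chi_1(\theta)\chi_2(\xi)\chi_3(|\theta-\xi|)$ only make concrete what the paper leaves implicit. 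Note also that the self-map step needs only the $C^0$ bound, since this map sends $\xi$ to a point at distance exactly $|\xi-\Theta_r(x,\xi)|$ from $\theta$.
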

	
	\begin{proof}
		Choose once and for all an open set $W$ satisfying \eqref{eq:W-choice}.  By
		Lemma~\ref{lem:critical-geometry},
		\begin{equation}
			\Theta_r(x,\xi)=\xi+O(\tau+\eps^2),
			\qquad
			D_\xi\Theta_r(x,\xi)=I_2+O(\tau+\diam V_0+\eps^2),
			\label{eq:Theta-close-C1}
		\end{equation}
		uniformly for $x\in\overline{U_0}$, $r\in I$, and $\xi\in V_0$.  After shrinking the
		parameters, the map $\xi\mapsto\Theta_r(x,\xi)$ is a uniform $C^1$-small
		perturbation of the identity on $V_0$.  Since $K_2\Subset W\Subset V_0$, the
		$C^0$ part of \eqref{eq:Theta-close-C1} gives
		$\Theta_r(x,K_2)\Subset W$.
		
		We next prove $W\Subset\Theta_r(x,V_0)$.  Fix $\theta\in W$.  Since
		$W\Subset V_0$, there is a small ball $B_\theta\Subset V_0$ centered at
		$\theta$, with a radius independent of $\theta\in W$.  On this ball set
		\[
		\mathcal T(\xi)=\xi+\theta-\Theta_r(x,\xi).
		\]
		The $C^1$-closeness in \eqref{eq:Theta-close-C1} makes $\mathcal T$ a
		contraction, and the $C^0$-closeness makes it map $B_\theta$ into itself.
		Hence $\mathcal T$ has a unique fixed point, equivalently
		$\Theta_r(x,\xi)=\theta$.  This proves the right inclusion in
		\eqref{eq:collar-inclusions}.  Differentiating
		$\Theta_r(x,\Xi_r(x,\theta))=\theta$ gives the derivative bounds for $\Xi_r$.
		
		It remains to choose $\chi$.  Since
		$\Theta_r(x,\xi)=\xi+O(\tau+\eps^2)$ for $\xi\in K_2$, all critical pairs lie,
		after shrinking $\tau$ and $\eps$, in an arbitrarily small fixed neighborhood
		of the diagonal over $K_2$.  Choose $\chi\in C_0^\infty(V_0\times V_0)$ equal
		to one on such a neighborhood and supported in $|\theta-\xi|<c_1$.  The segment
		condition follows because the set $\{|\theta-\xi|<c_1\}$ is convex in the
		$\theta$-variable for fixed $\xi$.
	\end{proof}
	
	\subsection{Exact critical value}
	\begin{lemma}[Geodesic extension and exact critical value]\label{lem:critical-value}
		For every $x\in\overline{U_0}$, $r\in I$, and $\xi\in K_2$,
		\[
		\Phi_R(x,\Theta_r(x,\xi))+S_r(\Theta_r(x,\xi),\xi)=\Phi_r(x,\xi).
		\]
	\end{lemma}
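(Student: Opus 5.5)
We need to show: at the critical point $\theta_*=\Theta_r(x,\xi)$, the sum $-d_g(x,Y_R(\theta_*))+d_g(Y_R(\theta_*),Y_r(\xi))$ equals $-d_g(x,Y_r(\xi))$. Equivalently, $d_g(x,Y_R(\theta_*)) = d_g(x,Y_r(\xi)) + d_g(Y_r(\xi),Y_R(\theta_*))$, i.e.\ the point $Y_r(\xi)$ lies on the minimizing geodesic from $x$ to $Y_R(\theta_*)$. The plan is to construct the point $\theta^\sharp$ for which this geodesic alignment holds, verify that it is a critical point of $\theta\mapsto\Psi_r(x,\theta,\xi)$ in the region $|\theta-\xi|\le c_1$, and conclude $\theta^\sharp=\Theta_r(x,\xi)$ by the uniqueness in Lemma~\ref{lem:critical-geometry}(i).

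\textbf{Step 1: construct $\theta^\sharp$ by geodesic extension.} Let $\gamma$ be the unit-speed minimizing geodesic from $x$ to $y=Y_r(\xi)$ and continue it past $y$. In scaled coordinates, Lemma~\ref{lem:flow} shows that $\gamma$ is an $O(\eps^2)$-perturbation of the straight line from $X$ through $\rho\Omega(\xi)$. This line crosses the scaled geodesic sphere $\{\exp_o(\rho_R\,\cdot)\}$, which is $O(\eps^2)$-close to the Euclidean sphere of radius $\rho_R$, transversally, at a point whose direction is $\Omega(\xi)+O(\tau)$. By the implicit function theorem, $\gamma$ therefore meets $\{Y_R(\theta)\}$ at a unique nearby point $Y_R(\theta^\sharp)$ with $|\theta^\sharp-\xi|\le C(\tau+\eps^2)<c_1$. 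All segments involved stay inside the strongly convex normal ball, since $R=4C_0\eps$ is small. Hence the extended geodesic is minimizing between any two of its points, and in particular
\[
d_g(x,Y_R(\theta^\sharp))=d_g(x,Y_r(\xi))+d_g(Y_r(\xi),Y_R(\theta^\sharp)).
\]

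\textbf{Step 2: criticality.} By the first variation formula, $\nabla_\theta d_g(x,Y_R(\theta))$ is obtained by pairing the unit tangent of the geodesic from $x$, taken at its endpoint $Y_R(\theta)$, with $\partial_\theta Y_R(\theta)$. Similarly, $\nabla_\theta d_g(Y_r(\xi),Y_R(\theta))$ pairs the unit tangent at $Y_R(\theta)$ of the geodesic from $Y_r(\xi)$ with the same $\partial_\theta Y_R$. At $\theta=\theta^\sharp$ both geodesics are segments of the same geodesic $\gamma$, so they arrive at $Y_R(\theta^\sharp)$ with the same unit tangent. The two gradients therefore cancel, giving $\partial_\theta\Psi_r(x,\theta^\sharp,\xi)=0$. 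Smoothness of the distances is available here because the relevant points are separated (Lemma~\ref{lem:scaled-distance}), and we stay away from cut points because we are inside the convex ball.

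\textbf{Step 3: conclude.} Lemma~\ref{lem:critical-geometry}(i) states that the critical point is unique in the region $|\theta-\xi|\le c_1$. Since $\theta^\sharp$ lies in that region, $\theta^\sharp=\Theta_r(x,\xi)$. Substituting into the identity from Step 1 gives
\[
\Phi_R(x,\Theta_r)+S_r(\Theta_r,\xi)=\Phi_r(x,\xi).
\]

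The main obstacle is Step 1: showing that the extended geodesic actually hits the outer geodesic sphere at an angular parameter within $c_1$ of $\xi$, and that the extension remains minimizing. I would handle this quantitatively with Lemma~\ref{lem:flow}, which reduces the question to the Euclidean picture up to $O(\eps^2)$ errors, combined with the strong convexity of the normal ball.
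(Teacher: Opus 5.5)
Your proposal is correct and follows essentially the same route as the paper: extend the minimizing geodesic from $x$ through $y=Y_r(\xi)$, locate its transverse hit $Y_R(\theta_*)$ of the outer sphere with $|\theta_*-\xi|\le C(\tau+\eps^2)$ via Lemma~\ref{lem:flow}, get criticality from the first variation formula, identify $\theta_*=\Theta_r(x,\xi)$ by the uniqueness in Lemma~\ref{lem:critical-geometry}(i), and finish with distance additivity along $x\to y\to z$. The only cosmetic difference is that in normal coordinates $\{Y_R(\theta)\}$ is exactly the Euclidean sphere of radius $\rho_R$ (in scaled variables), so the paper gets the unique hitting time from $\frac{d}{dt}|Z(t)|\ge\frac12$ rather than from an implicit function argument.
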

	
	\begin{proof}
		Write
		\[
		x=\eps X,
		\qquad r=\eps\rho,
		\qquad y=Y_r(\xi)=\exp_o(\eps\rho\Omega(\xi)).
		\]
		Let $\gamma_{xy}$ be the minimizing geodesic from $x$ to $y$.  By
		Lemma~\ref{lem:flow}, its terminal unit tangent at $y$, in scaled coordinates,
		is
		\[
		v_y
		=\frac{\rho\Omega(\xi)-X}{|\rho\Omega(\xi)-X|}
		+O(\eps^2)
		=\Omega(\xi)+O(\tau)+O(\eps^2).
		\]
		Extend the same $g_\eps$-geodesic past $y$ by solving
		\[
		Z(0)=\rho\Omega(\xi),
		\qquad
		\dot Z(0)=v_y.
		\]
		For $0\le t\le\rho_R-\rho+1$, Lemma~\ref{lem:flow} gives
		\begin{equation}
			Z(t)=\rho\Omega(\xi)+t\Omega(\xi)+O(\tau)+O(\eps^2),
			\qquad
			\dot Z(t)=\Omega(\xi)+O(\tau)+O(\eps^2).
			\label{eq:extended-geodesic}
		\end{equation}
		Consequently,
		\[
		\frac{d}{dt}|Z(t)|=1+O(\tau)+O(\eps^2)\ge\frac12
		\]
		after shrinking $\tau$ and $\eps$.  Since $|Z(0)|=\rho<\rho_R$ while
		$|Z(\rho_R-\rho+1)|>\rho_R$, there is a unique transverse hitting time $t_*$
		with $|Z(t_*)|=\rho_R$.  Write
		\[
		z=\exp_o(\eps Z(t_*))=Y_R(\theta_*).
		\]
		The leading radial expression in \eqref{eq:extended-geodesic} implies
		\[
		|\theta_*-\xi|\le C(\tau+\eps^2).
		\]
		Thus $\theta_*$ lies in the critical neighborhood where Lemma~\ref{lem:critical-geometry}
		applies.
		\begin{figure}[htbp]
	\centering
	\includegraphics[width=0.8\textwidth]{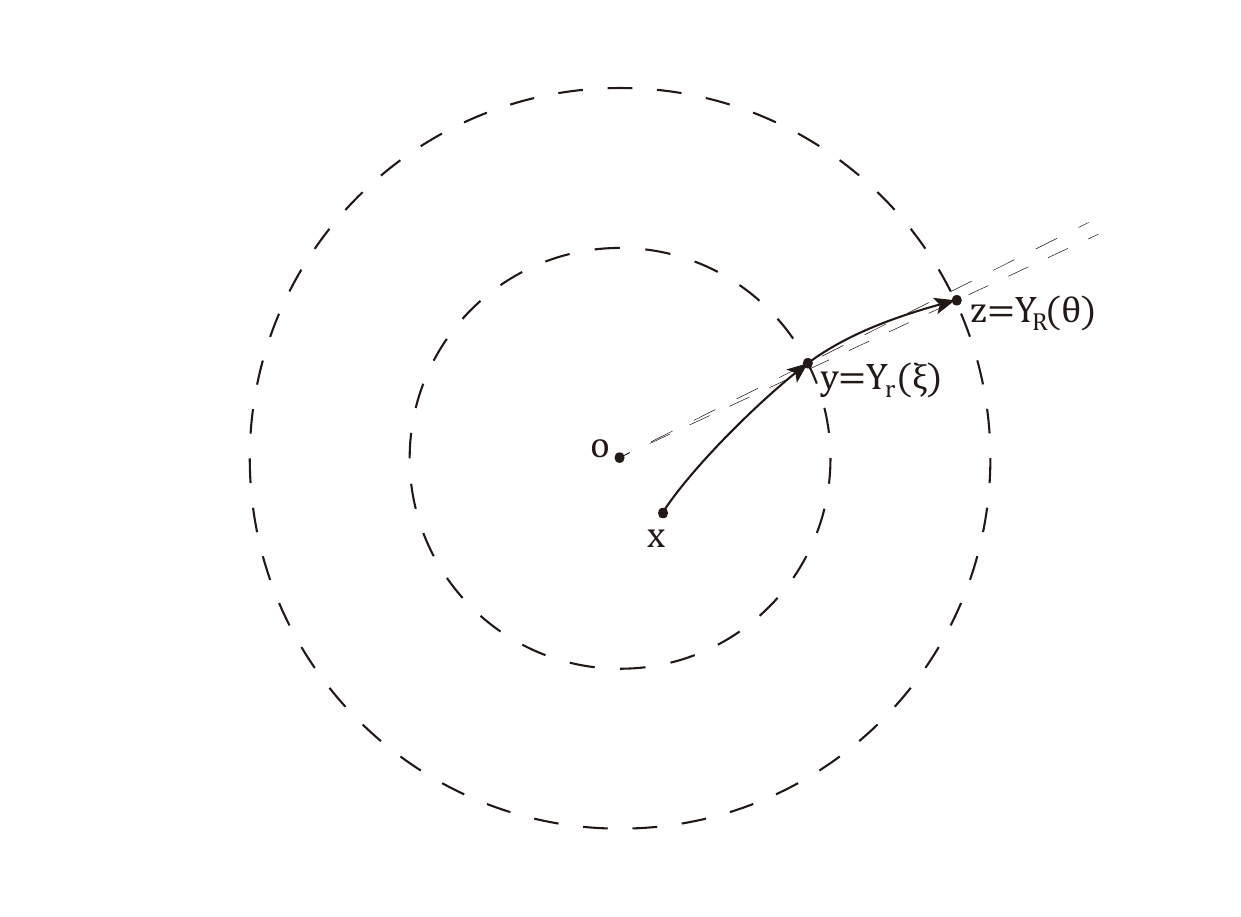}
	\caption{Geodesic from $x$ to $y$ and continued to $z$.}
	\label{figlm47}
\end{figure}
		
		The coordinate ball was chosen to be strongly geodesically convex, and $\eps$ is small
		so that the unique minimizing geodesic segments connecting $x$, $y$, and $z$ remain in it. See Figure~\ref{figlm47}. Applying the first variation formula to variations of the endpoint $z=Y_R(\theta)$ along the outer sphere gives directly
		\[
		\partial_\theta\{-d_g(x,Y_R(\theta))+d_g(Y_R(\theta),y)\}
		\big|_{\theta=\theta_*}=0.
		\]
		Since $y=Y_r(\xi)$, this is exactly
		\[
		\partial_\theta\{\Phi_R(x,\theta)+S_r(\theta,\xi)\}
		\big|_{\theta=\theta_*}=0.
		\]
		By uniqueness of the critical point,
		\[
		\theta_* = \Theta_r(x,\xi).
		\]
		Finally, distance additivity along the minimizing geodesics connecting $x$, $y$ and $z$ gives
		\[
		d_g(x,z)=d_g(x,y)+d_g(y,z).
		\]
		Therefore
		\begin{align*}
			\Phi_R(x,\theta_*)+S_r(\theta_*,\xi)
			&=-d_g(x,z)+d_g(z,y)\\
			&=-d_g(x,y)=\Phi_r(x,\xi).
		\end{align*}
		Substituting $\theta_* = \Theta_r(x,\xi)$ proves the identity.
	\end{proof}
	
	\section{Outer-radius factorization}\label{sec5}
	This section proves the analytic consequences of the geometric lemmas from
	Section~\ref{sec4}.  We keep the frozen radius $r\in I$ as an external
	parameter throughout; no $r$-derivatives are used.  We shall prove that the oscillatory integral operator $T_{\la,r}^a$ in \eqref{defop} can be factorized into the following two operators:
	\begin{align*}
		U_{\lambda,r}h(\theta)
		&=\lambda\int e^{i\lambda S_r(\theta,\xi)}\chi(\theta,\xi)h(\xi)\,d\xi,\\
		E^A_{\lambda,R}h(x)
		&=\lambda\int e^{i\lambda\Phi_R(x,\theta)}A(x,\theta,\lambda)h(\theta)\,d\theta.
	\end{align*}
	The constants below may depend on the fixed cap data and on powers of
	$\eps^{-1}$, but are uniform in $r\in I$ and $\lambda\ge1$. 
	
	\subsection{Auxiliary estimates}
	
	\begin{lemma}\label{lem:angular}
		For every $r\in I$ and every $\lambda\ge1$,
		\[
		\norm{U_{\lambda,r}h}_{L^2_\theta}\le C_\eps\norm{h}_{L^2(K_2)}.
		\]
	\end{lemma}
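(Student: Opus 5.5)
The plan is a $TT^*$ argument for the two-dimensional oscillatory integral operator $U_{\lambda,r}$, whose phase $S_r(\theta,\xi)$ has a nondegenerate mixed Hessian of size $\eps$. In dimension two, H\"ormander's $L^2$ bound predicts $\norm{U_{\lambda,r}h}\ls\lambda\cdot(\lambda\eps)^{-1}\norm{h}$, which is of the form $C_\eps\norm{h}$. The argument is the same as in Lemma~\ref{lem:l2}, carried out in $\R^2$ and with the roles of the variables arranged to use the available separation estimate.

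It is convenient to work with $U_{\lambda,r}U_{\lambda,r}^*$ rather than $U^*U$, because Lemma~\ref{lem:critical-geometry}(iii) gives separation in $\theta$ after taking the $\xi$-gradient. The kernel of $UU^*$ is
\[
K(\theta,\theta')=\lambda^2\int e^{i\lambda(S_r(\theta,\xi)-S_r(\theta',\xi))}\chi(\theta,\xi)\overline{\chi(\theta',\xi)}\,\tilde\chi(\xi)\,d\xi,
\]
where $\tilde\chi\in C_0^\infty(V_0)$ equals one on $K_2$. Inserting $\tilde\chi$ is harmless, since $h$ is supported in $K_2$ and we may replace $U$ by $U\tilde\chi$. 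On the support we have $\chi(\theta,\xi)\chi(\theta',\xi)\neq0$, so by \eqref{eq:segment-condition} the segment between $\theta'$ and $\theta$ stays in $|\vartheta-\xi|<c_1$. Estimate \eqref{eq:xi-nonstationary} then gives
\[
|\nabla_\xi(S_r(\theta,\xi)-S_r(\theta',\xi))|\ge c\eps|\theta-\theta'|.
\]
The matching upper bounds on derivatives come from the mean value theorem:
\[
\partial_\xi^\gamma\nabla_\xi\{S_r(\theta,\xi)-S_r(\theta',\xi)\}=O_{\gamma,\eps}(|\theta-\theta'|),
\]
since $S_r$ is smooth on the separated shell. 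Note that $d_g(Y_R(\theta),Y_r(\xi))$ is smooth because $R-r\ge3C_0\eps$. All derivatives of the cutoffs are bounded with constants depending on $\eps$.

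Next, integrate by parts $N=3$ times in $\xi$, using the vector field
\[
L=\frac{1}{i\lambda}\,\frac{\nabla_\xi\Delta}{|\nabla_\xi\Delta|^2}\cdot\nabla_\xi,
\qquad \Delta=S_r(\theta,\xi)-S_r(\theta',\xi),
\]
exactly as in the proof of Lemma~\ref{lem:l2}. This yields
\[
|K(\theta,\theta')|\le C_\eps\lambda^2(1+\lambda|\theta-\theta'|)^{-3}.
\]
Schur's test in the two-dimensional variable $\theta$ then gives
\[
\norm{UU^*}_{2\to2}\le C_\eps\lambda^2\int_{\R^2}(1+\lambda|t|)^{-3}\,dt\le C_\eps,
\]
and hence $\norm{U}_{2\to2}\le C_\eps^{1/2}$, uniformly in $r\in I$ and $\lambda\ge1$.

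The main point to check is the direction of the nonstationary bound. Lemma~\ref{lem:critical-geometry}(iii) controls the $\xi$-gradient of a difference in $\theta$, which is exactly what the $UU^*$ kernel requires. If one instead used $U^*U$, one would need the symmetric statement with the roles of $\theta$ and $\xi$ exchanged. That version would follow in the same way from \eqref{eq:S-mixed-lower}, using the transpose of the mixed Hessian, but the segment condition would then have to be verified in $\xi$, and the cutoff $\chi$ is only guaranteed convex in $\theta$. This is why the $UU^*$ route is the natural choice.

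The only other subtlety is the uniformity of the constants in $r$. This follows because the bounds in Lemma~\ref{lem:critical-geometry} hold uniformly for $r\in I$, and the scaled derivatives of $S_r$ are bounded uniformly in $\rho$.
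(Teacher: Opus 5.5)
Your proposal is correct and is essentially the paper's proof. Both use the $UU^*$ kernel, the segment condition from Lemma~\ref{lem:collar} combined with \eqref{eq:xi-nonstationary} to get $|\nabla_\xi\{S_r(\theta,\xi)-S_r(\theta',\xi)\}|\ge c\eps|\theta-\theta'|$, repeated integration by parts in $\xi$, and Schur's test on the two-dimensional angular patch, and your smooth cutoff $\tilde\chi\equiv1$ on $K_2$ (strictly $\tilde\chi^2$ in the kernel of $(U\tilde\chi)(U\tilde\chi)^*$) is a worthwhile refinement, since it removes the boundary terms that integrating by parts over the sharp domain $K_2$ in the paper's kernel would otherwise create.
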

	
	\begin{proof}
		The kernel of $U_{\lambda,r}U_{\lambda,r}^*$ is
		\[
		K_r(\theta,\theta')
		=\lambda^2\int_{K_2} e^{i\lambda\{S_r(\theta,\xi)-S_r(\theta',\xi)\}}
		\chi(\theta,\xi)\overline{\chi(\theta',\xi)}\,d\xi .
		\]
		If the integrand is nonzero, the fixed cutoff construction in
		Section~\ref{sec:fixed-margin} implies that the segment
		$\theta_t=\theta'+t(\theta-\theta')$ lies in $|\theta_t-\xi|<c_1$.  Hence
		Lemma~\ref{lem:critical-geometry} gives
		\[
		\abs{\nabla_\xi\{S_r(\theta,\xi)-S_r(\theta',\xi)\}}
		\ge c\eps|\theta-\theta'| .
		\]
		The higher $\xi$-derivatives of the scaled phase difference satisfy the corresponding $O(|\theta-\theta'|)$ bounds on the same support.  Integration by parts then gives
		\[
		|K_r(\theta,\theta')|
		\le C_N\lambda^2(1+\lambda\eps|\theta-\theta'|)^{-N},
		\qquad N>2.
		\]
		Schur's test on the two-dimensional angular patch yields
		\[
		\sup_\theta\int |K_r(\theta,\theta')|\,d\theta'
		+\sup_{\theta'}\int |K_r(\theta,\theta')|\,d\theta
		\le C_N\eps^{-2}.
		\]
		Thus $\|U_{\lambda,r}\|_{2\to2}\le C_\eps$.
	\end{proof}
	
	\begin{lemma}\label{lem:linear}
		Let $K\Subset U$ be fixed.  Let
		$A(x,\theta,\lambda)$ be a bounded $C^\infty$ order-zero amplitude family supported in $K\times V_0$.  Then
		\begin{align}
			\norm{E^A_{\lambda,R}h}_{L^2(U)}&\le C_\eps\norm{h}_{L^2_\theta},\label{eq:linear-L2}\\
			\norm{E^A_{\lambda,R}h}_{L^\infty(U)}&\le C\lambda\norm{h}_{L^2_\theta}.\notag\end{align}
	\end{lemma}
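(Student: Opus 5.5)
The $L^\infty$ bound is immediate from Cauchy--Schwarz. For fixed $x$, $|E^A_{\lambda,R}h(x)|\le \lambda\int|A(x,\theta,\lambda)||h(\theta)|\,d\theta\le \lambda\,\|A\|_{L^\infty}|V_0|^{1/2}\|h\|_{L^2_\theta}$. The amplitude is bounded uniformly in $\lambda$ because it belongs to a bounded order-zero family, and $V_0$ is a fixed bounded cap. This gives the second inequality with $C$ independent of $\lambda$ and $x$.

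For the $L^2$ bound \eqref{eq:linear-L2} I will use the classical $TT^*$ argument, in the same form as Lemma~\ref{lem:l2} but in only two angular variables. The kernel of $(E^A_{\lambda,R})^*E^A_{\lambda,R}$ is
\[
K(\theta,\theta')=\lambda^2\int e^{i\lambda(\Phi_R(x,\theta')-\Phi_R(x,\theta))}\overline{A(x,\theta,\lambda)}A(x,\theta',\lambda)\,dx,
\]
and the $x$-integral is over the fixed compact set $K\Subset U$. By Lemma~\ref{lem:outer-elliptic}(ii) with $s=R$, we have $|\nabla_x(\Phi_R(x,\theta')-\Phi_R(x,\theta))|\ge c|\theta-\theta'|$ uniformly for $x\in U$. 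Also \eqref{eq:x-derivative-difference} bounds all higher $x$-derivatives of this gradient difference by $C_{\gamma,\eps}|\theta-\theta'|$.

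I will integrate by parts $N$ times with the vector field $L$ from the proof of Lemma~\ref{lem:l2}. The amplitude has compact $x$-support in $K$, so there are no boundary terms. Its $x$-derivatives are bounded, though they may depend on $\eps$ through the support size and scaling. This gives
\[
|K(\theta,\theta')|\le C_{N,\eps}\lambda^2(1+\lambda|\theta-\theta'|)^{-N}.
\]
Schur's test in the two-dimensional $\theta$ variable then yields $\|E^*E\|_{2\to2}\le C_\eps\lambda^2\cdot\lambda^{-2}=C_\eps$, and hence $\|E^A_{\lambda,R}h\|_{L^2(U)}\le C_\eps\|h\|_{L^2_\theta}$.

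The only point needing care is that the map $\theta\mapsto\nabla_x\Phi_R(x,\theta)$ is a two-parameter family of covectors in $\mathbb R^3$. Its injectivity with the linear lower bound is exactly Lemma~\ref{lem:outer-elliptic}(ii), so the nondegeneracy in $x$ suffices for the two-dimensional Schur estimate. A full rank-three condition is not needed, since we only claim an $O(1)$ bound rather than a decaying one. The constants are allowed to depend on $\eps$, so the $\eps^{-|\gamma|}$ losses from unscaled derivatives cause no trouble.
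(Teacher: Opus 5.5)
Your proposal is correct and follows essentially the same approach as the paper: Cauchy--Schwarz over the compact angular patch for the $L^\infty$ bound, and, for the $L^2$ bound, the $T^*T$ kernel, repeated integration by parts in $x$ using the lower bound and the differentiated upper bounds from Lemma~\ref{lem:outer-elliptic}(ii) to get $|K(\theta,\theta')|\le C_{N,\eps}\lambda^2(1+\lambda|\theta-\theta'|)^{-N}$, and Schur's test in the two angular variables. One small point: Schur's test controls $\|(E^A_{\lambda,R})^*E^A_{\lambda,R}\|$, so the operator norm of $E^A_{\lambda,R}$ is $C_\eps^{1/2}$, which you correctly absorb into a renamed constant.
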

	
	\begin{proof}
		The $L^\infty$ bound follows immediately from Cauchy's inequality in the
		compact angular patch.  For the $L^2$ bound, the kernel of
		$(E^A_{\lambda,R})^*E^A_{\lambda,R}$ is
		\[
		K_s(\theta,\eta)
		=\lambda^2\int_U e^{i\lambda(\Phi_s(x,\eta)-\Phi_s(x,\theta))}
		\overline{A(x,\theta,\lambda)}A(x,\eta,\lambda)\,dx .
		\]
		Lemma~\ref{lem:outer-elliptic} gives, on $K\times V_0$,
		\[
		|\nabla_x\Phi_s(x,\eta)-\nabla_x\Phi_s(x,\theta)|\ge c|\eta-\theta|,
		\]
		with the corresponding differentiated upper bounds, where the constants may
		depend on $\eps$.  Repeated integration by parts in $x$ therefore gives
		\[
		|K_s(\theta,\eta)|
		\le C_{N,\eps}\lambda^2(1+\lambda|\theta-\eta|)^{-N},
		\qquad N>2.
		\]
		Schur's test on the two-dimensional angular variables proves
		\eqref{eq:linear-L2}.
	\end{proof}
	
	\subsection{Stationary phase}
	
	We shall use the following consequences of Section~\ref{sec4}.  For
	$x\in\overline{U_0}$, $r\in I$, and $\xi\in K_2$, the phase
	\[
	\Psi_r(x,\theta,\xi)=\Phi_R(x,\theta)+S_r(\theta,\xi)
	\]
	has a unique critical point $\Theta_r(x,\xi)$ on the support region, with exact
	critical value
	\begin{equation}\label{eq:section4-critical-value}
		\Psi_r(x,\Theta_r(x,\xi),\xi)=\Phi_r(x,\xi).
	\end{equation}
	The critical Hessian
	\begin{equation}\label{eq:section4-H-def}
		H_r(x,\xi)=\partial_{\theta\theta}^2
		\Psi_r(x,\Theta_r(x,\xi),\xi)
	\end{equation}
	satisfies $H_r\ge c\eps I_2$ and $|\det H_r|\ge c\eps^2$.  For later use set
	\[
	q_r(x,\xi):=(2\pi)e^{i\pi\sig(H_r(x,\xi))/4}
	|\det H_r(x,\xi)|^{-1/2},
	\]
	where $\sig(H)$ is the number of positive eigenvalues minus the number of
	negative eigenvalues.  Then $|q_r|\approx\eps^{-1}$.
	Moreover
	$\Xi_r$ is the inverse critical map on $\overline{U_0}\times W$, so
	\[
	\Xi_r(x,\Theta_r(x,\xi))=\xi,
	\qquad
	\Theta_r(x,\Xi_r(x,\theta))=\theta .
	\]
	The cutoff $\chi$ is equal to one on a fixed neighborhood of the critical graph
	and satisfies the segment condition from Section~\ref{sec:fixed-margin}.  Hence,
	by the nonstationary estimate in Lemma~\ref{lem:critical-geometry} and
	compactness, there is a smaller fixed neighborhood of the critical graph such
	that on the support of $\chi$ outside this neighborhood,
	\begin{equation}\label{eq:section4-nonstationary}
		|\partial_\theta\Psi_r(x,\theta,\xi)|\gs\eps .
	\end{equation}
	
	Choose $\psi_1\in C_0^\infty(U)$ and $\psi\in C_0^\infty(W)$ such
	that $\supp\psi_1\Subset U_0,\ 
	\psi_1=1\text{ near }K_1,$
	and $\psi=1$ on a fixed neighborhood of
	\[
	\{\Theta_r(x,\xi):x\in K_1,\ r\in I,\ \xi\in K_2\}.
	\]
	The support of $\psi$ is compactly contained in the collar where $\Xi_r$ is
	defined.  
	
	\begin{proposition}\label{prop:sogge-stationary}
		Let $K\Subset U$ be a compact set on which the critical geometry above holds,
		and let $B_r(x,\xi,\theta,\lambda)$ be a bounded $C^\infty$ order-zero
		amplitude family supported where $\chi$ is supported.  Then, for
		$x\in K$, $\xi\in K_2$, and $r\in I$,
		\begin{align}\label{eq:sogge-sp-formula}
			&\int e^{i\lambda\Psi_r(x,\theta,\xi)}
			B_r(x,\xi,\theta,\lambda)\chi(\theta,\xi)\,d\theta  \\
			&\quad =
			e^{i\lambda\Phi_r(x,\xi)}
			\left
			[
			\lambda^{-1}q_r(x,\xi)
			B_r(x,\xi,\Theta_r(x,\xi),\lambda)
			+\lambda^{-2}\mathcal E_{\lambda,r}^B(x,\xi)
			\right],
			\notag
		\end{align}
		where $\mathcal E_{\lambda,r}^B$ is a bounded $C^\infty$ amplitude, with
		bounds depending only on finitely many seminorms of $B_r$ and on the fixed
		geometric data.  The same assertion applies to any smooth phase family
		satisfying the same critical point, nonstationary, Hessian, and exact
		critical-value hypotheses, with $q$ defined from the corresponding Hessian.
	\end{proposition}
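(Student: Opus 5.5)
The plan is to apply the standard two-dimensional stationary phase theorem with parameters (H\"ormander's Theorem 7.7.5, in its version with smooth dependence on parameters) to the $\theta$-integral, with $(x,\xi,r)$ frozen. First I split the amplitude with a cutoff $\tilde\chi(x,\theta,\xi)$ that equals one on a small fixed neighborhood of the critical graph $\{\theta=\Theta_r(x,\xi)\}$ and is supported where \eqref{eq:section4-nonstationary} fails to hold only near the graph. On the support of $(1-\tilde\chi)B_r\chi$ we have $|\partial_\theta\Psi_r|\gtrsim\eps$. All $\theta$-derivatives of $\Psi_r$ are bounded; in scaled variables they are $O(\eps)$. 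So repeated integration by parts with $L=(i\lambda)^{-1}|\partial_\theta\Psi_r|^{-2}\partial_\theta\Psi_r\cdot\partial_\theta$ gives $O_N(\lambda^{-N})$, with constants depending on powers of $\eps^{-1}$. Multiplying by $e^{-i\lambda\Phi_r}$, which is bounded with all derivatives up to a power of $\lambda$, this term is absorbed into $\lambda^{-2}\mathcal E^B_{\lambda,r}$. Note that $x$-derivatives are not needed here, or they only cost harmless powers of $\lambda$ that are compensated by taking $N$ large.

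Near the critical graph I use the quantitative data from Lemma~\ref{lem:critical-geometry}. There is a unique critical point $\Theta_r(x,\xi)$, which depends smoothly and uniformly on parameters in the scaled variables. The Hessian $H_r\ge c\eps I_2$ along the whole support, by \eqref{eq:Psi-Hess-lower}. These give exactly the hypotheses of the parametric Morse lemma and stationary phase, uniformly in $r\in I$. Stationary phase yields
\[
e^{i\lambda\Psi_r(x,\Theta_r,\xi)}\Bigl[\lambda^{-1}(2\pi)e^{i\pi\sig H_r/4}|\det H_r|^{-1/2}B_r(x,\xi,\Theta_r,\lambda)+\lambda^{-2}\mathcal R\Bigr],
\]
where $\tilde\chi\chi=1$ at the critical point. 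The remainder $\mathcal R$ is controlled by finitely many $\theta$-derivatives of $B_r$ and of $\Psi_r$, divided by powers of $\det H_r$. I then replace the critical value by $\Phi_r(x,\xi)$ using the exact identity \eqref{eq:section4-critical-value}, which is Lemma~\ref{lem:critical-value}. This is what makes the phase exactly $\Phi_r$ and not merely an approximation to it.

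The main obstacle is showing that $\mathcal E^B_{\lambda,r}$ is a bounded $C^\infty$ amplitude in $(x,\xi)$, not merely bounded. To handle this, I differentiate under the integral before performing the stationary phase expansion. Write $I(x,\xi)=e^{i\lambda\Phi_r}J$, where $J=\int e^{i\lambda(\Psi_r-\Psi_r(\cdot,\Theta_r,\cdot))}B_r\chi\,d\theta$. Next, apply Taylor's formula to the phase about $\Theta_r$ and use the Morse change of variables $\theta\mapsto\Theta_r+H_r^{-1/2}$-adapted coordinates, which depend smoothly on $(x,\xi)$. This reduces $J$ to a Gaussian-type integral $\int e^{i\lambda Q(w)/2}b(x,\xi,w)\,dw$ with fixed quadratic form $Q$ and smooth amplitude $b$. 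For that integral, the remainder after the first term of the asymptotic expansion is $\lambda^{-2}$ times an expression that is smooth in the parameters, because derivatives fall only on $b$. The general statement for other phase families follows verbatim, since only the listed hypotheses were used.
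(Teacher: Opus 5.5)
Your proposal is correct and takes essentially the same approach as the paper: you cut off near the critical graph, integrate by parts on the nonstationary part using $|\partial_\theta\Psi_r|\gtrsim\eps$, apply parametric stationary phase on the critical part, and then replace the critical value by $\Phi_r$ via the exact identity of Lemma~\ref{lem:critical-value}. The only difference is that your parametric Morse-lemma reduction spells out why the remainder $\mathcal E^B_{\lambda,r}$ is smooth in $(x,\xi)$, which the paper instead takes directly from the variable-coefficient stationary phase theorem it cites (Sogge, Corollary 1.1.8).
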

	
	\begin{proof}
		Choose a smooth cutoff $\zeta_r(x,\theta,\xi)$ equal to one near the critical
		graph and supported where $\chi=1$.  On the support of $(1-\zeta_r)\chi$,
		\eqref{eq:section4-nonstationary} gives
		\[
		|\partial_\theta\Psi_r(x,\theta,\xi)|\gs\eps .
		\]
		For this non-stationary part, repeated integration by parts in $\theta$ gives, for every $L$ and fixed $N$,
		\[
		\|I_{\lambda,{\rm ns}}\|_{C^N_{x,\xi}}
		\le C_{L,N,\eps}\lambda^{-L}.
		\]
		Taking $L$ large allows this term to be absorbed into the remainder.
		
		On the critical part, the variable coefficient version of the stationary phase theorem (see e.g. Sogge \cite[Corollary 1.1.8]{fio}) applies uniformly in $r$.  The critical value is
		\eqref{eq:section4-critical-value} and the Hessian is
		\eqref{eq:section4-H-def}, so the critical contribution is
		\[
		e^{i\lambda\Phi_r(x,\xi)}
		\left[
		\lambda^{-1}q_r(x,\xi)
		B_r(x,\xi,\Theta_r(x,\xi),\lambda)
		+\lambda^{-2}\mathcal E_{\lambda,r}^B(x,\xi)
		\right],
		\]
		with $\mathcal E_{\lambda,r}^B$ bounded in $C^\infty$.  Adding the
		nonstationary contribution gives \eqref{eq:sogge-sp-formula}.  The final
		variant follows because the proof uses only the listed hypotheses.
	\end{proof}
	
	\subsection{Outer-radius factorization}
	
	\begin{proposition}[Outer-radius factorization]\label{prop:factorization}
		For every bounded $C^\infty$ order-zero amplitude $a_r$ satisfying the hypotheses in the definition of \(T^a_{\lambda,r}\) in \eqref{defop}, define
		\begin{equation}\label{eq:A-sogge-def}
			A_r(x,\theta,\lambda)
			:=\psi_1(x)\psi(\theta)
			q_r(x,\Xi_r(x,\theta))^{-1}
			a_r(x,\Xi_r(x,\theta),\lambda)
		\end{equation}
		wherever $\psi_1(x)\psi(\theta)\ne0$, and set $A_r=0$ elsewhere.  This is a
		smooth order-zero amplitude because $\supp\psi_1\Subset U_0$
		and $\supp\psi\Subset W$, where $\Xi_r$ is defined.  Then
		\[
		T^a_{\lambda,r}h
		=E^{A_r}_{\lambda,R}U_{\lambda,r}h+R_{\lambda,r}h,
		\qquad h\in L^2(K_2),
		\]
		and
		\[
		\|R_{\lambda,r}h\|_{L^\infty(U)}+
		\|R_{\lambda,r}h\|_{L^2(U)}
		\le C_\eps\|h\|_{L^2(K_2)},
		\qquad \lambda\ge1,\ r\in I.
		\]
		The amplitude $A_r$ belongs to a bounded $C^\infty$ order-zero class in the
		active variables $(x,\theta)$.
	\end{proposition}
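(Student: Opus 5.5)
We expand the composition directly. For $h\in L^2(K_2)$, Fubini gives
\[
E^{A_r}_{\lambda,R}U_{\lambda,r}h(x)=\lambda\int \Bigl(\lambda\int e^{i\lambda\Psi_r(x,\theta,\xi)}A_r(x,\theta,\lambda)\chi(\theta,\xi)\,d\theta\Bigr)h(\xi)\,d\xi ,
\]
since $\Phi_R(x,\theta)+S_r(\theta,\xi)=\Psi_r(x,\theta,\xi)$. The inner $\theta$-integral is exactly the object treated in Proposition~\ref{prop:sogge-stationary}, with $B_r(x,\xi,\theta,\lambda)=A_r(x,\theta,\lambda)$. This choice is legitimate: $A_r$ is a bounded order-zero family, and the cutoff $\chi$ provides the required support in $\supp\chi$. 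Applying that proposition for $x\in K=\supp\psi_1$ and $\xi\in K_2$, the inner integral equals
\[
\lambda\, e^{i\lambda\Phi_r(x,\xi)}\bigl[\lambda^{-1}q_r(x,\xi)A_r(x,\Theta_r(x,\xi),\lambda)+\lambda^{-2}\mathcal E^{A}_{\lambda,r}(x,\xi)\bigr].
\]

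Next we identify the main term. Using $\Xi_r(x,\Theta_r(x,\xi))=\xi$ in \eqref{eq:A-sogge-def}, we get
\[
q_r(x,\xi)A_r(x,\Theta_r(x,\xi),\lambda)=\psi_1(x)\psi(\Theta_r(x,\xi))\,a_r(x,\xi,\lambda).
\]
Since $a_r$ is supported in $K_1\times K_2$, we may use that $\psi_1=1$ near $K_1$ and $\psi=1$ on a neighborhood of $\Theta_r(K_1\times K_2)$. This reduces the product to $a_r(x,\xi,\lambda)$. The main term is therefore precisely $\lambda\int e^{i\lambda\Phi_r}a_r h\,d\xi=T^a_{\lambda,r}h(x)$.

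The remainder is
\[
R_{\lambda,r}h(x)=-\int e^{i\lambda\Phi_r(x,\xi)}\mathcal E^A_{\lambda,r}(x,\xi)h(\xi)\,d\xi ,
\]
with an overall factor $\lambda\cdot\lambda\cdot\lambda^{-2}=1$. For $x$ outside $\supp\psi_1$ both sides vanish, because $A_r$ and $a_r$ are both zero there. Since $\mathcal E$ is bounded uniformly in $r$ and $\lambda$ on the compact set $\supp\psi_1\times K_2$, Cauchy--Schwarz in $\xi$ gives $\|R_{\lambda,r}h\|_{L^\infty(U)}\le C_\eps\|h\|_{L^2}$. The $L^2(U)$ bound then follows because $U$ has finite measure.

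It remains to check that $A_r$ lies in a bounded $C^\infty$ order-zero class. Lemma~\ref{lem:collar} gives smoothness of $\Xi_r$ on $\overline{U_0}\times W$ with uniform bounds, at the cost of powers of $\eps^{-1}$. The Hessian $H_r$ is smooth in $(x,\xi)$ with $|\det H_r|\ge c\eps^2$ and $\sig H_r$ constant, since $H_r$ is positive definite. Hence $q_r^{-1}$ is smooth with $|q_r^{-1}|\approx\eps$, and composing with $\Xi_r$ preserves this.

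The main obstacle is verifying uniformity in $r$ of the stationary phase step, and in particular the uniform $C^\infty$ bounds on $\mathcal E$. These rely on the uniform Hessian lower bound \eqref{eq:Psi-Hess-lower}, the nonstationary bound \eqref{eq:section4-nonstationary}, and the exact critical value of Lemma~\ref{lem:critical-value}. Proposition~\ref{prop:sogge-stationary} packages all of this, so the factorization reduces to the bookkeeping above.
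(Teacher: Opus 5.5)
Your proposal is correct and follows essentially the same route as the paper. You compute the composition kernel of $E^{A_r}_{\lambda,R}U_{\lambda,r}$, apply Proposition~\ref{prop:sogge-stationary} with $B_r=A_r$, match the leading term to $a_r$ using $\Xi_r(x,\Theta_r(x,\xi))=\xi$ together with $\psi_1=1$ near $K_1$ and $\psi=1$ near the critical angles, bound the $O_\eps(1)$ remainder kernel, and get the symbol bounds for $A_r$ by differentiating its definition. Your remarks that both sides vanish for $x\notin\supp\psi_1$, and that Cauchy--Schwarz together with the finite measure of $K_2$ and $U$ gives the $L^\infty$ and $L^2$ bounds, only make explicit what the paper leaves implicit.
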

	
	\begin{proof}
		For a trial outer amplitude $A$, the composition kernel of
		$E^A_{\lambda,R}U_{\lambda,r}$ is
		\begin{equation}\label{eq:composition-kernel-section4}
			K^A_{\lambda,r}(x,\xi)
			=\lambda^2\int e^{i\lambda\Psi_r(x,\theta,\xi)}
			A(x,\theta,\lambda)\chi(\theta,\xi)\,d\theta .
		\end{equation}
		Apply Proposition~\ref{prop:sogge-stationary}, with $K=\overline{U_0}$, to
		$B_r(x,\xi,\theta,\lambda)=A_r(x,\theta,\lambda)$.  For
		$x\in K_1$ and $\xi\in K_2$, the cutoff choices and the inverse identity give
		\[
		\psi_1(x)=1,
		\qquad
		\psi(\Theta_r(x,\xi))=1,
		\qquad
		\Xi_r(x,\Theta_r(x,\xi))=\xi.
		\]
		Therefore
		\begin{equation}\label{eq:leading-match-sogge}
			q_r(x,\xi)A_r(x,\Theta_r(x,\xi),\lambda)=a_r(x,\xi,\lambda).
		\end{equation}
		Multiplying the stationary-phase expansion by the outside factor $\lambda^2$ in
		\eqref{eq:composition-kernel-section4}, we get, on $\overline{U_0}\times K_2$,
		\[
		K^{A_r}_{\lambda,r}(x,\xi)
		=\lambda e^{i\lambda\Phi_r(x,\xi)}
		q_r(x,\xi)A_r(x,\Theta_r(x,\xi),\lambda)
		+\mathcal K_{\lambda,r}(x,\xi),
		\]
		where the kernel remainder satisfies
		\[
		\|\mathcal K_{\lambda,r}\|_{L^\infty(\overline{U_0}\times K_2)}\le C_\eps .
		\]
		Together with \eqref{eq:leading-match-sogge}, this gives the desired original
		kernel plus the bounded kernel remainder on $K_1\times K_2$.
		
Since \(a_r\) is supported in \(K_1\times K_2\), the factor
\(a_r(x,\Xi_r(x,\theta),\lambda)\) in \(A_r\) implies the kernel of
\(T^a_{\lambda,r}-E^{A_r}_{\lambda,R}U_{\lambda,r}\) is the
stationary-phase remainder, hence is uniformly \(O_\varepsilon(1)\) on
\(U\times K_2\).  The symbol
bounds for \(A_r\) follow by differentiating \eqref{eq:A-sogge-def} and using
the bounds for \(q_r\), \(\Xi_r\), and \(a_r\).
	\end{proof}
	\subsection{Two-radius bilinear estimate on a common cap}\label{sec6}
	For the operators in \eqref{defop}, we prove the following bilinear estimates by combining the outer-radius factorization in Proposition \ref{prop:factorization} and the common-phase bilinear estimate in Proposition \ref{prop:singlephase}. \begin{theorem}\label{thm:main}
		After shrinking $\eps$, $\tau$, and the caps if necessary,  for  every $r_1,r_2\in I$ and $1\le \alpha\le\beta$,
		\[
		\bigl\|T^{a_1}_{\alpha,r_1}f\,T^{a_2}_{\beta,r_2}g\bigr\|_{L^2(U)}
		\ls \alpha^{1/2}
		\|f\|_{L^2(K_2)}\|g\|_{L^2(K_2)} .
		\]
	\end{theorem}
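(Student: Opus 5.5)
The plan is to apply the outer-radius factorization of Proposition~\ref{prop:factorization} to each factor separately, which puts both factors on the common outer phase $\Phi_R$. We then use the common-phase estimate of Proposition~\ref{prop:singlephase}, together with the uniform $L^2$ bound for the angular propagators from Lemma~\ref{lem:angular}.

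Concretely, write
\[
T^{a_1}_{\alpha,r_1}f=E^{A_{r_1}}_{\alpha,R}U_{\alpha,r_1}f+R_{\alpha,r_1}f,\qquad
T^{a_2}_{\beta,r_2}g=E^{A_{r_2}}_{\beta,R}U_{\beta,r_2}g+R_{\beta,r_2}g.
\]
Expanding the product gives four terms.

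\emph{Main term.} The main term is $E^{A_{r_1}}_{\alpha,R}F\cdot E^{A_{r_2}}_{\beta,R}G$, with $F=U_{\alpha,r_1}f$ and $G=U_{\beta,r_2}g$. Both operators have the same phase $\Phi_R$. By Lemma~\ref{lem:outer-elliptic}(iii), $\Phi_R$ satisfies the elliptic Carleson--Sj\"olin condition on $U\times V_0$. The amplitudes $A_{r_j}$ lie in a bounded $C^\infty$ order-zero class that is uniform in $r_j$.

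After possibly shrinking the cap, or covering $V_0$ by finitely many small patches via a partition of unity in $\theta$, Proposition~\ref{prop:singlephase} applies to each pair of patches. It gives the bound $\alpha^{1/2}\|F\|_2\|G\|_2$. The constant is uniform because the proposition depends only on finitely many seminorms of the phase and amplitudes.

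There is one point to check. Proposition~\ref{prop:singlephase} is stated for a single patch around $\xi_0$, while $\theta$ and $\theta'$ may lie in different small patches. If the patches are far apart, I would either enlarge the patch so the argument in that proof still works (the segment $\xi_t,\eta_t$ stays inside $V_1$ where $M$ is invertible), or fall back on the transversal H\"ormander argument. I expect that simply choosing $V_0$ small enough from the start avoids this issue, since the proof of Proposition~\ref{prop:singlephase} only needs all frequencies to lie in one small ball. Finally, Lemma~\ref{lem:angular} gives $\|F\|_2\lesssim\|f\|_2$ and $\|G\|_2\lesssim\|g\|_2$.

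\emph{Remainder terms.} Proposition~\ref{prop:factorization} gives $\|R_{\nu,r}h\|_{L^\infty\cap L^2(U)}\lesssim\|h\|_2$.
\begin{itemize}
\item For $E^{A_{r_1}}_{\alpha,R}F\cdot R_{\beta,r_2}g$, use $L^2\times L^\infty$: Lemma~\ref{lem:linear} gives $\|E^{A_{r_1}}_{\alpha,R}F\|_{L^2}\lesssim\|f\|_2$, and $\|R_{\beta,r_2}g\|_\infty\lesssim\|g\|_2$. This term is $O(1)\le\alpha^{1/2}$.
\item For $R_{\alpha,r_1}f\cdot E^{A_{r_2}}_{\beta,R}G$, the $L^\infty$ bound $\beta\|G\|_2$ on $E_\beta$ is too large. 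Instead put $R_{\alpha,r_1}f$ in $L^\infty$ and $E^{A_{r_2}}_{\beta,R}G$ in $L^2$ via Lemma~\ref{lem:linear}. This term is again $O(1)$.
\item The product $R\cdot R$ is bounded by $L^\infty\times L^2$.
\end{itemize}

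\textbf{Main obstacle.} The main obstacle is verifying that the hypotheses of Proposition~\ref{prop:singlephase} hold uniformly for the phase $\Phi_R$ with the $r$-dependent amplitudes $A_{r_1}$ and $A_{r_2}$. These two amplitudes differ, but the proposition allows different $b_1,b_2$ with a common phase. The $\eps$-dependent constants are harmless since $\eps$ is fixed. The genuinely delicate input, the matching of the two frozen radii, has already been absorbed into the factorization through the exact critical-value identity of Lemma~\ref{lem:critical-value}.
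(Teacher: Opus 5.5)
Your proposal is correct and follows the paper's own argument. You factor both frozen pieces through Proposition~\ref{prop:factorization} onto the common outer phase $\Phi_R$, bound the main--main term by Proposition~\ref{prop:singlephase} (justified by Lemma~\ref{lem:outer-elliptic}(iii), with the cap shrunk at the outset as the statement permits) together with Lemma~\ref{lem:angular}, and dispose of the remainders, where your explicit $L^2\times L^\infty$ pairing via Lemma~\ref{lem:linear} and the $L^\infty$ remainder bound is exactly what the paper means by calling them harmless $O(1)$ contributions.
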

	Apply Proposition~\ref{prop:factorization} to both frozen radii.  Write
	\[
	Q_{\nu,r}h:=E^{A_r}_{\nu,R}U_{\nu,r}h,
	\qquad
	T^a_{\nu,r}h=Q_{\nu,r}h+R_{\nu,r}h .
	\]
	For the main-main term, Lemma~\ref{lem:outer-elliptic}(iii) allows Proposition~\ref{prop:singlephase} to be applied to the common outer phase $\Phi_R$.  Lemma~\ref{lem:angular} controls the angular propagators.  Hence, for $1\le\alpha\le\beta$,
	\begin{align*}
		\|Q_{\alpha,r_1}f\,Q_{\beta,r_2}g\|_{L^2(U)}
		\ls\alpha^{1/2}
		\|U_{\alpha,r_1}f\|_2
		\|U_{\beta,r_2}g\|_2 
		\ls\alpha^{1/2}
		\|f\|_2\|g\|_2 .
	\end{align*}
	The remainders are harmless and only contribute $O(1)$. This proves Theorem~\ref{thm:main}.

	\section{Two-radius bilinear estimate on antipodal caps}\label{sec7}
	
	This section proves the signed verification needed for antipodal caps. The stationary-phase and factorization machinery has already been proved in Sections~\ref{sec4}--\ref{sec5}. Here we check that the signed antipodal phases satisfy the same hypotheses. The only new geometric ingredient is the critical-value identity for the negative ray in Lemma~\ref{lem:signed-critical-value}.
	
	For $\sigma\in\{+1,-1\}$ define
	\[
	Y_r^\sigma(\xi)=\exp_o(\sigma r\Omega(\xi)),
	\qquad
	\Phi_r^\sigma(x,\xi)=-\sigma d_g(x,Y_r^\sigma(\xi)).
	\]
	  In normal
	coordinates at $o$,
	\[
	\nabla_x\Phi_r^\sigma(0,\xi)=\Omega(\xi),
	\qquad \sigma=\pm1,
	\]
	so both signs have $x$-covectors in the same small cap.  Put
	\[
	S_r^\sigma(\theta,\xi)
	=d_g(Y_R^+(\theta),Y_r^\sigma(\xi)),
	\qquad
	\Psi_r^\sigma(x,\theta,\xi)
	=\Phi_R^+(x,\theta)+S_r^\sigma(\theta,\xi),
	\]
	where
	\[
	Y_R^+(\theta)=\exp_o(R\Omega(\theta)),
	\qquad
	\Phi_R^+(x,\theta)=-d_g(x,Y_R^+(\theta)).
	\]
	For $\sigma=+1$ this is exactly the notation of
	Sections~\ref{sec4}--\ref{sec5}.
	
	\subsection{Signed critical geometry}

	\begin{lemma}[Signed critical map and common cutoffs]
		\label{lem:signed-critical-geometry}\label{lem:signed-collar-cutoff}
		After the cap, $c_1$, $\tau$, and $\eps$ are chosen sufficiently small, the
		following hold uniformly for $\sigma=\pm1$, $r\in I$, $x\in\overline{U_0}$, and
		$\xi\in V_0$.
		\begin{enumerate}[(i)]
			\item The equation
			\[
			\partial_\theta\Psi_r^\sigma(x,\theta,\xi)=0
			\]
			has a unique solution $\theta=\Theta_r^\sigma(x,\xi)$ in the region
			$|\theta-\xi|\le c_1$, and
			\begin{equation}\label{eq:signed-Theta-close}
				|\Theta_r^\sigma(x,\xi)-\xi|\le C(\tau+\eps^2).
			\end{equation}
			\item On $|\theta-\xi|\le c_1$,
			\begin{align}
				\sigma\,\partial_{\theta\theta}^2\Psi_r^\sigma(x,\theta,\xi)
				&\ge c\eps I_2,\label{eq:signed-Psi-Hess-lower}\\
				-\sigma\,\Sym\partial_{\theta\xi}^2S_r^\sigma(\theta,\xi)
				&\ge c\eps I_2.\notag\end{align}
			Consequently, whenever the segment from $\theta$ to
			$\Theta_r^\sigma(x,\xi)$ remains in $|\vartheta-\xi|\le c_1$,
			\begin{equation}\label{eq:signed-theta-nonstationary}
				|\partial_\theta\Psi_r^\sigma(x,\theta,\xi)|
				\ge c\eps |\theta-\Theta_r^\sigma(x,\xi)|.
			\end{equation}
			\item If the segment from $\theta'$ to $\theta$ remains in
			$|\vartheta-\xi|\le c_1$, then
			\begin{equation}\label{eq:signed-xi-nonstationary}
				|\nabla_\xi(S_r^\sigma(\theta,\xi)-S_r^\sigma(\theta',\xi))|
				\ge c\eps |\theta-\theta'|.
			\end{equation}
			\item For fixed $x,r,\sigma$, the map
			$\xi\mapsto\Theta_r^\sigma(x,\xi)$ is a diffeomorphism from
			$V_0$ onto its image, and in scaled variables
			\[
			\partial_\xi\Theta_r^\sigma(\eps X,\xi)
			=I_2+O(\tau+\diam V_0+\eps^2).
			\]
			The family is bounded in $C^\infty$ in the scaled variables $(X,\xi)$, and the unscaled $x$-derivatives may cost powers of $\eps^{-1}$.
			\item The same open set $W$ and the same angular cutoff $\chi$ from
			Lemma~\ref{lem:collar} work for both signs:
			\begin{equation}\label{eq:signed-W-inclusions}
				\Theta_r^\sigma(x,K_2)\Subset W\Subset\Theta_r^\sigma(x,V_0).
			\end{equation}
			Thus the inverse
			\[
			\xi=\Xi_r^\sigma(x,\theta),
			\qquad
			\Theta_r^\sigma(x,\Xi_r^\sigma(x,\theta))=\theta,
			\]
			is defined on $\overline{U_0}\times W$ and is bounded in $C^\infty$ in the active variables.  Moreover,
			$\chi=1$ on a fixed neighborhood of every signed critical pair
			$(\Theta_r^\sigma(x,\xi),\xi)$ with $x\in\overline{U_0}$, $r\in I$, and
			$\xi\in K_2$, and its support satisfies the segment condition
			\eqref{eq:segment-condition} for both signs.
		\end{enumerate}
	\end{lemma}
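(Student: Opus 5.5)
The plan is to reduce both signs to the scaled Euclidean shell model of Lemma~\ref{lem:model}. After that, the proofs of Lemma~\ref{lem:critical-geometry} and Lemma~\ref{lem:collar} can be rerun essentially verbatim, with the sign $\sigma$ absorbed into the monotonicity estimates. For $\sigma=+1$ every assertion is exactly Lemmas~\ref{lem:critical-geometry} and \ref{lem:collar}, so only $\sigma=-1$ requires new work.

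\textbf{Step 1: scaled model for $\sigma=-1$.} Write $x=\eps X$ and $r=\eps\rho$. Since $|X|\le\tau$, the points $X$ and $-\rho\Omega(\xi)$ are separated, and $\rho_R\Omega(\theta)$ and $-\rho\Omega(\xi)$ are separated by about $\rho_R+\rho$. Lemma~\ref{lem:scaled-distance} then gives
\[
\eps^{-1}\Psi_r^{-}(\eps X,\theta,\xi)=F^-_{\rho,X}(\theta,\xi)+O(\eps^2),
\qquad
F^-_{\rho,X}:=-|X-\rho_R\Omega(\theta)|+|\rho_R\Omega(\theta)+\rho\Omega(\xi)|,
\]
with the error bounded in $C^\infty$.

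\textbf{Step 2: model identities.} Compute, as in Lemma~\ref{lem:model}, using \eqref{eq:distance-Hessian-formula} with $U=\rho_R\Omega(\theta)+\rho\Omega(\xi)$. At $\theta=\xi$ one has $U=(\rho_R+\rho)\Omega(\xi)$, and all terms $U\cdot\partial U$ vanish. The first derivative $\partial_\theta F^-_{\rho,0}(\xi,\xi)$ is zero. For the second derivatives,
\[
\partial^2_{\theta\theta}F^-_{\rho,0}(\xi,\xi)=\frac{\rho_R^2G_S}{\rho_R+\rho}-\rho_RG_S=-\frac{\rho_R\rho}{\rho_R+\rho}G_S(\xi),
\qquad
\partial^2_{\theta\xi}|\rho_R\Omega(\theta)+\rho\Omega(\xi)|\big|_{\theta=\xi}=\frac{\rho_R\rho}{\rho_R+\rho}G_S(\xi).
\]
Thus $\sigma\partial^2_{\theta\theta}$ and $-\sigma\Sym\partial^2_{\theta\xi}$ are both uniformly positive, with constant comparable to $\rho_R\rho/(\rho_R+\rho)\approx1$. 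By continuity, after shrinking the cap, $c_1$ and $\tau_0$, these bounds persist on $|\theta-\xi|\le c_1$ and $|X|\le\tau_0$. Multiplying by $\eps$ and absorbing the $O(\eps^3)$ errors proves (ii) for $\sigma=-1$.

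\textbf{Step 3: existence, uniqueness and nonstationarity.} Apply the argument of Lemma~\ref{lem:critical-geometry} to $g=\sigma\partial_\theta\Psi_r^\sigma$, which is strongly monotone by (ii). We have $|g(\xi)|\le C\eps(\tau+\eps^2)$. The outward-pointing estimate on $|\theta-\xi|=R_0$, followed by the Brouwer/projection argument, gives existence of a critical point. Strong monotonicity gives uniqueness, \eqref{eq:signed-Theta-close}, and \eqref{eq:signed-theta-nonstationary}. Item (iii) follows by integrating $\partial^2_{\theta\xi}S^\sigma_r$ along the segment: the symmetric part of the averaged matrix is $\sigma$-definite of size $\eps$.

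\textbf{Step 4: the critical map and common cutoffs.} For (iv), differentiate the critical equation to get $\partial_\xi\Theta^\sigma_r=-H^{-1}B$. At the model point, $H=-\eps\kappa G_S$ and $B=\eps\kappa G_S$ with $\kappa=\rho_R\rho/(\rho_R+\rho)$. The signs cancel, giving $\partial_\xi\Theta^\sigma_r=I_2+O(\tau+\diam V_0+\eps^2)$. Higher derivatives follow by the same induction in scaled variables. For (v), the point is that the estimates
\[
\Theta^\sigma_r=\xi+O(\tau+\eps^2),\qquad D_\xi\Theta^\sigma_r=I_2+O(\cdots)
\]
are the only inputs to the proof of Lemma~\ref{lem:collar}. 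Hence the same $W$ (already fixed) works after possibly shrinking $\tau$ and $\eps$ once more, and the contraction argument gives $\Xi^\sigma_r$. The cutoff $\chi$ was chosen equal to one on an arbitrarily small fixed neighborhood of the diagonal over $K_2$ and supported in $|\theta-\xi|<c_1$. Since both signed critical graphs lie within $O(\tau+\eps^2)$ of the diagonal, the same $\chi$ works for both signs, and the segment condition is unchanged.

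\textbf{Main obstacle.} The delicate point is keeping every choice uniform in $\sigma$. The constants $c_1,\tau_0$ and the cap must be fixed from the model bounds for both signs before $W$ and $\chi$ are selected, so that Lemma~\ref{lem:collar} need not be redone with new sets. I would therefore take minima of the two model constants at the stage of Lemma~\ref{lem:model}. The check I expect to need most care is the sign bookkeeping in $\partial_\xi\Theta^-_r\approx I_2$, since an error there would break the diffeomorphism property.
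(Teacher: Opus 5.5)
Your proposal is correct and follows the paper's proof essentially step for step. The paper likewise passes to the signed Euclidean models $F^\sigma_{\rho,X}$ and $G^\sigma_\rho$ and computes $\sigma\,\partial^2_{\theta\theta}F^\sigma_{\rho,0}(\xi,\xi)=-\sigma\,\partial^2_{\theta\xi}G^\sigma_\rho(\xi,\xi)=\frac{\rho_R\rho}{\rho_R-\sigma\rho}G_S(\xi)$, which agrees with your $\sigma=-1$ computation. It then reruns the monotonicity/Brouwer, implicit-differentiation and contraction/collar arguments with the sign absorbed, shrinking the parameters once more so that a single $W$ and a single $\chi$ serve both signs, exactly as you describe.
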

	
	\begin{proof}
		We only record the changes from the unsigned argument.  Write $x=\eps X$ and
		$r=\eps\rho$.  The signed Euclidean models are
		\[
		F_{\rho,X}^\sigma(\theta,\xi)
		=-|X-\rho_R\Omega(\theta)|
		+|\rho_R\Omega(\theta)-\sigma\rho\Omega(\xi)|,
		\]
		and
		\[
		G_\rho^\sigma(\theta,\xi)
		=|\rho_R\Omega(\theta)-\sigma\rho\Omega(\xi)|.
		\]
		At $X=0$ and $\theta=\xi$, the same computation as in
		Lemma~\ref{lem:model}, with $\rho\Omega(\xi)$ replaced by
		$\sigma\rho\Omega(\xi)$, gives
		\[
		\partial_{\theta\theta}^2F_{\rho,0}^\sigma(\xi,\xi)
		=\sigma\frac{\rho_R\rho}{\rho_R-\sigma\rho}G_S(\xi),
		\qquad
		\partial_{\theta\xi}^2G_\rho^\sigma(\xi,\xi)
		=-\sigma\frac{\rho_R\rho}{\rho_R-\sigma\rho}G_S(\xi).
		\]
		Since $\rho_R-\sigma\rho$ is uniformly positive for both signs, continuity
		implies, after shrinking the cap, $c_1$, and $\tau$, that
		\[
		\sigma\partial_{\theta\theta}^2F_{\rho,X}^\sigma\ge cI_2,
		\qquad
		-\sigma\Sym\partial_{\theta\xi}^2G_\rho^\sigma\ge cI_2
		\]
		on the region $|X|\le\tau$, $|\theta-\xi|\le c_1$.
		
		The scaled distance expansion gives
		\[
		\eps^{-1}\Psi_r^\sigma(\eps X,\theta,\xi)
		=F_{\rho,X}^\sigma(\theta,\xi)+O(\eps^2),
		\qquad
		\eps^{-1}S_r^\sigma(\theta,\xi)
		=G_\rho^\sigma(\theta,\xi)+O(\eps^2).
		\]
		Thus
		\[
		\sigma\partial_{\theta\theta}^2\Psi_r^\sigma\ge c\eps I_2,
		\qquad
		-\sigma\Sym\partial_{\theta\xi}^2S_r^\sigma\ge c\eps I_2.
		\]
		Moreover
		\[
		\partial_\theta\Psi_r^\sigma(\eps X,\xi,\xi)
		=O(\eps(\tau+\eps^2)).
		\]
		The proof of Lemma~\ref{lem:critical-geometry} now applies verbatim, with
		$\partial_{\theta\theta}^2\Psi_r$ replaced by
		$\sigma\partial_{\theta\theta}^2\Psi_r^\sigma$ and with
		$-\Sym\partial_{\theta\xi}^2S_r$ replaced by
		$-\sigma\Sym\partial_{\theta\xi}^2S_r^\sigma$.  Indeed, the first of the
		last two inequalities gives the same strong monotonicity for
		$\sigma\partial_\theta\Psi_r^\sigma$, while the second gives the same angular
		propagation estimate in the $\xi$ variable.  This proves the existence and
		uniqueness of $\Theta_r^\sigma$, the bound
		\eqref{eq:signed-Theta-close}, and the two nonstationary estimates
		\eqref{eq:signed-theta-nonstationary}--\eqref{eq:signed-xi-nonstationary}.
		The differentiated implicit-equation argument from Lemma~\ref{lem:critical-geometry}
		also gives
		\[
		\partial_\xi\Theta_r^\sigma(\eps X,\xi)
		=I_2+O(\tau+\diam V_0+\eps^2),
		\]
		and the stated $C^\infty$ boundedness in scaled variables.  After shrinking
		the parameters, $\Theta_r^\sigma(x,\cdot)$ is therefore a diffeomorphism from
		$V_0$ onto its image.
		
		Finally, the common inverse neighborhood and cutoff follow from the same
		compactness and contraction argument as Lemma~\ref{lem:collar}.  For both
		signs,
		\[
		\Theta_r^\sigma(x,\xi)=\xi+O(\tau+\eps^2),
		\qquad
		D_\xi\Theta_r^\sigma(x,\xi)=I_2+O(\tau+\diam V_0+\eps^2).
		\]
		Since there are only two signs, after shrinking the parameters once more the
		same open set $W$ and the same cutoff $\chi$ work simultaneously for
		$\sigma=\pm1$.  This gives \eqref{eq:signed-W-inclusions}, the inverse maps
		$\Xi_r^\sigma$ on $\overline{U_0}\times W$, and the common segment condition.
	\end{proof}
	
	\subsection{Signed critical value}
	
	\begin{lemma}[Signed critical value]\label{lem:signed-critical-value}
		For $\sigma=\pm1$,
		\begin{equation}\label{eq:signed-critical-value}
			\Phi_R^+(x,\Theta_r^\sigma(x,\xi))
			+S_r^\sigma(\Theta_r^\sigma(x,\xi),\xi)
			=\Phi_r^\sigma(x,\xi),
		\end{equation}
		for $x\in\overline{U_0}$, $r\in I$, and $\xi\in K_2$.
	\end{lemma}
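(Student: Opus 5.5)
For $\sigma=+1$ this is exactly Lemma~\ref{lem:critical-value}, so only $\sigma=-1$ needs an argument. The plan is to mimic that proof: we build an explicit geodesic configuration whose endpoint on the outer sphere is a critical point of $\theta\mapsto\Psi_r^{-}(x,\theta,\xi)$. Uniqueness from Lemma~\ref{lem:signed-critical-geometry}(i) then identifies this point with $\Theta_r^{-}(x,\xi)$, and distance additivity gives the value. The geometric difference from the unsigned case is the ordering of the points along the geodesic. Now $y=Y_r^-(\xi)=\exp_o(-r\Omega(\xi))$ lies on the \emph{opposite} side of $x\approx o$ from the outer sphere cap $Y_R^+(V_0)$. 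The relevant geodesic therefore runs from $y$ through $x$ and on to $z=Y_R^+(\theta_*)$, with $x$ in the middle instead of $y$.

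Concretely, write $x=\eps X$, $r=\eps\rho$, and let $\gamma$ be the minimizing $g_\eps$-geodesic from $\rho'=-\rho\Omega(\xi)$ to $X$. By Lemma~\ref{lem:flow}, its terminal unit tangent at $X$ is
\[
\frac{X+\rho\Omega(\xi)}{|X+\rho\Omega(\xi)|}+O(\eps^2)=\Omega(\xi)+O(\tau+\eps^2).
\]
Extend $\gamma$ past $X$. Lemma~\ref{lem:flow} then gives $Z(t)=X+t\Omega(\xi)+O(\tau+\eps^2)$ with $\frac{d}{dt}|Z(t)|\ge\frac12$. Since $|Z(0)|=|X|\le\tau<\rho_R$, there is a unique hitting time $t_*$ with $|Z(t_*)|=\rho_R$. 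Writing $Z(t_*)=\rho_R\Omega(\theta_*)$, we get $|\theta_*-\xi|\le C(\tau+\eps^2)$, so $\theta_*$ lies in the region where Lemma~\ref{lem:signed-critical-geometry} applies. Every segment involved has length at most $(\rho+\rho_R+1)\eps$. Shrinking $\eps$, they stay in the strongly convex normal ball, so $y$, $x$, $z$ lie in order on a single minimizing geodesic. This gives
\[
d_g(y,z)=d_g(y,x)+d_g(x,z).
\]

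Next we verify criticality. By the first variation formula, the $\theta$-gradient of $-d_g(x,Y_R^+(\theta))$ at $\theta_*$ is minus the pairing of $\partial_\theta Y_R^+$ with the unit tangent at $z$ of the geodesic from $x$ to $z$. The $\theta$-gradient of $d_g(Y_R^+(\theta),y)$ is the pairing of $\partial_\theta Y_R^+$ with the unit tangent at $z$ of the geodesic from $y$ to $z$. Because $x$ lies on the geodesic from $y$ to $z$, these two tangent vectors coincide, so the gradients cancel and $\partial_\theta\Psi_r^-(x,\theta_*,\xi)=0$. Uniqueness in Lemma~\ref{lem:signed-critical-geometry}(i) gives $\theta_*=\Theta_r^-(x,\xi)$. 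Then
\[
\Phi_R^+(x,\theta_*)+S_r^-(\theta_*,\xi)=-d_g(x,z)+d_g(z,y)=d_g(x,y)=\Phi_r^-(x,\xi),
\]
because $\Phi_r^-=+d_g(x,Y_r^-)$.

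The main obstacle is the step asserting that $y$, $x$, $z$ lie in order on one \emph{minimizing} geodesic, so that additivity holds exactly and not just approximately. In particular, the extension past $x$ must remain minimizing all the way to the outer sphere, over a total length of about $(\rho_R+\rho)\eps$. I would handle this by fixing $\eps$ so small that the whole scaled ball $B(0,(\rho_R+C_0+1)\eps)$ lies inside the strongly geodesically convex ball $B(0,\eps_*)$. Geodesics there are unique and minimizing between any two of their points, which makes the additivity rigorous.
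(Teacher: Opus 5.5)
Your proposal is correct and follows the paper's proof step by step. You take the minimizing geodesic from $y=Y_r^-(\xi)$ to $x$ and continue it past $x$ via Lemma~\ref{lem:flow} to a hit $z=Y_R^+(\theta_*)$ with $|\theta_*-\xi|\le C(\tau+\eps^2)$; first variation and uniqueness then give $\theta_*=\Theta_r^-(x,\xi)$, and additivity $d_g(z,y)=d_g(z,x)+d_g(x,y)$ gives the value.

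One small slip: the bound $\frac{d}{dt}|Z(t)|\ge\frac12$ cannot hold near $t=0$, where $|Z|=O(\tau)$. As in the paper, you only need monotonicity of $|Z(t)|^2-\rho_R^2$ on $[\rho_R-1,\rho_R+1]$, since $|Z(t)|\le t+C(\tau+\eps^2)<\rho_R$ for smaller $t$.
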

	
	\begin{proof}
		The case $\sigma=+1$ is Lemma~\ref{lem:critical-value}.  We prove the case
		$\sigma=-1$.  Write
		\[
		x=\eps X,
		\qquad r=\eps\rho,
		\qquad y=Y_r^-(\xi)=\exp_o(-r\Omega(\xi)).
		\]
		Let $\gamma$ be the minimizing geodesic from $y$ to $x$.  By
		Lemma~\ref{lem:flow}, in scaled normal coordinates its terminal unit tangent
		at $x$, pointing from $y$ to $x$, is
		\[
		v_x
		=\frac{X+\rho\Omega(\xi)}{|X+\rho\Omega(\xi)|}+O(\eps^2)
		=\Omega(\xi)+O(\tau+\eps^2).
		\]
		
		Continue this geodesic past $x$, and write the continued scaled curve as
		$Z(t)$ with
		\[
		Z(0)=X,
		\qquad \dot Z(0)=v_x.
		\]
		Again by Lemma~\ref{lem:flow}, for $0\le t\le\rho_R+2$,
		\[
		Z(t)=X+t\Omega(\xi)+O(\tau+\eps^2),
		\qquad
		\dot Z(t)=\Omega(\xi)+O(\tau+\eps^2).
		\]
		Hence $H(t)=|Z(t)|^2-\rho_R^2$ is strictly increasing on
		$[\rho_R-1,\rho_R+1]$ and changes sign there. The continued geodesic
		therefore has a unique transverse first intersection with the positive outer
		sphere. Write it as
		\[
		z=Y_R^+(\theta_*).
		\]
		The same expansion gives
		\[
		\frac{Z(t_*)}{|Z(t_*)|}=\Omega(\xi)+O(\tau+\eps^2),
		\]
		and the bi-Lipschitz property of the angular chart yields
		\[
		|\theta_*-\xi|\le C(\tau+\eps^2).
		\]
		Thus $\theta_*$ lies in the signed critical neighborhood.
		\begin{figure}[htbp]
	\centering
	\includegraphics[width=0.8\textwidth]{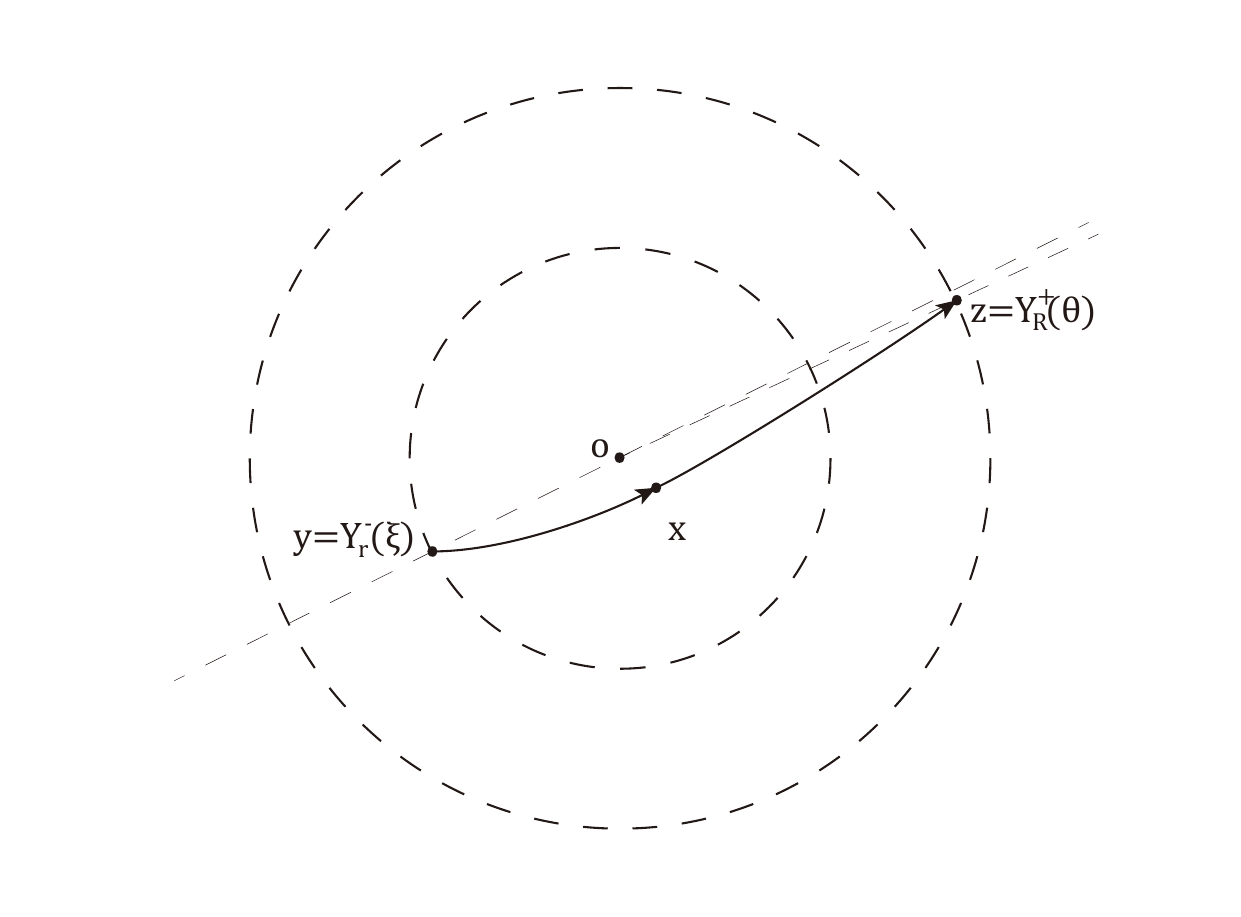}
	\caption{Geodesic from $y$ to $x$ and continued to $z$.}
	\label{figlm72}
\end{figure}
		Since the whole curve connecting $y$, $x$, and $z$ lies in the fixed strongly convex normal
		ball, the relevant subsegments are minimizing.  The first variation along the
		positive outer sphere gives
		\[
		\partial_\theta\{-d_g(x,Y_R^+(\theta))+d_g(Y_R^+(\theta),y)\}
		\big|_{\theta=\theta_*}=0,
		\]
		or equivalently
		\[
		\partial_\theta\{\Phi_R^+(x,\theta)+S_r^-(\theta,\xi)\}
		\big|_{\theta=\theta_*}=0.
		\]
		By the uniqueness part of Lemma~\ref{lem:signed-critical-geometry},
		$\theta_* = \Theta_r^-(x,\xi)$.
		
		Finally, distance additivity along the minimizing geodesic segment
		$y\to x\to z$ gives
		\[
		d_g(z,y)=d_g(z,x)+d_g(x,y).
		\]
		Therefore
		\[
		\begin{aligned}
			\Phi_R^+(x,\Theta_r^-(x,\xi))
			+S_r^-(\Theta_r^-(x,\xi),\xi)
			&=-d_g(x,z)+d_g(z,y)\\
			&=d_g(x,y)=\Phi_r^-(x,\xi),
		\end{aligned}
		\]
		which proves \eqref{eq:signed-critical-value} for $\sigma=-1$.
	\end{proof}
	
	\subsection{Signed factorization and signed two-radius estimate}
	
	\begin{proposition}[Signed factorization]\label{prop:signed-factorization}
		For $\sigma=\pm1$ define
		\[
		U_{\lambda,r}^\sigma h(\theta)
		=\lambda\int_{K_2} e^{i\lambda S_r^\sigma(\theta,\xi)}\chi(\theta,\xi)h(\xi)\,d\xi.
		\]
		Then
		\begin{equation}\label{eq:signed-angular-bound}
			\|U_{\lambda,r}^\sigma h\|_2\le C_\eps\|h\|_{L^2(K_2)}.
		\end{equation}
		Moreover, for every bounded $C^\infty$ order-zero amplitude $a_r$ supported in
		$K_1\times K_2$, define
		\[
		T_{\lambda,r}^{\sigma,a}h(x)
		=\lambda\int_{K_2} e^{i\lambda\Phi_r^\sigma(x,\xi)}a_r(x,\xi,\lambda)h(\xi)\,d\xi .
		\]
		Then
		\begin{equation}\label{eq:signed-factorization-formula}
			T_{\lambda,r}^{\sigma,a}h
			=E_{\lambda,R}^{A_r^\sigma}U_{\lambda,r}^\sigma h
			+R_{\lambda,r}^\sigma h,
		\end{equation}
		where the outer operator always has the positive outer phase,
		\[
		E_{\lambda,R}^{A}q(x)
		=\lambda\int e^{i\lambda\Phi_R^+(x,\theta)}A(x,\theta,\lambda)q(\theta)\,d\theta,
		\]
		and
		\begin{equation}\label{eq:signed-factorization-remainder}
			\|R_{\lambda,r}^\sigma h\|_{L^\infty(U)}+
			\|R_{\lambda,r}^\sigma h\|_{L^2(U)}
			\le C_\eps\|h\|_{L^2(K_2)}.
		\end{equation}
		Here \(A_r^\sigma\) belongs to a bounded \(C^\infty\) order-zero class in the
active variables \((x,\theta)\), uniformly in \(\sigma=\pm1\), \(r\in I\),
and \(\lambda\ge1\).  
	\end{proposition}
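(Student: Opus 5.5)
The plan is to repeat the unsigned argument of Section~\ref{sec5} essentially verbatim. The only inputs are the signed critical geometry of Lemma~\ref{lem:signed-critical-geometry} and the signed critical value of Lemma~\ref{lem:signed-critical-value}, which replace Lemmas~\ref{lem:critical-geometry}, \ref{lem:collar} and \ref{lem:critical-value}. For $\sigma=+1$ every statement is exactly Lemma~\ref{lem:angular} and Proposition~\ref{prop:factorization}, so only $\sigma=-1$ needs attention. Throughout, all constants will be kept uniform in the two signs.

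\emph{The angular bound \eqref{eq:signed-angular-bound}.} First I compute the kernel of $U^\sigma_{\lambda,r}(U^\sigma_{\lambda,r})^*$, namely $\lambda^2\int e^{i\lambda(S^\sigma_r(\theta,\xi)-S^\sigma_r(\theta',\xi))}\chi(\theta,\xi)\overline{\chi(\theta',\xi)}\,d\xi$. On its support the common cutoff $\chi$ satisfies the segment condition for both signs, by Lemma~\ref{lem:signed-critical-geometry}(v). Hence \eqref{eq:signed-xi-nonstationary} gives $|\nabla_\xi(\cdots)|\ge c\eps|\theta-\theta'|$. The higher $\xi$-derivatives of the phase difference are $O(|\theta-\theta'|)$ by the mean value theorem. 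Integrating by parts then bounds the kernel by $C_N\lambda^2(1+\lambda\eps|\theta-\theta'|)^{-N}$, and Schur's test gives the bound $C_\eps$.

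\emph{The factorization.} I define
\[
A^\sigma_r(x,\theta,\lambda)=\psi_1(x)\psi(\theta)\,q^\sigma_r(x,\Xi^\sigma_r(x,\theta))^{-1}a_r(x,\Xi^\sigma_r(x,\theta),\lambda),
\]
with $q^\sigma_r=2\pi e^{i\pi\sig(H^\sigma_r)/4}|\det H^\sigma_r|^{-1/2}$ and $H^\sigma_r=\partial^2_{\theta\theta}\Psi^\sigma_r(x,\Theta^\sigma_r,\xi)$. This is well defined because $\supp\psi\Subset W$ and $\Xi^\sigma_r$ exists on $\overline{U_0}\times W$ by \eqref{eq:signed-W-inclusions}. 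Here $\psi$ should be chosen equal to one near $\Theta^\sigma_r(K_1,K_2)$ for both signs, which is possible since both images lie in $W$.

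By \eqref{eq:signed-Psi-Hess-lower}, $\sigma H^\sigma_r\ge c\eps I_2$. So for $\sigma=-1$ the Hessian is negative definite, $\sig=-2$, and $|q^-_r|\approx\eps^{-1}$. In particular $A^\sigma_r$ is a bounded order-zero amplitude; derivatives follow from the $C^\infty$ bounds on $\Xi^\sigma_r$ and $\Theta^\sigma_r$.

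The composition kernel of $E^{A}_{\lambda,R}U^\sigma_{\lambda,r}$ is $\lambda^2\int e^{i\lambda\Psi^\sigma_r}A\chi\,d\theta$. I then apply the final variant of Proposition~\ref{prop:sogge-stationary}. Its hypotheses are:
\begin{itemize}
\item a unique critical point, from Lemma~\ref{lem:signed-critical-geometry}(i);
\item the nonstationary bound $|\partial_\theta\Psi^\sigma_r|\gtrsim\eps$ off a neighborhood of the critical graph, from \eqref{eq:signed-theta-nonstationary}, the segment condition and compactness;
\item the Hessian bound, from (ii);
\item the exact critical value \eqref{eq:signed-critical-value}.
\end{itemize}
With these, the main term is $\lambda e^{i\lambda\Phi^\sigma_r(x,\xi)}q^\sigma_rA^\sigma_r(x,\Theta^\sigma_r(x,\xi),\lambda)=\lambda e^{i\lambda\Phi^\sigma_r}a_r(x,\xi,\lambda)$ on $K_1\times K_2$. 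This holds because $\psi_1=\psi(\Theta^\sigma_r)=1$ there and $\Xi^\sigma_r\circ\Theta^\sigma_r=\mathrm{id}$.

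\emph{The remainder.} The difference $T^{\sigma,a}_{\lambda,r}-E^{A^\sigma_r}_{\lambda,R}U^\sigma_{\lambda,r}$ has kernel equal to $\lambda^2\cdot\lambda^{-2}\mathcal E$ plus a rapidly decaying nonstationary part. It is therefore bounded uniformly on $U\times K_2$. Since $K_2$ and $U$ are bounded, Cauchy--Schwarz gives both the $L^\infty$ and $L^2$ bounds \eqref{eq:signed-factorization-remainder}.

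\emph{The main obstacle.} The delicate point is checking that the stationary phase theorem tolerates the sign flip. The critical value \eqref{eq:signed-critical-value} is exactly what Lemma~\ref{lem:signed-critical-value} provides. The only change is that the Hessian is negative definite, which affects only the Maslov factor in $q^-_r$ and not its modulus or smoothness. I would double check that the uniform lower bound $|\det H^\sigma_r|\ge c\eps^2$, and hence the symbol bounds on $(q^\sigma_r)^{-1}$, hold uniformly in $r\in I$ for $\sigma=-1$. These rely on $\rho_R-\sigma\rho=\rho_R+\rho$ being bounded above and below, which is clear.
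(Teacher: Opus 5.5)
Your proposal is correct and follows the paper's proof essentially step by step. The angular bound comes from the $UU^*$ kernel, using \eqref{eq:signed-xi-nonstationary} and the common segment condition. The factorization comes from the general form of Proposition~\ref{prop:sogge-stationary}, fed by Lemmas~\ref{lem:signed-critical-geometry} and \ref{lem:signed-critical-value}, with one angular cutoff equal to one near the signed critical sets for both signs (the paper's $\psi_{\rm sgn}$). Your remark that the sign flip only changes the Maslov factor ($\sig H^-_r=-2$), and not $|q^-_r|\approx\eps^{-1}$, is exactly what the paper uses.
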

	
	\begin{proof}
		The angular estimate is the proof of Lemma~\ref{lem:angular}, with
		\eqref{eq:signed-xi-nonstationary} replacing \eqref{eq:xi-nonstationary}.  The
		same cutoff satisfies the same segment condition for both signs, so the Schur
		kernel estimate is unchanged.
		
		For the factorization, set
		\[
		H_r^\sigma(x,\xi)
		=\partial_{\theta\theta}^2\Psi_r^\sigma
		(x,\Theta_r^\sigma(x,\xi),\xi)
		\]
		and
		\[
		q_r^\sigma(x,\xi)
		=(2\pi)e^{i\pi\sig(H_r^\sigma(x,\xi))/4}
		|\det H_r^\sigma(x,\xi)|^{-1/2}.
		\]
		By \eqref{eq:signed-Psi-Hess-lower}, $|q_r^\sigma|\approx\eps^{-1}$, and all
		needed derivatives have bounds depending only on the fixed $C^\infty$ data and on $\eps$.  Keep the same $x$-cutoff $\psi_1$.  Choose
		$\psi_{\rm sgn}\in C_0^\infty(W)$ equal to one on a fixed
		neighborhood of
		\[
		\{\Theta_r^\sigma(x,\xi):\sigma=\pm1,\ x\in 
		K_1,\ r\in I,\ \xi\in K_2\}.
		\]
		Define
		\[
		A_r^\sigma(x,\theta,\lambda)
		:=\psi_1(x)\psi_{\rm sgn}(\theta)
		\{q_r^\sigma(x,\Xi_r^\sigma(x,\theta))\}^{-1}
		a_r(x,\Xi_r^\sigma(x,\theta),\lambda)
		\]
		wherever $\psi_1(x)\psi_{\rm sgn}(\theta)\ne0$, and set it equal to zero
		elsewhere.  This is a smooth order-zero amplitude because
		$\supp\psi_1\Subset U_0$, because
		$\supp\psi_{\rm sgn}\Subset W$ where $\Xi_r^\sigma$ is defined,
		and because $q_r^\sigma$ is elliptic.
		
		The composition kernel of
		$E_{\lambda,R}^{A_r^\sigma}U_{\lambda,r}^\sigma$ is
		\[
		\lambda^2\int e^{i\lambda\Psi_r^\sigma(x,\theta,\xi)}
		A_r^\sigma(x,\theta,\lambda)\chi(\theta,\xi)\,d\theta.
		\]
		Apply Proposition~\ref{prop:sogge-stationary} in its final general form, with
		$\Psi_r,\Theta_r,\Phi_r,H_r$ replaced by
		$\Psi_r^\sigma,\Theta_r^\sigma,\Phi_r^\sigma,H_r^\sigma$.  The required
		unique critical point, collar, cutoff, nonstationary estimate, derivative
		bounds, definite Hessian, and exact critical value are precisely the contents
		of Lemmas~\ref{lem:signed-critical-geometry} and
		\ref{lem:signed-critical-value}.  After multiplying by the outside factor
		$\lambda^2$, the composed kernel equals
		\[
		\lambda e^{i\lambda\Phi_r^\sigma(x,\xi)}
		q_r^\sigma(x,\xi)A_r^\sigma(x,\Theta_r^\sigma(x,\xi),\lambda)
		+\mathcal K_{\lambda,r}^\sigma(x,\xi),
		\]
		where
		\[
		\|\mathcal K_{\lambda,r}^\sigma\|_{L^\infty(\overline{U_0}\times K_2)}
		\le C_\eps .
		\]
		For $x\in K_1$ and $\xi\in K_2$,
		\[
		\psi_1(x)=1,
		\qquad
		\psi_{\rm sgn}(\Theta_r^\sigma(x,\xi))=1,
		\qquad
		\Xi_r^\sigma(x,\Theta_r^\sigma(x,\xi))=\xi.
		\]
		Therefore
		\[
		q_r^\sigma(x,\xi)A_r^\sigma(x,\Theta_r^\sigma(x,\xi),\lambda)
		=a_r(x,\xi,\lambda),
		\]
		and the leading stationary-phase term is
		\[
		\lambda e^{i\lambda\Phi_r^\sigma(x,\xi)}a_r(x,\xi,\lambda).
		\]
		As in Proposition~\ref{prop:factorization}, the kernel of
\(T_{\lambda,r}^{\sigma,a}-E_{\lambda,R}^{A_r^\sigma}U_{\lambda,r}^\sigma\)
is reduced to the stationary-phase remainder, which is uniformly \(O_\varepsilon(1)\) on
\(U\times K_2\).  This proves \eqref{eq:signed-factorization-formula}--
		\eqref{eq:signed-factorization-remainder}. The stated symbol bounds for \(A_r^\sigma\) follow by differentiating its
definition and using the uniform bounds for \(q_r^\sigma\),
\((q_r^\sigma)^{-1}\), \(\Xi_r^\sigma\), and \(a_r\).
	\end{proof}
	
	\begin{theorem}[Signed frozen two-radius estimate]\label{thm:signed-main}
		After the common choices above, for every
		$\sigma_1,\sigma_2\in\{+1,-1\}$, every $r_1,r_2\in I$, and every
		$1\le\alpha\le\beta$,
		\[
		\bigl\|T_{\alpha,r_1}^{\sigma_1,a_1}f\,
		T_{\beta,r_2}^{\sigma_2,a_2}g\bigr\|_{L^2(U)}
		\le C_\eps
		\alpha^{1/2}
		\|f\|_{L^2(K_2)}\|g\|_{L^2(K_2)}.
		\]
	\end{theorem}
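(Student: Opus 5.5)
We follow the proof of Theorem~\ref{thm:main} verbatim, with Proposition~\ref{prop:signed-factorization} in place of Proposition~\ref{prop:factorization}. The point of the signed construction is that the outer operator always carries the \emph{positive} outer phase $\Phi_R^+$, irrespective of $\sigma$. Two pieces with different signs therefore still share a single elliptic outer phase once they are propagated to the outer sphere.

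First apply Proposition~\ref{prop:signed-factorization} to each factor:
\[
T_{\alpha,r_1}^{\sigma_1,a_1}f=Q_1f+R_{\alpha,r_1}^{\sigma_1}f,\qquad
Q_1:=E_{\alpha,R}^{A_{r_1}^{\sigma_1}}U_{\alpha,r_1}^{\sigma_1},
\]
and similarly $T_{\beta,r_2}^{\sigma_2,a_2}g=Q_2g+R_{\beta,r_2}^{\sigma_2}g$. Expanding the product gives four terms.

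For the main--main term $Q_1f\,Q_2g$, both factors are operators $E^{A}_{\nu,R}$ with the common phase $\Phi_R^+$. By Lemma~\ref{lem:outer-elliptic}(iii), this phase is an elliptic Carleson--Sj\"olin phase on $U\times V_0$. The amplitudes $A_{r_j}^{\sigma_j}$ form a bounded order-zero family uniformly in $\sigma$, $r$ and $\lambda$, and their supports can be taken inside one fixed small patch after the common shrinking. Proposition~\ref{prop:singlephase} then gives
\[
\|Q_1f\,Q_2g\|_{L^2(U)}\lesssim\alpha^{1/2}\,\|U_{\alpha,r_1}^{\sigma_1}f\|_2\,\|U_{\beta,r_2}^{\sigma_2}g\|_2,
\]
and \eqref{eq:signed-angular-bound} bounds the two $L^2$ norms on the right by $C_\eps\|f\|_2$ and $C_\eps\|g\|_2$.

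For the cross terms, use \eqref{eq:signed-factorization-remainder}. In the term $R_{\alpha,r_1}^{\sigma_1}f\cdot Q_2g$, put $R_{\alpha,r_1}^{\sigma_1}f$ in $L^\infty$, with bound $C_\eps\|f\|_2$, and $Q_2g$ in $L^2$; Lemma~\ref{lem:linear} and \eqref{eq:signed-angular-bound} give $\|Q_2g\|_{L^2}\le C_\eps\|g\|_2$. The term $Q_1f\cdot R_{\beta,r_2}^{\sigma_2}g$ is handled symmetrically, and the remainder--remainder term is bounded by the $L^\infty\times L^2$ bounds for $R$. All three are $O_\eps(1)\|f\|_2\|g\|_2\le C_\eps\alpha^{1/2}\|f\|_2\|g\|_2$, since $\alpha\ge1$.

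The only delicate point is applying Proposition~\ref{prop:singlephase} uniformly. Its constant depends on the patch and on finitely many seminorms of the phase and amplitudes, so we must check that one choice of patch serves both signs and all radii. This holds because the outer phase $\Phi_R^+$ does not depend on $\sigma$ or $r$: its ellipticity constants come from Lemma~\ref{lem:outer-elliptic} alone. The amplitudes are supported in $\supp\psi_1\times\supp\psi_{\rm sgn}$, which is fixed, and after shrinking the cap this set can be placed inside the patch required by Proposition~\ref{prop:singlephase}. The constants may depend on $\eps$, which accounts for the $C_\eps$ in the statement.
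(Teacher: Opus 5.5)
Your proposal is correct and follows the paper's proof. You apply the signed factorization (Proposition~\ref{prop:signed-factorization}) to both factors. Because the outer operators share the single $\sigma$- and $r$-independent elliptic phase $\Phi_R^+$, Proposition~\ref{prop:singlephase} handles the main--main term, with the angular bounds \eqref{eq:signed-angular-bound} controlling the inputs. Your explicit treatment of the cross terms, with the remainder in $L^\infty$ via \eqref{eq:signed-factorization-remainder} and the main term in $L^2$ via Lemma~\ref{lem:linear}, spells out what the paper compresses into ``all remainders are harmless and only contribute $O(1)$.''
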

	
	\begin{proof}
		Use the signed factorization \eqref{eq:signed-factorization-formula} for both
		factors.  The main-main term has the common positive outer phase $\Phi_R^+$,
		so Proposition~\ref{prop:singlephase} applies exactly as in Section~\ref{sec6}.  The
		signed angular bounds \eqref{eq:signed-angular-bound} control the inputs.  All
		remainders are harmless and only contribute $O(1)$.
		This proves Theorem~\ref{thm:signed-main}.
	\end{proof}

	\section{Log-free  spectral cluster estimates}\label{sec8}
	
	In this section, we prove the log-free multilinear estimates for spectral clusters on any closed Riemannian manifold. The proof uses Sogge's spectral cluster parametrix and reduces to frozen radial pieces, which are then treated by Theorems \ref{thm1}-\ref{thm3} and Theorem \ref{thm:signed-main}.
	
 As in \cite[Lemma 2.2]{bgt2005}, a first reduction in the proof of Theorem \ref{thm4} is that it suffices to prove it for one fixed nontrivial function $\chi$. 

\begin{lemma}\label{chilem}
	Suppose that the assertion of Theorem \ref{thm4} holds for a bump function $\chi_0 \in \mathcal{S}(\mathbb{R})$
	which is not identically zero. Then it holds for any other choice of the bump function.
\end{lemma}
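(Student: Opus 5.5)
The plan is the standard Burq--G\'erard--Tzvetkov reduction. Fix $\chi_0\in\mathcal S(\mathbb R)$, not identically zero, for which \eqref{eig} holds. After translating and modulating we may assume $|\chi_0|\ge c>0$ on some interval $[\tau_0-\delta,\tau_0+\delta]$. Replacing $\lambda$ by $\lambda+\tau_0$ changes $\E_0$ only by a bounded factor, since $\E_0$ is a product of powers (and, for $n=3$, of square roots of logarithms) of the $\lambda_j$, and $\lambda_j+\tau_0\approx\lambda_j$ for $\lambda_j\gg1$. Hence we may assume $|\chi_0|\ge c$ on $[-\delta,\delta]$.

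The first step is a spectral decomposition. Let $\chi\in\mathcal S$ be arbitrary, write $P=\sqrt{-\Delta}$, and decompose $\mathbb R=\bigcup_{m\in\mathbb Z}I_m$ with $I_m=[m\delta-\delta/2,m\delta+\delta/2)$. For each $j$ we have
\[
\chi(P-\lambda_j)f_j=\sum_{m}\chi_0\bigl(P-(\lambda_j+m\delta)\bigr)\,\Pi_{m}^{(j)} f_j,\qquad \Pi_{m}^{(j)}=\Bigl(\frac{\chi}{\chi_0(\cdot-m\delta)}\mathbf 1_{I_m}\Bigr)(P-\lambda_j),
\]
since $\chi_0(\cdot-m\delta)$ is bounded below by $c$ on $I_m$. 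Moreover
\[
\|\Pi_{m}^{(j)}f_j\|_2\le c^{-1}\sup_{I_m}|\chi|\,\|f_j\|_2\le C_N(1+|m|)^{-N}\|f_j\|_2
\]
by the Schwartz decay of $\chi$.

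The second step expands the product into a multiple sum over $(m_1,\dots,m_k)$ and applies Minkowski's inequality. This is fine for $p\ge1$, which is the whole range in Theorem~\ref{thm4}. Each term is controlled by the hypothesis applied at the frequencies $\mu_j=\lambda_j+m_j\delta$, giving
\[
\Bigl\|\prod_{j=1}^k\chi(P-\lambda_j)f_j\Bigr\|_{L^p(M)}\le\sum_{m_1,\dots,m_k}\E_0(\mu_1,\dots,\mu_k)\prod_{j=1}^k C_N(1+|m_j|)^{-N}\|f_j\|_2 .
\]
Two technical points need care here.

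\emph{Ordering.} The shifted frequencies $\mu_j$ need not be ordered. However, $\E_0$ is symmetric after reordering by definition (the statement is about an unordered product), so we apply it to the sorted list.

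\emph{Size of the frequencies.} Some $\mu_j$ may not be $\gg1$, or may even be negative. When $\mu_j\le C$, the spectral cluster $\chi_0(P-\mu_j)$ maps $L^2\to L^\infty$ with norm $O((1+|\mu_j|)^{(n-1)/2})$. This follows from the Weyl-law bound $\|\chi_0(P-\mu)\|_{2\to\infty}\lesssim(1+|\mu|)^{(n-1)/2}$, which is Sogge's estimate at $p=\infty$ together with the rapid decay of $\chi_0$ away from the spectrum for negative $\mu$. So such factors can be pulled out in $L^\infty$ at polynomial cost, and the remaining product is handled by the hypothesis for fewer factors, or trivially.

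The third step, which I expect to be the main obstacle, is to compare $\E_0$ at the sorted $\mu$'s with $\E_0(\lambda_1,\dots,\lambda_k)$. The claim is that $\E_0$ grows at most polynomially under perturbation, uniformly in the regime:
\[
\E_0(\mu)\lesssim\prod_j(1+|m_j|)^{A}\,\E_0(\lambda).
\]
This holds because each $\mu_j\le\lambda_j(1+|m_j|)$ and each $\mu_j\gtrsim\lambda_j/(1+|m_j|)$ whenever $\mu_j\ge1$. Sorting can only permute entries among indices whose $\lambda$'s are within these factors of one another. $\E_0$ is a finite minimum of monomials with exponents bounded in terms of $n,k,p$ (plus logarithms), so it changes by at most a polynomial factor in $\prod(1+|m_j|)$. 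Choosing $N>A+1$ then makes the sum over $m$ converge. The degenerate case of small $\mu_j$ is absorbed by the same polynomial weights, since $\E_0(\lambda)\gtrsim1$.
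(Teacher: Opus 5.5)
Your proposal is correct and is essentially the paper's argument. You write $\chi=\sum_m b_m\,\chi_0(\cdot-m)$ with bounded multipliers $b_m$ decaying rapidly in $m$, expand the product, apply the hypothesis at the shifted and re-sorted frequencies, and absorb the polynomial loss in $\E_0$ into the decay in $m$; your sharp cutoffs $\mathbf 1_{I_m}$ in place of the paper's smooth partition $\rho$ make no difference for the $L^2$ bounds.

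Your separate treatment of shifted frequencies that are bounded or negative addresses a point the paper passes over by simply evaluating $\E_0$ at $\nu_j=2+|\lambda_j+m_j|$. What makes that case work is not $\E_0\gtrsim1$. It is the fact that such a factor has $\lambda_j\lesssim 1+|m_j|$, combined with the convention $\lambda_j=2$ for $j\le 0$, under which $\E_0(n,k,p;2,x_2,\ldots,x_k)\approx\E_0(n,k-1,p;x_2,\ldots,x_k)$. Sogge's linear bound covers the case where only one factor remains.
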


\begin{proof}
	Let \(P=\sqrt{-\Delta}\).  Choose \(t_0\) with \(\chi_0(t_0)\ne0\), and
	then choose \(h>0\) so small that \(\chi_0\) is bounded away from zero on
	\((t_0-2h,t_0+2h)\).  Consider a partition of unity of the real line given by
	\(\rho\in C_0^\infty((-h,h))\) with
	\[
	\sum_{m\in h\mathbb Z}\rho(t-t_0-m)=1 .
	\]
	For \(\widetilde\chi\in\mathcal S(\mathbb R)\), set
	\[
	b_m(t)=
	\frac{\rho(t-t_0-m)\widetilde\chi(t)}{\chi_0(t-m)},
	\qquad m\in h\mathbb Z .
	\]
	The denominator is harmless on the support of the numerator, and rapid decay
	of \(\widetilde\chi\) gives
	\[
	\widetilde\chi(t)=\sum_{m\in h\mathbb Z}b_m(t)\chi_0(t-m),
	\qquad
	\|b_m\|_\infty\le C_N(1+|m|)^{-N}.
	\]
	
	Expanding the product, fix \(m_1,\ldots,m_k\in h\mathbb Z\), and write
	\[
	\nu_j=2+|\lambda_j+m_j|,\qquad
	g_j=b_{m_j}(P-\lambda_j)f_j .
	\]
	Up to a rearrangement in the increasing order $\nu_1\le \dots \le \nu_k$, the assumed estimate for \(\chi_0\) gives
	\[
	\left\|
	\prod_{j=1}^k\chi_0(P-\lambda_j-m_j)g_j
	\right\|_{L^p}
	\lesssim
	\mathcal E_0(n,k,p;\nu_1,\ldots,\nu_k)
	\prod_{j=1}^k\|g_j\|_2 .
	\]
	Because
	\[ \frac{2+\lambda_j}{(1+|m_j|)} \leq \nu_j \leq (1+|m_j|) (2+\lambda_j),\]
	from the explicit formula for \(\mathcal E_0\), there is some \(N_0=N_0(n,k,p)\) such that
	\[
	\mathcal E_0(n,k,p;\nu_1,\ldots,\nu_k)
	\lesssim
	\mathcal E_0(n,k,p;\lambda_1,\ldots,\lambda_k)
	\prod_{j=1}^k(1+|m_j|)^{N_0} .
	\]
	Therefore,
	\[
	\left\|
	\prod_{j=1}^k\widetilde\chi(P-\lambda_j)f_j
	\right\|_{L^p}
	\lesssim
	\mathcal E_0(n,k,p;\lambda_1,\ldots,\lambda_k)
	\prod_{j=1}^k\|f_j\|_2       
	\left(\sum_{m_1,\ldots,m_k}
	\prod_{j=1}^k(1+|m_j|)^{N_0-N}\right).
	\]
	Choosing \(N>N_0+2\), the last sum is finite.  This proves the desired estimate
	for \(\widetilde\chi\).
\end{proof}

Following \cite[Chapter 4]{fio}, thanks to Lemma \ref{chilem}, it is sufficient to prove Theorem \ref{thm4} with \( \chi \) such that \( \hat{\chi}(\tau) \) is supported in the set
\[
\{ \tau \in \mathbb{R} : \varepsilon \leq \tau \leq 2\varepsilon \},
\]
where \( \varepsilon > 0 \) is a small number to be determined later. We can write
\[
\chi_{\lambda} f = \frac{1}{2\pi} \int_{\varepsilon}^{2\varepsilon} e^{-i\lambda\tau} \hat{\chi}(\tau) (e^{i\tau \sqrt{-\Delta}} f) \, d\tau.
\]

For \( \varepsilon \ll 1 \) and \( |\tau| \leq 2\varepsilon \), a partition of unity on \( M \) allows us to represent \( e^{i\tau \sqrt{-\Delta}} \) as a Fourier integral operator (see e.g. \cite{hor1968}). Therefore \( \chi_{\lambda} \) can also be represented as a Fourier integral operator. After a stationary phase argument (see \cite[Chapter 5]{fio}), we can represent \( \chi_{\lambda} f \) as follows.

\begin{lemma}\label{soggelem}
	There exists \( \varepsilon_0 > 0 \) such that for every $0<\varepsilon<\varepsilon_0$ and every \( N \geq 1 \), we have the splitting
	\[
	\chi_{\lambda} f = S_{\lambda} f + R_{\lambda} f,
	\]
	with
	\[
	\| R_{\lambda} f \|_{H^k(M)} \leq C_{N,k} \lambda^{k-N} \| f \|_{L^2(M)}, \quad k = 0, \ldots, N.
	\]
	Moreover there exist \( \tau > 0 \) and, for every \( x_0 \in M \), a system of coordinates \( U \subset \mathbb{R}^n \), containing \( 0 \in \mathbb{R}^n \), such that for \( x \in U \), \( |x| \leq \tau\eps \),
	\[
	S_{\lambda} f(x) = \lambda^{\frac{n-1}{2}}\int_{\mathbb{R}^n} e^{i\lambda \varphi(x,y)} a(x,y,\lambda) f(y) \, dy,
	\]
	where \( a(x,y,\lambda) \) is a polynomial in \( \lambda^{-1} \) with smooth coefficients supported in the set
	\[
	\{(x,y) \in U \times U : |x| \leq \tau\eps \ll \varepsilon / C \ll |y| \ll C\varepsilon\}
	\]
	and \( -\varphi(x,y) = d_g(x,y) \) is the geodesic distance between \( x \) and \( y \).
\end{lemma}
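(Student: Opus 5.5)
The plan is to follow Sogge's construction in \cite[Chapter 5]{fio}. First we fix the choice of $\chi$ with $\hat\chi$ supported in $[\eps,2\eps]$, and write
\[
\chi_\lambda f=\frac1{2\pi}\int e^{-i\lambda\tau}\hat\chi(\tau)\,e^{i\tau\sqrt{-\Delta}}f\,d\tau .
\]
For $0<\tau\le2\eps$ with $\eps$ below the injectivity radius, we use the Hadamard/Lax parametrix for the half-wave group (via \cite{hor1968}). Modulo a smoothing error, its kernel in a normal coordinate chart at $x_0$ is
\[
\int_{\R^n} e^{i(d_g(x,y)\cdot|\zeta|... )}
\]
More precisely, it is $\int e^{i(\phi(x,y,\zeta)+\tau|\zeta|_{g(y)})}q(\tau,x,y,\zeta)\,d\zeta$, where $q$ is a classical symbol of order $0$ and $\phi$ solves the eikonal equation with $\phi(x,y,\zeta)=\langle \exp_y^{-1}x,\zeta\rangle+O(|x-y|^2|\zeta|)$; concretely one may take $\phi=\langle\kappa(x)-\kappa(y),\zeta\rangle$ in geodesic normal coordinates. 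We then insert a cutoff $\beta(|\zeta|/\lambda)$ localizing to $|\zeta|\approx\lambda$. On the complement, integrating by parts in $\tau$ against $e^{-i\lambda\tau}e^{i\tau|\zeta|}$ gives factors $O((\lambda+|\zeta|)^{-N})$. Together with the smoothing error from the parametrix, this complementary part is placed in $R_\lambda$. The bounds $\|R_\lambda f\|_{H^k}\le C\lambda^{k-N}\|f\|_2$ then follow from the symbol decay, since derivatives in $x$ cost at most powers of $|\zeta|$, which are absorbed.

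Next, on the main term we rescale $\zeta=\lambda\zeta'$ and pass to polar coordinates $\zeta'=s\omega$. The phase becomes
\[
\lambda\bigl(\langle\kappa(x)-\kappa(y),s\omega\rangle+\tau s-\tau\bigr),
\]
with the $e^{-i\lambda\tau}$ absorbed. We apply stationary phase in the $(n+1)$ variables $(\tau,s,\omega)$, or equivalently in $(\tau,s)$ and then $\omega\in S^{n-1}$.
\begin{itemize}
\item The critical equations in $\tau$ and $s$ force $s=1$ and $\langle\kappa(x)-\kappa(y),\omega\rangle=-\tau$.
\item Criticality in $\omega$ forces $\omega=-(\kappa(x)-\kappa(y))/|\kappa(x)-\kappa(y)|$.
\end{itemize}
Hence $\tau=|\kappa(x)-\kappa(y)|=d_g(x,y)$, and the critical value is $-\lambda d_g(x,y)$. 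The critical point is nondegenerate because the Hessian in $(\tau,s)$ is $\begin{pmatrix}0&1\\1&0\end{pmatrix}$ plus lower order, and the angular Hessian is $\approx d_g(x,y)\,\mathrm{Id}$ on $T_\omega S^{n-1}$. The angular Hessian is nondegenerate exactly because $d_g(x,y)\approx\tau\in[\eps,2\eps]$. This is where the support condition $\eps/C\ll|y|\ll C\eps$ comes from: $|x|\le\tau\eps$ is small, and $\hat\chi(\tau)$ vanishes unless $\tau\in[\eps,2\eps]$. Away from this region the phase is nonstationary in $\tau$ or $\omega$, so it contributes to $R_\lambda$.

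Counting powers of $\lambda$ from the stationary phase: we start with $\lambda^n$ from $d\zeta$, lose $\lambda^{-1}$ from the $(\tau,s)$ integration, and lose $\lambda^{-(n-1)/2}$ from the $\omega$ integration. This gives $\lambda^{(n-1)/2}$, matching the normalization in Lemma~\ref{soggelem}. The amplitude comes out as an asymptotic series in $\lambda^{-1}$. We truncate it at order $N$, which makes $a$ a polynomial in $\lambda^{-1}$ with smooth compactly supported coefficients, and the tail of order $\lambda^{(n-1)/2-N}$ with smooth kernel goes into $R_\lambda$. Uniformity of $\tau$ and of the coordinates in $x_0$ follows from the compactness of $M$.

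The main obstacle is correctly identifying the exact critical value $\varphi=-d_g$. In curved geometry the parametrix phase is not literally $\langle\kappa(x)-\kappa(y),\zeta\rangle$, so this requires choosing the eikonal phase so that the critical value is the distance. We would handle this by invoking the Hamilton–Jacobi characterization: the phase $\tau|\zeta|$ combined with $\phi$ generates geodesic flow, so at criticality $x=\exp_y(\tau\,\cdot)$, i.e.\ $\tau=d_g(x,y)$. The critical value then equals $-\lambda\tau$ by homogeneity of $\phi$ in $\zeta$ (Euler's identity), which gives exactly $-\lambda d_g(x,y)$. This follows \cite[Chapter 5]{fio}.
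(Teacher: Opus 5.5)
Your proposal is correct and takes the same route as the paper, which only cites H\"ormander's parametrix for $e^{i\tau\sqrt{-\Delta}}$ and Sogge's stationary phase argument in Chapter 5. Your localization to $|\zeta|\approx\lambda$, the stationary phase in $(\tau,s,\omega)$, the power count $\lambda^{n-1-(n-1)/2}$, and the truncation of the amplitude expansion are exactly that argument written out.

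One caveat: the flat phase $\langle\kappa(x)-\kappa(y),\zeta\rangle$ does not solve the eikonal equation when neither point is the center of the normal coordinates. So the step $\tau=|\kappa(x)-\kappa(y)|=d_g(x,y)$ in your bullet computation is only a model. The exact identity $\varphi=-d_g$ for all $|x|\le\tau\eps$ rests on your final Hamilton--Jacobi plus Euler-identity argument, which is the right justification.
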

In view of Lemma \ref{soggelem}, to prove \eqref{eig}, it is enough to show
\begin{equation}\label{tb}
	\left\|\prod_{j=1}^k S_{\lambda_j}f_j\right\|_{L^p(M)}
	\ls\,\E_0(n,k,p;\lambda_1,\ldots,\lambda_k)\prod_{j=1}^k\|f_j\|_{L^2(M)} .
\end{equation}
The products involving remainders $R_{\la_j}$ are straightforward to estimate.

Next, we represent \( y \) in geodesic coordinates as \( y = \exp_o(r\omega) \), \( r > 0 \), \( \omega \in S^{n-1} \). For \( |x| \leq \tau\eps \) and \( \omega \in S^{n-1} \), we define the frozen phase \( \varphi_r \),
\[
\varphi_r(x, \omega) = \varphi(x, \exp_o(r\omega)).
\]
Using Gauss's lemma, there exists $\eps>0$ such that for every $r\in [\eps/C,C\eps]$ and every $\omega\in S^{n-1}$, we have
\begin{equation}\label{gauss}
	\nabla_x\varphi_r(0,\omega)=\omega.
\end{equation}

Fix a point \( \underline{\omega} \in S^{n-1} \). The set
\[
S_{\varphi_r}(x)= \left\{ \nabla_x \varphi_r(x, \omega): \omega \in S^{n-1}, \omega \sim \underline{\omega} \right\}
\]
is a smooth hypersurface in \( \mathbb{R}^n \). Indeed, assuming  \( \underline{\omega} = (1, 0, \ldots, 0) \), then \( (w_1 = \omega_2, \ldots, w_{n-1} = \omega_n) \) is a system of coordinates on \( S^{n-1} \) and according to \eqref{gauss}, \( \nabla_w \nabla_x \varphi_r \) has rank \( n-1 \).
\begin{lemma}\label{ellip}
	The hypersurface \( 	S_{\varphi_r}(x) \) has uniformly definite second fundamental form: for any local coordinate system \( w \in \mathbb{R}^{n-1} \)   near \( \underline{\omega} \in S^{n-1} \), if we denote by \( \pm N(x, w) \) the normal unit vectors to the surface \( 	S_{\varphi_r}(x) \) at the point \( \nabla_x \varphi_r(x, w) \), then for \( x \) close to 0,
	\[
	\big\langle \partial^2_{ww} \nabla_x \varphi_r(x, w), N(x, w) \big\rangle
	\]
	is definite.
\end{lemma}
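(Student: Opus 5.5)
The plan is to reduce everything to the model computation at $x=0$ and then use continuity. At $x=0$, Gauss's lemma \eqref{gauss} gives $\nabla_x\varphi_r(0,\omega)=\omega$ for every $\omega\in S^{n-1}$ and $r\in[\eps/C,C\eps]$. Hence $S_{\varphi_r}(0)$ is literally a piece of the unit sphere near $\underline\omega$. With the coordinates $w=(\omega_2,\dots,\omega_n)$ we have
\[
\nabla_x\varphi_r(0,w)=\bigl(\sqrt{1-|w|^2},w\bigr),
\]
and the unit normal is $N(0,w)=\pm\nabla_x\varphi_r(0,w)$. Differentiating $|\nabla_x\varphi_r(0,w)|^2=1$ twice in $w$ gives
\[
\bigl\langle\partial^2_{ww}\nabla_x\varphi_r(0,w),\nabla_x\varphi_r(0,w)\bigr\rangle=-\bigl\langle\partial_w\nabla_x\varphi_r,\partial_w\nabla_x\varphi_r\bigr\rangle=-G_S(w),
\]
where $G_S$ is the round metric in the chart. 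This is the same identity used in Lemma~\ref{lem:model}, and $G_S$ is uniformly positive definite near $\underline\omega$. So the second fundamental form at $x=0$ is $\mp G_S(w)$, which is definite with a lower bound $c>0$ independent of $r$ and of $w$ in a small cap.

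Next I would check that the claim is coordinate independent. Under a change of chart $w=w(\tilde w)$, the Hessian term transforms as
\[
\partial_{\tilde w}\partial_{\tilde w}=J^T(\partial^2_{ww})J+(\text{first-order terms}),
\]
and the first-order terms are tangent to the surface, so they vanish after pairing with $N$. Definiteness is therefore preserved, with constants depending only on the chart.

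Finally I would pass from $x=0$ to $x$ close to $0$. The quantities $\partial_w\nabla_x\varphi_r$, $\partial^2_{ww}\nabla_x\varphi_r$ and $N$ are smooth in $(x,w)$, uniformly in $r\in[\eps/C,C\eps]$, on the region $|x|\le\tau\eps$ where $x$ stays away from $y=\exp_o(r\omega)$. The only care needed is uniformity, since $x$-derivatives of $d_g$ could a priori blow up as $\eps\to0$. To handle this I would work in the scaled variables $x=\eps X$ exactly as in Lemma~\ref{lem:outer-elliptic}. By Lemma~\ref{lem:scaled-distance},
\[
\nabla_x\varphi_r(\eps X,\omega)=\frac{\rho\omega-X}{|\rho\omega-X|}+O(\eps^2)
\]
in $C^2$ in $\omega$, with $r=\eps\rho$. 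For $|X|\le\tau$ this is an $O(\tau+\eps^2)$ perturbation in $C^2_w$ of the sphere chart, and $N$ is correspondingly an $O(\tau+\eps^2)$ perturbation of $\pm\omega$. So the pairing equals $\mp G_S(w)+O(\tau+\eps^2)$, which remains definite after shrinking $\tau$ and $\eps$.

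The step I expect to need the most care is this uniformity in the scaled variables. Smoothness alone gives continuity only for each fixed $r$; the scaled expansion is what makes the perturbation bound independent of $r$ and $\eps$. The rest is the elementary sphere computation above.
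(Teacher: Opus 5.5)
Your proposal is correct and follows the paper's proof: compute $\langle\partial^2_{ww}\nabla_x\varphi_r(0,w),N(0,w)\rangle=\mp\langle\partial_w\omega,\partial_w\omega\rangle$ from Gauss's lemma and $|\omega(w)|^2=1$ (valid in any chart, so your separate coordinate-change step is not actually needed), then perturb to $|x|\le\tau\eps$. For the perturbation, the paper simply uses joint smoothness in $(r,x,w)$ over the compact range $r\in[\eps/C,C\eps]$ with $\eps$ fixed. That already gives uniformity in $r$, so your remark that smoothness only gives continuity for each fixed $r$ is not right; your scaled-variable argument additionally makes $\tau$ independent of $\eps$, which the paper does not need.
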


\begin{proof}
	Set
	\[
	\II_{ij}(r,x,w)=\big\langle \partial_{w_iw_j}^2\nabla_x\varphi_r(x,w),N(x,w)\big\rangle .
	\]
	In normal coordinates, as discussed above,Gauss's lemma gives
	\(
	\nabla_x\varphi_r(0,w)=\omega(w), \ \forall r\in[\eps/C,C\eps].
	\)
	For a local choice of normal, write
	\(N(0,w)=\sigma\omega(w),\) where \( \sigma\in\{\pm1\}.\)
	
	Since \(|\omega(w)|^2=1\), differentiating gives
	\[
	\II_{ij}(r,0,w)
	=-\sigma\,\langle\partial_{w_i}\omega,\partial_{w_j}\omega\rangle .
	\]
	Thus, for \(\zeta\in\mathbb R^{n-1}\),
	\[
	-\sigma\sum_{i,j}\II_{ij}(r,0,w)\zeta_i\zeta_j
	=
	\sum_{i,j}\langle\partial_{w_i}\omega,\partial_{w_j}\omega\rangle\zeta_i\zeta_j 
	=\left|\sum_i\zeta_i\partial_{w_i}\omega\right|^2
	\ge c|\zeta|^2 .
	\]
	By smoothness, uniformly for \(r\in[\eps/C,C\eps]\) and \(|x|\le\tau\eps\),
	\[
	\|\II(r,x,w)-\II(r,0,w)\|\le c/2.
	\]
	Hence
	\[
	-\sigma\sum_{i,j}\II_{ij}(r,x,w)\zeta_i\zeta_j
	\ge \frac c2|\zeta|^2,
	\]
	which is the claimed uniform definiteness.
\end{proof}

We note that there exists a smooth positive function \( \kappa(r, \omega) \) such that \( dy = \kappa(r, \omega) \, dr \, d\omega \).  
For \( r \in [\varepsilon / C, C\varepsilon] \) and \( \lambda \ge 1 \), define the operator acting on functions on \( S^{n-1} \) by
\[
(T_{\lambda,\varphi_r}^{a_r} f)(x)
=\lambda^{\frac{n-1}{2}}\int_{S^{n-1}} e^{i\lambda \varphi_r(x, \omega)} a_r(x, \omega, \lambda) f(\omega) \, d\omega,
\]
where \( a_r(x, \omega, \lambda) = \kappa(r, \omega) a(x, \exp_o(r\omega), \lambda) \).  Then
\[
S_\lambda f(x) = \int_{\eps/C}^{C\eps} (T_{\lambda,\varphi_r}^{a_r} f_r)(x) \, dr,
\]
where \( f_r(\omega) = f(r, \omega) \). Minkowski's inequality shows that \eqref{tb} will be a consequence of 
\begin{equation}\label{tb2}
	\left\|\prod_{j=1}^k T_{\lambda_j,\varphi_{r_j}}^{a_{r_j}}f_j\right\|_{L^p(M)}
	\ls\,\E_0(n,k,p;\lambda_1,\ldots,\lambda_k)\prod_{j=1}^k\|f_{j}\|_{L^2(S^{n-1})}.
\end{equation}
uniformly for $1\ll\la_1\le ...\le \la_k$ and $r_1,...,r_k\in [\eps/C,C\eps]$.  Therefore, by Lemma \ref{ellip}, the bounds for Carleson-Sj\"olin operators in Theorems \ref{thm1}--\ref{thm3} give \eqref{tb2}, except for the case $n=3$ and $p\le 2$. We shall prove log-free estimates in this case separately in the following.

\subsection{Log-free estimates in three dimensions}Let $M$ be a closed Riemannian manifold of dimension $n=3$. We shall use  Theorem~\ref{thm:signed-main} to establish log-free endpoint estimates in three dimensions.
	\begin{lemma}[Transverse frozen cap estimate]\label{lem:transverse-frozen}
		Let
		\[
		T_{\lambda,j}h(x)
		=\lambda\int e^{i\lambda\phi_j(x,\xi)}b_j(x,\xi,\lambda)h(\xi)\,d\xi,
		\qquad j=1,2,
		\]
		be oscillatory integral operators on a fixed compact patch
		$U\Subset\mathbb R^3$, with $\xi\in V_j\Subset\mathbb R^2$.  Assume the phases
		and order-zero amplitudes form a bounded $C^\infty$ family on the supports.  Put
		\[
		P_j(x,\xi)=\nabla_x\phi_j(x,\xi).
		\]
		Assume $\operatorname{rank}\partial_\xi P_j=2$ and that the associated covector
		surfaces are uniformly transverse:
		\begin{equation}\label{eq:transverse-normal-lower}
			|n_1(x,\xi)\times n_2(x,\eta)|\ge c_0>0,
		\end{equation}
		where $n_j$ is a smooth unit normal to $P_j(x,V_j)$.  Then, for
		$1\le\alpha\le\beta$,
		\begin{equation}\label{eq:transverse-frozen-estimate}
			\|T_{\alpha,1}f\,T_{\beta,2}g\|_{L^2(U)}
			\le C\alpha^{1/2}\|f\|_2\|g\|_2 .
		\end{equation}
		The constant is uniform for bounded $C^\infty$ families satisfying these hypotheses.
	\end{lemma}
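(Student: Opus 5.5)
The estimate will follow by linearizing the product, exactly as in Proposition~\ref{prop:singlephase}, and applying H\"ormander's $L^2$ theorem (Lemma~\ref{lem:l2}) to a three-dimensional slice of the four-dimensional frequency variable $(\xi,\eta)$.

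\emph{Setup.} Write
\[
B F(x)=\alpha\beta\iint e^{i\alpha\phi_1(x,\xi)+i\beta\phi_2(x,\eta)}b_1b_2\,F(\xi,\eta)\,d\xi\,d\eta,
\]
and set $\sigma=\alpha/\beta\in(0,1]$. The phase is $\beta\Psi(x,\xi,\eta)$ with $\Psi=\sigma\phi_1(x,\xi)+\phi_2(x,\eta)$, and
\[
\nabla_x\Psi=\sigma P_1(x,\xi)+P_2(x,\eta).
\]
After localizing $\xi$ and $\eta$ to small patches (finitely many, at the cost of a constant), choose a unit vector $e$ in the $\xi$-plane. Slice the $\xi$-variable as $\xi=\xi'+s e$, where $\xi'$ ranges over the one-dimensional orthogonal complement and $s\in\mathbb R$. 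For fixed $\xi'$ the remaining variables are $y=(s,\eta)\in\mathbb R^3$, and
\[
D_y\nabla_x\Psi=\bigl(\sigma\,\partial_eP_1(x,\xi),\ \partial_{\eta_1}P_2,\ \partial_{\eta_2}P_2\bigr).
\]

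\emph{Rescaling.} This matrix degenerates as $\sigma\to0$. Substitute $u=\sigma s$ to obtain $D_{(u,\eta)}\nabla_x\Psi=\bigl(\partial_eP_1,\partial_{\eta}P_2\bigr)$. The two columns $\partial_\eta P_2$ span the tangent plane of the second surface, whose normal is $n_2(x,\eta)$. Choose $e$ so that $\partial_eP_1$ is not tangent to that plane; this is possible because the first surface's tangent plane is not contained in the second's, by the transversality hypothesis \eqref{eq:transverse-normal-lower}. Then
\[
|\det|\gtrsim|\langle\partial_eP_1,n_2\rangle|\gtrsim c_0 .
\]
Concretely, take $e$ with $\partial_eP_1$ parallel to the projection of $n_2$ onto the tangent plane of $P_1(x,V_1)$; that projection has length $|n_1\times n_2|\ge c_0$.

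Here is the step I expect to be the main obstacle. This $e$ depends on $(x,\xi,\eta)$, but it can be frozen at a center point once the patches and the $x$-patch are shrunk, since all the dependence is continuous. Lemma~\ref{lem:l2} requires the global lower bound \eqref{eq:l2-lower} on the convex slice, not just a nonzero determinant. I would obtain it by the same mean-value argument as in Proposition~\ref{prop:singlephase}: integrate $D_y\nabla_x\Psi$ along segments, on which the matrix stays within $\ll c_0$ of a fixed invertible matrix. The upper bounds \eqref{eq:l2-upper} are immediate, because in the $(u,\eta)$ variables the gradients are $C^\infty$ uniformly in $\sigma$: $\xi=\xi'+(u/\sigma)e$ moves over a $u$-interval of length $O(\sigma)$, and derivatives in $u$ carry a factor $1/\sigma$ that is compensated. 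I will double-check the last point carefully. Derivatives of $\sigma P_1(x,\xi'+(u/\sigma)e)$ in $u$ give $\partial_eP_1$, then $\sigma^{-1}\partial_e^2P_1$, and so on, so the higher derivatives blow up. Lemma~\ref{lem:l2}, however, only needs $x$-smoothness and the first-order bounds \eqref{eq:l2-lower}--\eqref{eq:l2-upper} in $y$, which hold by the mean value theorem. So this point is fine.

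\emph{Conclusion.} Lemma~\ref{lem:l2} gives, for each fixed $\xi'$, an operator bound of $\beta^{-3/2}$ in $L^2(du\,d\eta)$. Since $ds=\sigma^{-1}du$,
\[
\|H\|_{L^2(du\,d\eta)}=\sigma^{1/2}\|F(\xi',\cdot)\|_{L^2(ds\,d\eta)}.
\]
Minkowski's inequality in $\xi'$ and Cauchy--Schwarz over the bounded $\xi'$-interval then give
\[
\|BF\|_2\lesssim\alpha\beta\cdot\beta^{-3/2}\cdot\sigma^{-1}\cdot\sigma^{1/2}\|F\|_2=\alpha^{1/2}\|F\|_2 .
\]
Here the factor $\sigma^{-1}$ comes from writing $ds=\sigma^{-1}du$ in the integral defining $B$. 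Finally, taking $F=f\otimes g$ yields \eqref{eq:transverse-frozen-estimate}.
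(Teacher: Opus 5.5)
Your proposal is correct and follows essentially the same route as the paper: slice $\xi=(s,t)$ along a direction whose image under $\partial_\xi P_1$ is transverse to the tangent plane of the second covector surface, rescale $u=\sigma s$, apply Lemma~\ref{lem:l2} in the three variables $(u,\eta)$ uniformly in $\sigma$, and finish with Minkowski and Cauchy--Schwarz in the remaining slice variable. Your mean-value argument for the global lower bound (freezing the matrix near a fixed invertible one on small convex patches, with an $x$-localization), and your observation that only first-order $y$-derivatives are needed for the upper bounds, spell out a step the paper only asserts.
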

	
	\begin{proof}
		It suffices to prove the linearized estimate
		\[
		B_{\alpha,\beta}F(x)
		=\alpha\beta\iint e^{i\alpha\phi_1(x,\xi)+i\beta\phi_2(x,\eta)}
		b_1(x,\xi,\alpha)b_2(x,\eta,\beta)F(\xi,\eta)\,d\xi d\eta .
		\]
		After a finite subdivision of the frequency supports, transversality allows us
		to write $\xi=(s,t)$ so that
		\begin{equation}\label{eq:transverse-minor}
			\left|\det\bigl(\partial_sP_1(x,\xi),\partial_{\eta_1}P_2(x,\eta),
			\partial_{\eta_2}P_2(x,\eta)\bigr)\right|\ge c_1>0
		\end{equation}
		on the support.  Indeed, the two $\eta$-columns span the tangent plane of the
		second covector surface, and \eqref{eq:transverse-normal-lower} says that some
		tangent direction of the first surface is uniformly transverse to that plane.
		
		Set $\sigma=\alpha/\beta$ and $u=\sigma s$.  For fixed $t$, put
		$y=(u,\eta)$ and
		\[
		\Psi_t(x,u,\eta)=\sigma\phi_1(x,u/\sigma,t)+\phi_2(x,\eta).
		\]
		Then
		\[
		\nabla_x\Psi_t=\sigma P_1(x,u/\sigma,t)+P_2(x,\eta),
		\]
		and differentiation with respect to $(u,\eta_1,\eta_2)$ gives the columns in
		\eqref{eq:transverse-minor}.  Thus, after the subdivision and uniformly in
		$\sigma$ and $t$,
		\begin{align*}
			|\nabla_x\Psi_t(x,y)-\nabla_x\Psi_t(x,y')|&\ge c|y-y'|,
			\\
			|\partial_x^\gamma\{\nabla_x\Psi_t(x,y)-\nabla_x\Psi_t(x,y')\}|&\le C_\gamma |y-y'|,
			\qquad |\gamma|\le N,
		\end{align*}
		with $N>3$.  The change $u=\sigma s$ removes the small factor $\sigma$ from the
		first column.
		
		For fixed $t$, let
		\[
		S_{\beta,t}h(x)=\int e^{i\beta\Psi_t(x,u,\eta)}
		c_t(x,u,\eta,\alpha,\beta)h(u,\eta)\,du d\eta,
		\]
		where $c_t=b_1(x,u/\sigma,t,\alpha)b_2(x,\eta,\beta)$.  Lemma~\ref{lem:l2},
		applied in the three variables $(u,\eta)$, gives
		\[
		\|S_{\beta,t}h\|_{L^2_x}\le C\beta^{-3/2}\|h\|_2
		\]
		uniformly in $t$ and $\sigma$.  Since $ds=\sigma^{-1}du$ and
		$\alpha\beta/\sigma=\beta^2$,
		\[
		B_{\alpha,\beta}F(x)=\beta^2\int S_{\beta,t}H_t(x)\,dt,
		\qquad H_t(u,\eta)=F(u/\sigma,t,\eta).
		\]
		Minkowski and Cauchy's inequality in the bounded $t$-interval give
		\[
		\|B_{\alpha,\beta}F\|_2
		\le C\beta^{1/2}
		\left(\int\|H_t\|_2^2\,dt\right)^{1/2}
		=C\beta^{1/2}\sigma^{1/2}\|F\|_2
		=C\alpha^{1/2}\|F\|_2.
		\]
		Taking $F=f\otimes g$ proves \eqref{eq:transverse-frozen-estimate}.
	\end{proof}
	
	\begin{proposition}\label{lem:local-main-piece}
		Let $o$ be the center of a sufficiently small normal-coordinate ball and set
		$U=\{|x|<\tau\eps\}$ in these coordinates.  Suppose
		\[
		S^{(j)}_\lambda f(x)
		=\lambda\int e^{-i\lambda d_g(x,y)}a_j(x,y,\lambda)f(y)\,dy,
		\qquad j=1,2,
		\]
		where the amplitudes are supported in $x\in K_1\Subset U$, where
		\[
		d_g(o,y)\approx\eps,
		\qquad d_g(x,y)\approx\eps,
		\]
		and where the amplitudes form a bounded order-zero family.  Then, for
		$1\le\alpha\le\beta$,
		\begin{equation}\label{eq:local-main-piece-estimate}
			\|S^{(1)}_\alpha f\,S^{(2)}_\beta g\|_{L^2(U)}
			\le C\alpha^{1/2}\|f\|_{L^2(M)}\|g\|_{L^2(M)} .
		\end{equation}
	\end{proposition}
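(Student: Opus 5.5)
We reduce the local estimate \eqref{eq:local-main-piece-estimate} to the frozen two-radius bounds by writing $y$ in geodesic polar coordinates about $o$, $y=\exp_o(r\omega)$ with $r\in I=[\eps/C_0,C_0\eps]$ (after choosing $C_0$ so that the support condition $d_g(o,y)\approx\eps$ puts $r$ in $I$), and $dy=\kappa(r,\omega)\,dr\,d\omega$. Then
\[
S^{(j)}_\lambda f(x)=\int_I T^{(j)}_{\lambda,r}f_r(x)\,dr,\qquad f_r(\omega)=f(\exp_o(r\omega)),
\]
where $T^{(j)}_{\lambda,r}h(x)=\lambda\int e^{-i\lambda d_g(x,\exp_o(r\omega))}\kappa(r,\omega)a_j(x,\exp_o(r\omega),\lambda)h(\omega)\,d\omega$. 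Next we cover $S^2$ by finitely many small caps $\{\mathcal C_\ell\}$, each with a chart $\Omega_\ell$ as in Section~\ref{sec4}, and insert a smooth angular partition of unity, so each $T^{(j)}_{\lambda,r}$ is a finite sum of pieces $T^{a}_{\lambda,r}$ of the form \eqref{defop} on some cap. The bilinear product then splits into finitely many pairs of caps $(\mathcal C_\ell,\mathcal C_m)$, and it suffices to prove the frozen bound
\[
\|T^{(1)}_{\alpha,r_1,\ell}h_1\,T^{(2)}_{\beta,r_2,m}h_2\|_{L^2(U)}\le C\alpha^{1/2}\|h_1\|_2\|h_2\|_2
\]
uniformly in $r_1,r_2\in I$.

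Granting this, Minkowski's inequality in $(r_1,r_2)$ gives
\[
\|S^{(1)}_\alpha f\,S^{(2)}_\beta g\|_{L^2(U)}\le C\alpha^{1/2}\int_I\|f_{r_1}\|_2\,dr_1\int_I\|g_{r_2}\|_2\,dr_2 .
\]
Cauchy--Schwarz on the bounded interval $I$ together with $\kappa\approx\eps^2$ bounds each integral by $C_\eps\|f\|_{L^2(M)}$ and $C_\eps\|g\|_{L^2(M)}$, respectively. Since $\eps$ is fixed, this gives \eqref{eq:local-main-piece-estimate}.

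For the frozen bound there are three geometric cases. By Gauss's lemma \eqref{gauss}, at $x=0$ the covector surface of a cap piece is the cap itself on $S^2$, and for $x\in U$ it is an $O(\tau+\eps^2)$-perturbation of it (Lemma~\ref{lem:outer-elliptic}(i)).
\begin{enumerate}[(a)]
\item If $\mathcal C_\ell$ and $\mathcal C_m$ are neither close nor nearly antipodal, then the normals $\Omega(\xi)$ and $\Omega(\eta)$ satisfy $|\Omega(\xi)\times\Omega(\eta)|\ge c_0$. This persists for $x\in U$ after shrinking $\tau,\eps$, so Lemma~\ref{lem:transverse-frozen} applies.
\item If the caps lie in a common small cap, Theorem~\ref{thm:main} applies directly, or equivalently Theorem~\ref{thm:signed-main} with $\sigma_1=\sigma_2=+1$.
\item If the caps are nearly antipodal, write $\mathcal C_m=-\mathcal C'$ with $\mathcal C'$ near $\mathcal C_\ell$. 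Then $\exp_o(r\omega)=\exp_o(-r\Omega(\xi))$ with $\Omega(\xi)\in\mathcal C'$, and the phase $-d_g(x,Y^-_r(\xi))$ equals $\sigma\Phi^\sigma_r$ with $\sigma=-1$.
\end{enumerate}

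Case (c) is the delicate point. In Theorem~\ref{thm:signed-main} the signed phase is $\Phi_r^-=+d_g$, while our operator carries $-d_g$, so it is the complex conjugate of a $\sigma=-1$ operator with conjugated amplitude and input. Since $|\overline{F}\,G|=|F\,G|$ pointwise, we expect $|T_{\alpha}f\,T_\beta g|=|\overline{T_\alpha f}\,T_\beta g|$ to let us replace one factor by its conjugate. This yields a pair with signs $(\sigma_1,\sigma_2)=(+1,-1)$ on a common cap $\mathcal C'$, and Theorem~\ref{thm:signed-main} then gives the bound $C_\eps\alpha^{1/2}$, uniformly in $r_1,r_2$ and in the order of the frequencies (we always take $\alpha\le\beta$). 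We must also check two things: that the caps are small enough, and $\tau,\eps$ chosen so that Lemmas~\ref{lem:signed-critical-geometry} and~\ref{lem:transverse-frozen} hold uniformly over the finitely many cap pairs, and that the amplitudes $\kappa a_j$ form bounded order-zero families supported in $K_1\times K_2$. Both are routine given the finiteness of the cover.
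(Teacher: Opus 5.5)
Your proposal is correct and follows the paper's proof essentially step by step: geodesic polar coordinates about $o$ with a fine finite cap partition; the frozen two-radius bound from Theorem~\ref{thm:main} (or Theorem~\ref{thm:signed-main}) for caps in a common chart; conjugation of the antipodal factor to reach the signed case $(\sigma_1,\sigma_2)=(+1,-1)$ of Theorem~\ref{thm:signed-main}; Lemma~\ref{lem:transverse-frozen} for transverse pairs; and then Minkowski and Cauchy--Schwarz in the two radial variables. The only cosmetic difference is that you put the whole polar Jacobian $\kappa$ into the amplitude and use $\kappa\approx\eps^2$, whereas the paper splits it symmetrically between the amplitude and the input.
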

	
	\begin{proof}
		Use geodesic polar coordinates around $o$ on the $y$-annulus.  After finite
		radial and angular partitions of unity, each angular piece has the form
		\begin{equation}\label{eq:radial-angular-piece}
			S_{\lambda,\nu}^{(j)}f_j(x)
			=\int_I \bigl(T^{(j,\nu)}_{\lambda,r}h^{(j,\nu)}_r\bigr)(x)\,dr,
		\end{equation}
		where $I\subset[\eps/C,C\eps]$ is fixed,
		$Y_\nu(r,\xi)=\exp_o(r\Omega_\nu(\xi))$, and
		\[
		T^{(j,\nu)}_{\lambda,r}h(x)
		=\lambda\int e^{-i\lambda d_g(x,Y_\nu(r,\xi))}
		b^{(j,\nu)}_r(x,\xi,\lambda)h(\xi)\,d\xi .
		\]
		Splitting the polar Jacobian symmetrically between the amplitude and the input
		gives
		\begin{equation}\label{eq:polar-L2-control}
			\sum_\nu\int_I\|h^{(j,\nu)}_r\|_{L^2_\xi}^2\,dr
			\le C_\eps\|f_j\|_{L^2(M)}^2,
			\qquad f_1=f,\quad f_2=g .
		\end{equation}
		The frozen amplitudes satisfy the symbol bounds required in the frozen
		estimates.
		
		We claim that every frozen cap pair satisfies
		\begin{equation}\label{eq:frozen-uniform-cap-pair}
			\|T^{(1,\nu_1)}_{\alpha,r_1}f\,
			T^{(2,\nu_2)}_{\beta,r_2}g\|_{L^2(U)}
			\le C\alpha^{1/2}\|f\|_2\|g\|_2,
		\end{equation}
		for all $r_1,r_2\in I$.  The angular partition is chosen fine enough that each
		cap pair is either parallel/antipodal or uniformly separated from both of these
		relations.  In the first case there is a common angular chart and there are signs
		$\sigma_j\in\{+1,-1\}$ such that, after conjugating the factors with
		$\sigma_j=-1$ if necessary, the phases are precisely the signed phases
		$\Phi_{r_j}^{\sigma_j}$.  Theorem~\ref{thm:signed-main} then gives
		\eqref{eq:frozen-uniform-cap-pair}.
		
		In the remaining case, the two direction sets are separated from both
		$\omega_1=\omega_2$ and $\omega_1=-\omega_2$.  The scaled distance estimate gives
		\[
		\nabla_x[-d_g(x,\exp_o(r\omega))]=\omega+O(\tau+\eps^2)
		\]
		with smooth dependence on $\omega$.  Thus, after taking $\tau$ and $\eps$ small,
		the associated covector surfaces have uniformly transverse normal fields.
		Lemma~\ref{lem:transverse-frozen} gives \eqref{eq:frozen-uniform-cap-pair} for
		these cap pairs as well.
		
		Finally combine \eqref{eq:radial-angular-piece}, \eqref{eq:frozen-uniform-cap-pair},
		Minkowski's inequality, Cauchy's inequality in the two radial variables, and
		\eqref{eq:polar-L2-control}.  For each cap pair,
		\[
		\begin{aligned}
			\|S_{\alpha,\nu_1}^{(1)}f\,S_{\beta,\nu_2}^{(2)}g\|_{L^2(U)}
			&\le \int_I\int_I
			\|T^{(1,\nu_1)}_{\alpha,r_1}h^{(1,\nu_1)}_{r_1}\,
			T^{(2,\nu_2)}_{\beta,r_2}h^{(2,\nu_2)}_{r_2}\|_2\,dr_1dr_2 \\
			&\le C\alpha^{1/2}
			\left(\int_I\|h^{(1,\nu_1)}_{r_1}\|_2^2\,dr_1\right)^{1/2}
			\left(\int_I\|h^{(2,\nu_2)}_{r_2}\|_2^2\,dr_2\right)^{1/2}.
		\end{aligned}
		\]
		Summing over the finitely many cap pairs proves
		\eqref{eq:local-main-piece-estimate}.
	\end{proof}

 		By Lemmas~\ref{chilem}-\ref{soggelem} we may choose $\chi\in\mathcal{S}(\mathbb{R})$ so that
	\[
	\chi_\lambda=S_\lambda+R_{\lambda}.
	\]
	The remainder is harmless. It remains to estimate $S_{\lambda}f\,S_{\mu}g$.    Both operators satisfy the hypotheses of Proposition~\ref{lem:local-main-piece}.  Consequently, for $1\le\lambda\le\mu$,
	\[
	\|S_{\lambda}f\,S_{\mu}g\|_{L^2(M)}
	\ls\lambda^{1/2}\|f\|_2\|g\|_2 .
	\]
	This proves  the log-free endpoint bilinear estimate
	\begin{equation}\label{bi}
		\left\| \chi_{\lambda}f\chi_\mu g\right\|_{L^2(M)}
		\lesssim
		\la^{\frac12}
		\|f\|_{L^2(M)}\|g\|_{L^2(M)}.
	\end{equation}
	
For $k\ge3$, $ 1\le p\le2$, $1\ll\lambda_1\le\cdots\le\lambda_k$.
		Write
		$u_j=\chi_{\lambda_j}f_j$ with $\|f_j\|_{L^2(M)}=1$.  By interpolation, it suffices to prove the log-free endpoint estimates.  The standard linear bounds are uniform for this bounded amplitude family:
		\[
		\|u_j\|_{L^\infty}\lesssim \lambda_j,
		\qquad
		\|u_j\|_{L^4}\lesssim \lambda_j^{1/4}.
		\]
		The bilinear endpoint from \eqref{bi}  gives,
		for $\lambda_i\le\lambda_j$, $\|u_i u_j\|_{L^2}\lesssim \lambda_i^{1/2}.$
		At $p=1$,
		\[
		\|u_1u_2\cdots u_k\|_{L^1}
		\lesssim
		\prod_{j=1}^{k-4}\|u_j\|_{L^\infty}
		\|u_{k-3}u_{k-1}\|_{L^2}
		\|u_{k-2}u_k\|_{L^2}
		\lesssim
		\prod_{j=1}^{k-4}\lambda_j\,
		\lambda_{k-3}^{1/2}\lambda_{k-2}^{1/2}.
		\]
		At $p=4/3$,
		\[
		\|u_1u_2\cdots u_k\|_{L^{4/3}}
		\lesssim
		\prod_{j=1}^{k-4}\|u_j\|_{L^\infty}
		\|u_{k-3}\|_{L^\infty}
		\|u_{k-2}u_k\|_{L^2}
		\|u_{k-1}\|_{L^4}
		\lesssim
		\prod_{j=1}^{k-4}\lambda_j\,
		\lambda_{k-3}\lambda_{k-2}^{1/2}\lambda_{k-1}^{1/4}.
		\]
		At $p=2$,
		\[
		\|u_1u_2\cdots u_k\|_{L^2}
		\lesssim
		\prod_{j=1}^{k-3}\|u_j\|_{L^\infty}
		\|u_{k-2}\|_{L^\infty}
		\|u_{k-1}u_k\|_{L^2}
		\lesssim
		\prod_{j=1}^{k-3}\lambda_j\,
		\lambda_{k-2}\lambda_{k-1}^{1/2}.
		\]

	\section{Proof of multilinear oscillatory integral estimates}\label{sec10}
In this section, we prove the upper bounds in Theorems \ref{thm1}--\ref{thm3}.	Fix $n\ge2$, $k\ge2$. Let $d=n-1$. Let $u_j=T_{\la_j,\phi_j}^{a_j}f_j$ and $\|f_j\|_{L^2(\mathbb{R}^{d})}=1$ for $j=1,2,\ldots,k$.  To give a unified argument, we use the convention that the empty product is \(1\) and \(\lambda_j=2\) for all \(j\le0\).
	
	For $d\ge3$, set
	\[
	q_0=\frac{2(d+2)}{d-2},\qquad
	r_0=\frac{2d(d+2)}{d^2+4},\qquad
	q_1=\frac{2d}{d-2}.
	\]
	For $d\ge2$, set $q_2=\frac{2(d+2)}d$.  Then, for $d\ge3$,
	\[\quad \frac1{p_0}=\frac1{q_0}+\frac12,\quad
	\frac1{p_0}=\frac1{q_1}+\frac1{r_0},\quad \frac1{p_1}=\frac1{q_1}+\frac12, \]
	and for $d\ge2$,
	\[
	\frac1{p_2}=\frac1{q_2}+\frac12.
	\]
	In this section, for $1\le p\le 2$, we first prove bilinear and multilinear estimates for $d\ge3$, and then prove them for $d=1$ and $d=2$ separately. Finally, we prove similar estimates for $p>2$.
	
	\subsection{Keel-Tao's theorem}\label{sec:keel-tao-upper}
	From now on set $d=n-1$.  The proof is local in phase space. We first reduce the operator to the usual Carleson--Sj\"olin normal form.

			\begin{lemma}[Common normal coordinates]\label{lem:normal-form-products}
			Fix $k\ge2$.  Let $T_{\lambda_j,\phi_j}^{a_j}$, $1\le j\le k$, be Carleson--Sj\"olin operators, with amplitudes $a_j$ supported in fixed compact coordinate patches.  After a finite decomposition in $y$ and in each of the $k$ frequency variables, each localized piece of the product $\prod_{j=1}^kT_{\lambda_j,\phi_j}^{a_j}f_j$ admits orthonormal
			coordinates
			$y=(s,z)\in\mathbb R\times\mathbb R^d$ common to all $j$, such that
			\[
			\det\partial^2_{z\xi}\phi_j(s,z,\xi)\ne0
			\]
			on the support of its localized amplitude.  In these coordinates each hypersurface $S_{\phi_j}(s,z)$
			is locally a graph $\{\bigl(H^{(j)}_{s,z}(\zeta),\zeta\bigr)\},$
			and the curvature condition  is equivalent to 
			\begin{equation}\label{curcond}
				\det\partial^2_{\zeta\zeta}H^{(j)}_{s,z}(\zeta)\ne0.
			\end{equation}
		
		\end{lemma}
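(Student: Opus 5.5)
We localize first and then pick a single time direction $s$ that works for all $k$ phases simultaneously. By compactness of the supports and a finite partition of unity in $y$ and in each frequency variable $\xi^{(j)}$, we may assume each amplitude $a_j$ is supported in a small ball $B(y_0,\delta)\times B(\xi_j^0,\delta)$. The common $y$-ball is the same for all $j$. Set $y_0$ and $\xi^0_j$ as centers.

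For each $j$, the nondegeneracy condition $\rank\partial^2_{\xi y}\phi_j=d$ means that $N_j(y_0,\xi_j^0)$, the unit normal to $S_{\phi_j}(y_0)$ at $\nabla_y\phi_j(y_0,\xi_j^0)$, is well defined. The columns of $\partial_\xi\nabla_y\phi_j$ span the tangent plane $N_j^\perp$.

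Choose a unit vector $e\in\R^n$ with $e\cdot N_j(y_0,\xi_j^0)\neq0$ for all $j=1,\dots,k$. This is possible because the complement of the finite union of the hyperplanes $N_j^\perp$ is open and dense. Take orthonormal coordinates $y=(s,z)$ with $s$ in the direction $e$ and $z\in e^\perp\cong\R^d$.

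Then $\partial^2_{z\xi}\phi_j$ is the composition of $\partial_\xi\nabla_y\phi_j$, which is injective with image $N_j^\perp$, with the orthogonal projection $\pi$ onto $e^\perp$. The kernel of $\pi|_{N_j^\perp}$ is $N_j^\perp\cap\R e$, which is zero since $e\notin N_j^\perp$. Hence $\det\partial^2_{z\xi}\phi_j(y_0,\xi^0_j)\ne0$. By continuity this persists on the support once $\delta$ is small, and a finite cover handles all base points. The subdivision is finite because the lower bound on $|e\cdot N_j|$ is locally uniform.

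Next we show the graph structure. Since $\xi\mapsto\nabla_z\phi_j(s,z,\xi)=:\zeta$ is a local diffeomorphism, with Jacobian $\partial^2_{z\xi}\phi_j$, the inverse function theorem, uniform in $(s,z)$, gives $\xi=\Xi_j(s,z,\zeta)$. Then
\[
S_{\phi_j}(s,z)=\{(H^{(j)}_{s,z}(\zeta),\zeta)\},\qquad H^{(j)}_{s,z}(\zeta)=\partial_s\phi_j(s,z,\Xi_j(s,z,\zeta)).
\]
This is exactly the claimed graph representation.

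For the curvature equivalence: a graph hypersurface $\{(H(\zeta),\zeta)\}$ in $\R^n$ has Gaussian curvature equal to
\[
\det\partial^2_{\zeta\zeta}H\,(1+|\nabla H|^2)^{-(d+2)/2},
\]
up to sign. This expression is invariant under the orthonormal change of coordinates and under reparametrization of the surface. So nonvanishing Gaussian curvature is equivalent to \eqref{curcond}.

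The only point requiring care is making the finite decomposition uniform. The choice of $e$ must work for all $k$ factors at once, since the coordinates must be common. This is handled by the genericity argument above together with compactness of $\supp a_j$.
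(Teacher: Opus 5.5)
Your proposal is correct and follows the paper's proof: like the paper, you pick one unit vector outside the finitely many $d$-dimensional images of $\partial_\xi\nabla_y\phi_j$ at the base points (your condition $e\cdot N_j\neq0$), use that the orthogonal projection onto its complement is injective on each image, extend by continuity and compactness, and read off the graph with $\zeta=\nabla_z\phi_j$ and $H^{(j)}_{s,z}=\partial_s\phi_j$. Your explicit graph formula for the Gaussian curvature, together with its invariance under the orthonormal change of variables, only spells out the equivalence with the nonvanishing of $\det\partial^2_{\zeta\zeta}H^{(j)}_{s,z}$, which the paper states without computation.
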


\begin{proof}
The assertion is local in the product support.  Fix
\[
y_0\in \R^{d+1},\qquad \eta_j^0\in \R^d,\quad j=1,\ldots,k,
\]
with \((y_0,\eta_j^0)\in\operatorname{supp}a_j\).  By the
 rank condition, the image
\[
\operatorname{Image}\partial_\xi\nabla_y\phi_j(y_0,\eta_j^0)
\subset \R^{d+1}
\]
is a \(d\)-dimensional subspace for each \(j\).  Choose a unit vector \(\ell\)
outside the finite union of these subspaces.  Let
\(E:\R^d\to \ell^\perp\subset\R^{d+1}\) be an isometry, and write
\[
y=y_0+s\ell+Ez,\qquad s\in\R,\quad z\in\R^d .
\]
Then
\[
\partial_{z\xi}^2\phi_j
=
E^T\partial_\xi\nabla_y\phi_j .
\]
Since \(\ker E^T=\R\ell\) and \(\ell\notin
\operatorname{Image}\partial_\xi\nabla_y\phi_j(y_0,\eta_j^0)\), the projection
\(E^T\) is injective on this image.  Hence, using the rank condition again,
\[
\det \partial_{z\xi}^2\phi_j(0,0,\eta_j^0)\ne0 .
\]
After shrinking the supports, this remains true throughout the localized
piece.  Compactness gives the required finite decomposition.

On each such piece, the map $\xi\mapsto \zeta=\partial_z\phi_j(s,z,\xi)$
is a local diffeomorphism.  Using \(\zeta\) as the new frequency variable, we
may write
\[
\partial_s\phi_j(s,z,\xi)=H^{(j)}_{s,z}(\zeta).
\]
Therefore
\[
S_{\phi_j}(s,z)
=
\{(H^{(j)}_{s,z}(\zeta),\zeta)\}.
\]
So  the curvature condition is equivalent to
\[
\det\partial_{\zeta\zeta}^2H^{(j)}_{s,z}\ne0.
\]
\end{proof}

	On such common normal coordinates, each localized factor has the form
	\begin{equation}\label{Tla}
		T_{\la,\Phi} u(s, z) = \la^{d/2} \int_{\mathbb{R}^d} e^{i\la\Phi(s, z, \xi)} b(s, z, \xi) u(\xi) \, d\xi,
	\end{equation}
	where \( b \in C_0^\infty \), \( \det \partial^2_{z\xi} \Phi \neq 0 \), and the graph curvature condition \eqref{curcond} holds.  The following Strichartz estimate follows from the Keel--Tao theorem \cite{keeltao} in the truncated-decay setting, together with the standard logarithmic substitute at the one excluded endpoint.
	\begin{proposition}\label{kt}
		Let $d\ge 1$, $2\le a,b\le \infty$, and suppose
		\[
		\frac2a+\frac db\le \frac d2.
		\]
		If $(a,b,d)\ne (2,\infty,2)$, then
		\begin{equation}\label{kt-main-estimate}
			\|T_{\la,\Phi} u\|_{L_s^aL_z^b}\ls \la^{\frac d2-\frac 1a-\frac db}\|u\|_{L^2}.
		\end{equation}
		The only logarithmic endpoint is the two-dimensional endpoint
		$(a,b,d)=(2,\infty,2)$, where
		\begin{equation}\label{kt-2d-log-endpoint}
			\|T_{\la,\Phi} u\|_{L_s^2L_z^\infty}\ls \la^{\frac 12}\sqrt{\log\la}\|u\|_{L^2}.
		\end{equation}
	\end{proposition}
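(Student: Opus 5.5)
The plan is to view $s$ as a time variable and $U(s)u:=T_{\la,\Phi}u(s,\cdot)$ as a family of operators $L^2(\R^d)\to L^2(\R^d_z)$, and then run the abstract Keel--Tao machinery. Two inputs are needed.

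The first is the energy estimate $\|U(s)u\|_{L^2_z}\ls\|u\|_{L^2}$, uniformly in $s$. Since $\det\partial^2_{z\xi}\Phi\ne0$, the operator $\la^{d/2}\int e^{i\la\Phi(s,z,\xi)}b\,u\,d\xi$ is a nondegenerate $d$-dimensional oscillatory integral operator in $(z,\xi)$. H\"ormander's $L^2$ theorem (the analogue of Lemma~\ref{lem:l2} in $d$ variables) gives $\la^{-d/2}$, which cancels the prefactor.

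The second is the truncated dispersive estimate
\[
\|U(s)U(s')^*\|_{L^1\to L^\infty}\ls \la^{d}(1+\la|s-s'|)^{-d/2}.
\]
The kernel of $U(s)U(s')^*$ is
\[
\la^d\int e^{i\la(\Phi(s,z,\xi)-\Phi(s',z',\xi))}b\bar b\,d\xi .
\]
Change variables to $\zeta=\partial_z\Phi$ as in Lemma~\ref{lem:normal-form-products}. The phase difference, as a function of $\xi$, has gradient comparable to $|z-z'|+O(|s-s'|)$ when $|z-z'|\gg|s-s'|$, and there we integrate by parts. Otherwise its $\xi$-Hessian is $(s-s')\,\partial^2_{\zeta\zeta}H+O(|s-s'|^2+|s-s'||z-z'|)$ up to the Jacobian factors, and this is nondegenerate of size $|s-s'|$ by \eqref{curcond}. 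Stationary phase in $d$ variables then gives $(\la|s-s'|)^{-d/2}$, and the trivial bound $\la^d$ covers $|s-s'|\ls\la^{-1}$. Making this stationary phase uniform, with a nonstationary region, a single critical point and a uniform Hessian bound, is the main technical step. I would treat it exactly as in Proposition~\ref{prop:sogge-stationary}, after shrinking supports so that the critical point is unique.

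Next, rescale. Put $\tilde s=\la s$ and $\tilde U(\tilde s)=U(\tilde s/\la)$. Then $\tilde U$ satisfies the energy bound, and $\la^{-d}\tilde U\tilde U^*$ decays like $(1+|\tilde s-\tilde s'|)^{-d/2}$, so the interpolated decay holds with constant $\la^{d(1-2/b)}$. The truncated-decay version of Keel--Tao (decay $(1+|t|)^{-\sigma}$ with $\sigma=d/2$) gives
\[
\|\tilde U u\|_{L^a_{\tilde s}L^b_z}\ls \la^{\frac d2-\frac db}\|u\|_2
\]
for all $\frac2a+\frac db\le\frac d2$ with $(a,b,d)\ne(2,\infty,2)$. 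Here the non-sharp admissible range is allowed, since truncated decay permits $\frac1a\le\sigma(\frac12-\frac1b)$ away from the forbidden endpoint, and the $\sigma=1$, $b=\infty$ endpoint is excluded. Undoing the scaling in $s$ contributes $\la^{-1/a}$, which yields \eqref{kt-main-estimate}.

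For $(a,b,d)=(2,\infty,2)$ I would use the $TT^*$ argument directly. Bound
\[
\Big\|\int U(s)U(s')^*F(s')\,ds'\Big\|_{L^2_sL^\infty_z}
\]
using $\|U(s)U(s')^*\|_{L^1\to L^\infty}\ls\la^2\min(1,(\la|s-s'|)^{-1})$. The kernel $\min(\la^2,\la|s-s'|^{-1})$ on the bounded $s$-interval has $L^1$ norm $\approx\la\log\la$, so Young's inequality gives the $TT^*$ bound $\la\log\la$. Taking the square root gives \eqref{kt-2d-log-endpoint}. This step is routine; the real work is the uniform dispersive estimate.
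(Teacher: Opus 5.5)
Your proposal is correct and follows the paper's proof. The steps match: a uniform fixed-time $L^2$ bound, the truncated dispersive bound $\lambda^d(1+\lambda|s-s'|)^{-d/2}$ from stationary phase, reduction to the truncated-decay Keel--Tao theorem with $\sigma=d/2$, and a direct $TT^*$/Young argument giving $\lambda\log\lambda$ at the excluded endpoint $(2,\infty,2)$. The only difference is cosmetic: you rescale only $s$ and carry the dispersive constant $\lambda^d$ through Keel--Tao, whereas the paper also dilates $z\mapsto\lambda z$ so the Keel--Tao hypotheses are exactly normalized, and that spatial dilation makes the two versions equivalent.
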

	\begin{proof}
		We prove the estimate on one normal-form patch. The finite sum of patches only changes the
		constant.  Write
		\[
		U_\lambda(s)u(z):=T_{\lambda,\Phi}u(s,z).
		\]
		The nondegeneracy \(\det\partial_{z\xi}^2\Phi\ne 0\) gives the fixed-time \(L^2\) bound uniform in \(s\) and \(\lambda\):
		\[
		\|U_\lambda(s)u\|_{L_z^2}\le C\|u\|_{L_\xi^2}.
		\]

		We also need the usual dispersive estimate. The kernel of \(U_\lambda(s)U_\lambda(s')^*\) is
		\[
		K_\lambda(s,z;s',z')=
		\lambda^d\int e^{i\lambda(\Phi(s,z,\xi)-\Phi(s',z',\xi))}
		b(s,z,\xi)\overline{b(s',z',\xi)}\,d\xi .
		\]
		If \(|s-s'|\lesssim\lambda^{-1}\), the trivial bound gives \(|K_\lambda|\lesssim\lambda^d\).
		If \(|s-s'|\gtrsim\lambda^{-1}\), stationary phase in \(\xi\), using the graph curvature
		condition, gives
		\[
		|K_\lambda(s,z;s',z')|\le C\lambda^d(\lambda |s-s'|)^{-d/2}.
		\]
		Combining the two regions,
		\[
		\|U_\lambda(s)U_\lambda(s')^*F\|_{L_z^\infty}
		\le C\lambda^d(1+\lambda |s-s'|)^{-d/2}\|F\|_{L_z^1}.
		\]
		
		Now rescale both time and space to put this in the normalized Keel--Tao form.  Set
		\[
		\widetilde U_\lambda(t)u(x)
		:=\lambda^{-d/2}U_\lambda(t/\lambda)u(x/\lambda).
		\]
		Then
		\[
		\|\widetilde U_\lambda(t)u\|_{L_x^2}\le C\|u\|_{L^2}
		\]
		and the previous kernel bound becomes
		\[
		\|\widetilde U_\lambda(t)\widetilde U_\lambda(t')^*F\|_{L_x^\infty}
		\le C(1+|t-t'|)^{-d/2}\|F\|_{L_x^1}.
		\]
		Thus the Keel--Tao theorem applies with dispersive exponent \(\sigma=d/2\) in the
		truncated-decay case.  Since Keel--Tao admissibility is
		\[
		\frac1a+\frac{d/2}{b}\le \frac{d/2}{2},
		\]
		it is exactly the condition
		\[
		\frac2a+\frac db\le \frac d2.
		\]
		The only excluded Keel--Tao endpoint \((a,b,\sigma)=(2,\infty,1)\) is precisely
		\((a,b,d)=(2,\infty,2)\).  Therefore, for every other admissible pair,
		\[
		\|\widetilde U_\lambda(t)u\|_{L_t^aL_x^b}\le C\|u\|_{L^2}.
		\]
		Undoing the rescaling gives
		\[
		\|\widetilde U_\lambda u\|_{L_t^aL_x^b}
		=\lambda^{-d/2+d/b+1/a}\|T_{\lambda,\Phi} u\|_{L_s^aL_z^b},
		\]
		with the usual interpretation when \(a=\infty\) or \(b=\infty\).  Hence
		\[
		\|T_{\lambda,\Phi} u\|_{L_s^aL_z^b}
		\le C\lambda^{d/2-1/a-d/b}\|u\|_{L^2},
		\]
		which proves \eqref{kt-main-estimate}.
		
		When \((a,b,d)=(2,\infty,2)\), the same dyadic \(TT^*\) argument is borderline: the time kernel
		has size \((1+\lambda |s-s'|)^{-1}\), whose dyadic summation over \(\lambda^{-1}\lesssim
		|s-s'|\lesssim1\) contributes \(\log\lambda\) at the \(TT^*\) level.  Taking the square root gives
		\[
		\|T_{\lambda,\Phi} u\|_{L_s^2L_z^\infty}\le C\lambda^{1/2}\sqrt{\log\lambda}\|u\|_{L^2}.
		\]
		This is the asserted endpoint estimate.
	\end{proof}
	We shall repeatedly use the following mixed H\"older bookkeeping. If $\sum_{j=1}^k \frac1{a_j}=\sum_{j=1}^k\frac1{b_j}=\frac1p$, then
	\[\Big\|\prod_{j=1}^ku_j\Big\|_{L_{s,z}^p}\ls \prod_{j=1}^k\|u_j\|_{L_s^{a_j}L_z^{b_j}}.\]
	Since each normal-form patch is compact in the \((s,z)\) variables, the same estimate is valid when the two sums are at most \(1/p\), after using the finite-measure embeddings.
	
	\subsection{Bilinear estimates for $d\ge3$} We record the endpoint bilinear estimates for $d\ge3$. The remaining ranges follow by interpolation. We shall use Proposition \ref{kt} with mixed H\"older.
	
	At $p=2$ we have
	\[\|u_1u_2\|_{L^{2}}\ls  \|u_1\|_{L_s^2 L_z^{\infty}}\|u_2\|_{L_s^{\infty} L_z^{2}}\ls  \la_1^{\frac{d-1}2} .\]
	At $p=p_0$, we have 
	\[\|u_1u_2\|_{L^{p_0}}\ls  \|u_1\|_{L_s^2 L_z^{q_1}}\|u_2\|_{L_s^{q_0} L_z^{r_0}}\ls  \la_1^{\frac12}\la_2^{\frac1{q_0}}.\]
	At $p=p_1$, we have 
	\[\|u_1u_2\|_{L^{p_1}}\ls  \|u_1\|_{L_s^2 L_z^{q_1}}\|u_2\|_{L_s^\infty L_z^2}\ls  \la_1^{\frac12}.\]
	At $p=1$, we have
	\[\|u_1u_2\|_{L^1}\ls  \|u_1\|_{L_s^\infty L_z^2}\|u_2\|_{L_s^\infty L_z^2}\ls  1.\]

	\subsection{Multilinear estimates for \texorpdfstring{$d\ge3$}{d >= 3}} Fix $k\ge3$ and $d\ge3$. We prove the endpoint estimates, and the remaining ranges follow by interpolation. We shall use Proposition \ref{kt} with mixed H\"older and the bilinear  estimates obtained above.
	
	At $p=2$, apply H\"older and bilinear $L^2$ estimates
	\[\|u_1u_2\cdots u_k\|_{L^{2}}\ls  \|u_1\|_{ L^{\infty}}\cdots\|u_{k-2}\|_{L^\infty}\|u_{k-1}u_k\|_{L^{2}}\ls  \prod_{j=1}^{k-2}\la_j^{\frac d2}\cdot \la_{k-1}^{\frac{d-1}2}.\]
	At $p=p_0$,  we apply H\"older and bilinear $L^{p_0}$ estimates
	\begin{align*}
		\|u_1u_2\cdots u_k\|_{L^{p_0}}&\ls  \|u_1\|_{ L^{\infty}}\cdots\|u_{k-2}\|_{L^\infty}\|u_{k-1}u_k\|_{L^{p_0}}\\
		&\ls  \prod_{j=1}^{k-2}\la_j^{\frac d2}\cdot \la_{k-1}^{\frac12}\la_k^{\frac{d-2}{2(d+2)}}.
	\end{align*}
	Moreover, mixed H\"older gives
	\begin{align*}
		\|u_1u_2\cdots u_k\|_{L^{p_0}}&\ls  \|u_1\|_{ L^{\infty}}\cdots\|u_{k-3}\|_{L^\infty}\|u_{k-2}\|_{ L_s^{q_0}L_z^{\infty}}\|u_{k-1}\|_{L_s^2 L_z^{q_0}}\|u_k\|_{L_s^{\infty}L_z^2}\\
		&\ls  \prod_{j=1}^{k-3}\la_j^{\frac d2}\cdot  \la_{k-2}^{\frac d2-\frac{d-2}{2(d+2)}}\la_{k-1}^{\frac12+\frac{d-2}{d+2}}.
	\end{align*}
	The first bound is smaller than the second bound if $\la_{k-2}\la_{k}\ll\la_{k-1}^2$.  We therefore take the minimum of these two bounds.
	
	At $p=p_1$,  mixed H\"older gives
	\begin{align*}
		\|u_1u_2\cdots u_k\|_{L^{p_1}}&\ls  \|u_1\|_{ L^{\infty}}\cdots\|u_{k-3}\|_{L^\infty}\|u_{k-2}\|_{ L_s^{q_1}L_z^{\infty}}\|u_{k-1}\|_{L_s^2 L_z^{q_1}}\|u_k\|_{L_s^{\infty}L_z^2}\\
		&\ls  \prod_{j=1}^{k-3}\la_j^{\frac d2}\cdot \la_{k-2}^{\frac{d}2-\frac{d-2}{2d}}\la_{k-1}^{\frac12}.
	\end{align*}
	
	At $p=p_2$,  mixed H\"older gives
	\begin{align*}
		\|u_1u_2\cdots u_k\|_{L^{p_2}}&\ls  \|u_1\|_{ L^{\infty}}\cdots\|u_{k-3}\|_{L^\infty}\|u_{k-2}\|_{ L_s^{2}L_z^{\infty}}\|u_{k-1}\|_{L^{q_2}}\|u_k\|_{L_s^{\infty}L_z^2}\\
		&\ls  \prod_{j=1}^{k-3}\la_j^{\frac d2}\cdot \la_{k-2}^{\frac{d-1}2}\la_{k-1}^{\frac{d}{2(d+2)}}.
	\end{align*}
	At $p=1$,   H\"older and bilinear $L^2$ estimates give a basic bound
	\begin{align}\nonumber
		\|u_1u_2\cdots u_k\|_{L^{1}}&\ls  \|u_1\|_{ L^{\infty}}\cdots\|u_{k-4}\|_{L^\infty}\|u_{k-3}u_{k-1}\|_{L^2}\|u_{k-2}u_k\|_{L^{2}}\\ 
		&\ls  \prod_{j=1}^{k-4}\la_j^{\frac d2}\cdot  \la_{k-3}^{\frac {d-1}2}\la_{k-2}^{\frac{d-1}2}.\label{basic}
	\end{align}
	Moreover, for $d\ge4$, $(2,d)$ is admissible: $\frac2 2+\frac d d=2\le \frac d2$, so  mixed H\"older  gives
	\begin{align*}
		\|u_1u_2\cdots u_k\|_{L^{1}}&\ls  \|u_1\|_{ L^{\infty}}\cdots\|u_{k-3}\|_{L^\infty}\|u_{k-2}\|_{ L_s^{2}L_z^{d}}\|u_{k-1}\|_{L_s^{2} L_z^{q_1}}\|u_k\|_{L_s^{\infty}L_z^2}\\
		&\ls  \prod_{j=1}^{k-3}\la_j^{\frac d2}\cdot \la_{k-2}^{\frac{d-3}2}\la_{k-1}^{\frac12}.
	\end{align*}
	and for $d=3$  mixed H\"older gives
	\begin{align*}
		\|u_1u_2\cdots u_k\|_{L^{1}}&\ls \|u_1\|_{ L^{\infty}}\cdots\|u_{k-4}\|_{L^\infty}\|u_{k-3}\|_{L_s^4L_z^\infty}\|u_{k-2}\|_{L_s^2L_z^6}\|u_{k-1}\|_{L_s^4L_z^3}\|u_k\|_{L_s^\infty L_z^2}\\
		&\ls  \prod_{j=1}^{k-4}\la_j^{\frac 32}\cdot \la_{k-3}^{\frac 54}\la_{k-2}^{\frac12}\la_{k-1}^{\frac14}.
	\end{align*}
	Note that these two bounds are smaller than the basic bound \eqref{basic} if $\la_{k-3}\la_{k-1}\ll\la_{k-2}^2$. Thus we take the minimum of them.

	When $d=3$, there is an extra endpoint $p_3=\frac{10}9$.  Mixed H\"older gives
	\begin{align*}
		\|u_1u_2\cdots u_k\|_{L^{p_3}}&\ls \|u_1\|_{L^\infty}...\|u_{k-3}\|_{L^\infty}\|u_{k-2}\|_{ L_s^{2}L_z^{6}}\|u_{k-1}\|_{L_s^{5/2} L_z^{30/7}}\|u_k\|_{L_s^{\infty}L_z^2}\\
		&\ls \prod_{j=1}^{k-3}\la_j^{\frac 32}\cdot \la_{k-2}^{\frac12}\la_{k-1}^{\frac 25},
	\end{align*}
	and
	\begin{align*}
		\|u_1u_2\cdots u_k\|_{L^{p_3}}&\ls \|u_1\|_{L^\infty}...\|u_{k-4}\|_{L^\infty}\|u_{k-3}\|_{L_s^4L_z^\infty}\|u_{k-2}\|_{L_s^2L_z^\infty}\|u_{k-1}\|_{L_s^{20/3}L_z^{5/2}}\|u_{k}\|_{L_s^\infty L_z^2}\\
		&\ls \prod_{j=1}^{k-4}\la_j^{\frac 32}\cdot  \la_{k-3}^{\frac 54}\la_{k-2}\la_{k-1}^{\frac 3{20}}.
	\end{align*}
	The first bound is smaller than the second bound if $\la_{k-3}\la_{k-1}\ll \la_{k-2}^2$. Thus we take the minimum of them.
	\subsection{Multilinear estimates for \texorpdfstring{$d=1$}{d = 1}} Fix $k\ge2$ and $d=1$.  We shall prove multilinear estimates directly by applying Proposition \ref{kt} with mixed H\"older.
	
	At $p=2$, mixed H\"older gives
	\begin{align*}
		\|u_1u_2\cdots u_k\|_{L^2}
		&\ls \prod_{j=1}^{k-3}\|u_j\|_{L^\infty}
		\|u_{k-2}\|_{L_s^4L_z^\infty}
		\|u_{k-1}\|_{L_s^4L_z^\infty}
		\|u_k\|_{L_s^\infty L_z^2}\\
		&\ls \prod_{j=1}^{k-3}\la_j^{1/2}\cdot \la_{k-2}^{1/4}\la_{k-1}^{1/4}.
	\end{align*}
	At $p=\frac32$, we have
	\begin{align*}
		\|u_1u_2\cdots u_k\|_{L^{3/2}}
		&\ls \prod_{j=1}^{k-4}\|u_j\|_{L^\infty}
		\|u_{k-3}\|_{L_s^4L_z^\infty}
		\|u_{k-2}\|_{L_s^4L_z^\infty}
		\|u_{k-1}\|_{L_s^6L_z^6}
		\|u_k\|_{L_s^\infty L_z^2}\\
		&\ls \prod_{j=1}^{k-4}\la_j^{1/2}\cdot
		\la_{k-3}^{1/4}\la_{k-2}^{1/4}\la_{k-1}^{1/6}.
	\end{align*}
	At $p=\frac65$, we have
	\begin{align*}
		\|u_1u_2\cdots u_k\|_{L^{6/5}}
		&\ls \prod_{j=1}^{k-5}\|u_j\|_{L^\infty}
		\|u_{k-4}\|_{L_s^4L_z^\infty}
		\|u_{k-3}\|_{L_s^4L_z^\infty}
		\|u_{k-2}\|_{L_s^4L_z^\infty}
		\|u_{k-1}\|_{L_s^{12}L_z^3}
		\|u_k\|_{L_s^\infty L_z^2}\\
		&\ls \prod_{j=1}^{k-5}\la_j^{1/2}\cdot
		\la_{k-4}^{1/4}\la_{k-3}^{1/4}\la_{k-2}^{1/4}\la_{k-1}^{1/12}.
	\end{align*}
	At $p=1$, we have
	\begin{align*}
		\|u_1u_2\cdots u_k\|_{L^1}
		&\ls \prod_{j=1}^{k-6}\|u_j\|_{L^\infty }
		\prod_{m=k-5}^{k-2}\|u_m\|_{L_s^4L_z^\infty}
		\|u_{k-1}\|_{L_s^\infty L_z^2}
		\|u_k\|_{L_s^\infty L_z^2}\\
		&\ls \prod_{j=1}^{k-6}\la_j^{1/2}\cdot
		\la_{k-5}^{1/4}\la_{k-4}^{1/4}\la_{k-3}^{1/4}\la_{k-2}^{1/4}.
	\end{align*}

	In particular, the exponents $p=\frac 32, \frac65$ are not regarded as endpoints when \(k=2,3\), since they follow by interpolating between the \(p=1\) and \(p=2\) estimates proved above.
	\subsection{Multilinear estimates for $d=2$}\label{subsec:d2-kt-upper}
	In the general case the endpoint estimate \eqref{kt-2d-log-endpoint} has a log factor, so we prove the stated bounds directly by mixed H\"older. 
	
	Let $1\le p\le 2$ and $p'=\frac{p}{p-1}$ and $q=\frac{2p}{2-p}$. For $k=2$, 
	\[\|u_1u_2\|_{L^p}\ls \|u_1\|_{L_s^{p'}L_z^{q}}\|u_2\|_{L_s^\infty L_z^2}\ls \begin{cases}
		\la_1^{1-\frac1p}, &1\le p<2\\
		\la_1^{\frac12}\sqrt{\log\la_1}, &p=2.
	\end{cases}\]

	Now assume \(k\ge3\).   For \(1\le p\le \frac 43\) and $r=\frac{2p}{4-3p}$, mixed H\"older gives
	\begin{align*}
		\|u_1u_2...u_k\|_{L^p}\ls \prod_{j=1}^{k-4}\|u_j\|_{L^\infty}\|u_{k-3}\|_{L_s^rL_z^\infty}\|u_{k-2}\|_{L_s^2L_z^\infty}\|u_{k-1}\|_{L_s^{p'}L_z^{q}}\|u_k\|_{L_s^\infty L_z^2}.
	\end{align*}
	Hence, for \(1<p\le \frac 43\),
	\[
	\|u_1u_2...u_k\|_{L^p}
	\lesssim
	\prod_{j=1}^{k-4}\lambda_j\cdot 
	\lambda_{k-3}^{\frac52-\frac2p}
	\lambda_{k-2}^{\frac12}\sqrt{\log\lambda_{k-2}}
	\lambda_{k-1}^{1-\frac1p}.
	\]
	At \(p=1\), we have $r=2$, so one obtains the same bound with the extra factor
	\(\sqrt{\log\lambda_{k-3}}\).  
	
	For \(\frac 43<p\le 2\), mixed H\"older gives
	\begin{align*}
		\|u_1u_2...u_k\|_{L^p}\ls \prod_{j=1}^{k-3}\|u_j\|_{L^\infty}\|u_{k-2}\|_{L_s^{q/2}L_z^\infty}\|u_{k-1}\|_{L_s^{p'}L_z^{q}}\|u_k\|_{L_s^\infty L_z^2}.
	\end{align*}
	Thus, for $\frac 43<p<2$, 
	\[
	\|u_1u_2...u_k\|_{L^p}\ls
	\prod_{j=1}^{k-3}\lambda_j\cdot 
	\lambda_{k-2}^{2-\frac 2p}
	\lambda_{k-1}^{1-\frac 1p}.
	\]
	At \(p=2\), we have $(p',q)=(2,\infty)$, so one obtains the same bound with the extra factor $\sqrt{\log\lambda_{k-1}}$.

	\subsection{Multilinear estimates for $p>2$}\label{subsec:large-p-upper}
	Let $k\ge2$ and $p>2$.   We shall prove the estimates in Theorem \ref{thm3} by applying Proposition \ref{kt} and mixed H\"older.
	
	Let $p_c=\frac{2(d+2)}d$. For $p\ge p_c$, H\"older gives
	\begin{align*}
		\|u_1u_2...u_k\|_{L^p}&\ls \prod_{j=1}^{k-1}\|u_j\|_{L^\infty}\|u_k\|_{L^p}\ls \prod_{j=1}^{k-1}\la_j^{\frac d2}\cdot 
		\lambda_k^{\frac d2-\frac {d+1}{p}}.
	\end{align*}
	For $2< p< p_c$ when $d\ge2$, and $3\le p<6$ when $d=1$, let $a=\frac{4p}{4-d(p-2)}$ and $b=\frac{4p}{d(p-2)}$. We have $$\frac1a+\frac1b=\frac1p.$$
	Then mixed H\"older gives 
	\begin{align}\nonumber
		\|u_1u_2...u_k\|_{L^p}&\ls \prod_{j=1}^{k-2}\|u_j\|_{L^\infty}\|u_{k-1}\|_{L_s^aL_z^\infty}\|u_k\|_{L_s^bL_z^p}\\ 
		&\ls \prod_{j=1}^{k-2}\la_j^{\frac d2}\cdot \lambda_{k-1}^{\frac{3d}{4}-\frac{d+2}{2p}}
		\lambda_k^{\frac d4-\frac d{2p}}.\label{gb}
	\end{align}
	For  $2< p<3$ when $d=1$, let $c=\frac{2p}{3-p}$. We have
	\[\frac1c+\frac14+\frac1b=\frac1p.\]
	Then mixed H\"older gives
	\begin{align*}
		\|u_1u_2...u_k\|_{L^p}
		&\lesssim
		\prod_{j=1}^{k-3}\|u_j\|_{L^\infty}\|u_{k-2}\|_{L_s^{c}L_z^\infty}
		\|u_{k-1}\|_{L_s^4L_z^\infty}
		\|u_k\|_{L_s^{b}L_z^p}  \\
		&\lesssim
		\prod_{j=1}^{k-3}\la_j^{\frac d2}\cdot \lambda_{k-2}^{1-\frac{3}{2p}}
		\lambda_{k-1}^{\frac14}
		\lambda_k^{\frac14-\frac1{2p}}.
	\end{align*}
	This bound is smaller than \eqref{gb} when $\la_{k-1}\ll\la_k$. This completes the proof.

	\section{Model examples on the sphere}\label{sec11}
	In this section we construct the model profiles which will be used in the proof of the sharpness results in Section~\ref{sec12}. We work on the round sphere \(S^n\) and write \(d=n-1\) for the dimension transverse to a geodesic.  The degree-\(\nu\) spherical harmonics are
	\[
	\HH_\nu(S^n)=\{Y\in C^\infty(S^n):-\Delta_{S^n}Y=\nu(\nu+n-1)Y\}.
	\]
	All lower examples below are exact spherical harmonics.  Replacing \(\nu\) by the true spectral parameter \(\sqrt{\nu(\nu+n-1)}\) changes only constants and hence does not change any power of the frequency.
	
	\subsection{Basic beams and one-packet models}
	Write
	\[
	x=(x_1,x_2,x'')\in\mathbb R\times\mathbb R\times\mathbb R^d,
	\qquad x_0=(1,0,0).
	\]
	For \(|\eta|<1\), set
	\[
	\omega(\eta)=(\sqrt{1-|\eta|^2},\eta)\in S^d,
	\qquad a_\eta=(1,i\omega(\eta))\in\mathbb C^{n+1},
	\]
	and
	\[
	q_{\nu,\eta}(x)=\kappa_{\nu,n}(a_\eta\cdot x)^\nu,
	\qquad
	\kappa_{\nu,n}=
	\left(
	\frac{|S^n|\Gamma((n+1)/2)\nu!}{\Gamma(\nu+(n+1)/2)}
	\right)^{-1/2}.
	\]
	
	\begin{lemma}[Gaussian beam]\label{lem:exact-beams}
		For every \(\nu\ge1\) and every \(|\eta|,|\zeta|<1\),
		\[
		q_{\nu,\eta}\in\HH_\nu(S^n),\qquad
		\|q_{\nu,\eta}\|_2=1,
		\qquad
		\kappa_{\nu,n}\approx \nu^{d/4},
		\]
		and
		\[
		\langle q_{\nu,\eta},q_{\nu,\zeta}\rangle_{L^2(S^n)}
		=\left(\frac{1+\omega(\eta)\cdot\omega(\zeta)}2\right)^\nu .
		\]
		Consequently, after restricting \(|\eta|,|\zeta|\le\delta_0\),
		\[
		\left|\langle q_{\nu,\eta},q_{\nu,\zeta}\rangle\right|
		\lesssim e^{-c\nu|\eta-\zeta|^2},
		\qquad
		\left|\langle q_{\nu,\eta},q_{\nu,\zeta}\rangle\right|
		\gtrsim e^{-C\nu|\eta-\zeta|^2}.
		\]
	\end{lemma}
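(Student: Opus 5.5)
The plan is to treat $q_{\nu,\eta}$ as a highest-weight type harmonic and verify the four claims in order: harmonicity, the inner product formula, the normalization, and then the Gaussian bounds.

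\textbf{Harmonicity.} Write $a=a_\eta=(1,i\omega)$ with $\omega\in S^d$ real. Then
\[
a\cdot a=1+i^2|\omega|^2=0 .
\]
So $(a\cdot x)^\nu$ is a homogeneous polynomial of degree $\nu$ on $\mathbb R^{n+1}$. Its Euclidean Laplacian is
\[
\nu(\nu-1)(a\cdot a)(a\cdot x)^{\nu-2}=0 ,
\]
so it is harmonic. Its restriction to $S^n$ therefore lies in $\HH_\nu(S^n)$.

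\textbf{Inner products.} These reduce to a single integral computation,
\[
\langle q_{\nu,\eta},q_{\nu,\zeta}\rangle=\kappa_{\nu,n}^2\int_{S^n}(a\cdot x)^\nu(\bar b\cdot x)^\nu\,d\sigma,\qquad b=a_\zeta .
\]
I would evaluate it with the Gaussian trick. Integrate the polynomial against $e^{-|x|^2}$ on $\mathbb R^{n+1}$ and pass to polar coordinates. This converts the sphere integral into a Gaussian moment times the ratio $\Gamma(\nu+\tfrac{n+1}2)/\Gamma(\tfrac{n+1}2)$. Wick's theorem then evaluates the Gaussian moment: each pairing contributes $a\cdot a=0$, $\bar b\cdot\bar b=0$, or $a\cdot\bar b/2$. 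Only the $\nu!$ cross pairings survive, so
\[
\int_{\mathbb R^{n+1}}(a\cdot x)^\nu(\bar b\cdot x)^\nu e^{-|x|^2}dx=\pi^{\frac{n+1}2}\,\nu!\,\Big(\frac{a\cdot\bar b}2\Big)^\nu .
\]
Moreover
\[
a\cdot\bar b=1+\omega(\eta)\cdot\omega(\zeta)\in(0,2] ,
\]
which is real. Matching constants, using $|S^n|=2\pi^{(n+1)/2}/\Gamma((n+1)/2)$, the prefactor $\kappa_{\nu,n}^{-2}$ is exactly the quantity making the $\eta=\zeta$ value equal to $1$. Hence
\[
\langle q_{\nu,\eta},q_{\nu,\zeta}\rangle=\Big(\frac{1+\omega(\eta)\cdot\omega(\zeta)}2\Big)^\nu ,
\]
and taking $\eta=\zeta$ gives $\|q_{\nu,\eta}\|_2=1$. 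The main bookkeeping risk is the power of $2$ in this constant. To pin it down I would check the case $\nu=1$ directly: there $\int x_ix_j\,d\sigma=\delta_{ij}|S^n|/(n+1)$.

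\textbf{Size of $\kappa_{\nu,n}$.} Stirling gives
\[
\frac{\nu!}{\Gamma(\nu+\frac{n+1}2)}\approx\nu^{1-\frac{n+1}2}=\nu^{-\frac{n-1}2}=\nu^{-d/2},
\]
so $\kappa_{\nu,n}\approx\nu^{d/4}$.

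\textbf{Gaussian bounds.} Use $1-\omega(\eta)\cdot\omega(\zeta)=\tfrac12|\omega(\eta)-\omega(\zeta)|^2$. Since $\eta\mapsto\omega(\eta)$ is bi-Lipschitz for $|\eta|\le\delta_0$, this gives
\[
\frac{1+\omega(\eta)\cdot\omega(\zeta)}2=1-t,\qquad t\approx|\eta-\zeta|^2,\quad t\le\tfrac14 .
\]
On this range we have $e^{-2t}\le 1-t\le e^{-t}$. Raising to the power $\nu$ gives both the upper bound $e^{-c\nu|\eta-\zeta|^2}$ and the lower bound $e^{-C\nu|\eta-\zeta|^2}$.
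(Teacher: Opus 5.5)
Your proof is correct. Its skeleton matches the paper's: the null condition $a_\eta\cdot a_\eta=0$ for harmonicity, an exact evaluation of $\int_{S^n}(a_\eta\cdot x)^\nu(\overline{a_\zeta}\cdot x)^\nu\,dS$, Stirling, and comparison of $(1-u)^\nu$ with $e^{-\nu u}$ and $e^{-2\nu u}$.

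The one genuine difference is how you evaluate the sphere integral. The paper uses the Bessel-type expansion $\int_{S^n}e^{c\cdot x}\,dS=|S^n|\Gamma(\frac{n+1}2)\sum_{m\ge0}\frac{(c\cdot c/4)^m}{m!\,\Gamma(m+\frac{n+1}2)}$. It continues this analytically to complex $c=s a_\eta+t\overline{a_\zeta}$ and reads off the coefficient of $s^\nu t^\nu$. You instead use homogeneity to pass to a Gaussian integral on $\mathbb R^{n+1}$, then apply Wick's theorem; the null conditions kill every pairing except the $\nu!$ cross-pairings.

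What each route buys:
\begin{itemize}
\item Your route is more self-contained. It needs no Bessel identity and no analytic continuation. It uses only $\int_0^\infty r^{2\nu+n}e^{-r^2}\,dr=\frac12\Gamma(\nu+\frac{n+1}2)$ and Isserlis' formula, extended to complex coefficients by multilinearity.
\item The paper's generating function produces all the mixed moments $\int(a_\eta\cdot x)^j(\overline{a_\zeta}\cdot x)^k\,dS$ in one stroke.
\end{itemize}

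Both routes give the same constant $|S^n|\Gamma(\frac{n+1}2)\,\nu!/\Gamma(\nu+\frac{n+1}2)$. Your sentence about converting the sphere integral via the ratio $\Gamma(\nu+\frac{n+1}2)/\Gamma(\frac{n+1}2)$ has the direction ambiguous, so state it explicitly. For $P$ homogeneous of degree $2\nu$,
\[
\frac1{|S^n|}\int_{S^n}P\,dS=\frac{\Gamma(\frac{n+1}2)}{\Gamma(\nu+\frac{n+1}2)}\,\pi^{-\frac{n+1}2}\int_{\mathbb R^{n+1}}P\,e^{-|x|^2}\,dx .
\]
This is consistent with your $\nu=1$ check.
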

	
	\begin{proof}
		We first check that the beams are spherical harmonics.  Since
		\[
		a_\eta\cdot a_\eta
		=1+(i\sqrt{1-|\eta|^2})^2+\sum_{j=1}^d(i\eta_j)^2=0,
		\]
		one has
		\[
		\Delta_{\mathbb R^{n+1}}(a_\eta\cdot x)^\nu
		=\nu(\nu-1)(a_\eta\cdot a_\eta)(a_\eta\cdot x)^{\nu-2}=0.
		\]
		Thus \((a_\eta\cdot x)^\nu\) is a homogeneous harmonic polynomial of degree
		\(\nu\).  Its restriction to \(S^n\) is therefore an eigenfunction of
		\(-\Delta_{S^n}\) with eigenvalue \(\nu(\nu+n-1)\), hence belongs to
		\(\HH_\nu(S^n)\).
		
		It remains to compute the normalization and the overlaps.  For
		\(c\in\mathbb C^{n+1}\) we use the standard entire expansion
		\[
		\int_{S^n}e^{c\cdot x}\,dS(x)
		=|S^n|\Gamma\!\left(\frac{n+1}{2}\right)
		\sum_{m\ge0}\frac{(c\cdot c/4)^m}{m!\Gamma(m+(n+1)/2)} .
		\]
		Indeed, this is the usual Bessel-function formula for real \(c\), continued
		analytically to complex \(c\).  Apply it with
		\(c=s a_\eta+t\overline{a_\zeta}\).  Since
		\(a_\eta\cdot a_\eta=\overline{a_\zeta}\cdot\overline{a_\zeta}=0\),
		\[
		c\cdot c=2st\,a_\eta\cdot\overline{a_\zeta}
		=2st\bigl(1+\omega(\eta)\cdot\omega(\zeta)\bigr).
		\]
		On the other hand,
		\[
		\int_{S^n}e^{s a_\eta\cdot x+t\overline{a_\zeta}\cdot x}\,dS(x)
		=\sum_{j,k\ge0}\frac{s^j t^k}{j!k!}
		\int_{S^n}(a_\eta\cdot x)^j
		(\overline{a_\zeta}\cdot x)^k\,dS(x).
		\]
		Comparing the coefficient of \(s^\nu t^\nu\) gives
		\[
		\int_{S^n}(a_\eta\cdot x)^\nu(\overline{a_\zeta}\cdot x)^\nu\,dS(x)
		=|S^n|\Gamma\!\left(\frac{n+1}{2}\right)
		\frac{\nu!}{\Gamma(\nu+(n+1)/2)}
		\left(\frac{1+\omega(\eta)\cdot\omega(\zeta)}2\right)^\nu.
		\]
		Taking \(\zeta=\eta\) shows that the constant \(\kappa_{\nu,n}\) makes
		\(\|q_{\nu,\eta}\|_2=1\), and the same identity gives the stated overlap
		formula.  Stirling's formula yields
		\[
		\frac{\Gamma(\nu+(n+1)/2)}{\nu!}\approx \nu^{(n-1)/2}=\nu^{d/2},
		\]
		so \(\kappa_{\nu,n}\approx \nu^{d/4}\).
		
		Finally,
		\[
		\frac{1+\omega(\eta)\cdot\omega(\zeta)}2
		=1-\frac{|\omega(\eta)-\omega(\zeta)|^2}{4}.
		\]
		After restricting to a sufficiently small ball \(|\eta|,|\zeta|\le\delta_0\),
		the map \(\eta\mapsto\omega(\eta)\) is bi-Lipschitz and
		\(|\omega(\eta)-\omega(\zeta)|^2/4\le1/2\).  Therefore
		\[
		1-C|\eta-\zeta|^2
		\le \frac{1+\omega(\eta)\cdot\omega(\zeta)}2
		\le 1-c|\eta-\zeta|^2 .
		\]
		Raising to the \(\nu\)-th power and using
		\((1-u)^\nu\le e^{-\nu u}\) and \((1-u)^\nu\ge e^{-2\nu u}\) for
		\(0\le u\le1/2\) gives the two Gaussian overlap bounds.
	\end{proof}

	\begin{lemma}[Envelope and zonal]\label{lem:coherent-short-packets}
		Fix \(K\ge1\).  After choosing \(r_*=r_*(K)>0\) sufficiently small, the
		following holds for every \(\nu^{-1}\le r\le r_*\).  There is a normalized
		\(u_{\nu,r}\in\HH_\nu(S^n)\) such that, in the Fermi chart
		\[
		x(t,z)=\left(\cos|z|\cos t,\ \cos|z|\sin t,\ \frac{\sin|z|}{|z|}z\right),
		\qquad z\in\mathbb R^d,
		\]
		we have the lower bound
		\begin{equation}\label{eq:short-packet-lower}
			|u_{\nu,r}(x(t,z))|
			\gtrsim \nu^{d/4}r^{-d/4}
		\end{equation}
		whenever \(|t|\le cr\) and \(|z|\le c(r/\nu)^{1/2}\), and the tail bound
		\begin{equation}\label{eq:short-packet-upper}
			|u_{\nu,r}(x(t,z))|
			\le C_K\nu^{d/4}r^{-d/4}
			\left(1+\frac{|t|}{r}\right)^{-d/2}
		\end{equation}
		whenever \(|t|\le r_*\) and \(|z|\le K(r/\nu)^{1/2}\).
		
		In particular, the case \(r=\nu^{-1}\), after rotation, gives a normalized
		zonal function \(p_{\nu,y}\in\HH_\nu(S^n)\) satisfying
		\[
		|p_{\nu,y}(x)|\gtrsim \nu^{d/2}
		\qquad\text{if } \operatorname{dist}_{S^n}(x,y)\le c\nu^{-1}.
		\]
		Consequently, if \(E\subset B(y,c\mu^{-1})\) and \(\nu\le\mu\), then
		\(|p_{\nu,y}|\gtrsim \nu^{d/2}\) on \(E\).
	\end{lemma}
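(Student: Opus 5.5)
The plan is to build $u_{\nu,r}$ as a coherent superposition of the Gaussian beams of Lemma~\ref{lem:exact-beams}, with directions spread over a cap of angular radius $\delta=(r\nu)^{-1/2}$. Concretely, set
\[
u_{\nu,r}=c_{\nu,r}\int_{\mathbb R^d}\chi\!\left(\frac{\eta}{\delta}\right)q_{\nu,\eta}\,d\eta,
\qquad \delta=(r\nu)^{-1/2},
\]
where $\chi\ge0$ is a fixed smooth bump and $\delta\le\delta_0$ once $r\ge\nu^{-1}$ and $\nu$ is large. Each $q_{\nu,\eta}$ lies in $\HH_\nu(S^n)$, so $u_{\nu,r}$ does as well.

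The first step is the normalization. From the overlap formula,
\[
\|\textstyle\int\chi(\eta/\delta)q_{\nu,\eta}\,d\eta\|_2^2=\iint\chi\chi\left(\frac{1+\omega(\eta)\cdot\omega(\zeta)}2\right)^\nu d\eta\,d\zeta .
\]
The Gaussian upper and lower bounds localize $|\eta-\zeta|\lesssim\nu^{-1/2}$. Since $\delta\ge\nu^{-1/2}$ exactly when $r\le1$, this double integral is $\approx\delta^d\nu^{-d/2}$. Hence $c_{\nu,r}\approx\delta^{-d/2}\nu^{d/4}$.

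The second step is the pointwise lower bound. In the Fermi chart,
\[
a_\eta\cdot x(t,z)=\cos|z|\cos t+i\cos|z|\sin t\sqrt{1-|\eta|^2}+i\eta\cdot z\,\tfrac{\sin|z|}{|z|}.
\]
Expanding $\nu\log(a_\eta\cdot x)$ to second order gives
\[
\nu\log(a_\eta\cdot x)\approx i\nu t-\frac{\nu|z|^2}{2}+i\nu\eta\cdot z-\frac{i\nu t|\eta|^2}{2}+O(\nu|\eta|^2|z|^2+\dots).
\]
On $|t|\le cr$ and $|z|\le c(r/\nu)^{1/2}$ we have $\nu|\eta||z|\lesssim c$ and $\nu t|\eta|^2\lesssim c$ for $|\eta|\lesssim\delta$. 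The common factor $e^{i\nu t}$ pulls out, and the remaining phases vary by a small constant, so the real parts add. With $\kappa_{\nu,n}\approx\nu^{d/4}$ this gives
\[
|u|\gtrsim c_{\nu,r}\,\nu^{d/4}\delta^d=\nu^{d/2}\delta^{d/2}=\nu^{d/4}r^{-d/4},
\]
which is \eqref{eq:short-packet-lower}. The remainder terms are controlled because $r\le r_*$ and $|z|\lesssim K(r/\nu)^{1/2}\le K r_*$, which is where the choice of $r_*(K)$ enters.

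The third step, which is the main obstacle, is the tail bound \eqref{eq:short-packet-upper}. For $|z|\le K(r/\nu)^{1/2}$, write the superposition as $\nu^{d/4}c_{\nu,r}$ times an oscillatory integral in $\eta$:
\[
\int\chi(\eta/\delta)\,e^{-i\nu t|\eta|^2/2+i\nu\eta\cdot z}A\,d\eta .
\]
Here the amplitude $A$ has bounded modulus, since $|a_\eta\cdot x|\le1$ on the sphere, and is smooth at scale $\delta$ after including the $e^{-\nu|z|^2/2}$-type factors. Rescale $\eta=\delta\theta$. The quadratic phase becomes $\nu t\delta^2|\theta|^2/2=(t/r)|\theta|^2/2$ and the linear phase becomes $\nu\delta\,\theta\cdot z$, which is $O(K)$.

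Stationary phase in $\theta$ with the nondegenerate quadratic then gives decay $(1+|t|/r)^{-d/2}$, so that
\[
|u|\lesssim c_{\nu,r}\,\nu^{d/4}\delta^d(1+|t|/r)^{-d/2}.
\]
This is exactly \eqref{eq:short-packet-upper}, with constants depending on $K$ through the size of the linear term. The delicate point is that the higher-order corrections to the exact phase, such as $\nu t|\eta|^4$ and the curvature terms in the chart, must be treated as smooth symbol perturbations uniformly for $|t|\le r_*$. For instance, $\nu t\delta^4=t/(r^2\nu)\le t/r$ is controlled by the main quadratic term. I would handle these corrections by putting the exact phase $\nu\log(a_\eta\cdot x)-i\nu t$ into normal form via a Morse-lemma change of variables in $\theta$, valid for $r_*$ small.

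Finally, for $r=\nu^{-1}$ we have $\delta\approx1$, so the construction is a normalized zonal-type function. The lower bound gives $|u|\gtrsim\nu^{d/2}$ on the box $|t|,|z|\le c\nu^{-1}$, which contains $B(y,c'\nu^{-1})$ with $y=x_0$. Rotating produces $p_{\nu,y}$. The consequence for $E\subset B(y,c\mu^{-1})\subset B(y,c\nu^{-1})$ follows immediately since $\nu\le\mu$.
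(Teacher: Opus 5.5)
Your construction, normalization $\|U\|_2^2\approx\delta^d\nu^{-d/2}$, phase-coherence lower bound on the core, rescaling $\eta=\delta\theta$ with quadratic phase $\approx(t/r)|\theta|^2$, and reading-off of the zonal case at $r=\nu^{-1}$ are exactly the paper's. These parts are fine, apart from one slip: $\delta\le\delta_0$ fails at $r=\nu^{-1}$, where $\delta=1$, so the bump itself must be supported in a small ball, as the paper's $\psi$ is. The real problem is in the tail step, which you rightly single out as the main obstacle.

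There you assert that the amplitude $A$ is bounded with bounded $\theta$-derivatives (``smooth at scale $\delta$''), and then invoke ordinary stationary phase. Boundedness is correct, since $|a_\eta\cdot x|\le1$, but smoothness is not. At $z=0$ one has $|w_\eta|^2=1-\sin^2t\,|\eta|^2$, so $\operatorname{Re}(\nu\log w_{\delta\theta})=-\frac{\sin^2 t}{2r}|\theta|^2+\cdots$. This is a Gaussian in $\theta$ of width about $\sqrt r/|t|$. Whenever $\sqrt r\ll|t|\le r_*$ (a nonempty part of the tail region, e.g.\ $r=\nu^{-1}$, $|t|\approx r_*$), the $\theta$-derivatives of $A$ grow like $(t^2/r)^{|\alpha|/2}$, unboundedly in $\nu$. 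So fixed-amplitude stationary phase does not give $(1+|t|/r)^{-d/2}$. Nor can a Morse lemma be applied to the complex-valued $\nu\log w-i\nu t$ as it stands. If you apply it only to the real phase, $e^{\operatorname{Re}(\nu\log w)}$ stays in the amplitude and the same problem returns.

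The missing observation is that this growth is exactly borderline. With $\Lambda=1+|t|/r$ one has $t^2/r\le r_*\Lambda$, hence $|\partial_\theta^\alpha A|\lesssim\Lambda^{|\alpha|/2}$. Meanwhile the real phase $\Phi=\operatorname{Im}(\nu\log w_{\delta\theta})$ satisfies $|\partial_\theta^\alpha\Phi|\lesssim\Lambda$ for $|\alpha|\ge2$. With the bump supported in $|\theta|\le2\varepsilon$, its Hessian is $-\frac{\sin t\cos t}{r}I+O_K(1+\varepsilon|t|/r)$, hence $\Lambda$-definite once $|t|\ge C_0(K)r$. Stationary phase for this symbol class still yields $O(\Lambda^{-d/2})$: for instance, integrate by parts on dyadic shells $|\theta-\theta_c|\approx2^j\Lambda^{-1/2}$ about the critical point, each step gaining $2^{-j}$. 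These scaled symbol bounds are precisely what the paper records to close the tail estimate.
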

	
	\begin{proof}
		Let \(\delta=(\nu r)^{-1/2}\), and choose a fixed nonnegative
		\(\psi\in C_0^\infty(\{|\theta|\le 2\varepsilon\})\), equal to one for
		\(|\theta|\le\varepsilon\).  Define
		\[
		U_{\nu,r}(x)=\int \psi(\eta/\delta)q_{\nu,\eta}(x)\,d\eta,
		\qquad u_{\nu,r}=U_{\nu,r}/\|U_{\nu,r}\|_2 .
		\]
		By the Gaussian overlap estimate in Lemma~\ref{lem:exact-beams},
		\[
		\|U_{\nu,r}\|_2^2\approx \delta^d\nu^{-d/2},
		\qquad
		\|U_{\nu,r}\|_2^{-1}\approx \nu^{d/4}\delta^{-d/2}.
		\]
		On the box \(|t|\le cr\), \(|z|\le c(r/\nu)^{1/2}\), the factors
		\((a_\eta\cdot x(t,z))^\nu\), \(|\eta|\le2\varepsilon\delta\), have modulus
		\(\gtrsim1\) and phases differing by at most a sufficiently small absolute constant, provided \(c\) and
		\(\varepsilon\) are small.  Hence
		\[
		|U_{\nu,r}(x(t,z))|\gtrsim \nu^{d/4}\delta^d,
		\]
		and normalization gives \eqref{eq:short-packet-lower}.
		
		For the upper bound, put \(\eta=\delta\theta\).  Then
		\[
		u_{\nu,r}(x(t,z))
		=B_{\nu,r}\int \psi(\theta)\exp\{\nu\log w_{\delta\theta}(t,z)\}\,d\theta,
		\qquad B_{\nu,r}\approx \nu^{d/4}r^{-d/4},
		\]
		where
		\[
		w_\eta(t,z)=a_\eta\cdot x(t,z)
		=\cos\rho\cos t
		+i\bigl(\cos\rho\sin t\,(1-|\eta|^2)^{1/2}+\beta_\rho z\cdot\eta\bigr),
		\quad \beta_\rho=\frac{\sin\rho}{\rho},\quad \rho=|z|.
		\]
		On the region \(|z|\le K(r/\nu)^{1/2}\), the logarithm is taken on a fixed
		branch and \(|w_{\delta\theta}|\approx1\).  Write
		\[
		\Phi(\theta)=\operatorname{Im}(\nu\log w_{\delta\theta}(t,z)),
		\qquad
		A(\theta)=\psi(\theta)\exp\{\operatorname{Re}(\nu\log w_{\delta\theta}(t,z))\},
		\qquad
		\Lambda=1+\frac{|t|}{r}.
		\]
		A Taylor expansion of the explicit formula for \(w_\eta\), using
		\(\nu\delta^2=1/r\), gives uniformly on the support of \(\psi\)
		\[
		\partial_{\theta_i}\partial_{\theta_j}\Phi(\theta)
		=-\frac{\sin t\cos t}{r}\delta_{ij}
		+O_K\!\left(1+\varepsilon\frac{|t|}{r}\right),
		\]
		and the scaled symbol bounds
		\[
		|\partial_\theta^\alpha \Phi(\theta)|\le C_{\alpha,K}\Lambda\quad(|\alpha|\ge2),
		\qquad
		|\partial_\theta^\alpha A(\theta)|\le C_{\alpha,K}\Lambda^{|\alpha|/2}.
		\]
		If \(|t|\lesssim r\), the integral is \(O(1)\), which is the desired bound.  If
		\(|t|\ge C_0r\), then \(|\sin t\cos t|\approx |t|\) on the fixed small coordinate
		interval.  Taking \(C_0\) large and \(\varepsilon\) small, the phase has Hessian
		of size \(\Lambda\).  The stationary phase estimate with the above scaled
		symbol bounds gives
		\[
		\left|\int A(\theta)e^{i\Phi(\theta)}\,d\theta\right|
		\le C_K\Lambda^{-d/2}.
		\]
		Multiplying by \(B_{\nu,r}\) proves \eqref{eq:short-packet-upper}.
		
		Finally, when \(r=\nu^{-1}\), the lower-bound box contains a ball of radius
		\(c\nu^{-1}\) after reducing \(c\).  Rotating the construction gives the stated zonal function.
	\end{proof}

	\subsection{The packet profile}
	For \(0<L\lesssim1\), \(R\ge1\), write \(E_{L,R}\) for a rectangular packet box in one of the local charts used above,
	\[
	E_{L,R}=\{|s|\lesssim L,\ |y|\lesssim R^{-1/2}\},
	\qquad |E_{L,R}|\approx LR^{-d/2}.
	\]
	The center of the box may be translated along the reference geodesic.  The following lemmas give the individual constructions that make up the profile.
	
	\begin{lemma}[Beam block: \(1\le R\le\nu\)]\label{lem:wide-transverse-block}
		Assume \(1\le R\le\nu\).  One can find a normalized \(u\in\HH_\nu(S^n)\) and a set \(E\) contained in a fixed coordinate patch, with
		\[
		|E|\approx LR^{-d/2},
		\qquad
		|u(x)|\gtrsim R^{d/4}\quad (x\in E).
		\]
	\end{lemma}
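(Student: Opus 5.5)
The plan is a beam-block construction: superpose about $N\approx(\nu/R)^{d/2}$ Gaussian beams from Lemma~\ref{lem:exact-beams}. Their tubes, each of width $\nu^{-1/2}$, are laid side by side to fill a transverse ball of radius $R^{-1/2}$ around a fixed geodesic segment of length $\approx L$. Each beam gets coefficient $N^{-1/2}$. Only $O(1)$ beams are large at any given point, so the superposition should have size $\nu^{d/4}N^{-1/2}=R^{d/4}$ on the whole block, whose measure is $\approx LR^{-d/2}$. The hypothesis $R\le\nu$ is exactly what guarantees $N\ge1$, i.e.\ that the block is at least as wide as a single beam.

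\emph{Construction.} Fix the reference beam $q_{\nu,0}$ of Lemma~\ref{lem:exact-beams}, concentrated along the great circle $\{x(t,0)\}$ in the Fermi chart of Lemma~\ref{lem:coherent-short-packets}. On $|t|\le c$ and $|z|\le c$ it satisfies $|q_{\nu,0}(x(t,z))|\approx\nu^{d/4}e^{-\nu|z|^2(1+o(1))/2}$; this follows from $|a_0\cdot x(t,z)|=\cos|z|$. Next choose points $w_j\in\mathbb R^d$, $|w_j|\le R^{-1/2}$, forming a $\nu^{-1/2}$-separated maximal net; there are $N\approx(\nu/R)^{d/2}$ of them. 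Let $\mathcal R_{w}$ be the rotation of $S^n$ acting in the planes $(x_1,x_{j+2})$ with angles $w$. It moves the reference geodesic to a nearby great circle through $x(0,w)$ whose transverse offset is $|w|+O(|t|)\,|w|$ along $|t|\le c$. Set $b_j=q_{\nu,0}\circ\mathcal R_{w_j}^{-1}\in\HH_\nu(S^n)$. Take $E=\{x(t,z):|t|\le cL,\ |z|\le cR^{-1/2}\}$, which has measure $\approx LR^{-d/2}$. For every $x\in E$ there is some $j$ with $\operatorname{dist}$ from $x$ to the $j$-th great circle at most $C\nu^{-1/2}$, and hence $|b_j(x)|\gtrsim\nu^{d/4}$.

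\emph{Normalization.} By rotation invariance, $\langle b_j,b_k\rangle$ is a Gaussian overlap of the same type as in Lemma~\ref{lem:exact-beams}; the rotation $\mathcal R_{w_k}\mathcal R_{w_j}^{-1}$ moves the beam direction by $\approx|w_j-w_k|$. So $|\langle b_j,b_k\rangle|\lesssim e^{-c\nu|w_j-w_k|^2}$. Because the net is $\nu^{-1/2}$-separated, Schur's test gives $\|\sum_j\epsilon_jN^{-1/2}b_j\|_2\lesssim1$ uniformly in the signs $\epsilon_j\in\{\pm1\}$. It is also $\gtrsim1$, by the lower bound proved below together with $|E|\gtrsim$ its $L^2$ mass scale. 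Dividing by the norm then only changes constants.

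\emph{Lower bound, the main obstacle.} If all coefficients are taken equal to $+N^{-1/2}$, the phases of different beams at a point of $E$ need not align. The rotations introduce relative phase errors of order $\nu|w|^2\sim\nu/R$, which can be large, so neighbouring beams could cancel. I would not try to tune the phases. Instead I would use random signs. Let $u_\epsilon=N^{-1/2}\sum_j\epsilon_jb_j$ with independent Rademacher $\epsilon_j$. For each fixed $x\in E$,
\[
\mathbb E|u_\epsilon(x)|^2=N^{-1}\sum_j|b_j(x)|^2\ge N^{-1}\max_j|b_j(x)|^2\gtrsim N^{-1}\nu^{d/2}=R^{d/2}.
\]
Khintchine's inequality gives $\mathbb E|u_\epsilon(x)|^4\lesssim(\mathbb E|u_\epsilon(x)|^2)^2$. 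Paley--Zygmund then yields $\mathbb P\bigl(|u_\epsilon(x)|\ge c\,(\mathbb E|u_\epsilon(x)|^2)^{1/2}\bigr)\ge c'$ for each $x\in E$. Integrating over $E$ with Fubini gives a fixed sign pattern $\epsilon$ and a subset $E'\subset E$ with $|E'|\ge c'|E|\approx LR^{-d/2}$, on which $|u_\epsilon|\gtrsim R^{d/4}$. Replacing $E$ by $E'$ and normalizing $u=u_\epsilon/\|u_\epsilon\|_2$ proves the lemma. The one geometric point still to check is the tube-covering claim, namely that every point of $E$ lies within $C\nu^{-1/2}$ of some rotated geodesic. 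This holds because the rotated geodesics are $O(|w|)$-close and nearly parallel over the fixed length $|t|\le c$.
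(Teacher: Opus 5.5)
Your argument is correct, but it handles the one real difficulty, possible cancellation between overlapping beams, by a different mechanism than the paper.

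\textbf{The paper's route.} The paper also uses a fan of exact beams: the $q_{\nu,\eta}$ themselves, all passing through $x_0$, viewed in the chart $X(s,y)=(\cos s,\sin s\,\omega(y))$ with $s\in[1/4,1/2]$. The differences are in how the beams are placed and where the lower bound is claimed:
\begin{itemize}
\item The centers $\eta$ form a maximal $D\nu^{-1/2}$-separated set with $D$ large.
\item All coefficients equal $+1$.
\item $E$ is only the union of the disjoint cores $\{|y-\eta|\le c\nu^{-1/2}\}$.
\end{itemize}
On the core of $\eta$, the own beam is $\gtrsim\nu^{d/4}$. The other beams contribute at most $C\nu^{d/4}\sum_{\zeta\ne\eta}e^{-c\nu|\eta-\zeta|^2}$, which is below half the main term once $D$ is large. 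So no phase information is needed, and still $|E|\approx NL\nu^{-d/2}\approx LR^{-d/2}$.

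\textbf{Your route.} You keep a $\nu^{-1/2}$-dense net and the full block, and defeat cancellation with Rademacher signs, Khintchine, Paley--Zygmund and Fubini.

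\textbf{What each buys.} The paper's argument is deterministic and needs only pointwise Gaussian decay. It also produces an explicitly structured good set: a time interval times a transverse lattice of small balls. That is exactly the form Lemma~\ref{lem:common-refinement} later shifts and intersects with the other factors' good sets. Your argument covers a constant fraction of the whole block, but the good set is unstructured. It also needs the sign-uniform almost-orthogonality bound, which you do have, since $\langle b_j,b_k\rangle=((1+e_j\cdot e_k)/2)^\nu$ with $e_j=\mathcal R_{w_j}e_1$. It could still feed the common-set argument, because averaging over signs lets you intersect with any fixed set of proportional measure.

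\textbf{Two small repairs.}

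First, your remark that $\|u_\epsilon\|_2\gtrsim1$ follows from the pointwise lower bound is wrong when $L\ll1$; that bound only gives $\|u_\epsilon\|_2^2\gtrsim L$. The remark is also unnecessary. You only use $\|u_\epsilon\|_2\lesssim1$ and $u_\epsilon\ne0$, and dividing by a norm that is $\lesssim1$ only increases $|u_\epsilon|$.

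Second, ``nearly parallel'' does not by itself justify the covering claim. In Fermi coordinates the $w$-geodesic sits at transverse position $Z_w(t)\approx w\cos t$. The drift $|w|(1-\cos t)$ can be much larger than $\nu^{-1/2}$ when $R\ll\nu$ and $L\approx1$. So you cannot follow, for all $t$, the geodesic that is nearest at $t=0$. Argue at each fixed $t$ instead:
\begin{itemize}
\item The map $w\mapsto Z_w(t)$ is a bi-Lipschitz perturbation of $w\mapsto w\cos t$.
\item Hence the image of your maximal net is $C\nu^{-1/2}$-dense in $\{|z|\le cR^{-1/2}\}$.
\item Let $\Gamma_j$ be the $j$-th great circle. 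Then $|b_j(x)|=\kappa_{\nu,n}\cos^\nu\operatorname{dist}(x,\Gamma_j)\gtrsim\nu^{d/4}$ for the nearest $\Gamma_j$.
\end{itemize}
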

	
	\begin{proof}
		Use the chart
		\[
		X(s,y)=(\cos s,\sin s\,\omega(y)),
		\qquad  s\in[1/4,1/2],\quad |y|\ll1,
		\]
		where \(\omega(y)=(\sqrt{1-|y|^2},y)\in S^d\).  In this chart the beams from Lemma~\ref{lem:exact-beams} satisfy, after shrinking the chart,
		\begin{equation}\label{eq:beam-belt-decay}
			|q_{\nu,\eta}(X(s,y))|
			\le C\nu^{d/4}e^{-c\nu|y-\eta|^2},
			\qquad
			|q_{\nu,\eta}(X(s,y))|
			\ge c\nu^{d/4}
		\end{equation}
		whenever \(|y-\eta|\le c\nu^{-1/2}\).  This follows directly from
		\[
		q_{\nu,\eta}(X(s,y))
		=\kappa_{\nu,n}\bigl(\cos s+i\sin s\,\omega(\eta)\cdot\omega(y)\bigr)^\nu
		\]
		and the bi-Lipschitz property of \(\omega\) near the origin.
		
		Choose a subinterval \(I_L\subset[1/4,1/2]\) with \(|I_L|\approx L\).  Let \(D\gg1\) be fixed, and choose a maximal \(D\nu^{-1/2}\)-separated set
		\[
		\mathcal A\subset\{\eta: |\eta|\le c_0R^{-1/2}\}.
		\]
		Then
		\[
		N:=|\mathcal A|\approx (\nu/R)^{d/2}.
		\]
		For \(\eta\in\mathcal A\), set
		\[
		T_\eta=\{X(s,y):s\in I_L,\ |y-\eta|\le c\nu^{-1/2}\},
		\qquad E=\bigcup_{\eta\in\mathcal A}T_\eta .
		\]
		The tubes \(T_\eta\) are disjoint if \(c\ll D\), and therefore
		\[
		|E|\approx N L\nu^{-d/2}\approx LR^{-d/2}.
		\]
		\begin{figure}[htbp]
	\centering
	\includegraphics[width=0.8\textwidth]{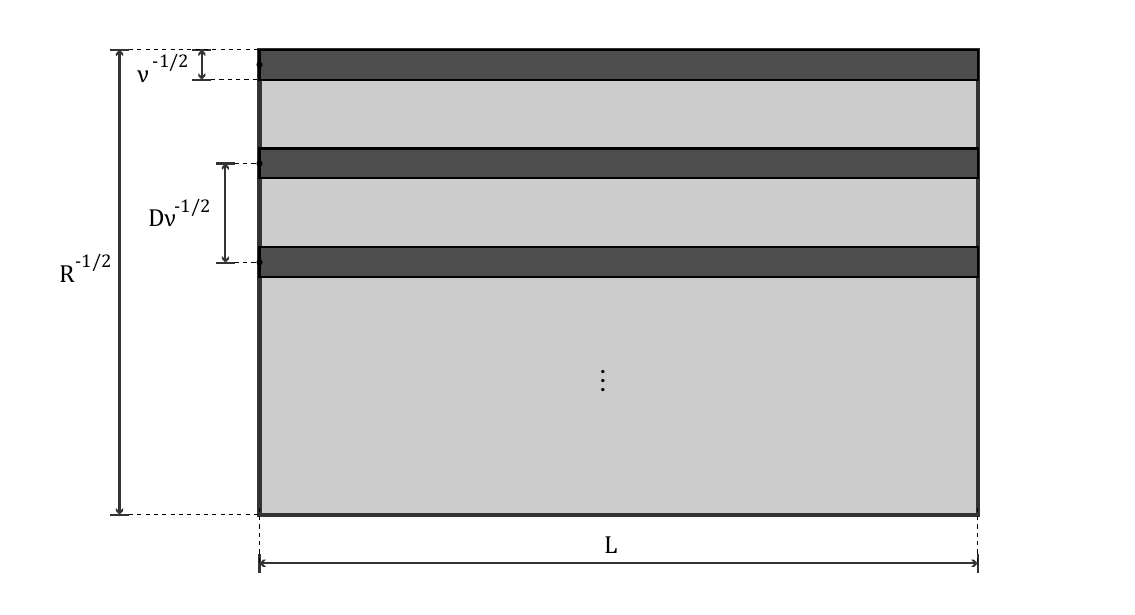}
	\caption{Beam block: $1\leq R \leq \nu$.}
	\label{figbb}
	\end{figure}

		Set
		\[
		Q=\sum_{\eta\in\mathcal A}q_{\nu,\eta}.
		\]
		By the Gaussian overlap bound in Lemma~\ref{lem:exact-beams},
		\[
		\|Q\|_2^2
		\le \sum_{\eta,\zeta\in\mathcal A}e^{-c\nu|\eta-\zeta|^2}
		\le C_D N.
		\]
		If \(x\in T_\eta\), then the \(\eta\)-beam is \(\gtrsim\nu^{d/4}\) by \eqref{eq:beam-belt-decay}, while the remaining beams contribute at most
		\[
		C\nu^{d/4}\sum_{\zeta\in\mathcal A\setminus\{\eta\}}
		e^{-c\nu|\eta-\zeta|^2}.
		\]
		Taking \(D\) sufficiently large makes this tail smaller than half of the main term.  Hence \(|Q|\gtrsim\nu^{d/4}\) on \(E\).  With \(u=Q/\|Q\|_2\),
		\[
		|u(x)|\gtrsim \nu^{d/4}N^{-1/2}\approx R^{d/4},
		\qquad x\in E.
		\]
	\end{proof}
	
	\begin{lemma}[Train construction]\label{lem:separated-short-packet-trains}
		Assume \(d\ge3\), \(\nu^{-1}\le r\le r_*\), and let \(P_a\) denote a rotation of the packet in Lemma~\ref{lem:coherent-short-packets} whose core is centered at the time \(a\) on the reference geodesic.  If \(a_j\) are contained in a fixed coordinate interval and
		\[
		|a_j-a_\ell|\ge D r\qquad (j\ne \ell),
		\]
		then, for \(D\) sufficiently large but fixed,
		\begin{equation}\label{eq:short-train-norm}
			\Big\|\sum_j P_{a_j}\Big\|_2\lesssim N^{1/2},
			\qquad N=\#\{j\},
		\end{equation}
		and on the union of the cores
		\[
		F_j=\{x(t,z): |t-a_j|\le cr,\ |z|\le c(r/\nu)^{1/2}\}
		\]
		one has
		\begin{equation}\label{eq:short-train-pointwise}
			\Big|\sum_j P_{a_j}(x)\Big|
			\gtrsim \nu^{d/4}r^{-d/4}.
		\end{equation}
	\end{lemma}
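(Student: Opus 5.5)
The proof rests on the tail bound \eqref{eq:short-packet-upper} from Lemma~\ref{lem:coherent-short-packets}, the decay $(1+|t|/r)^{-d/2}$, which is summable over the $Dr$-separated centers exactly when $d/2>1$, i.e.\ $d\ge3$. Throughout, $P_a$ denotes the normalized packet $u_{\nu,r}$ rotated along the reference geodesic so that its core sits at time $a$. Rotation along the geodesic preserves the Fermi chart up to the shift $t\mapsto t-a$, so the bounds \eqref{eq:short-packet-lower} and \eqref{eq:short-packet-upper} hold for $P_a$ with $t$ replaced by $t-a$.

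For \eqref{eq:short-train-norm} we expand the square,
\[
\Big\|\sum_j P_{a_j}\Big\|_2^2=N+\sum_{j\ne\ell}\langle P_{a_j},P_{a_\ell}\rangle .
\]
It then suffices to prove the almost-orthogonality bound
\[
|\langle P_{a},P_{b}\rangle|\lesssim (1+|a-b|/r)^{-d/2}
\]
for $|a-b|\ge Dr$. Given this, summing over the separated set gives
\[
\sup_j\sum_{\ell\ne j}|\langle P_{a_j},P_{a_\ell}\rangle|\lesssim \sum_{m\ge1}(Dm)^{-d/2}\lesssim D^{-d/2},
\]
since $d/2>1$, and Schur's test yields $\|\sum_j P_{a_j}\|_2^2\le (1+CD^{-d/2})N$.

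To prove the overlap bound, we work on the integral representation. Write $P_a=\|U\|_2^{-1}\int\psi(\eta/\delta)\,R_a q_{\nu,\eta}\,d\eta$, where $R_a$ is the rotation by angle $a$ in the $(x_1,x_2)$-plane and $\delta=(\nu r)^{-1/2}$. Note that $R_a q_{\nu,\eta}=\kappa(R_{-a}a_\eta\cdot x)^\nu$ is again a highest-weight function with null vector $a'=R_{-a}a_\eta$. The generating-function identity from the proof of Lemma~\ref{lem:exact-beams} applies verbatim to any two null vectors $a,b$:
\[
\langle\kappa(a\cdot x)^\nu,\kappa(b\cdot x)^\nu\rangle=(a\cdot\bar b/2)^\nu .
\]
Thus the overlap becomes an explicit $2d$-dimensional integral
\[
\|U\|_2^{-2}\iint\psi(\eta/\delta)\psi(\zeta/\delta)\Big(\tfrac{a_\eta\cdot\overline{R_{b-a}a_\zeta}}{2}\Big)^\nu d\eta\, d\zeta .
\]
Here
\[
a_\eta\cdot\overline{R_\tau a_\zeta}=\cos\tau+\cos\tau\,\omega_1(\eta)\omega_1(\zeta)+\eta\cdot\zeta+i\sin\tau\,(\omega_1(\eta)-\omega_1(\zeta))\cdot(\text{sign}),
\]
up to bookkeeping. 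For $\tau=b-a$ with $|\tau|\ge Dr$ and $\eta,\zeta\sim\delta$, the same Taylor/stationary-phase analysis as in the tail estimate applies. The phase Hessian in the scaled variables has size $\nu\delta^2|\tau|=|\tau|/r$, and there are $d$ effective directions, coming from the difference $\eta-\zeta$. This gives the factor $(|\tau|/r)^{-d/2}$ relative to the $\tau=0$ value, which is $1$ by normalization.

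If this direct computation becomes awkward, the fallback is to estimate the overlap by splitting $S^n$ into three regions. Near the core of $P_a$ we use \eqref{eq:short-packet-upper} for $P_b$. In the symmetric region near the core of $P_b$ we argue the same way with the roles exchanged. Far from both geodesic segments and in the transverse tails we need Gaussian transverse decay, which does not follow from the lemma as stated. For that reason the integral formula is the preferred route.

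For the pointwise bound \eqref{eq:short-train-pointwise}, fix $x\in F_j$. The main term satisfies $|P_{a_j}(x)|\gtrsim\nu^{d/4}r^{-d/4}$ by \eqref{eq:short-packet-lower}. For $\ell\ne j$ we have $|t-a_\ell|\ge(D-c)r$, and $|z|\le c(r/\nu)^{1/2}\le K(r/\nu)^{1/2}$. So \eqref{eq:short-packet-upper}, with fixed $K=1$, gives
\[
\sum_{\ell\ne j}|P_{a_\ell}(x)|\le C\nu^{d/4}r^{-d/4}\sum_{m\ge1}(1+(Dm-c))^{-d/2}\le C'D^{1-d/2}\,\nu^{d/4}r^{-d/4}.
\]
The centers lie in a fixed interval, so $|t-a_\ell|\le r_*$ can be arranged by shrinking that interval. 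Choosing $D$ large makes the tail at most half the main term, which gives \eqref{eq:short-train-pointwise}.

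The main obstacle is the almost-orthogonality estimate. The pointwise part is immediate from Lemma~\ref{lem:coherent-short-packets}, while the $L^2$ bound needs a global decay estimate for the overlap that the stated tail bound, valid only in a tube, does not provide directly.
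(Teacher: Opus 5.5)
Your reductions are the ones the paper uses.

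\begin{itemize}
\item The pointwise bound \eqref{eq:short-train-pointwise} follows from \eqref{eq:short-packet-lower}--\eqref{eq:short-packet-upper}, with a tail $\sum_{m\ge1}(1+Dm)^{-d/2}$ that is finite, and small for $D$ large, when $d\ge3$.
\item The norm bound \eqref{eq:short-train-norm} follows from the overlap estimate $|\langle P_a,P_b\rangle|\lesssim(1+|a-b|/r)^{-d/2}$. You rightly propose to get this from the null-vector identity $\langle\kappa(a\cdot x)^\nu,\kappa(b\cdot x)^\nu\rangle=(a\cdot\bar b/2)^\nu$.
\end{itemize}

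The gap is the overlap estimate itself. You correctly single it out as the heart of the lemma, but your sketch gets the mechanism wrong in two concrete places. As written, it does not prove \eqref{eq:short-train-norm}.

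\emph{The kernel.} Set $\alpha_\eta=(1-|\eta|^2)^{1/2}$. Up to the sign of $\tau$,
\[
a_\eta\cdot\overline{R_\tau a_\zeta}=\cos\tau\,(1+\alpha_\eta\alpha_\zeta)+\eta\cdot\zeta+i\sin\tau\,(\alpha_\eta+\alpha_\zeta).
\]
The imaginary part contains a \emph{sum}, not a difference. Your version would give the central pair $\eta=\zeta=0$ modulus $|\cos\tau|$, and hence a spurious decay $|\cos\tau|^\nu$. In fact $\Gamma_\tau:=\frac12 a_\eta\cdot\overline{R_\tau a_\zeta}$ satisfies $\Gamma_\tau(0,0)=e^{i\tau}$, so all of the decay must come from oscillation.

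\emph{The directions.} The $d$ directions that produce decay are not the difference directions. In the scaled variables $\eta=\delta\theta$, $\zeta=\delta\vartheta$, with $\nu\delta^2=1/r$, one gets
\[
\nu\log\Gamma_\tau(\delta\theta,\delta\vartheta)=i\nu\tau-\frac{|\theta-\vartheta|^2}{4r}-\frac{(1-\cos\tau)+i\sin\tau}{2r}\Big(|u|^2-\frac{|\theta-\vartheta|^2}{4}\Big)+\cdots,\qquad u=\frac{\theta+\vartheta}{2}.
\]
The real part confines $\theta-\vartheta$ to scale $\sqrt r$. On that scale the phase in the difference variable is $O(|\sin\tau|)=O(1)$, so stationary phase in $\eta-\zeta$ gains nothing.

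The decay comes from the average variable $u$. The bookkeeping goes as follows.
\begin{itemize}
\item Substitute $v=(\theta-\vartheta)/\sqrt r$. The Jacobian $r^{d/2}$ cancels the prefactor $\|U_{\nu,r}\|_2^{-2}\delta^{2d}\approx r^{-d/2}$, and the $v$-integral is a harmless Gaussian.
\item In $u$ the main term is a complex Gaussian. Its coefficient has nonnegative real part and modulus $\approx|\tau|/r$; the phase Hessian is $-\frac{\sin\tau}{r}I+O(1+\eps|\tau|/r)$.
\item Scaled stationary phase in $u$ then gives $(1+|\tau|/r)^{-d/2}$ for $|\tau|\ge C_0r$.
\end{itemize}
This is exactly the computation the paper carries out for \eqref{eq:short-packet-overlap}. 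Your count of $d$ effective directions gives the right exponent, but the computation as you describe it would not produce it. As you yourself note, the region-splitting fallback is also incomplete without transverse decay.
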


	\begin{proof}
		Let \(R_a\) be the rotation along the reference geodesic, so that
		\(R_a x(t,z)=x(t+a,z)\), and write \(P_a=R_a u_{\nu,r}\).  The lower bound on a
		single core and the tail estimate away from that core are exactly
		\eqref{eq:short-packet-lower} and \eqref{eq:short-packet-upper}, after replacing
		\(t\) by \(t-a\).  It remains to prove the almost-orthogonality estimate.
		
		By rotation invariance it suffices to estimate \(\langle P_0,P_\tau\rangle\),
		where \(\tau=b-a\).  Put \(\delta=(\nu r)^{-1/2}\) and
		\(\alpha_\eta=(1-|\eta|^2)^{1/2}\).  Lemma~\ref{lem:exact-beams}, applied after
		rotating one beam by \(\tau\), gives
		\[
		\langle q_{\nu,\eta},R_\tau q_{\nu,\zeta}\rangle
		=\Gamma_\tau(\eta,\zeta)^\nu,
		\]
		where
		\[
		\Gamma_\tau(\eta,\zeta)
		=\frac12\Bigl(
		\cos\tau\,(1+\alpha_\eta\alpha_\zeta)+\eta\cdot\zeta
		+i\sin\tau\,(\alpha_\eta+\alpha_\zeta)
		\Bigr).
		\]
		Consequently,
		\[
		\langle P_0,P_\tau\rangle
		=c_{\nu,r}\iint
		\psi(\theta)\overline{\psi(\vartheta)}
		\Gamma_\tau(\delta\theta,\delta\vartheta)^\nu
		\,d\theta\,d\vartheta,
		\qquad c_{\nu,r}\approx r^{-d/2}.
		\]
		For \(|\tau|\lesssim r\), Cauchy's inequality gives
		\(|\langle P_0,P_\tau\rangle|\le1\).  Assume \(|\tau|\ge C_0r\).  Taylor expansion of
		the explicit kernel gives
		\[
		\operatorname{Re}\bigl(\nu\log\Gamma_\tau(\delta\theta,\delta\vartheta)\bigr)
		\le -c\frac{|\theta-\vartheta|^2}{r}+C.
		\]
		With
		\[
		u=\frac{\theta+\vartheta}{2},
		\qquad v=\frac{\theta-\vartheta}{\sqrt r},
		\]
		the Jacobian is \(d\theta\,d\vartheta=r^{d/2}\,du\,dv\), which cancels
		\(c_{\nu,r}\).  The same expansion gives, in the \(u\)-variables,
		\[
		\partial_{u_i}\partial_{u_j}
		\operatorname{Im}\bigl(\nu\log\Gamma_\tau(\delta\theta,\delta\vartheta)\bigr)
		=-\frac{\sin\tau}{r}\delta_{ij}
		+O\!\left(1+\varepsilon\frac{|\tau|}{r}\right).
		\]
		On the fixed coordinate interval, \(|\sin\tau|\approx |\tau|\).  Therefore the
		inner \(u\)-integral has a nondegenerate Hessian of size \(|\tau|/r\), while the
		\(v\)-dependence is Gaussian.  The same scaled stationary-phase estimate used in
		Lemma~\ref{lem:coherent-short-packets} yields
		\begin{equation}\label{eq:short-packet-overlap}
			|\langle P_0,P_\tau\rangle|
			\le C\left(1+\frac{|\tau|}{r}\right)^{-d/2}.
		\end{equation}
		Rotating gives the same bound for \(P_a\) and \(P_b\).
		
		Now
		\[
		\Big\|\sum_j P_{a_j}\Big\|_2^2
		\le \sum_{j,\ell}|\langle P_{a_j},P_{a_\ell}\rangle|
		\le C\sum_j\sum_{m\in\mathbb Z}(1+D|m|)^{-d/2}
		\lesssim N,
		\]
		because the points \(a_j\) are \(Dr\)-separated on a line and \(d/2>1\).  This
		proves \eqref{eq:short-train-norm}.
		
		Finally, fix \(x\in F_j\).  The main packet satisfies
		\(|P_{a_j}(x)|\gtrsim\nu^{d/4}r^{-d/4}\) by
		\eqref{eq:short-packet-lower}.  For \(\ell\ne j\), the upper bound
		\eqref{eq:short-packet-upper} gives
		\[
		|P_{a_\ell}(x)|
		\le C\nu^{d/4}r^{-d/4}
		\left(1+\frac{|a_\ell-a_j|}{r}\right)^{-d/2},
		\]
		after decreasing the core constant \(c\).  Hence
		\[
		\sum_{\ell\ne j}|P_{a_\ell}(x)|
		\le C\nu^{d/4}r^{-d/4}
		\sum_{m\ge1}(1+Dm)^{-d/2}.
		\]
		Choosing the fixed separation constant \(D\) large makes this tail smaller than
		the main term.  This proves \eqref{eq:short-train-pointwise}.
	\end{proof}

	\begin{lemma}[Envelope train: \(\nu\le R\le\nu^2\)]\label{lem:intermediate-packet-profile}
		Assume \(\nu\le R\le\nu^2\) and put \(r=\nu/R\).  If either \(L\le r\), or \(d\ge3\), one can find a normalized \(u\in\HH_\nu(S^n)\) and a set \(E\) with
		\[
		|E|\approx LR^{-d/2}
		\]
		such that
		\begin{equation}\label{eq:intermediate-packet-profile}
			|u(x)|\gtrsim
			R^{d/4}\min\{1,(L/r)^{-1/2}\}
			=R^{d/4}\min\{1,(LR/\nu)^{-1/2}\},
			\qquad x\in E.
		\end{equation}
	\end{lemma}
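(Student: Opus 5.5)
The plan is to build \(u\) from the envelope packets of Lemma~\ref{lem:coherent-short-packets} at scale \(r=\nu/R\), which lies in \([\nu^{-1},1]\) precisely because \(\nu\le R\le\nu^2\). Each such packet has a core box
\[
\{|t|\le cr,\ |z|\le c(r/\nu)^{1/2}\}=\{|t|\le cr,\ |z|\le cR^{-1/2}\},
\]
so its transverse width already matches the packet box \(E_{L,R}\). On this core the packet has size \(\nu^{d/4}r^{-d/4}=R^{d/4}\). Throughout, I will assume that \(r\le r_*\), which holds once \(R\ge \nu/r_*\). The remaining range \(R\approx\nu\) is covered, up to constants, by the beam block Lemma~\ref{lem:wide-transverse-block}, since there \(L/r\approx LR/\nu\approx L\lesssim 1\).

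\emph{Case \(L\le r\).} Take a single packet \(u=u_{\nu,r}\) from Lemma~\ref{lem:coherent-short-packets}. Take \(E\) to be the sub-box \(\{|t|\le cL,\ |z|\le cR^{-1/2}\}\) of the core. Then \(|E|\approx LR^{-d/2}\), and \eqref{eq:short-packet-lower} gives \(|u|\gtrsim R^{d/4}\) on \(E\). This is \eqref{eq:intermediate-packet-profile}, because the minimum there equals \(1\) when \(L\le r\). No dimensional restriction is needed in this case, which is consistent with the hypothesis ``\(L\le r\) or \(d\ge3\)''.

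\emph{Case \(L>r\), \(d\ge 3\).} Choose \(Dr\)-separated centers \(a_1,\dots,a_N\) in an interval of length \(\approx L\) along the reference geodesic, with \(N\approx L/(Dr)\approx L/r\). Set \(u=\sum_j P_{a_j}/\|\sum_j P_{a_j}\|_2\), and let \(E=\bigcup_j F_j\) be the union of the cores. The cores are disjoint because of the separation, so
\[
|E|\approx N\,r\,R^{-d/2}\approx LR^{-d/2}.
\]
Lemma~\ref{lem:separated-short-packet-trains} gives \(\|\sum_j P_{a_j}\|_2\lesssim N^{1/2}\), and it gives the lower bound \(|\sum_j P_{a_j}|\gtrsim R^{d/4}\) on every core. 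Hence on \(E\) we get
\[
|u|\gtrsim R^{d/4}N^{-1/2}\approx R^{d/4}(L/r)^{-1/2},
\]
which is the minimum in \eqref{eq:intermediate-packet-profile} when \(L>r\). It remains to check \(L\lesssim 1\), so that all centers stay inside the fixed coordinate interval required by Lemma~\ref{lem:separated-short-packet-trains}. This holds by assumption.

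The main obstacle is already handled upstream: the almost-orthogonality estimate \(\langle P_a,P_b\rangle\lesssim(1+|a-b|/r)^{-d/2}\), together with the pointwise tail bound. Both must be summable along a line, which needs \(d/2>1\), and this is exactly why \(d\ge3\) is required in the train case. For the present lemma, the only remaining care is bookkeeping. First, the core constant \(c\) must be chosen consistently with the tail bound \eqref{eq:short-packet-upper} on \(|z|\le K(r/\nu)^{1/2}\), so that the pointwise lower bound survives the summation. Second, the borderline range \(r>r_*\), i.e.\ \(R\lesssim\nu\), is disposed of by the beam block construction or by simply adjusting constants.
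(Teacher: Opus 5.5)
Your proposal is correct and follows the paper's proof essentially verbatim. When $L\le r$ you take a single envelope $u_{\nu,r}$ restricted to $|t|\lesssim L$. When $L>r$ and $d\ge3$ you take a $Dr$-separated train of $N\approx L/r$ rotated envelopes, normalized through the bound $\|\sum_j P_{a_j}\|_2\lesssim N^{1/2}$ and the pointwise lower bound from Lemma~\ref{lem:separated-short-packet-trains}.

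Your extra check that $r\le r_*$, where the paper only records $r\le 1$, is a harmless refinement. In the leftover range $\nu\le R\lesssim\nu/r_*$, you cannot apply Lemma~\ref{lem:wide-transverse-block} literally, because it needs $R\le\nu$. Instead use a single beam, or that lemma with $R$ replaced by $\nu$; since $R\approx\nu$ there, this only costs constants.
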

	
	\begin{proof}
		Since \(\nu\le R\le\nu^2\), the number \(r=\nu/R\) satisfies \(\nu^{-1}\le r\le1\), and the transverse radius in Lemma~\ref{lem:coherent-short-packets} is
		\[
		(r/\nu)^{1/2}=R^{-1/2}.
		\]
		If \(L\le r\), take one envelope \(u_{\nu,r}\) from Lemma~\ref{lem:coherent-short-packets} and restrict its core to \(|t|\lesssim L\).  On this set
		\[
		|u_{\nu,r}|
		\gtrsim \nu^{d/4}r^{-d/4}=R^{d/4},
		\]
		and the measure is \(\approx LR^{-d/2}\).  This is the first alternative in \eqref{eq:intermediate-packet-profile}.
		
		Assume now that \(L>r\) and \(d\ge3\).  Choose times \(a_j\) in an interval of length \(\approx L\), separated by \(Dr\), with
		\[
		N\approx L/r=LR/\nu.
		\]

	\begin{figure}[htbp]
	\centering
	\includegraphics[width=0.8\textwidth]{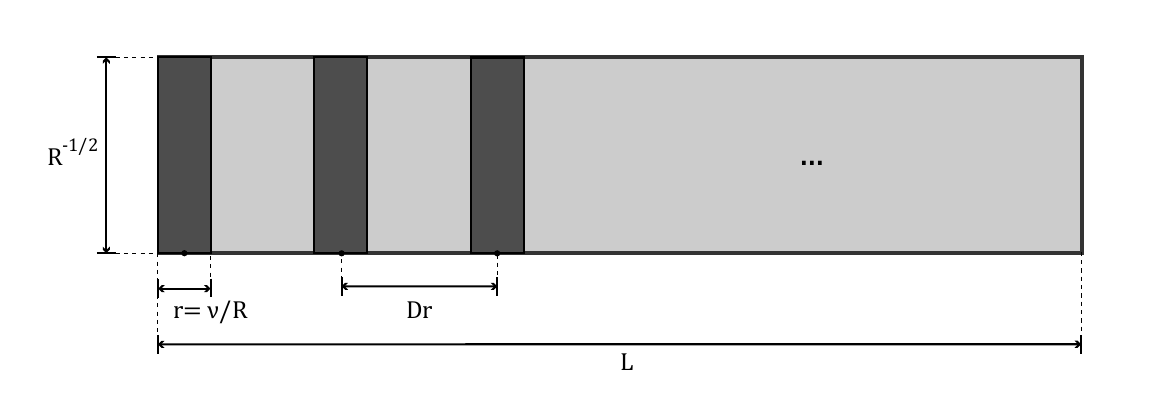}
	\caption{Envelope train: $\nu \leq R \leq \nu^2$.}
	\label{figet}
	\end{figure}

		Let \(P_{a_j}\) be the corresponding rotated envelopes, set
		\[
		Q=\sum_{j=1}^N P_{a_j},
		\qquad u=Q/\|Q\|_2,
		\]
		and let
		\[
		E=\bigcup_{j=1}^N
		\{x(t,z): |t-a_j|\le cr,
		\ |z|\le cR^{-1/2}\}.
		\]
		The pieces are disjoint, so
		\[
		|E|\approx N r R^{-d/2}\approx LR^{-d/2}.
		\]
		By Lemma~\ref{lem:separated-short-packet-trains}, \(\|Q\|_2\lesssim N^{1/2}\) and \(|Q|\gtrsim R^{d/4}\) on \(E\).  Therefore
		\[
		|u(x)|\gtrsim R^{d/4}N^{-1/2}
		\approx R^{d/4}(L/r)^{-1/2},
		\qquad x\in E.
		\]
	\end{proof}

	\begin{lemma}[Zonal train: \(R\ge\nu^2\)]\label{lem:zonal-train-profile}
		Assume \(R\ge\nu^2\).  If either \(L\nu\lesssim1\), or \(d\ge3\), one can find a normalized \(u\in\HH_\nu(S^n)\) and a set \(E\) with
		\[
		|E|\approx LR^{-d/2}
		\]
		such that
		\begin{equation}\label{eq:zonal-train-profile}
			|u(x)|\gtrsim
			\nu^{d/2}\min\{1,(L\nu)^{-1/2}\},
			\qquad x\in E.
		\end{equation}
	\end{lemma}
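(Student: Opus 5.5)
The plan is to follow the envelope-train construction of Lemma~\ref{lem:intermediate-packet-profile}, with the envelopes replaced by zonal packets, i.e.\ the case $r=\nu^{-1}$ of Lemma~\ref{lem:coherent-short-packets}. Since $R\ge\nu^2$, the box $E_{L,R}$ has transverse width $R^{-1/2}\le\nu^{-1}$, which is smaller than the core radius of a zonal packet. Hence any portion of a zonal core of length $\lesssim\nu^{-1}$ in the geodesic direction and width $R^{-1/2}$ transversally lies inside the region where $|p_{\nu,y}|\gtrsim\nu^{d/2}$.

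\emph{Case $L\nu\lesssim1$.} Take a single zonal function $u=p_{\nu,y}$, with $y$ on the reference geodesic, normalized in $L^2$. Let
\[
E=\{x(t,z):|t|\le cL,\ |z|\le cR^{-1/2}\}.
\]
Since $L\lesssim\nu^{-1}$ and $R^{-1/2}\le\nu^{-1}$, after shrinking constants we have $E\subset B(y,c\nu^{-1})$. Thus $|u|\gtrsim\nu^{d/2}$ on $E$ by Lemma~\ref{lem:coherent-short-packets}, and $|E|\approx LR^{-d/2}$. This is the first alternative in \eqref{eq:zonal-train-profile}.

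\emph{Case $L\nu\gg1$, $d\ge3$.} Choose points $a_1,\dots,a_N$ in an interval of length $\approx L$ on the reference geodesic, separated by $D\nu^{-1}$, with $N\approx L\nu$. Let $P_{a_j}$ be the rotated zonal packets, which are the $r=\nu^{-1}$ instances of the packets in Lemma~\ref{lem:separated-short-packet-trains}. That lemma, applied with $r=\nu^{-1}$ (allowed since $\nu^{-1}\le r\le r_*$), gives
\[
\|Q\|_2\lesssim N^{1/2}\quad\text{for } Q=\sum_j P_{a_j},
\]
and $|Q|\gtrsim\nu^{d/4}r^{-d/4}=\nu^{d/2}$ on the cores $F_j=\{|t-a_j|\le c\nu^{-1},\ |z|\le c\nu^{-1}\}$. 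Now take
\[
E=\bigcup_j\{x(t,z):|t-a_j|\le c\nu^{-1},\ |z|\le cR^{-1/2}\}\subset\bigcup_j F_j.
\]
These pieces are disjoint, so
\[
|E|\approx N\nu^{-1}R^{-d/2}\approx LR^{-d/2}.
\]
Setting $u=Q/\|Q\|_2$ gives $|u|\gtrsim\nu^{d/2}N^{-1/2}\approx\nu^{d/2}(L\nu)^{-1/2}$ on $E$, which is the second alternative.

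The only point requiring care is that the tail and overlap bounds \eqref{eq:short-packet-upper} and \eqref{eq:short-packet-overlap} hold uniformly at the endpoint $r=\nu^{-1}$, where $\delta=(\nu r)^{-1/2}\approx1$. There the angular parameter $\eta$ ranges over a fixed small cap rather than a shrinking one, so one should check that the Taylor expansions in the proof of Lemma~\ref{lem:coherent-short-packets} remain valid with $\varepsilon$ small but fixed. I expect this to go through verbatim because the lemma is stated for all $\nu^{-1}\le r\le r_*$. Summability of $\sum_m(1+Dm)^{-d/2}$ is exactly where $d\ge3$ enters; this is why the train is only claimed for $d\ge3$, and otherwise only the single-packet regime $L\nu\lesssim1$ is asserted.
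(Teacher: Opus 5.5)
Your proposal is correct and matches the paper's proof essentially step for step. When $L\nu\lesssim1$, both use a single zonal function restricted to $\{|t|\le cL,\ |z|\le cR^{-1/2}\}$. When $L\nu\gg1$ and $d\ge3$, both use the $r=\nu^{-1}$ train from Lemma~\ref{lem:separated-short-packet-trains}, with $N\approx L\nu$ packets and cores narrowed transversally to width $R^{-1/2}\le\nu^{-1}$. Your concern about uniformity at the endpoint $r=\nu^{-1}$ is a reliance the paper shares, since both packet lemmas are stated for the full range $\nu^{-1}\le r\le r_*$.
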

	
	\begin{proof}
		This is the endpoint case \(r=\nu^{-1}\) of the envelope train.
		Since \((r/\nu)^{1/2}=\nu^{-1}\) and \(R^{-1/2}\le\nu^{-1}\), a single zonal function from Lemma~\ref{lem:coherent-short-packets} gives the result when
		\(L\nu\lesssim1\), after restricting its core to
		\[
		E=\{x(t,z): |t|\le cL,\ |z|\le cR^{-1/2}\}.
		\]
		Indeed \(|u|\gtrsim\nu^{d/2}\) there and \(|E|\approx LR^{-d/2}\).
		
		Assume now that \(L\nu\gg1\) and \(d\ge3\).  Choose times \(a_j\) in an interval
		of length \(\approx L\), separated by \(D\nu^{-1}\), with
		\[
		N\approx L\nu.
		\]

	\begin{figure}[htbp]
	\centering
	\includegraphics[width=0.8\textwidth]{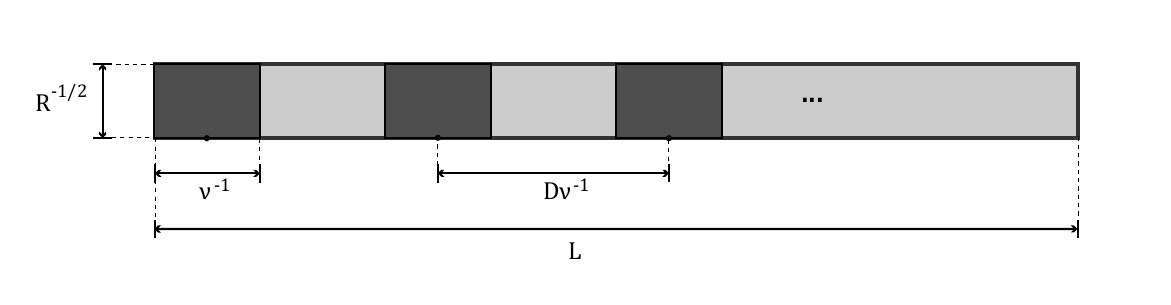}
	\caption{Zonal train: $R \geq \nu^2$.}
	\label{figzt}
	\end{figure}
		Let \(P_{a_j}\) be the corresponding rotations of \(u_{\nu,\nu^{-1}}\), and set
		\[
		Q=\sum_{j=1}^N P_{a_j},
		\qquad u=Q/\|Q\|_2.
		\]
		Let
		\[
		E=\bigcup_{j=1}^N
		\{x(t,z): |t-a_j|\le c\nu^{-1},\ |z|\le cR^{-1/2}\}.
		\]
		The pieces are disjoint and
		\[
		|E|\approx N\nu^{-1}R^{-d/2}\approx LR^{-d/2}.
		\]
		Lemma~\ref{lem:separated-short-packet-trains}, with \(r=\nu^{-1}\), gives
		\(\|Q\|_2\lesssim N^{1/2}\) and \(|Q|\gtrsim\nu^{d/2}\) on \(E\).  Therefore
		\[
		|u(x)|\gtrsim \nu^{d/2}N^{-1/2}
		\approx \nu^{d/2}(L\nu)^{-1/2},
		\qquad x\in E.
		\]
	\end{proof}

	For a fixed packet box with longitudinal length \(L\) and transverse radius
	\(R^{-1/2}\), set
	\begin{equation}\label{eq:packet-profile-general}
		A_\nu(L,R)=
		\begin{cases}
			R^{d/4}\min\{1,(LR/\nu)^{-1/2}\}, & 1\le R\le \nu^2,\\[1mm]
			\nu^{d/2}\min\{1,(L\nu)^{-1/2}\}, & R\ge \nu^2.
		\end{cases}
	\end{equation}
	
	\noindent\textbf{Definition of admissible triples.}
	Let \(0<L\lesssim1\), \(R\ge1\), and \(\nu\gg1\).  We call
	\((\nu,L,R)\) admissible if one of the following alternatives holds.
	\begin{enumerate}
		\item[\((\mathrm A_1)\)] \(1\le R\le\nu\).  This is the range of
		Lemma~\ref{lem:wide-transverse-block}, which gives the lower bound
		\(R^{d/4}\gtrsim A_\nu(L,R)\).
		
		\item[\((\mathrm A_2)\)] \(\nu\le R\le\nu^2\).  Put \(r=\nu/R\).  Assume
		either \(L\le r\), or \(d\ge3\).  This is exactly the hypothesis of
		Lemma~\ref{lem:intermediate-packet-profile}, which gives
		\[
		R^{d/4}\min\{1,(L/r)^{-1/2}\}
		=A_\nu(L,R).
		\]
		In the subcase \(L>r\), the proof of
		Lemma~\ref{lem:intermediate-packet-profile} uses
		Lemma~\ref{lem:separated-short-packet-trains}, and this is why \(d\ge3\) is
		required.
		
		\item[\((\mathrm A_3)\)] \(R\ge\nu^2\).  Assume either \(L\nu\lesssim1\),
		or \(d\ge3\).  This is exactly the hypothesis of
		Lemma~\ref{lem:zonal-train-profile}, which gives
		\[
		\nu^{d/2}\min\{1,(L\nu)^{-1/2}\}=A_\nu(L,R).
		\]
	\end{enumerate}
	For \(d=1,2\), alternatives \((\mathrm A_2)\) and \((\mathrm A_3)\) reduce to
	\(L\le\nu/R\) and \(L\nu\lesssim1\), respectively.
	
	\begin{lemma}[Common active set for admissible triples]\label{lem:common-refinement}
		Fix \(0<L\lesssim1\), \(R\ge1\), and frequencies
		\(\nu_1,\ldots,\nu_m\).  Assume that each \((\nu_i,L,R)\) is admissible.
		Then, after one rotation of \(S^n\), there is one coordinate box
		\[
		B_{L,R}=\{x(t,z):t\in I,\ z\in Q_R\},
		\qquad |I|=L,
		\qquad |Q_R|\approx R^{-d/2},
		\]
		normalized harmonics \(u_i\in\HH_{\nu_i}(S^n)\), and sets
		\(T\subset I\), \(Z\subset Q_R\), such that
		\[
		|T|\approx L,
		\qquad |Z|\approx R^{-d/2}.
		\]
		With \(E=x(T,Z)\), one has
		\[
		|E|\approx LR^{-d/2}
		\]
		and
		\begin{equation}\label{eq:common-refinement-lower}
			|u_i(x)|\gtrsim A_{\nu_i}(L,R),
			\qquad x\in E,
			\qquad i=1,\ldots,m .
		\end{equation}
		The constants may depend on \(m,d\) and on the fixed separation constants in
		Lemmas~\ref{lem:wide-transverse-block}--\ref{lem:zonal-train-profile}, but not
		on \(L,R\), or the frequencies.
	\end{lemma}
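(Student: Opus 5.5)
The plan is to construct all the $u_i$ along one common reference geodesic. After a single rotation, the relevant core of each of the three constructions is a set of the form $x(T_i,Z_i)$ in the Fermi chart of Lemma~\ref{lem:coherent-short-packets}, with the same time axis $t$ and transverse variable $z$ centred at $z=0$. The beam block of Lemma~\ref{lem:wide-transverse-block} is the one exception, since it is written in a different chart $X(s,y)$; this issue is taken up in the last paragraph. Throughout, the time interval $I$ of length $L$ is fixed inside a fixed small coordinate interval, and $Q_R=\{|z|\le cR^{-1/2}\}$.

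\emph{Transverse variable.} In each regime the core has full transverse extent of order $R^{-1/2}$, which makes $Z=\{|z|\le c'R^{-1/2}\}$ a common choice.
\begin{enumerate}
\item[(i)] For $(\mathrm A_2)$, the envelope core is $|z|\le c(r/\nu)^{1/2}=cR^{-1/2}$.
\item[(ii)] For $(\mathrm A_3)$, the zonal core is $|z|\le c\nu^{-1}$, and this contains $|z|\le cR^{-1/2}$ because $R\ge\nu^2$.
\item[(iii)] For $(\mathrm A_1)$, the beam block is a union of tubes of radius $c\nu^{-1/2}$ around a $D\nu^{-1/2}$-separated net in a ball of radius $c_0R^{-1/2}$. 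It is not all of $Z$. Instead I would replace the single profile by a translation-averaged version: intersect with $Z$ and use the fact that the tubes occupy a fixed proportion ($\approx (c/D)^d$) of $Z$.
\end{enumerate}
Case (iii) shows that exact intersection cannot simply be assumed. To handle several $(\mathrm A_1)$ factors at once, I would use a probabilistic or pigeonhole refinement. Each factor $i$ is good on a subset $G_i\subset B_{L,R}$ of proportion $\ge\kappa_i$, and these sets come from lattice-like structures. Translate the net of factor $i$ by a random shift $\eta_i$ of size $\le D\nu_i^{-1/2}$, which changes neither the norm bound nor the pointwise bound in Lemma~\ref{lem:wide-transverse-block}. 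Then $\mathbb E|\bigcap_i G_i|=\prod_i\kappa_i\,|B_{L,R}|$, since the shifts are independent and each point lies in $G_i$ with probability $\kappa_i$. Choose shifts achieving at least the mean. The time variable is handled the same way for trains: the cores of $(\mathrm A_2)$/$(\mathrm A_3)$ trains have time density $\approx c/D$ in $I$, so independent random time translations of the train starting points give a common $T$ of proportion $\approx (c/D)^{m}$. This is why the constants depend on $m$ and on the separation constants.

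\emph{Main obstacle: making the product structure $E=x(T,Z)$ exact and fitting the beam block into the Fermi chart.} The random-shift argument only yields a set of proportional measure, not a product. To fix this, I would first do the time refinement: choose time shifts so that $T=\bigcap_i T_i$ has $|T|\gtrsim L$, with each $T_i$ a union of intervals. After that, do the transverse refinement for the $(\mathrm A_1)$ factors, whose good sets are products $I_L\times(\text{tubes})$ and therefore independent of $t$. This gives a product set $T\times Z$ with $|Z|\approx R^{-d/2}$. For the chart discrepancy, I would note that the beam chart $X(s,y)$ with $s\in[1/4,1/2]$ and the Fermi chart are both Fermi-type charts along great circles. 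A rotation therefore aligns the beam geodesic $\{y=0\}$ with the reference geodesic, up to a smooth change of variables with bounded distortion. I would either redo the beam block directly in the $x(t,z)$ chart, using $|q_{\nu,\eta}(x(t,z))|\approx\nu^{d/4}e^{-c\nu|z-\eta|^2}$, which follows from the same explicit formula, or accept a bi-Lipschitz distortion of the boxes, which only changes constants. Finally, $|E|=|T||Z|\approx LR^{-d/2}$, and on $E$ the bound \eqref{eq:common-refinement-lower} follows from the individual lemmas.
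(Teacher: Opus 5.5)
Your argument is essentially the paper's proof: independent averaged shifts of the time grids $S_{\ell,\tau}$ for the trains and of the transverse lattices $G_{\rho,\sigma}$ for the beam blocks, each factor good on a product $T_i\times Z_i$, with the time and transverse intersections chosen separately so that $E=x(T,Z)$ is automatically a product. You go beyond the paper only on the chart discrepancy, which the paper passes over silently; there your formula $|q_{\nu,\eta}(x(t,z))|\approx\nu^{d/4}e^{-c\nu|z-\eta|^2}$ is wrong, since all these beams pass through $x_0$, so in the Fermi chart they are centred at $z\approx\sin t\,\eta$ and fan out, and a general bi-Lipschitz change of chart does not preserve product sets. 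The clean fix is to take the common box in the beam chart $X(s,y)$ with $s\in[1/4,1/2]$: there the Fermi-chart good sets $T_i\times\{|z|\le cR^{-1/2}\}$ still contain products, with each $T_i$ shrunk by $O(R^{-1})$, which is negligible against the core lengths, because $s=t+O(|z|^2)$ and $|y|\approx|z|/\sin s$.
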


	\begin{proof}
		Work in one box \(B_{L,R}=\{x(t,z):t\in I,\ z\in Q_R\}\). A shift means the origin of a packet grid.  For a time scale \(\ell\le L\) and shift \(\tau\in[0,D\ell)\), set
		\[
		S_{\ell,\tau}=I\cap\bigcup_{j\in\mathbb Z}
		\{t:|t-(\tau+jD\ell)|\le c\ell\}.
		\]
		For a transverse scale \(\rho\le R^{-1/2}\) and shift \(\sigma\in[0,D\rho)^d\), set
		\[
		G_{\rho,\sigma}=Q_R\cap\bigcup_{k\in\mathbb Z^d}
		\bigl(\sigma+D\rho k+[-c\rho,c\rho]^d\bigr).
		\]
		Thus \(\tau\) translates a time train and \(\sigma\) translates a transverse lattice.  The required shifts are chosen by averaging.  For each fixed \(t\in I\) and \(z\in Q_R\),
		\[
		\mathbb{E}_\tau {\bf 1}_{S_{\ell,\tau}}(t):={1\over D\ell}\int_0^{D\ell}{\bf 1}_{S_{\ell,\tau}}(t)\,d\tau={2c\over D},
		\]
		\[ \mathbb{E}_\sigma {\bf 1}_{G_{\rho,\sigma}}(z):=
		{1\over (D\rho)^d}\int_{[0,D\rho)^d}{\bf 1}_{G_{\rho,\sigma}}(z)\,d\sigma
		=\left({2c\over D}\right)^d .\]
		Then for any fixed $A,B\le m$
		\[\mathbb{E}_{\tau_1,...,\tau_A}\left|I\cap\bigcap_{a=1}^A S_{\ell_a,\tau_a}\right|=\int_{I}\prod_{a=1}^A\mathbb{E}_{\tau_a}{\bf 1}_{S_{\ell_a,\tau_a}}(t)dt=\Big(\frac{2c}D\Big)^A|I|\approx L,\]
		\[\mathbb{E}_{\sigma_1,...,\sigma_B}\left|Q_R\cap\bigcap_{b=1}^B G_{\rho_b,\sigma_b}\right|=\int_{Q_R}\prod_{b=1}^B\mathbb{E}_{\sigma_b}{\bf 1}_{G_{\rho_b,\sigma_b}}(z)dz=\Big(\frac{2c}D\Big)^{dB}|Q_R|\approx R^{-d/2},\]
		Thus there are shifts $\tau_1,...,\tau_A$ and $\sigma_1,...,\sigma_B$ so that
		\begin{equation}\label{eq:grid-shift-choice}
			\left|I\cap\bigcap_{a=1}^A S_{\ell_a,\tau_a}\right|\gtrsim L,
			\qquad
			\left|Q_R\cap\bigcap_{b=1}^B G_{\rho_b,\sigma_b}\right|\gtrsim R^{-d/2}.
		\end{equation}
		
		For each frequency choose a good product set \(T_i\times Z_i\subset I\times Q_R\).  Full factors \(I\) and \(Q_R\) impose no restriction.  If \(1\le R\le\nu_i\), take \(\rho_i=\nu_i^{-1/2}\) and, by Lemma~\ref{lem:wide-transverse-block},
		\[
		T_i=I,
		\qquad Z_i=G_{\rho_i,\sigma_i},
		\qquad |u_i|\gtrsim R^{d/4}\gtrsim A_{\nu_i}(L,R)
		\quad \hbox{on } T_i\times Z_i .
		\]
		If \(\nu_i\le R\le\nu_i^2\), set \(r_i=\nu_i/R\).  For \(L\le r_i\), Lemma~\ref{lem:intermediate-packet-profile} gives the bound on all of \(I\times Q_R\).  For \(L>r_i\), admissibility gives \(d\ge3\), and the train with centers \(\tau_i+jDr_i\) gives
		\[
		T_i=S_{r_i,\tau_i},
		\qquad Z_i=Q_R,
		\qquad |u_i|\gtrsim R^{d/4}(L/r_i)^{-1/2}=A_{\nu_i}(L,R)
		\quad \hbox{on } T_i\times Z_i .
		\]
		If \(R\ge\nu_i^2\), Lemma~\ref{lem:zonal-train-profile} gives the bound on all of \(I\times Q_R\) when \(L\nu_i\lesssim1\).  For \(L\nu_i\gg1\), use the endpoint train with centers \(\tau_i+jD\nu_i^{-1}\), obtaining
		\[
		T_i=S_{\nu_i^{-1},\tau_i},
		\qquad Z_i=Q_R,
		\qquad |u_i|\gtrsim \nu_i^{d/2}(L\nu_i)^{-1/2}=A_{\nu_i}(L,R)
		\quad \hbox{on } T_i\times Z_i .
		\]
		
		Choose all nontrivial shifts by \eqref{eq:grid-shift-choice} and set
		\[
		T=I\cap\bigcap_i T_i,
		\qquad
		Z=Q_R\cap\bigcap_i Z_i,
		\]
		omitting full factors.  Then \(|T|\gtrsim L\) and \(|Z|\gtrsim R^{-d/2}\); after discarding subsets we may take comparable upper bounds as well.  With \(E=x(T,Z)\), the coordinate density gives \(|E|\approx LR^{-d/2}\).  Since \(T\times Z\subset T_i\times Z_i\) for every \(i\), the pointwise bounds above imply \eqref{eq:common-refinement-lower}.
	\end{proof}
	
	The following profile is the form of the preceding lemmas used in the sharpness proofs.
	
	\begin{corollary}[Model packet profile]\label{cor:packet-profile}
		Let \(1\ll\nu\), \(0<L\lesssim1\), and \(R\ge1\).  If \((\nu,L,R)\) is admissible, then one can find an \(L^2\)-normalized \(u\in\HH_\nu(S^n)\) and a set \(E\subset S^n\) such that
		\[
		|E|\approx LR^{-d/2},
		\qquad
		|u(x)|\gtrsim A_\nu(L,R)\quad (x\in E).
		\]
		More generally, if \((\nu_i,L,R)\), \(i=1,\ldots,m\), are admissible for one fixed pair \((L,R)\), then the \(u_i\in\HH_{\nu_i}(S^n)\) can be chosen with one common set \(E\) satisfying
		\[
		|E|\approx LR^{-d/2},
		\qquad
		|u_i(x)|\gtrsim A_{\nu_i}(L,R)
		\quad (x\in E,\ 1\le i\le m).
		\]
		For \(d\ge3\), every \((\nu,L,R)\) with \(0<L\lesssim1\) and \(R\ge1\) is admissible.  For \(d=1,2\), only the subcases allowed in \((\mathrm A_1)\)--\((\mathrm A_3)\) are used below.
	\end{corollary}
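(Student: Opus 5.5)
The corollary packages Lemmas~\ref{lem:wide-transverse-block}--\ref{lem:zonal-train-profile} together with Lemma~\ref{lem:common-refinement}, so the plan is to read it off case by case according to the admissibility alternative.

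For a single frequency I would split into the three alternatives defining admissibility.
\begin{itemize}
\item In case $(\mathrm A_1)$, $1\le R\le\nu$, Lemma~\ref{lem:wide-transverse-block} gives a normalized $u$ and a set $E$ with $|E|\approx LR^{-d/2}$ and $|u|\gtrsim R^{d/4}$ on $E$. One must then check that $R^{d/4}\ge A_\nu(L,R)$: since $R\le\nu\le\nu^2$ we are in the first line of \eqref{eq:packet-profile-general}, and $\min\{1,\cdot\}\le1$.
\item In case $(\mathrm A_2)$, Lemma~\ref{lem:intermediate-packet-profile} gives exactly $A_\nu(L,R)$. Its hypothesis ($L\le r$ or $d\ge3$) is precisely the admissibility condition.
\item In case $(\mathrm A_3)$, Lemma~\ref{lem:zonal-train-profile} gives exactly the second line of $A_\nu$.
\end{itemize}
All three lemmas produce spherical harmonics in $\HH_\nu(S^n)$, normalized in $L^2$, with sets of the required measure.

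For several frequencies sharing one pair $(L,R)$, Lemma~\ref{lem:common-refinement} already gives the statement verbatim: a common set $E=x(T,Z)$ with $|E|\approx LR^{-d/2}$ and the lower bounds \eqref{eq:common-refinement-lower}. The single-frequency claim is then just the case $m=1$, so the first part could equally be derived that way. The constants depend only on $m$, $d$ and the fixed separation constants.

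It remains to check the final assertion, that for $d\ge3$ every triple is admissible. Given $0<L\lesssim1$ and $R\ge1$, exactly one of $R\le\nu$, $\nu\le R\le\nu^2$, $R\ge\nu^2$ holds, up to overlaps at the endpoints, which are harmless since $A_\nu$ agrees there up to constants. In the last two alternatives the extra condition is satisfied because $d\ge3$. For $d=1,2$ nothing needs proving beyond the restriction stated.

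The only genuine point to watch is consistency at the boundary cases. At $R=\nu^2$ we have $r=\nu^{-1}$, and $R^{d/4}(LR/\nu)^{-1/2}=\nu^{d/2}(L\nu)^{-1/2}$. At $R=\nu$, $r=1\ge L$, so the bound is $R^{d/4}$. Hence $A_\nu$ is well defined and each lemma's output matches it.
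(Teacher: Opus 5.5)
Your proposal is correct and follows the paper's proof. The single-frequency statement is read off from Lemmas~\ref{lem:wide-transverse-block}, \ref{lem:intermediate-packet-profile}, and \ref{lem:zonal-train-profile} according to whether $(\mathrm A_1)$, $(\mathrm A_2)$, or $(\mathrm A_3)$ holds, and the common-set statement is Lemma~\ref{lem:common-refinement}. Your additional checks are correct routine verifications that the paper leaves implicit: that $R^{d/4}\ge A_\nu(L,R)$ in $(\mathrm A_1)$, that $A_\nu$ agrees at $R=\nu^2$, and that every triple is admissible when $d\ge3$.
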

	
	\begin{proof}
		The one-function statement is Lemma~\ref{lem:wide-transverse-block}, Lemma~\ref{lem:intermediate-packet-profile}, or Lemma~\ref{lem:zonal-train-profile}, according as \((\mathrm A_1)\), \((\mathrm A_2)\), or \((\mathrm A_3)\) holds.  The common-set statement is Lemma~\ref{lem:common-refinement}.
	\end{proof}
	
	\begin{lemma}\label{lem:fixed-box-lower}
		Let \(1\le p\le\infty\).  Suppose \(E\subset S^n\) and \(u_i\in\HH_{\nu_i}(S^n)\), \(i=1,\ldots,m\), are \(L^2\)-normalized and satisfy
		\[
		|u_i(x)|\ge M_i,
		\qquad x\in E.
		\]
		Then
		\begin{equation}\label{eq:common-set-lower}
			\left\|\prod_{i=1}^m u_i\right\|_{L^{p}(S^n)}
			\ge |E|^{\frac1p}\prod_{i=1}^m M_i.
		\end{equation}
	\end{lemma}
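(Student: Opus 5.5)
This is immediate from restricting the integral to the set $E$ and applying the pointwise lower bounds. I will treat the cases $p<\infty$ and $p=\infty$ separately.

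For $1\le p<\infty$, monotonicity of the integral with nonnegative integrand gives
\[
\left\|\prod_{i=1}^m u_i\right\|_{L^p(S^n)}^p
\ge \int_E\prod_{i=1}^m|u_i(x)|^p\,dx .
\]
On $E$ each factor satisfies $|u_i(x)|\ge M_i$, so the integrand is at least $\prod_i M_i^p$ pointwise on $E$. Integrating over $E$ gives the lower bound $|E|\prod_i M_i^p$, and taking $p$-th roots yields \eqref{eq:common-set-lower}.

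For $p=\infty$ we interpret $|E|^{1/p}=1$, assuming $|E|>0$; if $|E|=0$ the right side is zero for finite $p$ and there is nothing to prove. The product exceeds $\prod_i M_i$ everywhere on a set of positive measure, so its essential supremum does too.

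I expect no genuine obstacle. The one point needing care is that the $u_i$ are continuous (they are spherical harmonics), so the pointwise hypothesis on $E$ is meaningful; measurability of $E$ is implicit in the statement. The $L^2$-normalization and the eigenfunction property play no role here. They matter only when the lemma is combined with Corollary~\ref{cor:packet-profile}, where taking $M_i=cA_{\nu_i}(L,R)$ and $|E|\approx LR^{-d/2}$ produces the lower bounds used in Section~\ref{sec12}.
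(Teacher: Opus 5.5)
Your proof is correct and is essentially the paper's argument: the paper notes that $\prod_i|u_i|\ge M=\prod_i M_i$ on $E$ and concludes $\|\prod_i u_i\|_{L^p}\ge\|M\|_{L^p(E)}=|E|^{1/p}M$, which is your restrict-to-$E$ monotonicity step. Your separate treatment of $p=\infty$ and your remark that continuity of the $u_i$ makes the pointwise hypothesis meaningful are harmless extra care that the paper leaves implicit.
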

	
	\begin{proof}
		If \(M=\prod_iM_i\), then \(\prod_i|u_i|\ge M\) on \(E\), and hence
		\[
		\left\|\prod_i u_i\right\|_{L^{p}}
		\ge \|M\|_{L^p(E)}=|E|^{\frac1p}M.
		\]
	\end{proof}
	
	\section{Proof of the sharpness}\label{sec12}
	In this section we prove the sharpness assertion in Theorem~\ref{thm4} on the round sphere.  By Lemmas \ref{soggelem}--\ref{ellip}, Sogge's spectral cluster parametrix is a Carleson--Sj\"olin operator, so together with Section \ref{sec2} this proves the sharpness assertions in Theorems \ref{thm1}--\ref{thm3}.

	For \(1\ll\lambda_1\le\cdots\le\lambda_k\), define
	\begin{equation}\label{eq:def-L-functional}
		\LL_{p,n}^{(k)}(\lambda_1,\ldots,\lambda_k)
		:=
		\sup_{\substack{u_j\in\HH_{\lambda_j}(S^n)\\ \|u_j\|_{L^2(S^n)}=1}}
		\left\|\prod_{j=1}^k u_j\right\|_{L^p(S^n)} .
	\end{equation}
	Empty products are interpreted as \(1\).  When a compact formula contains an index \(j\le0\), that dummy factor is omitted, and equivalently one may set \(\lambda_j=2\) for \(j\le0\), changing only constants.

	The proofs use the packet profiles from Section~\ref{sec11} and apply Corollary \ref{cor:packet-profile} and Lemma \ref{lem:fixed-box-lower}.  Let \(0<L\lesssim1\), \(R\ge1\), and \(\nu\gg1\). A packet box has longitudinal length
	\(L\) and transverse radius \(R^{-1/2}\):
	\[
	E_{L,R}=\{|s|\lesssim L,\ |y|\lesssim R^{-1/2}\},\qquad |E_{L,R}|\approx L R^{-d/2}.
	\]
	For a degree \(\nu\) spherical harmonic, the available lower-bound size on such a box is
	\[
	A_\nu(L,R)=
	\begin{cases}
		R^{d/4}\min\{1,(LR/\nu)^{-1/2}\}, & 1\le R\le \nu^2,\\[2mm]
		\nu^{d/2}\min\{1,(L\nu)^{-1/2}\}, & R\ge \nu^2.
	\end{cases}
	\]
	We use the following names for the model packet profiles in the different parameter regimes:
	\[
	\begin{array}{ccl}
		\text{beam}&:& R=\nu,\quad A_\nu(L,R)=\nu^{d/4},\\
		\text{beam block} &:& 1\le R\le \nu,\quad A_\nu(L,R)=R^{d/4},\\
		\text{envelope} &:& \nu\le R\le \nu^2,\ L\le \nu/R,\quad A_\nu(L,R)=R^{d/4},\\
		\text{envelope train} &:& \nu\le R\le \nu^2,\ L>\nu/R,\ d\ge 3,\quad
		A_\nu(L,R)=R^{d/4}(LR/\nu)^{-1/2},\\
		\text{zonal} &:& R\ge \nu^2,\ L\nu\lesssim 1,\quad A_\nu(L,R)=\nu^{d/2},\\
		\text{zonal train} &:& R\ge \nu^2,\ L\nu\gg 1,\ d\ge 3,\quad
		A_\nu(L,R)=\nu^{d/2}(L\nu)^{-1/2}.
	\end{array}
	\]
	For \(d\ge3\), all triples \((\nu,L,R)\) used below are admissible.  For \(d=1,2\), we only use the non-train subcases:
	\(L\le \nu/R\) in the envelope range and \(L\nu\lesssim1\) in the zonal range.

	\subsection{Sharpness for $1\le p\le 2$}

	\begin{proposition}[Bilinear sharpness]\label{prop:sharp-k2-bilinear}
	Let $p\le 2$.	The bilinear estimates in Theorem~\ref{thm4} are sharp.  
	\end{proposition}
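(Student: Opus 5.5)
The plan is to prove, for each regime of $p\le 2$ in Theorem~\ref{thm1} (with the $n=3$ log removed, as in Theorem~\ref{thm4}), a lower bound
\[
\LL^{(2)}_{p,n}(\lambda,\mu)\gtrsim \E_0(n,2,p;\lambda,\mu).
\]
Each bound comes from a single packet box $E_{L,R}$ with $(L,R)$ chosen for that regime. We apply Corollary~\ref{cor:packet-profile} to get a common set $E$ for the two frequencies $\nu_1=\lambda,\nu_2=\mu$, and then Lemma~\ref{lem:fixed-box-lower}. This gives
\[
\LL^{(2)}_{p,n}\gtrsim (LR^{-d/2})^{1/p}A_\lambda(L,R)A_\mu(L,R),
\]
and we choose $(L,R)$ so that the right side matches each branch of $\E_0$. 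The choices follow Table~\ref{tb6}; throughout $d=n-1$.

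\textbf{Range $1\le p\le p_1$} (beam $\times$ beam block). Take $R=\lambda\le\mu$ and $L\approx1$. Then $A_\lambda=\lambda^{d/4}$ and $A_\mu=R^{d/4}=\lambda^{d/4}$, with $|E|\approx\lambda^{-d/2}$. The product is $\lambda^{d/2-d/(2p)}$, which matches $\lambda^{\frac{n-1}{2}(1-\frac1p)}$. This choice is admissible for every $d$, so it also handles $n=2,3$ on this range, and it covers $(3,2)$ since there $p_1=2$.

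\textbf{Range $p_1\le p\le p_0$, $\mu\le\lambda^2$} (envelope train $\times$ beam). Take $R=\mu$, so that $A_\mu=\mu^{d/4}$. Since $\lambda\le\mu\le\lambda^2$, the $\lambda$-factor is in the envelope-train range, $A_\lambda=\mu^{d/4}(L\mu/\lambda)^{-1/2}$ for $L>\lambda/\mu$. This gives
\[
L^{1/p-1/2}\,\mu^{-d/(2p)+d/2-1/2}\,\lambda^{1/2}.
\]
Because $p\le2$, the exponent of $L$ is nonnegative, so we take $L\approx1$ and obtain $\lambda^{1/2}\mu^{\frac{n-2}{2}-\frac{n-1}{2p}}$, as required.

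\textbf{Range $p_0\le p\le2$, $\mu\le\lambda^2$} (zonal train $\times$ envelope train). Take $R=\lambda^2$ and $L\approx1$. Then $A_\lambda=\lambda^{d/2}\lambda^{-1/2}$, and $A_\mu=\lambda^{d/2}(\lambda^2/\mu)^{-1/2}$ is valid since $\mu\le R\le\mu^2$. With $|E|\approx\lambda^{-d}$ the product is $\lambda^{d-1-d/p}\cdot\lambda\,\mu^{-1/2}\cdot$(a correction in $\mu$). We will adjust $R$ between $\mu$ and $\lambda^2$, treating $R$ as a free parameter and optimizing, to land on $\lambda^{\frac{2n-1}{2}-\frac{n+1}{p}}\mu^{\frac1p-\frac12}$.

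\textbf{Range $\mu\ge\lambda^2$} (zonal train $\times$ beam block). Take $R=\lambda^2\le\mu$ and $L\approx1$. Then $A_\mu=R^{d/4}=\lambda^{d/2}$, $A_\lambda=\lambda^{(d-1)/2}$ and $|E|\approx\lambda^{-d}$, which gives $\lambda^{d-\frac12-\frac dp}=\lambda^{n-\frac32-\frac{n-1}{p}}$.

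\textbf{Low dimensions.} For $n=2,3$ on $p>p_1$ only non-train profiles may be used. Here we take zonal$\times$zonal on a box with $L\approx\lambda^{-1}$ and $R=\mu^2$, or beam block variants, and check that they reproduce $\lambda^{\frac{d}{2}(1-\frac1p)}$. For $(3,2)$ the target is $\lambda^{1/2}$, which beam$\times$beam block already gives.

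\textbf{Main obstacle.} I expect the hardest step to be the exponent bookkeeping in the envelope-train regimes: choosing the optimal $(L,R)$ and verifying admissibility, namely $d\ge3$ for trains and the conditions $L\le\nu/R$ and $L\nu\lesssim1$ for $d\le2$, consistently with the case split $\mu\lessgtr\lambda^2$. There I would optimize the monomial in $(L,R)$ over the admissible polytope; since the maximum is attained at a vertex, it suffices to check the vertices.
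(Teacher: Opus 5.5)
Your treatment of three regimes matches the paper's proof. You use beam $\times$ beam block on $L\approx1$, $R=\lambda$ for $1\le p\le p_1$, envelope train $\times$ beam on $L\approx1$, $R=\mu$ for $p_1\le p\le p_0$ with $\mu\le\lambda^2$, and zonal train $\times$ beam block on $L\approx1$, $R=\lambda^2$ for $\mu\ge\lambda^2$. Your ``low dimensions'' paragraph is unnecessary. For $n=2,3$ the target is $\lambda^{\frac d2(1-\frac1p)}$ for every $1\le p\le2$ (and $p_1=2$ when $n=3$), so the first box already settles those dimensions, exactly as in the paper. The zonal $\times$ zonal box you suggest there ($L\approx\lambda^{-1}$, $R=\mu^2$) is not even admissible for $d\le2$ when $\mu\gg\lambda$, since $L\mu\gg1$ would require a zonal train.

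The genuine gap is the regime $p_0\le p\le2$ with $\mu\le\lambda^2$. You defer this to a later optimization, and the plan you sketch cannot reach the target. With $L\approx1$, $R=\lambda^2$ one has $A_\lambda=\lambda^{\frac{d-1}{2}}$, $A_\mu=\lambda^{\frac d2-1}\mu^{\frac12}$ and $|E|\approx\lambda^{-d}$. The product is therefore $\lambda^{d-\frac32-\frac dp}\mu^{\frac12}$, not the expression you wrote, and it falls short of $\lambda^{\frac{2d+1}{2}-\frac{d+2}{p}}\mu^{\frac1p-\frac12}$ by the factor $(\mu/\lambda^2)^{1-\frac1p}$.

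Keeping $L\approx1$ and moving $R$ within $[\mu,\lambda^2]$ does not help. Both factors are then trains, and the bound is $R^{\frac d2(1-\frac1p)-1}(\lambda\mu)^{1/2}$, a monomial in $R$, so its maximum is at $R=\lambda^2$ or at $R=\mu$. The endpoint $R=\mu$ reproduces your $p_1\le p\le p_0$ branch $\lambda^{1/2}\mu^{\frac{d-1}{2}-\frac{d}{2p}}$, which loses $(\mu/\lambda^2)^{\frac d2-\frac{d+2}{2p}}$ against the target; this is a real loss for $p>p_0$.

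The missing idea is to shorten the box: take $R=\lambda^2$ and $L=\mu/\lambda^2$.
\begin{itemize}
\item Since $L=\mu/R$, the $\mu$-factor is a single endpoint envelope with $A_\mu=\lambda^{d/2}$ and no train loss. The ``envelope train'' label in Table~\ref{tb6} is really this endpoint case.
\item The $\lambda$-factor is a zonal train of $\approx\mu/\lambda$ packets, with $A_\lambda=\lambda^{\frac{d+1}{2}}\mu^{-\frac12}$.
\item The box has $|E|\approx\mu\lambda^{-(d+2)}$.
\end{itemize}
Lemma~\ref{lem:fixed-box-lower} then gives exactly $\lambda^{\frac{2d+1}{2}-\frac{d+2}{p}}\mu^{\frac1p-\frac12}$. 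Your closing remark about checking vertices would find this only if you optimized jointly in $(L,R)$ over the cells on which the profile formulas are monomials. As written, the proposal does not establish this branch.
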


	\begin{proof}
		Let $1\ll\la\le \mu$ and $d=n-1$. 
		We first use the box \(L\approx1\), \(R=\lambda\).  The $\la$-factor is a  beam that contributes 
		$A_\la(1,\la)= \la^{d/4}$,
		while  the $\mu$-factor is a beam block that contributes
			$A_\mu(1,\la)= \la^{d/4}.$
		 Since $|E_{L,R}|\approx \la^{-d/2}$, Lemma~\ref{lem:fixed-box-lower} gives
		\[
		\|uv\|_p\gtrsim \lambda^{d/2}\lambda^{-d/(2p)}
		=\lambda^{\frac d2(1-1/p)}.
		\]
		This agrees with the powers in Theorem~\ref{thm1} for \(d=1,2\), and the first range for \(d\ge3\).
		
		Assume \(d\ge3\).    If \(\mu\le\lambda^2\), use first \(L\approx1\), \(R=\mu\).  The \(\lambda\)-factor is an envelope train that contributes
		\[
		A_\lambda(1,\mu)=\mu^{d/4}(\mu/\lambda)^{-1/2}
		=\lambda^{1/2}\mu^{d/4-1/2},
		\]
		while the \(\mu\)-factor is a  beam that contributes \(A_\mu(1,\mu)=\mu^{d/4}\).  Since \(|E_{L,R}|\approx\mu^{-d/2}\),
		\[
		\|uv\|_p\gtrsim
		\lambda^{1/2}\mu^{\frac{d-1}{2}-\frac d{2p}}.
		\]
		In the same frequency regime, use the box
		\(L=\mu/\lambda^2\), \(R=\lambda^2\).  The 
		$\la$-factor is a zonal train that contributes
		\[A_\lambda(\mu/\la^2,\la^2)=\la^{d/2}(\mu/\la)^{-1/2}=\lambda^{(d+1)/2}\mu^{-1/2},\]
		while the $\mu$ factor is an envelope train that contributes
		\[
		A_\mu(\mu/\la^2,\la^2)=\lambda^{d/2}.
		\]
		Since $
		|E_{L,R}|\approx \mu\lambda^{-(d+2)}$, 
		\[
		\|uv\|_p\gtrsim
		\lambda^{\frac{2d+1}{2}-\frac{d+2}{p}}\mu^{\frac1p-\frac12}.
		\]
		If \(\mu\ge\lambda^2\), use \(L\approx1\), \(R=\lambda^2\).  Then the $\la$-factor is a zonal train that contributes $	A_\lambda(1,\lambda^2)=\lambda^{(d-1)/2},$
		while the $\mu$-factor is a beam block that contributes 
		$A_\mu(1,\lambda^2)=\lambda^{d/2}$.
	Since
		$|E_{L,R}|\approx\lambda^{-d}$,
		\[
		\|uv\|_p\gtrsim \lambda^{d-\frac12-\frac d p}.
		\]
		These are the two minimum branches in the remaining ranges. 
	\end{proof}

	\begin{proposition}[Multilinear sharpness]\label{prop:sharpness-main}
		Let $k\ge3$ and $p\le 2$. The $k$-linear estimates in Theorem~\ref{thm4} are sharp.
	\end{proposition}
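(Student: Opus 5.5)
The plan follows the bilinear case (Proposition~\ref{prop:sharp-k2-bilinear}). For each endpoint exponent and each branch of the minimum in the formula for $\E_0$, I choose one packet box $E_{L,R}$ and an admissible profile for each factor. Corollary~\ref{cor:packet-profile} then gives a common set $E$ with $|E|\approx LR^{-d/2}$, and Lemma~\ref{lem:fixed-box-lower} gives the lower bound $|E|^{1/p}\prod_j A_{\lambda_j}(L,R)$. Since $\LL^{(k)}_{p,n}$ is a supremum of $L^p$ norms over $L^2$-normalized inputs, it is log-convex in $1/p$ only from above. It therefore suffices to match the upper bound of Section~\ref{sec10} at each endpoint and on each branch, and to check that the exponents are affine in $1/p$ on every interval. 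On a fixed box all profiles are independent of $p$, so the lower bound is itself log-affine in $1/p$. The same box therefore gives sharpness on the whole interval between consecutive endpoints, provided the box choice stays the same across that interval.

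Throughout, the small frequencies $\lambda_j$, $j\le k-4$ (or $j\le k-3$), are zonal with $R\ge\lambda_j^2$ and $L\lambda_j\lesssim1$, so each contributes $\lambda_j^{d/2}$. I arrange this by taking the box short and thin compared with the scales of those frequencies. The only freedom left is the choice of $(L,R)$ adapted to the top three or four frequencies. For $d\ge3$ I use the following boxes, read off from the mixed H\"older arguments so that every H\"older inequality there becomes an equality.

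In the range $p_0\le p\le2$, the first branch uses $L=\lambda_k/\lambda_{k-1}^2$, $R=\lambda_{k-1}^2$, with $\lambda_{k-2}$ zonal, $\lambda_{k-1}$ a zonal train and $\lambda_k$ an envelope train. This is the $k$-linear analogue of the second bilinear example, and it requires $\lambda_{k-2}^2\le R$ and $L\lambda_{k-2}\lesssim1$, i.e. $\lambda_{k-2}\lambda_k\le\lambda_{k-1}^2$, which is exactly the stated regime. The second branch uses $L=1/\lambda_{k-2}$, $R=\lambda_{k-1}^2$.

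In the range $p_1\le p\le p_0$, the first branch uses $R=\lambda_k$ and $L=1/\lambda_{k-2}$, so that $\lambda_k$ is a beam and $\lambda_{k-1}$ an envelope train. At $p_2$ I take $R=\lambda_{k-1}$ and $L=\lambda_{k-2}^{-1}$. At $p=1$ the two branches use $L=1/\lambda_{k-3}$ with $R=\lambda_{k-2}^2$ and $R=\lambda_{k-1}$ respectively.

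For each choice I would compute the product explicitly and verify that it equals the displayed power. For $n=4$ the extra endpoint $p_3$ is handled the same way, with $L=1/\lambda_{k-3}$, $R=\lambda_{k-2}^2$ and $\lambda_{k-1}$ an envelope train.

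For $d=1,2$ the train subcases are not available. Here I use only beams, beam blocks, envelopes with $L\le\nu/R$, and zonals with $L\nu\lesssim1$. For $d=1$ the natural box at $p=1$ has $R=\lambda_{k-1}$ and $L$ small enough that $\lambda_{k-5},\dots,\lambda_{k-2}$ are envelopes contributing $R^{1/4}$. For $d=2$ ($n=3$), $\E_0$ should be matched by the boxes $L=1/\lambda_{k-3}$, $R=\lambda_{k-1}$ and $L=1/\lambda_{k-2}$, $R=\lambda_{k-1}$. The logarithms do not appear in Theorem~\ref{thm4}, so power matching is enough there. The log-sharpness in Theorem~\ref{thm2} comes from Proposition~\ref{prop:hyperbolic-klinear-lp}.

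The main obstacle is the bookkeeping. In each regime I must confirm that every factor's triple $(\lambda_j,L,R)$ is admissible and falls in the intended profile regime, which needs the ordering $\lambda_1\le\cdots\le\lambda_k$ and the branch conditions such as $\lambda_{k-3}\lambda_{k-1}\le\lambda_{k-2}^2$. I also have to confirm that the resulting exponent agrees with $\E_0$, including in the degenerate cases where a given branch is only the minimum in part of the regime. I would first check the $p=2$ and $p=1$ endpoints, where the exponents are simplest, and then transfer to the intermediate endpoints $p_0,p_1,p_2$.
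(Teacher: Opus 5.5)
Your framework is the paper's: one packet box per range, Corollary~\ref{cor:packet-profile}, Lemma~\ref{lem:fixed-box-lower}, and one fixed box for a whole interval, since $\log\LL^{(k)}_{p,n}$ is convex in $1/p$ and endpoint lower bounds do not interpolate. But the proof consists entirely of the box choices, and several of yours do not produce $\E_0$. The recurring error is taking $R$ equal to a single frequency.

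Consider the range $p_1\le p\le p_0$, first branch, with $a=\lambda_{k-2}$, $b=\lambda_{k-1}$, $c=\lambda_k$ and $ac\le b^2$. With your box $L=a^{-1}$, $R=c$, the $a$-factor is zonal only if $c\ge a^2$, which is not implied. If $a=b=c$ the box is $a^{-1}\times a^{-1/2}$, and the best profile for the $a$-factor is a beam of size $a^{d/4}$. Even when $c\ge a^2$, your lower bound is at most
\[
\Big(\prod_{j\le k-3}\lambda_j^{d/2}\Big)\,a^{\frac{d+1}{2}-\frac1p}\,b^{\frac12}\,c^{\frac{d-1}{2}-\frac d{2p}}.
\]
This is the first-branch value of $\E_0$ times $a^{\frac{d}{2p}-\frac{d-1}{2}}$. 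So it matches only at $p=p_1$, and at $p_0$ it falls short by $a^{(d-2)/(2(d+2))}$.

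The paper's box is $L=a^{-1}$, $R=ac$. Then $R\ge a^2$, so the $a$-factor is a genuine zonal. The $c$-factor is an endpoint envelope of length exactly $L=c/R$, contributing $(ac)^{d/4}$ instead of the beam value $c^{d/4}$.

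The same defect appears elsewhere:
\begin{itemize}
\item Your box at $p_2$, $L=\lambda_{k-2}^{-1}$, $R=\lambda_{k-1}$, loses $\lambda_{k-2}^{\frac d2(1-\frac1p)}$, i.e.\ $\lambda_{k-2}^{d/(2(d+2))}$ at $p_2$. The box $R=\lambda_{k-2}\lambda_{k-1}$ works on all of $[p_2,p_1]$.
\item Your $p=1$ second-branch box with $R=\lambda_{k-1}$ requires $\lambda_{k-1}\ge\lambda_{k-2}^2$. The paper uses $R=\lambda_{k-3}\lambda_{k-1}$.
\item For $n=3$, your boxes with $R=\lambda_{k-1}$ lose $\lambda_{k-2}^{1-1/p}$ on $[4/3,2]$. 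The paper uses $R=\lambda_{k-2}\lambda_{k-1}$ there and $R=\lambda_{k-3}\lambda_{k-1}$ on $[1,4/3]$.
\end{itemize}

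Your endpoint boxes also change within each interval. You use $L=\lambda_{k-3}^{-1}$, $R=\lambda_{k-2}^2$ at $p=1$, a different box at $p_2$, and yet another at $p_1$. By your own convexity remark, nothing is then obtained in the interiors.
\begin{itemize}
\item Your $p=1$ first-branch box loses $(\lambda_{k-3}\lambda_{k-1}/\lambda_{k-2}^2)^{1-1/p}$ for $p>1$. It also needs $\lambda_{k-3}\lambda_k\ge\lambda_{k-2}^2$, or the $\lambda_k$-factor becomes a train. The paper's single box $L=\lambda_{k-1}/\lambda_{k-2}^2$, $R=\lambda_{k-2}^2$ covers all of $[1,p_2]$.
\item Your $n=4$ box at $p_3$ loses $(\lambda_{k-3}\lambda_{k-1}/\lambda_{k-2}^2)^{1/10}$ in exactly the regime where you make $\lambda_{k-1}$ an envelope train.
\end{itemize}

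For $n=2$, a single transverse scale cannot work. With $R=\lambda_{k-1}$ and $\lambda_{k-5},\dots,\lambda_{k-2}$ all envelopes, the constraint $L\le\lambda_{k-5}/R$ caps the $p=1$ bound (beyond the filler) at $LR\le\lambda_{k-5}$. The target is $(\lambda_{k-5}\lambda_{k-4}\lambda_{k-3}\lambda_{k-2})^{1/4}$, which is larger. The missing idea is nested tubes:
\begin{itemize}
\item take $L=\lambda_{k-5}^{-1}$ and common scale $R=\lambda_{k-5}\lambda_{k-1}$;
\item realize each intermediate $\lambda_j$ as an endpoint envelope at its own coarser scale $R_j=\lambda_{k-5}\lambda_j$;
\item each such tube contains the common one, so the factor contributes $(\lambda_{k-5}\lambda_j)^{1/4}$ on $E$.
\end{itemize}
The same device handles the $\lambda_{k-2}$-factor for $n=3$, $p\le 4/3$.
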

	
	\begin{proof}
		For \(k\ge3\), use Propositions~\ref{prop:sharp-dge4}, \ref{prop:sharp-d3}, \ref{prop:sharp-d2}, and~\ref{prop:sharp-d1} below, according as \(n\ge5\), \(n=4\), \(n=3\), and \(n=2\).
	\end{proof}

	\begin{proposition}\label{prop:sharp-dge4}
		Let $n\ge5$, $k\ge3$, and $p\le 2$.  The $k$-linear estimates in Theorem~\ref{thm4} are sharp.
	\end{proposition}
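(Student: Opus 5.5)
Since the upper bounds are proved in Section~\ref{sec10} and \ref{sec8}, sharpness amounts to showing $\LL^{(k)}_{p,n}(\lambda_1,\dots,\lambda_k)\gtrsim \E(n,k,p;\lambda_1,\dots,\lambda_k)$ for $d=n-1\ge4$. The upper bound is piecewise log-linear in $1/p$ with breakpoints $p_2,p_1,p_0$, and on each segment it is a minimum of at most two products. It therefore suffices to produce, for each branch, one fixed box $E_{L,R}$ and one choice of packet types whose value of
\[
|E_{L,R}|^{1/p}\prod_j A_{\lambda_j}(L,R)
\]
matches that branch for all $p$ in the corresponding range. This is exactly what Corollary~\ref{cor:packet-profile} and Lemma~\ref{lem:fixed-box-lower} provide. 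Since $d\ge3$, every triple is admissible, so a common set $E$ exists for all $k$ factors. Because each candidate lower bound is itself log-linear in $1/p$, checking the exponents at the endpoints of each range will be enough.

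The recipe, extrapolated from Proposition~\ref{prop:sharp-k2-bilinear}, is as follows. The low factors $j\le k-4$ (or $j\le k-3$) should contribute $\lambda_j^{d/2}$. This requires $R\ge\lambda_j^2$ and $L\lambda_j\lesssim1$, so we take $L\le\lambda_{k-3}^{-1}$ or $\lambda_{k-2}^{-1}$; these factors are single zonal packets.

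For the two ranges $p_1\le p\le 2$ we take three active factors $\lambda_{k-2},\lambda_{k-1},\lambda_k$ and mimic the bilinear boxes with $\lambda_{k-2}$ added.
\begin{itemize}
\item For the first branch when $p_0\le p\le2$, take $L=\lambda_k/\lambda_{k-1}^2$ and $R=\lambda_{k-1}^2$. Then $\lambda_{k-2}$ is zonal (we need $L\lambda_{k-2}\lesssim1$, which is the condition $\lambda_{k-2}\lambda_k\le\lambda_{k-1}^2$), $\lambda_{k-1}$ is a zonal train, and $\lambda_k$ is an envelope train.
\item For the first branch when $p_1\le p\le p_0$, take $L=\lambda_{k-2}^{-1}$ and $R=\lambda_k$, with an envelope train at $\lambda_{k-1}$ and a beam at $\lambda_k$.
\item For the second branches, take $R=\lambda_{k-1}^2$ and $L=\lambda_{k-2}^{-1}$, with a zonal train at $\lambda_{k-1}$ and a beam block at $\lambda_k$.
\end{itemize}

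In the range $p_2\le p\le p_1$ the target is $\lambda_{k-2}^{d-\frac{d+2}{2p}}\lambda_{k-1}^{\frac d2-\frac d{2p}}$. We take $R=\lambda_{k-1}$ and $L=\lambda_{k-2}^{-1}$. Then $\lambda_{k-2}$ is an envelope or zonal train, $\lambda_{k-1}$ is a beam, and $\lambda_k$ is a beam block. The exponent bookkeeping is $|E|^{1/p}=(\lambda_{k-2}^{-1}\lambda_{k-1}^{-d/2})^{1/p}$ multiplied by the product of the $A$'s. For $p\le p_2$, four factors are active and the box is pushed down one index: the $\lambda_{k-3}$ factor becomes zonal or a zonal train, with $R$ chosen among $\lambda_{k-2}^2$ and $\lambda_{k-1}$ and $L$ among $\lambda_{k-3}^{-1}$ and $\lambda_{k-1}/\lambda_{k-2}^2$, giving the two branches.

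For each branch I would tabulate $(L,R)$ and the packet type for each factor, then verify the resulting exponents against the displayed formulas at the two endpoints of the range. The main obstacle is this bookkeeping. One must guess the correct $(L,R)$ for each of the seven branches and confirm that the regime conditions for each packet type (e.g.\ $\lambda_{k-2}\le R\le\lambda_{k-2}^2$, or $L>\nu/R$) hold under the stated ordering, or under the side condition that determines which branch of the minimum is active. My approach is to first match the $\lambda_k$ and $\lambda_{k-1}$ exponents, which pins down $R$, and then read off $L$ from the exponent of $\lambda_{k-2}$. If a guessed box fails, I would compare with the H\"older chain used for that branch in Section~\ref{sec10}, where the mixed norms indicate which factor is $L^\infty_z$-concentrated (zonal or envelope) and which is $L^2_z$-spread (beam block).
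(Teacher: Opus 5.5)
Your framework matches the paper's: one box $E_{L,R}$ per branch, a common active set from Corollary~\ref{cor:packet-profile}, Lemma~\ref{lem:fixed-box-lower}, and a comparison of exponents that are affine in $1/p$. Four of your boxes are exactly the paper's: the first branch of $p_0\le p\le 2$, both second branches on $p_1\le p\le 2$ ($R=\lambda_{k-1}^2$, $L=\lambda_{k-2}^{-1}$), and the first branch of $p\le p_2$ ($R=\lambda_{k-2}^2$, $L=\lambda_{k-1}/\lambda_{k-2}^2$). The other three boxes are wrong, and for one common reason.

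\begin{itemize}
\item On $p_2\le p\le p_1$, take your box $L=\lambda_{k-2}^{-1}$, $R=\lambda_{k-1}$. Even in the favorable case $\lambda_{k-1}\ge\lambda_{k-2}^2$, where $\lambda_{k-2}$ is zonal, it gives
\[
\prod_{j\le k-3}\lambda_j^{d/2}\cdot\lambda_{k-2}^{\frac d2-\frac1p}\,\lambda_{k-1}^{\frac d2-\frac d{2p}}.
\]
This falls short of the target by $\lambda_{k-2}^{\frac d2(1-\frac1p)}$. (Since $L\lambda_{k-2}=1$, that factor is never a zonal train.)
\item For the first branch on $p_1\le p\le p_0$, your box $L=\lambda_{k-2}^{-1}$, $R=\lambda_k$ gives at best $\lambda_{k-2}^{\frac{d+1}2-\frac1p}\lambda_{k-1}^{1/2}\lambda_k^{\frac{d-1}2-\frac d{2p}}$. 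The shortfall is $\lambda_{k-2}^{\frac{d-1}2-\frac d{2p}}$, a positive power for every $p>p_1$.
\item For the second branch on $1\le p\le p_2$, your candidates $R\in\{\lambda_{k-1},\lambda_{k-2}^2\}$ with $L=\lambda_{k-3}^{-1}$ fall short by $\lambda_{k-3}^{\frac d2(1-\frac1p)}$ and $(\lambda_{k-3}\lambda_{k-1}/\lambda_{k-2}^2)^{\frac d2(1-\frac1p)}$ respectively. They are sharp only at $p=1$.
\end{itemize}

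The missing idea is this. Once the lowest active factor is zonal with $L=\lambda_{\mathrm{low}}^{-1}$, a higher factor of frequency $\nu$ should be an endpoint envelope of that same length, not a beam. That means $r=\nu/R=L$, i.e.\ $R=\lambda_{\mathrm{low}}\nu$. The paper's boxes all have $L=\lambda_{\mathrm{low}}^{-1}$:
\begin{itemize}
\item On $[p_2,p_1]$ it uses $R=\lambda_{k-2}\lambda_{k-1}$. Then $\lambda_{k-1}$ and $\lambda_k$ each gain $\lambda_{k-2}^{d/4}$ over your beams, while $|E|^{1/p}$ loses only $\lambda_{k-2}^{-d/(2p)}$. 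This is exactly your deficit.
\item For the first branch on $[p_1,p_0]$ it uses $R=\lambda_{k-2}\lambda_k$. Here $\lambda_{k-1}$ is an envelope train of $\approx\lambda_k/\lambda_{k-1}$ packets and $\lambda_k$ is an envelope.
\item For the second branch on $[1,p_2]$ it uses $R=\lambda_{k-3}\lambda_{k-1}$. The zonal train is at $\lambda_{k-2}$, not at $\lambda_{k-3}$.
\end{itemize}

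Your fallback rule would not find these boxes. In your failing boxes on $[p_2,p_1]$ and $[p_1,p_0]$, the $\lambda_{k-1}$ and $\lambda_k$ exponents already agree with the target, so matching them does not determine $R$. Moreover, with $R=\lambda_{k-1}$ fixed, no choice of $L$ reaches the target on $(p_2,p_1]$. The best power of $\lambda_{k-2}$ is attained at $L\approx1$, with $\lambda_{k-2}$ a zonal train, and it is at most $\lambda_{k-2}^{(d-1)/2}$. The target requires $\lambda_{k-2}^{d-\frac{d+2}{2p}}$, which is strictly larger for $p>p_2$.
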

	\begin{proof}
		Let $d=n-1.$
		Then \(d\ge 4\).  The lower-frequency factors that are not named explicitly below are chosen to be zonal functions from Lemma \ref{lem:coherent-short-packets}, with scale \(r=\lambda_j^{-1}\), all centered at the same point as the packet box. The boxes used below are contained in the region where these zonal functions have size \(\gtrsim \lambda_j^{d/2}\). Hence such filler factors contribute the product of the corresponding powers \(\lambda_j^{d/2}\).
		
		We first consider
		\[
		1\le p\le p_2=\frac{d+2}{d+1}.
		\]
		Set
		\[
		a=\lambda_{k-3},\qquad b=\lambda_{k-2},\qquad
		c=\lambda_{k-1},\qquad e=\lambda_k .
		\]
		The filler factors contribute \(\prod_{j=1}^{k-4}\lambda_j^{d/2}\).
		
		Assume first that $ac\le b^2.$
		Choose
		$
		L=c/b^2,\  R=b^2.$
		Then \(L\lesssim 1\), and
		\[
		|E_{L,R}|\approx \frac{c}{b^2}(b^2)^{-d/2}
		= c b^{-(d+2)}.
		\]
		We now list the sizes of the four active factors on \(E_{L,R}\).
		
		The \(a\)-factor is zonal and $A_a(L,R)=a^{d/2}.$ 
		
		The \(b\)-factor is a zonal train and $A_b(L,R)=b^{d/2}\left(\frac{c}{b}\right)^{-1/2}
		=b^{(d+1)/2}c^{-1/2}.$
		
		The \(c\)-factor is either a beam block or an envelope, and in both cases $	A_c(L,R)=R^{d/4}=b^{d/2}.$
		
		The \(e\)-factor is again a beam block or an envelope, and $A_e(L,R)=R^{d/4}=b^{d/2}.$
		
		Therefore, Lemma~\ref{lem:fixed-box-lower} gives
		\[
		\begin{aligned}
			L^{(k)}_{p,n}(\lambda_1,\ldots,\lambda_k)
			&\gtrsim
			\left(\prod_{j=1}^{k-4}\lambda_j^{d/2}\right)
			a^{d/2} b^{(3d+1)/2} c^{-1/2}
			\left(c b^{-(d+2)}\right)^{1/p}  \\
			&=
			\left(\prod_{j=1}^{k-4}\lambda_j^{d/2}\right)
			a^{d/2}
			b^{\frac{3d+1}{2}-\frac{d+2}{p}}
			c^{\frac1p-\frac12}.
		\end{aligned}
		\]
		This is the first branch in the first range of Theorem \ref{thm2}.
		
		Assume next that $ac\ge b^2.$
		Choose
		$
		L=a^{-1},\ R=ac .
		$
		Then
		\[
		|E_{L,R}|\approx a^{-1}(ac)^{-d/2}
		=a^{-1-d/2}c^{-d/2}.
		\]
		
		The \(a\)-factor is zonal and $
		A_a(L,R)=a^{d/2}.$
		
		The \(b\)-factor is a zonal train and 
		$
		A_b(L,R)=b^{d/2}\left(\frac{b}{a}\right)^{-1/2}
		=a^{1/2}b^{(d-1)/2}.
		$
		
		The \(c\)-factor is an envelope and 
		$
		A_c(L,R)=R^{d/4}=(ac)^{d/4}.
		$
		
		The \(e\)-factor is a beam block or an envelope, and 
		$
		A_e(L,R)=R^{d/4}=(ac)^{d/4}.
		$
		
		Therefore,
		Lemma~\ref{lem:fixed-box-lower} gives
		\[
		\begin{aligned}
			L^{(k)}_{p,n}(\lambda_1,\ldots,\lambda_k)
			&\gtrsim
			\left(\prod_{j=1}^{k-4}\lambda_j^{d/2}\right)
			a^{(2d+1)/2}b^{(d-1)/2}c^{d/2}
			\left(a^{-1-d/2}c^{-d/2}\right)^{1/p} \\
			&=
			\left(\prod_{j=1}^{k-4}\lambda_j^{d/2}\right)
			a^{\frac{2d+1}{2}-\frac{d+2}{2p}}
			b^{\frac{d-1}{2}}
			c^{\frac d2-\frac d{2p}}.
		\end{aligned}
		\]
		This is the second branch in the first range of Theorem \ref{thm2}. Moreover, the ratio of the first branch to the second branch is
		\[
		\left(\frac{b^2}{ac}\right)^{{(d+1)}/{2}-{(d+2)}/{(2p)}}.
		\]
		Since \(1\le p\le (d+2)/(d+1)\), the exponent is nonpositive, so the two subcases above select the smaller of the two branches.
		
		We now turn to the remaining ranges. Put
		\[
		a=\lambda_{k-2},\qquad b=\lambda_{k-1},\qquad c=\lambda_k .
		\]
		The filler factors contribute
		$
		\prod_{j=1}^{k-3}\lambda_j^{d/2}.
		$
		
		First suppose
		\[
		p_2\le p\le p_1=\frac{d}{d-1}.
		\]
		Choose
		$
		L=a^{-1},\  R=ab .
		$
		Then
		\[
		|E_{L,R}|\approx a^{-1}(ab)^{-d/2}
		=a^{-1-d/2}b^{-d/2}.
		\]
		
		The \(a\)-factor  is zonal and $A_a(L,R)=a^{d/2}.$
		
		The \(b\)-factor is an envelope and 
		$
		A_b(L,R)=R^{d/4}=(ab)^{d/4}.
		$
		
		The \(c\)-factor is a beam block or an envelope, and in either case
		$A_c(L,R)=R^{d/4}=(ab)^{d/4}.$

		Hence, 
		Lemma~\ref{lem:fixed-box-lower} yields
		\[
		\begin{aligned}
			L^{(k)}_{p,n}(\lambda_1,\ldots,\lambda_k)
			&\gtrsim
			\left(\prod_{j=1}^{k-3}\lambda_j^{d/2}\right)
			a^d b^{d/2}
			\left(a^{-1-d/2}b^{-d/2}\right)^{1/p} \\
			&=
			\left(\prod_{j=1}^{k-3}\lambda_j^{d/2}\right)
			a^{d-\frac{d+2}{2p}}
			b^{\frac d2-\frac d{2p}}.
		\end{aligned}
		\]
		This is precisely the branch in Theorem \ref{thm2} for \(p_2\le p\le p_1\).
		
		Next assume
		\[
		p_1\le p\le p_0=\frac{d+2}{d}.
		\]
		There are two frequency regimes.
		
		If $ac\le b^2,$
		choose
		$
		L=a^{-1},\  R=ac .$
		Then
		\[
		|E_{L,R}|\approx a^{-1}(ac)^{-d/2}
		=a^{-1-d/2}c^{-d/2}.
		\]
		The \(a\)-factor is zonal and  $A_a(L,R)=a^{d/2}.$
		The \(b\)-factor  is an envelope train and contributes
		\[
		A_b(L,R)=R^{d/4}\left(\frac{LR}{b}\right)^{-1/2}
		=(ac)^{d/4}\left(\frac{c}{b}\right)^{-1/2}
		=a^{d/4}b^{1/2}c^{d/4-1/2}.
		\]
		The \(c\)-factor  is an envelope and contributes
	$	A_c(L,R)=R^{d/4}=(ac)^{d/4}.$
	
		Consequently, Lemma \ref{lem:fixed-box-lower} gives
		\[
		\begin{aligned}
			L^{(k)}_{p,n}(\lambda_1,\ldots,\lambda_k)
			&\gtrsim
			\left(\prod_{j=1}^{k-3}\lambda_j^{d/2}\right)
			a^d b^{1/2}c^{(d-1)/2}
			\left(a^{-1-d/2}c^{-d/2}\right)^{1/p} \\
			&=
			\left(\prod_{j=1}^{k-3}\lambda_j^{d/2}\right)
			a^{d-\frac{d+2}{2p}}
			b^{1/2}
			c^{\frac{d-1}{2}-\frac d{2p}}.
		\end{aligned}
		\]
		This is the first branch in the range \(p_1\le p\le p_0\).
		
		If instead $ac\ge b^2,$
		choose
		$
		L=a^{-1},\  R=b^2.$
		Then
		\[
		|E_{L,R}|\approx a^{-1}b^{-d}.
		\]
		The \(a\)-factor is zonal and  $A_a(L,R)=a^{d/2}.$
		The \(b\)-factor  is a zonal train and contributes
		\[
		A_b(L,R)=b^{d/2}\left(\frac{b}{a}\right)^{-1/2}
		=a^{1/2}b^{(d-1)/2}.
		\]
		The \(c\)-factor is a beam block or an envelope, and $A_c(L,R)=R^{d/4}=b^{d/2}.$
		Thus
		 Lemma \ref{lem:fixed-box-lower} yields
		\[
		\begin{aligned}
			L^{(k)}_{p,n}(\lambda_1,\ldots,\lambda_k)
			&\gtrsim
			\left(\prod_{j=1}^{k-3}\lambda_j^{d/2}\right)
			a^{(d+1)/2}b^{d-1/2}
			\left(a^{-1}b^{-d}\right)^{1/p} \\
			&=
			\left(\prod_{j=1}^{k-3}\lambda_j^{d/2}\right)
			a^{\frac{d+1}{2}-\frac1p}
			b^{d-\frac12-\frac d p}.
		\end{aligned}
		\]
		This is the second branch in the range \(p_1\le p\le p_0\). The ratio of the first branch to the second branch is
		\[
		\left(\frac{ac}{b^2}\right)^{{(d-1)}/{2}-{d}/{(2p)}}.
		\]
		Since \(p\ge p_1=d/(d-1)\), the exponent is nonnegative, and the two subcases again select the smaller branch.
		
		It remains to prove the lower bounds for
		\[
		p_0\le p\le 2.
		\]
		We keep the notation \(a=\lambda_{k-2}\), \(b=\lambda_{k-1}\), and \(c=\lambda_k\), and the same filler contribution \(\prod_{j=1}^{k-3}\lambda_j^{d/2}\).
		
		If $ac\le b^2,$
		choose
		$
		L=c/b^2,\ R=b^2 .
		$
		Then
		\[
		|E_{L,R}|\approx c b^{-(d+2)}.
		\]
		The \(a\)-factor is zonal and
	$A_a(L,R)=a^{d/2}.$
		The \(b\)-factor is a zonal train,  and
		\[
		A_b(L,R)=b^{d/2}\left(\frac{c}{b}\right)^{-1/2}
		=b^{(d+1)/2}c^{-1/2}.
		\]
		The \(c\)-factor is a beam block or an envelope, and contributes $A_c(L,R)=R^{d/4}=b^{d/2}.$
		
		Hence,
		Lemma \ref{lem:fixed-box-lower} gives
		\[
		\begin{aligned}
			L^{(k)}_{p,n}(\lambda_1,\ldots,\lambda_k)
			&\gtrsim
			\left(\prod_{j=1}^{k-3}\lambda_j^{d/2}\right)
			a^{d/2} b^{(2d+1)/2} c^{-1/2}
			\left(c b^{-(d+2)}\right)^{1/p} \\
			&=
			\left(\prod_{j=1}^{k-3}\lambda_j^{d/2}\right)
			a^{d/2}
			b^{\frac{2d+1}{2}-\frac{d+2}{p}}
			c^{\frac1p-\frac12}.
		\end{aligned}
		\]
		This is the first branch in the range \(p_0\le p\le 2\).
		
		If $ac\ge b^2,$
		we use the same box as in the second subcase of the preceding range, namely
		$
		L=a^{-1},\  R=b^2 .
		$
		Then 
\[|E_{L,R}|\approx a^{-1}b^{-d}.\]
	The $a$-factor is zonal and $A_a(L,R)=a^{d/2}$. The $b$-factor is a zonal train and $$A_b(L,R)=b^{d/2}(b/a)^{-1/2}=a^{1/2}b^{(d-1)/2}.$$ 
	The $c$-factor is a beam block or an envelope, and 
		$A_c(L,R)=b^{d/2}$.
	Therefore Lemma \ref{lem:fixed-box-lower} gives
		\[
		\begin{aligned}
			L^{(k)}_{p,n}(\lambda_1,\ldots,\lambda_k)
			&\gtrsim
			\left(\prod_{j=1}^{k-3}\lambda_j^{d/2}\right)
			a^{(d+1)/2}b^{d-1/2}
			\left(a^{-1}b^{-d}\right)^{1/p} \\
			&=
			\left(\prod_{j=1}^{k-3}\lambda_j^{d/2}\right)
			a^{\frac{d+1}{2}-\frac1p}
			b^{d-\frac12-\frac d p}.
		\end{aligned}
		\]
		This is the second branch in the range \(p_0\le p\le 2\). In this range, the ratio of the first branch to the second branch is
		\[
		\left(\frac{ac}{b^2}\right)^{{1}/{p}-{1}/{2}},
		\]
		and the exponent is nonnegative because \(p\le 2\). Hence the two subcases select the smaller branch.
		
		Combining the four ranges, and recalling that \(d=n-1\), we obtain exactly the powers in Theorem \ref{thm2} for \(n\ge 5\). This proves the proposition.
	\end{proof}

	\begin{proposition}\label{prop:sharp-d3}
		Let $n=4$, $k\ge3$, and $p\le 2$. The $k$-linear estimates in Theorem~\ref{thm4} are sharp.
	\end{proposition}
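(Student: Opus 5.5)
The plan is to repeat the box-by-box strategy used in the proof of Proposition~\ref{prop:sharp-dge4}, now with $d=n-1=3$. Since $d\ge3$, Corollary~\ref{cor:packet-profile} shows that every triple $(\nu,L,R)$ is admissible, so all six packet profiles, including both train profiles, are available. For each $p$-range and each frequency regime we fix one box $E_{L,R}$. On it we place zonal filler factors for the unnamed low frequencies, which contribute $\lambda_j^{3/2}$, and a prescribed profile for each active factor. We then apply Lemma~\ref{lem:fixed-box-lower} with $|E_{L,R}|\approx LR^{-3/2}$. As before, whenever two branches occur, the ratio of the branches is a power of $ac/b^2$ or $b^2/(ac)$, and its sign selects the minimum.

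\textbf{Ranges that reuse the $d\ge4$ boxes.} Most ranges are covered by specializing the $d\ge4$ constructions to $d=3$.
\begin{itemize}
\item[(i)] In $\frac{10}{9}\le p\le\frac54$ with $a=\lambda_{k-3}$, $b=\lambda_{k-2}$, $c=\lambda_{k-1}$, $e=\lambda_k$:
\begin{itemize}
\item[] if $ac\le b^2$, the box $L=c/b^2$, $R=b^2$ (zonal, zonal train, and two $R^{3/4}$ factors) gives $a^{3/2}b^{5-5/p}c^{1/p-1/2}$;
\item[] if $ac\ge b^2$, the box $L=a^{-1}$, $R=ac$ gives $a^{\frac72-\frac5{2p}}b\,c^{\frac32-\frac3{2p}}$.
\end{itemize}
These are exactly the two branches of that range.
\item[(ii)] With $a=\lambda_{k-2}$, $b=\lambda_{k-1}$, $c=\lambda_k$:
\begin{itemize}
\item[] in $\frac54\le p\le\frac32$, the box $L=a^{-1}$, $R=ab$ gives $a^{3-\frac5{2p}}b^{\frac32-\frac3{2p}}$;
\item[] in $\frac32\le p\le\frac53$, the boxes $(a^{-1},ac)$ and $(a^{-1},b^2)$ give $a^{3-\frac5{2p}}b^{1/2}c^{1-\frac3{2p}}$ and $a^{2-\frac1p}b^{\frac52-\frac3p}$;
\item[] in $\frac53\le p\le2$, the boxes $(c/b^2,b^2)$ and $(a^{-1},b^2)$ give $a^{3/2}b^{\frac72-\frac5p}c^{\frac1p-\frac12}$ and $a^{2-\frac1p}b^{\frac52-\frac3p}$.
\end{itemize}
\end{itemize}
In each case the computation is the one already written in Proposition~\ref{prop:sharp-dge4}, specialized to $d=3$. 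The branch-ratio exponents have the same signs as there, because the $n=4$ thresholds $p_2=\frac54$, $p_1=\frac32$ and $p_0=\frac53$ are the $d=3$ values of the general ones.

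\textbf{The new range $1\le p\le\frac{10}{9}$.} The one genuinely new piece is the first term $a^{\frac{15}4-\frac5{2p}}b^{1/2}c^{\frac74-\frac3{2p}}$ in this range, in the regime $ac\le b^2$. Its $b$-exponent is $\frac12$, independent of $p$. This suggests making the $b$-factor an envelope train, whose size is $R^{3/4}(LR/b)^{-1/2}=b^{1/2}R^{1/4}L^{-1/2}$. I would take $L=a^{-1}$ and $R=ac$, and assign the factors as follows:
\begin{itemize}
\item[] the $a$-factor is zonal, of size $a^{3/2}$;
\item[] the $b$-factor is an envelope train of size $a^{1/2}b^{1/2}(ac)^{1/4}$;
\item[] the $c$-factor and the $e$-factor are envelopes or beam blocks, each of size $(ac)^{3/4}$.
\end{itemize}
The required ranges hold:
\begin{itemize}
\item[] $b\le ac\le b^2$, by the regime assumption;
\item[] $L>b/R$, since this reduces to $c>b$;
\item[] $R\ge a^2$;
\item[] $e\ge ac$ or $ac\le e\le e^2$.
\end{itemize}
Multiplying the four sizes by $|E|^{1/p}=a^{-\frac5{2p}}c^{-\frac3{2p}}$ gives exactly $a^{15/4-5/(2p)}b^{1/2}c^{7/4-3/(2p)}$. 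For $ac\ge b^2$ I reuse the box $(a^{-1},ac)$ from range (i), which gives the second term. The ratio of the first term to the second is $(ac/b^2)^{1/4}$, so the two subcases again pick out the minimum.

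\textbf{Main obstacle.} The main difficulty is identifying this new configuration. Plain zonal or zonal-train $a$-factors cannot produce the exponent $\frac{15}4$; the $a$-power has to be assembled from $R=ac$ together with the train penalty $L^{-1/2}$. Beyond that, I would carefully verify the admissibility and common-set requirements of Lemma~\ref{lem:common-refinement} for the four active profiles simultaneously. This is automatic for $d=3$, but the regime inequalities must be checked so that each factor genuinely lies in the claimed profile class.
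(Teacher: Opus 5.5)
Your proposal is correct and is essentially the paper's proof: it uses the same five ranges and the same boxes, namely $(a^{-1},ac)$ with a zonal $a$-factor, an envelope-train $b$-factor and envelope/beam-block $c$- and $e$-factors for the new first branch when $1\le p\le\frac{10}{9}$, and the $d\ge4$ boxes specialized to $d=3$ elsewhere, with the same branch-ratio selection. The only slip is cosmetic: the $e$-factor condition should read $ac\le e$ (beam block) or $e\le ac\le e^2$ with $L=a^{-1}\le e/(ac)$, i.e.\ $c\le e$ (envelope), and both hold automatically.
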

	\begin{proof}
		Here \(d=3\).  As in the preceding proof, unnamed lower-frequency factors are chosen to be zonal functions from Lemma~\ref{lem:coherent-short-packets}, centered at the same packet box.  
		
		First assume
		\[
		1\le p\le \frac{10}{9}.
		\]
		Set
		\[
		a=\lambda_{k-3},\qquad b=\lambda_{k-2},\qquad c=\lambda_{k-1},\qquad e=\lambda_k .
		\]
		The filler factors contribute \(\prod_{j=1}^{k-4}\lambda_j^{3/2}\).
		
		Suppose first that \(ac\le b^2\).  Choose
		$
		L=a^{-1},\ R=ac .
		$
		Then
		\[
		|E_{L,R}|\approx a^{-1}(ac)^{-3/2}=a^{-5/2}c^{-3/2}.
		\]
		The \(a\)-factor is zonal and contributes \(A_a(L,R)=a^{3/2}\).  The \(b\)-factor is an envelope train and contributes
		\[
		A_b(L,R)=(ac)^{3/4}\left(\frac cb\right)^{-1/2}
		=a^{3/4}b^{1/2}c^{1/4}.
		\]
		The \(c\)- and \(e\)-factors are beam blocks or envelopes, and each contributes 
		\[A_c(L,R)=A_e(L,R)=(ac)^{3/4}\]  Hence
		\[
		\begin{aligned}
			\LL_{p,4}^{(k)}(\lambda_1,\ldots,\lambda_k)
			&\gtrsim
			\left(\prod_{j=1}^{k-4}\lambda_j^{3/2}\right)
			a^{3/2}\big(a^{3/4}b^{1/2}c^{1/4}\big)(ac)^{3/4}(ac)^{3/4}
			\left(a^{-5/2}c^{-3/2}\right)^{1/p} \\
			&=
			\left(\prod_{j=1}^{k-4}\lambda_j^{3/2}\right)
			a^{\frac{15}{4}-\frac{5}{2p}}
			b^{1/2}
			c^{\frac74-\frac{3}{2p}} .
		\end{aligned}
		\]
		This is the first branch in the first range of Theorem~\ref{thm2}.
		
		Suppose next that \(ac\ge b^2\).  Use the same box \(L=a^{-1}\), \(R=ac\).  The \(a\)-, \(c\)-, and \(e\)-factors are as above.  Now the \(b\)-factor is a zonal train and contributes
		\[
		A_b(L,R)=b^{3/2}\left(\frac ba\right)^{-1/2}=a^{1/2}b .
		\]
		Thus
		\[
		\begin{aligned}
			\LL_{p,4}^{(k)}(\lambda_1,\ldots,\lambda_k)
			&\gtrsim
			\left(\prod_{j=1}^{k-4}\lambda_j^{3/2}\right)
			a^{3/2}(a^{1/2}b)(ac)^{3/4}(ac)^{3/4}
			\left(a^{-5/2}c^{-3/2}\right)^{1/p} \\
			&=
			\left(\prod_{j=1}^{k-4}\lambda_j^{3/2}\right)
			a^{\frac72-\frac{5}{2p}}
			b
			c^{\frac32-\frac{3}{2p}} .
		\end{aligned}
		\]
		This is the second branch in the first range.  The ratio of the first branch to the second branch is
		\[
		\left(\frac{ac}{b^2}\right)^{1/4},
		\]
		so the two subcases select the smaller branch.
		
		We next assume
		\[
		\frac{10}{9}\le p\le \frac54,
		\]
		and keep the same notation \(a,b,c,e\).  If \(ac\le b^2\), choose
		$
		L=c/b^2,\ R=b^2 .
		$
		Then \(L\lesssim1\) and
		\[
		|E_{L,R}|\approx \frac{c}{b^2}(b^2)^{-3/2}=cb^{-5}.
		\]
		The \(a\)-factor is zonal and contributes \(A_a(L,R)=a^{3/2}\).  The \(b\)-factor is a zonal train and contributes
		\[
		A_b(L,R)=b^{3/2}\left(\frac cb\right)^{-1/2}=b^2c^{-1/2}.
		\]
		The \(c\)- and \(e\)-factors are beam blocks or envelopes, and each contributes 
		\[A_c(L,R)=A_e(L,R)=b^{3/2}.\] Therefore
		\[
		\begin{aligned}
			\LL_{p,4}^{(k)}(\lambda_1,\ldots,\lambda_k)
			&\gtrsim
			\left(\prod_{j=1}^{k-4}\lambda_j^{3/2}\right)
			a^{3/2}(b^2c^{-1/2})b^{3/2}b^{3/2}
			\left(cb^{-5}\right)^{1/p} \\
			&=
			\left(\prod_{j=1}^{k-4}\lambda_j^{3/2}\right)
			a^{3/2}
			b^{5-\frac5p}
			c^{\frac1p-\frac12} .
		\end{aligned}
		\]
		This is the first branch in the second range.
		
		If \(ac\ge b^2\), use again \(L=a^{-1}\), \(R=ac\). Then 
		\[|E_{L,R}|\approx a^{-5/2}c^{-3/2}. \]
		The $a$-factor is zonal and $A_a(L,R)=a^{3/2}$. The $b$-factor is a zonal train and $A_b(L,R)=a^{1/2}b$. The $c$-factor is an envelope and $A_c(L,R)=(ac)^{3/4}$. The $e$-factor is a beam block or an envelope, and $A_e(L,R)=(ac)^{3/4}$.
	
		 Hence
		\[
		\begin{aligned}
			\LL_{p,4}^{(k)}(\lambda_1,\ldots,\lambda_k)
			&\gtrsim
			\left(\prod_{j=1}^{k-4}\lambda_j^{3/2}\right)
			a^{\frac72}bc^{3/2}
			\left(a^{-5/2}c^{-3/2}\right)^{1/p} \\
			&=
			\left(\prod_{j=1}^{k-4}\lambda_j^{3/2}\right)
			a^{\frac72-\frac{5}{2p}}
			b
			c^{\frac32-\frac{3}{2p}} .
		\end{aligned}
		\]
		This is the second branch in the second range.  The ratio of the first branch to the second branch is
		\[
		\left(\frac{ac}{b^2}\right)^{\frac{5}{2p}-2}.
		\]
		Since \(10/9\le p\le5/4\), the exponent \(5/(2p)-2\) is nonnegative.  Thus the two subcases again select the smaller branch.
		
		For the remaining three ranges, set
		\[
		a=\lambda_{k-2},\qquad b=\lambda_{k-1},\qquad c=\lambda_k,
		\]
		and let the lower-frequency factors contribute \(\prod_{j=1}^{k-3}\lambda_j^{3/2}\).
		
		Assume first that
		\[
		\frac54\le p\le \frac32.
		\]
		Choose $L=a^{-1},\  R=ab .$
		Then
		\[
		|E_{L,R}|\approx a^{-1}(ab)^{-3/2}=a^{-5/2}b^{-3/2}.
		\]
		The \(a\)-factor is zonal and \(A_a(L,R)=a^{3/2}\).  The \(b\)- and \(c\)-factors are beam blocks or envelopes, and each contributes 
		\[A_b(L,R)=A_c(L,R)=(ab)^{3/4}.\]  Therefore
		\[
		\begin{aligned}
			\LL_{p,4}^{(k)}(\lambda_1,\ldots,\lambda_k)
			&\gtrsim
			\left(\prod_{j=1}^{k-3}\lambda_j^{3/2}\right)
			a^{3/2}(ab)^{3/4}(ab)^{3/4}
			\left(a^{-5/2}b^{-3/2}\right)^{1/p} \\
			&=
			\left(\prod_{j=1}^{k-3}\lambda_j^{3/2}\right)
			a^{3-\frac{5}{2p}}
			b^{\frac32-\frac{3}{2p}} .
		\end{aligned}
		\]
		This is the branch in the range \(5/4\le p\le3/2\).
		
		Next suppose that
		\[
		\frac32\le p\le \frac53.
		\]
		If \(ac\le b^2\), choose \(L=a^{-1}\), \(R=ac\).  Then $$|E_{L,R}|\approx a^{-5/2}c^{-3/2}.$$  The \(a\)-factor is zonal and contributes \(A_a(L,R)=a^{3/2}\). The \(b\)-factor is an envelope train and contributes
		\[
		A_b(L,R)=(ac)^{3/4}\left(\frac cb\right)^{-1/2}
		=a^{3/4}b^{1/2}c^{1/4}.
		\]
		The \(c\)-factor is an envelope and contributes \(A_c(L,R)=(ac)^{3/4}\).  Hence
		\[
		\begin{aligned}
			\LL_{p,4}^{(k)}(\lambda_1,\ldots,\lambda_k)
			&\gtrsim
			\left(\prod_{j=1}^{k-3}\lambda_j^{3/2}\right)
			a^{3/2}\big(a^{3/4}b^{1/2}c^{1/4}\big)(ac)^{3/4}
			\left(a^{-5/2}c^{-3/2}\right)^{1/p} \\
			&=
			\left(\prod_{j=1}^{k-3}\lambda_j^{3/2}\right)
			a^{3-\frac{5}{2p}}
			b^{1/2}
			c^{1-\frac{3}{2p}} .
		\end{aligned}
		\]
		This is the first branch in the range \(3/2\le p\le5/3\).
		
		If \(ac\ge b^2\), choose \(L=a^{-1}\), \(R=b^2\).  Then
		\[
		|E_{L,R}|\approx a^{-1}b^{-3}.
		\]
		The \(a\)-factor is zonal and contributes \(A_a(L,R)=a^{3/2}\). The \(b\)-factor is a zonal train and contributes
		\[
		A_b(L,R)=b^{3/2}\left(\frac ba\right)^{-1/2}=a^{1/2}b.
		\]
		The \(c\)-factor is a beam block or an envelope, and contributes \(A_c(L,R)=b^{3/2}\).  Therefore
		\[
		\begin{aligned}
			\LL_{p,4}^{(k)}(\lambda_1,\ldots,\lambda_k)
			&\gtrsim
			\left(\prod_{j=1}^{k-3}\lambda_j^{3/2}\right)
			a^{3/2}(a^{1/2}b)b^{3/2}
			\left(a^{-1}b^{-3}\right)^{1/p} \\
			&=
			\left(\prod_{j=1}^{k-3}\lambda_j^{3/2}\right)
			a^{2-\frac1p}
			b^{\frac52-\frac3p} .
		\end{aligned}
		\]
		This is the second branch in the range \(3/2\le p\le5/3\).  The ratio of the first branch to the second branch is
		\[
		\left(\frac{ac}{b^2}\right)^{1-\frac{3}{2p}}.
		\]
		Since \(p\ge3/2\), the exponent is nonnegative, and the two subcases select the smaller branch.
		
		It remains to consider
		\[
		\frac53\le p\le 2.
		\]
		If \(ac\le b^2\), choose
		$
		L=c/b^2,\  R=b^2 .
		$
		Then \(|E_{L,R}|\approx cb^{-5}\).  The \(a\)-factor is zonal and contributes \(A_a(L,R)=a^{3/2}\).  The \(b\)-factor is a zonal train and contributes
		\[
		A_b(L,R)=b^{3/2}\left(\frac cb\right)^{-1/2}=b^2c^{-1/2}.
		\]
		The \(c\)-factor is a beam block or an envelope, and contributes \(A_c(L,R)=b^{3/2}\).  Thus
		\[
		\begin{aligned}
			\LL_{p,4}^{(k)}(\lambda_1,\ldots,\lambda_k)
			&\gtrsim
			\left(\prod_{j=1}^{k-3}\lambda_j^{3/2}\right)
			a^{3/2}(b^2c^{-1/2})b^{3/2}
			\left(cb^{-5}\right)^{1/p} \\
			&=
			\left(\prod_{j=1}^{k-3}\lambda_j^{3/2}\right)
			a^{3/2}
			b^{\frac72-\frac5p}
			c^{\frac1p-\frac12} .
		\end{aligned}
		\]
		This is the first branch in the range \(5/3\le p\le2\).
		
		If \(ac\ge b^2\), use the same box as in the second subcase of the preceding range, namely \(L=a^{-1}\), \(R=b^2\).  The \(a\)-factor is zonal and contributes \(A_a(L,R)=a^{3/2}\).  The \(b\)-factor is a zonal train and contributes
		\[
		A_b(L,R)=b^{3/2}(b/a)^{-1/2}=a^{1/2}b.
		\]
		The \(c\)-factor is a beam block or an envelope, and contributes \(A_c(L,R)=b^{3/2}\).

		Thus, 
		\[
		\begin{aligned}
			\LL_{p,4}^{(k)}(\lambda_1,\ldots,\lambda_k)
			&\gtrsim
			\left(\prod_{j=1}^{k-3}\lambda_j^{3/2}\right)
			a^{2}b^{5/2}
			\left(a^{-1}b^{-3}\right)^{1/p} \\
			&=
			\left(\prod_{j=1}^{k-3}\lambda_j^{3/2}\right)
			a^{2-\frac1p}
			b^{\frac52-\frac3p} .
		\end{aligned}
		\]
		This is the second branch in the range \(5/3\le p\le2\).  The ratio of the first branch to the second branch is
		\[
		\left(\frac{ac}{b^2}\right)^{\frac1p-\frac12}.
		\]
		Since \(p\le2\), the exponent is nonnegative, so the two subcases select the smaller branch.
		
		Combining the five ranges gives exactly the \(n=4\) powers in Theorem~\ref{thm2}.  This proves the proposition.
	\end{proof}

	\begin{proposition}\label{prop:sharp-d2}
		Let $n=3$, $k\ge3$, and $p\le 2$. The $k$-linear estimates in Theorem~\ref{thm4} are sharp.
	\end{proposition}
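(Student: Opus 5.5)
We take the round sphere $S^3$, so $d=n-1=2$, and bound $\LL^{(k)}_{p,3}$ from below by the power $\E_0$ in \eqref{Eps0}. The logarithmic factors in Theorem~\ref{thm2} for $n=3$ are already shown to be sharp for the hyperbolic phase in Proposition~\ref{prop:hyperbolic-klinear-lp}, so only the pure powers need to be matched here.

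Since $d=2$, the trains of Lemma~\ref{lem:separated-short-packet-trains} are not available. We therefore use only beam blocks, single envelopes and single zonal functions, all from Lemmas~\ref{lem:coherent-short-packets} and~\ref{lem:wide-transverse-block}. For one factor we also use an envelope $u_{\nu,r}$ whose scale $r$ is chosen independently of the box $(L,R)$. All packets are centred at the same point of one reference geodesic, so the common lower-bound set is simply the intersection of their cores, and Lemma~\ref{lem:common-refinement} is not needed. As in the higher-dimensional proofs, the filler factors $\lambda_j$ not named below are zonal functions centred at the same point, each contributing $\lambda_j^{d/2}=\lambda_j$. We then conclude with Lemma~\ref{lem:fixed-box-lower}.

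\emph{Range $4/3<p\le2$.} Put $a=\lambda_{k-2}$, $b=\lambda_{k-1}$, $c=\lambda_k$, and take $L=a^{-1}$, $R=ab$, so that $|E_{L,R}|\approx a^{-2}b^{-1}$. Since $L a=1$ and $R\ge a^2$, the $a$-factor is zonal and has size $a$. Since $b\le R\le b^2$ and $L=b/R$, the $b$-factor is an envelope with $r=1/a$ and has size $R^{1/2}=(ab)^{1/2}$. The $c$-factor has size $(ab)^{1/2}$: it is a beam block if $ab\le c$, and otherwise an envelope, where $L\le c/R$ reduces to $b\le c$. Lemma~\ref{lem:fixed-box-lower} then gives
\[
\LL^{(k)}_{p,3}\gtrsim \prod_{j\le k-3}\lambda_j\cdot a\,(ab)\,(a^{-2}b^{-1})^{1/p}
=\prod_{j\le k-3}\lambda_j\cdot a^{2-\frac2p}b^{1-\frac1p},
\]
which is the second line of \eqref{Eps0}.

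\emph{Range $1\le p\le4/3$.} Put $a=\lambda_{k-3}$, $b=\lambda_{k-2}$, $c=\lambda_{k-1}$, $e=\lambda_k$, and take $L=a^{-1}$, $R=ac$, so that $|E|\approx a^{-2}c^{-1}$.
\begin{enumerate}[(i)]
\item The $a$-factor is zonal, since $R\ge a^2$ and $La=1$, and has size $a$.
\item The $c$-factor is an envelope, since $L=c/R$, and has size $(ac)^{1/2}$.
\item The $e$-factor is a beam block or an envelope (the condition reduces to $c\le e$) and has size $(ac)^{1/2}$.
\item The $b$-factor is the one that does not fit the profile $A_b(L,R)$. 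We take the single envelope $u_{b,r}$ with $r=a^{-1}$ from Lemma~\ref{lem:coherent-short-packets}. It has size $\gtrsim b^{1/2}r^{-1/2}=(ab)^{1/2}$ on $|t|\le c a^{-1}$, $|z|\le c(ab)^{-1/2}$.
\end{enumerate}
This core contains the box $E_{L,R}$, because $(ac)^{-1/2}\le (ab)^{-1/2}$. Multiplying the four sizes gives $a^{5/2}b^{1/2}c$, and the factor $(a^{-2}c^{-1})^{1/p}$ then yields
\[
\LL^{(k)}_{p,3}\gtrsim\prod_{j\le k-4}\lambda_j\cdot a^{\frac52-\frac2p}b^{\frac12}c^{1-\frac1p},
\]
which is the first line of \eqref{Eps0}.

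The main point to verify is the containment of all lower-bound regions in one common box of the right measure. The constants $c$ in the cores must be shrunk consistently, and every admissibility condition used for $d=2$ must be among the non-train cases. The convention $\lambda_j=2$ for $j\le0$ handles $k=3$, with missing factors replaced by constants. Combining the two ranges gives the sharpness of the $n=3$ powers $\E_0$ of Theorem~\ref{thm2}, and hence of Theorem~\ref{thm4} for $n=3$ and $p\le2$.
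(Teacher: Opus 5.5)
Your proposal is correct and follows essentially the same route as the paper: the same boxes ($L=a^{-1}$ with $R=ac$ for $1\le p\le 4/3$, and $R=ab$ for $4/3\le p\le 2$), the same zonal, endpoint-envelope and beam-block factors, and the same conclusion via Lemma~\ref{lem:fixed-box-lower}. Your envelope $u_{b,r}$ with $r=a^{-1}$ for the $b$-factor is exactly the paper's ``endpoint envelope at the coarser transverse scale $R'=ab$,'' restricted to the thinner box using $(ac)^{-1/2}\le(ab)^{-1/2}$.
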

	\begin{proof}
		Here \(d=2\).  We use only the admissible non-train subcases from the packet profile: in the envelope range \(\nu\le R\le \nu^2\) we always have \(L\le \nu/R\), and in the zonal range \(R\ge \nu^2\) we always have \(L\nu\lesssim1\).  Thus the geometric objects which occur below are zonals, beam blocks, and envelopes, but not trains.  The unnamed lower-frequency factors are chosen to be zonal functions from Lemma~\ref{lem:coherent-short-packets}, centered at the same point as the packet boxes.  
		
		First suppose that
		\[
		1\le p\le \frac43 .
		\]
		Set
		\[
		a=\lambda_{k-3},\qquad b=\lambda_{k-2},\qquad c=\lambda_{k-1},\qquad e=\lambda_k .
		\]
		We use $L=a^{-1},\ R=ac$. 
		Then
		\[
		|E_{L,R}|\approx a^{-2}c^{-1}.
		\]
		The filler factors contribute \(\prod_{j=1}^{k-4}\lambda_j\).
		The \(a\)-factor is zonal and $A_a(L,R)=a$.
		The \(b\)-factor is an envelope at the coarser transverse scale \(R'=ab\le R\).  Indeed \(b\le ab\le b^2\) and
		\[
		L=a^{-1}=\frac{b}{ab}=b/R'.
		\]
		Thus this is the endpoint envelope case, and
		\[
		A_b(L,R')=(ab)^{1/2}.
		\]
		Since \((ac)^{-1/2}\le (ab)^{-1/2}\), the same lower bound holds after restricting to the smaller set \(E_{L,R}\).
		The \(c\)-factor is an envelope 
		and $A_c(L,R)=(ac)^{1/2}.$
		Finally, the \(e\)-factor is a beam block or an envelope,  and 	$A_e(L,R)=(ac)^{1/2}$.
		Hence, 
		Lemma~\ref{lem:fixed-box-lower} gives
		\[
		\begin{aligned}
			\LL_{p,3}^{(k)}(\lambda_1,\ldots,\lambda_k)
			&\gtrsim
			\left(\prod_{j=1}^{k-4}\lambda_j\right)
			a^{5/2}b^{1/2}c\,
			\left(a^{-2}c^{-1}\right)^{1/p} \\
			&=
			\left(\prod_{j=1}^{k-4}\lambda_j\right)
			a^{\frac52-\frac2p}
			b^{1/2}
			c^{1-\frac1p}.
		\end{aligned}
		\]
		Substituting back \(a=\lambda_{k-3}\), \(b=\lambda_{k-2}\), and \(c=\lambda_{k-1}\), this is exactly the power \(\E_0\) in Theorem~\ref{thm2} for \(1\le p\le4/3\).
		
		We now suppose that
		\[
		\frac43\le p\le 2 .
		\]
		Set
		\[
		a=\lambda_{k-2},\qquad b=\lambda_{k-1},\qquad c=\lambda_k .
		\]
		We use
		$
		L=a^{-1},\ R=ab.
		$
		Thus
		\[
		|E_{L,R}|\approx a^{-1}(ab)^{-1}=a^{-2}b^{-1}.
		\]
	The filler factors contribute \(\prod_{j=1}^{k-3}\lambda_j\).
		The \(a\)-factor is zonal and
	$A_a(L,R)=a.$
		The \(b\)-factor is an envelope and 
		$
		A_b(L,R)=(ab)^{1/2}.$
		The \(c\)-factor is a beam block or an envelope, and 
	$A_c(L,R)=(ab)^{1/2}.$
		Consequently, 
		Lemma~\ref{lem:fixed-box-lower} gives
		\[
		\begin{aligned}
			\LL_{p,3}^{(k)}(\lambda_1,\ldots,\lambda_k)
			&\gtrsim
			\left(\prod_{j=1}^{k-3}\lambda_j\right)
			a^2b\,
			\left(a^{-2}b^{-1}\right)^{1/p} \\
			&=
			\left(\prod_{j=1}^{k-3}\lambda_j\right)
			a^{2-\frac2p}
			b^{1-\frac1p}.
		\end{aligned}
		\]
		Since \(a=\lambda_{k-2}\) and \(b=\lambda_{k-1}\), this is exactly the power \(\E_0\) in Theorem~\ref{thm2} for \(4/3\le p\le2\).
		
		Combining the two ranges proves the sharp lower bounds for the powers in the case \(n=3\).  This proves the proposition.
	\end{proof}

	\begin{proposition}\label{prop:sharp-d1}
		Let $n=2$, $k\ge3$, and $p\le 2$. The $k$-linear estimates in Theorem~\ref{thm4} are sharp.
	\end{proposition}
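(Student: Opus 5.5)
We follow the scheme used for Proposition~\ref{prop:sharp-d2}. Since $d=1$, we use only the non-train subcases of the packet profile: envelopes with $L\le\nu/R$ and zonals with $L\nu\lesssim1$, together with beam blocks. In each of the three ranges of Theorem~\ref{thm2} for $n=2$ we pick one packet box $E_{L,R}$, with $|E_{L,R}|\approx LR^{-1/2}$, and one normalized harmonic per frequency, all centered at the same point of a common reference geodesic, and then apply Lemma~\ref{lem:fixed-box-lower}. Every frequency below the named ones is a filler zonal function from Lemma~\ref{lem:coherent-short-packets}. Each filler has size $\gtrsim\lambda_j^{1/2}$ on the box, because the box lies within distance $\lesssim\lambda_j^{-1}$ of its center. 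The fillers therefore contribute $\prod\lambda_j^{1/2}$. For small $k$ we use the convention $\lambda_j=2$ for $j\le0$.

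The three cases share one template. Let $m$ be the lowest named frequency, let $c=\lambda_{k-1}$, and take $L=m^{-1}$, $R=mc$, so that $|E_{L,R}|\approx m^{-3/2}c^{-1/2}$.
\begin{itemize}
\item The $m$-factor is zonal, since $R\ge m^2$ and $Lm=1$; it gives $m^{1/2}$.
\item Each intermediate frequency $\nu$ with $m\le\nu\le c$ is an endpoint envelope at the coarser scale $R'=m\nu\in[\nu,\nu^2]$, for which $L=\nu/R'$. It gives $(m\nu)^{1/4}$ on the wider box $E_{L,R'}\supset E_{L,R}$, hence also on $E_{L,R}$. This is the same restriction trick as for the $b$-factor in Proposition~\ref{prop:sharp-d2}.
\item The $c$-factor is an envelope with $R=mc\in[c,c^2]$ and $L=c/R$; it gives $(mc)^{1/4}$.
\item The $\lambda_k$-factor is a beam block if $mc\le\lambda_k$. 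Otherwise it is an envelope, admissible because $L=m^{-1}\le\lambda_k/(mc)$. In both cases it gives $(mc)^{1/4}$.
\end{itemize}
If there are $s$ intermediate frequencies, the product of the pointwise bounds is $m^{1/2+(s+2)/4}\prod_{\mathrm{int}}\nu^{1/4}\,c^{1/2}$. Multiplying by $|E|^{1/p}$ then gives
\[
m^{\frac{s+4}{4}-\frac{3}{2p}}\prod_{\mathrm{int}}\nu^{1/4}\,c^{\frac12-\frac1{2p}} .
\]

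For $3/2\le p\le2$ we take $m=\lambda_{k-3}$ and $s=1$ (intermediate $\lambda_{k-2}$). This gives $\prod_{j\le k-4}\lambda_j^{1/2}\,\lambda_{k-3}^{\frac54-\frac3{2p}}\lambda_{k-2}^{1/4}\lambda_{k-1}^{\frac12-\frac1{2p}}$.

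For $6/5\le p\le3/2$ we take $m=\lambda_{k-4}$ and $s=2$. This gives $\prod_{j\le k-5}\lambda_j^{1/2}\,\lambda_{k-4}^{\frac32-\frac3{2p}}\lambda_{k-3}^{1/4}\lambda_{k-2}^{1/4}\lambda_{k-1}^{\frac12-\frac1{2p}}$.

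For $1\le p\le6/5$ we take $m=\lambda_{k-5}$ and $s=3$. This gives $m^{\frac74-\frac3{2p}}\lambda_{k-4}^{1/4}\lambda_{k-3}^{1/4}\lambda_{k-2}^{1/4}\lambda_{k-1}^{\frac12-\frac1{2p}}$, times the fillers.

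These are exactly the three branches of $\E$ for $n=2$. Since each example is valid for every $p$, sharpness holds in each range.

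The main obstacle is to justify placing all these packets on one common set. The intermediate envelopes live at different transverse scales $(m\nu)^{-1/2}$, and Lemma~\ref{lem:common-refinement} is stated for a single pair $(L,R)$. We plan to avoid the refinement lemma. None of the profiles used here is a train or a lattice, since there are no beam-block lattices except possibly for the top factor. So every factor satisfies its lower bound on a full box centered at the common point with transverse radius at least $(mc)^{-1/2}$, and the finest box $E_{L,R}$ lies in all of them. The one exception is a beam block for the $\lambda_k$-factor. For it we apply Lemma~\ref{lem:common-refinement}, or simply the averaging over shifts in its proof, to that single lattice factor alone inside $E_{L,R}$. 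This retains a subset of measure $\approx|E_{L,R}|$, which suffices.
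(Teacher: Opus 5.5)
Your proposal is correct and is essentially the paper's proof: the same box $L=\alpha^{-1}$, $R=\alpha\lambda_{k-1}$ with $\alpha$ the lowest active frequency (your ``$m$'', the paper's $\alpha=\lambda_{k-m+1}$ with $m\in\{4,5,6\}$, so your $s$ equals the paper's $m-3$), the $\alpha$-factor zonal, the intermediate factors endpoint envelopes at the coarser scales $R_j=\alpha\lambda_j$ whose wider tubes contain $E_{L,R}$, the $\lambda_{k-1}$-factor an endpoint envelope, and the $\lambda_k$-factor a beam block or an envelope. Your explicit handling of the $\lambda_k$ beam-block lattice, by shift-averaging or simply by containment in the finest box, only spells out a step the paper leaves implicit.
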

	\begin{proof}
		Here \(d=1\).  We use only the admissible non-train cases from the packet profile.  Thus the geometric objects which occur below are zonals, envelopes, and beam blocks; no envelope trains or zonal trains are used.  All packets are centered on the same geodesic, and the unnamed lower-frequency factors are chosen to be zonal functions from Lemma~\ref{lem:coherent-short-packets}.  
		
		Choose \(m\in\{4,5,6\}\) by
		\[
		m=6\quad \left(1\le p\le \frac65\right),
		\qquad
		m=5\quad \left(\frac65\le p\le \frac32\right),
		\qquad
		m=4\quad \left(\frac32\le p\le2\right).
		\]
		Set
		\[
		a=k-m+1,
		\qquad
		\alpha=\lambda_a,
		\qquad
		\omega=\lambda_{k-1}.
		\]
		We use 	$L=\alpha^{-1},
		R=\alpha\omega.$
		Thus, 
		\[
		|E_{L,R}|\approx \alpha^{-1}(\alpha\omega)^{-1/2}
		=\alpha^{-3/2}\omega^{-1/2}.
		\]
			The filler factors are zonal and contribute \(\prod_{j=1}^{a-1}\lambda_j\).
		The \(\alpha\)-factor is also zonal and 
		$A_\alpha(L,R)=\alpha^{1/2}.$ 
		
		For the intermediate factors \(a+1\le j\le k-2\), we use the larger tube with the same longitudinal length and transverse parameter
		\[
		R_j=\alpha\lambda_j.
		\]
		Since \(\alpha\le\lambda_j\), we have
		\[
		\lambda_j\le R_j=\alpha\lambda_j\le\lambda_j^2,
		\qquad
		L=\alpha^{-1}=\frac{\lambda_j}{\alpha\lambda_j}=\frac{\lambda_j}{R_j}.
		\]
		Hence each such factor is an endpoint envelope, and
		\[
		A_{\lambda_j}(L,R_j)
		=(\alpha\lambda_j)^{1/4},
		\qquad a+1\le j\le k-2.
		\]
		Because \(\lambda_j\le\omega\), the tube defining \(E_{L,R}\) is contained in this larger envelope tube. Therefore the same lower bound holds on \(E_{L,R}\).
		
		 The \(\omega\)-factor is an endpoint envelope, and $	A_\omega(L,R)=(\alpha\omega)^{1/4}.$ The $\la_k$-factor is a beam block or an envelope, and $A_{\lambda_k}(\alpha^{-1},\alpha\omega)=(\alpha\omega)^{1/4}.$

	Thus, Lemma~\ref{lem:fixed-box-lower} gives
		\[
		\begin{aligned}
			\LL_{p,2}^{(k)}(\lambda_1,\ldots,\lambda_k)
			&\gtrsim
		\left(\prod_{j=1}^{a-1}\lambda_j^{1/2}\right)
		\alpha^{1/2}
		\left(\prod_{j=a+1}^{k-2}(\alpha\lambda_j)^{1/4}\right)
		(\alpha\omega)^{1/4}(\alpha\omega)^{1/4}
			\left(\alpha^{-3/2}\omega^{-1/2}\right)^{1/p} \\
			&=
			\left(\prod_{j=1}^{a-1}\lambda_j^{1/2}\right)
			\alpha^{\frac{m+1}{4}-\frac{3}{2p}}
			\left(\prod_{j=a+1}^{k-2}\lambda_j^{1/4}\right)
			\omega^{\frac12-\frac1{2p}}.
		\end{aligned}
		\]
		
		Taking \(m=6\), so that \(a=k-5\) and \(\alpha=\lambda_{k-5}\), gives for \(1\le p\le6/5\)
		\[
		\LL_{p,2}^{(k)}
		\gtrsim
		\left(\prod_{j=1}^{k-6}\lambda_j^{1/2}\right)
		\lambda_{k-5}^{\frac74-\frac{3}{2p}}
		\lambda_{k-4}^{1/4}
		\lambda_{k-3}^{1/4}
		\lambda_{k-2}^{1/4}
		\lambda_{k-1}^{\frac12-\frac1{2p}}.
		\]
		Taking \(m=5\), so that \(a=k-4\) and \(\alpha=\lambda_{k-4}\), gives for \(6/5\le p\le3/2\)
		\[
		\LL_{p,2}^{(k)}
		\gtrsim
		\left(\prod_{j=1}^{k-5}\lambda_j^{1/2}\right)
		\lambda_{k-4}^{\frac32-\frac{3}{2p}}
		\lambda_{k-3}^{1/4}
		\lambda_{k-2}^{1/4}
		\lambda_{k-1}^{\frac12-\frac1{2p}}.
		\]
		Taking \(m=4\), so that \(a=k-3\) and \(\alpha=\lambda_{k-3}\), gives for \(3/2\le p\le2\)
		\[
		\LL_{p,2}^{(k)}
		\gtrsim
		\left(\prod_{j=1}^{k-4}\lambda_j^{1/2}\right)
		\lambda_{k-3}^{\frac54-\frac{3}{2p}}
		\lambda_{k-2}^{1/4}
		\lambda_{k-1}^{\frac12-\frac1{2p}}.
		\]
		These are exactly the three powers in Theorem~\ref{thm2} for \(n=2\).  This proves  the proposition.
	\end{proof}
		\subsection{Sharpness for $p>2$}\label{subsec:large-p-sharpness}
	
	\begin{proposition}\label{prop:sharp-large-p}
		Let $n\ge2$, $k\ge2$, and $p>2$. The $k$-linear estimates in Theorem~\ref{thm4} are sharp.
	\end{proposition}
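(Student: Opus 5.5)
We construct the lower bounds on the round sphere in the same way as for $p\le2$: pick one packet box $E_{L,R}$, use Corollary~\ref{cor:packet-profile} to assign a profile to each factor on a common set $E$, and apply Lemma~\ref{lem:fixed-box-lower}. Every factor not named explicitly is a zonal function from Lemma~\ref{lem:coherent-short-packets} centred on the box, so the lower-frequency factors supply the product $\prod\lambda_j^{d/2}$. Two box choices are needed when $n\ge3$, and three when $n=2$.

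\emph{Zonal range $p\ge p_c$.} Take $L=\lambda_k^{-1}$ and $R=\lambda_k^2$, so the box is a ball of radius $\approx\lambda_k^{-1}$. Every factor, including the $\lambda_k$ one, is zonal and has size $\lambda_j^{d/2}$ there. This gives
\[
\prod_{j\le k-1}\lambda_j^{d/2}\,\lambda_k^{d/2}\,\lambda_k^{-(d+1)/p},
\]
which is $\lambda_k^{\frac d2-\frac{d+1}{p}}$ in the $\lambda_k$ variable, as claimed.

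\emph{Beam-type range, $2<p\le p_c$ for $d\ge2$ and $3\le p\le6$ for $d=1$.} Set $a=\lambda_{k-1}$ and $e=\lambda_k$. Take $L=a^{-1}$ and $R=ae$. Then:
\begin{itemize}
\item the $a$-factor is zonal, since $R\ge a^2$ and $La=1$, and contributes $a^{d/2}$;
\item the $e$-factor is an endpoint envelope, since $e\le ae\le e^2$ and $L=e/R$, and contributes $(ae)^{d/4}$;
\item $|E|\approx a^{-1}(ae)^{-d/2}$.
\end{itemize}
Lemma~\ref{lem:fixed-box-lower} then gives
\[
a^{\frac{3d}4-\frac{d+2}{2p}}\,e^{\frac d4-\frac d{2p}},
\]
which matches the first line for $n\ge3$ and the middle line for $n=2$. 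The earlier filler factors sit inside the zonal core of the $a$-factor, because the box has transverse radius $(ae)^{-1/2}\le a^{-1}$ and length $a^{-1}$. No trains are used, so the construction is admissible for every $d$.

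\emph{The range $2<p\le3$ when $n=2$.} Set $b=\lambda_{k-2}$, $c=\lambda_{k-1}$ and $e=\lambda_k$. Take $L=b^{-1}$ and $R=be$. Then:
\begin{itemize}
\item the $b$-factor is zonal, contributing $b^{1/2}$;
\item the $e$-factor is an endpoint envelope, contributing $(be)^{1/4}$;
\item the $c$-factor is an endpoint envelope on the larger tube $R'=bc$, exactly as in the proof of Proposition~\ref{prop:sharp-d1}, contributing $(bc)^{1/4}$;
\item $|E|\approx b^{-3/2}e^{-1/2}$.
\end{itemize}
The product is $b^{1-\frac{3}{2p}}c^{1/4}e^{\frac14-\frac1{2p}}$, which is the first $n=2$ line.

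The last step is to check that these box choices match the piecewise formulas exactly, with the required inequalities holding:
\begin{itemize}
\item $L\lesssim1$ in every case;
\item $R\ge\lambda_{k-1}^2$ for the zonal factor;
\item the nested-tube containment for the intermediate envelope.
\end{itemize}
For the oscillatory operator statements (Theorem~\ref{thm3}) we conclude as in the introduction of Section~\ref{sec12}: by Lemmas~\ref{soggelem}--\ref{ellip}, sharpness on $S^n$ transfers to Carleson--Sj\"olin operators.

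The main obstacle is the $n=2$, $2<p\le3$ case. There one has to confirm that the $(bc)$-envelope tube contains $E_{L,R}$ and that it is consistent with the common-refinement Lemma~\ref{lem:common-refinement} when the transverse scales differ. We handle this the way Proposition~\ref{prop:sharp-d1} does: the containment $(be)^{-1/2}\le(bc)^{-1/2}$ holds with the same longitudinal length, so no shifting argument is needed.
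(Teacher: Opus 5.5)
Your proposal is correct and is essentially the paper's argument in the packet-box language. The paper places envelopes $u_{\lambda_j,r_j}$ with nested $r_1\ge\cdots\ge r_k$ on one geodesic and reads the bound off the smallest tube, and your three box choices are exactly its three choices: all $r_j=\lambda_j^{-1}$; then $r_k=\lambda_{k-1}^{-1}$; and, for $n=2$, $r_{k-1}=r_k=\lambda_{k-2}^{-1}$, including the same nested-tube containment for the intermediate envelope.
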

	
	\begin{proof}
		The proof of sharpness for $p>2$ only needs the envelopes in Lemma~\ref{lem:coherent-short-packets}. For \(\nu^{-1}\le r\ll1\), there is an \(L^2\)-normalized spherical harmonic satisfying
		\[
		|u_{\nu,r}|\gtrsim (\nu/r)^{d/4}
		\]
		on a tube of length \(r\), transverse radius \((r/\nu)^{1/2}\), and measure \(\approx r(r/\nu)^{d/2}\).  Put all envelopes on the same geodesic.  If \(r_1\ge\cdots\ge r_k\) and \(r_j\ge\lambda_j^{-1}\), then the smallest tube gives
		\begin{equation}\label{eq:large-p-master-lower}
			\LL_{p,n}^{(k)}(\lambda_1,\ldots,\lambda_k)
			\gtrsim
			\prod_{j=1}^k\left(\frac{\lambda_j}{r_j}\right)^{d/4}
			\left[r_k\left(\frac{r_k}{\lambda_k}\right)^{d/2}\right]^{1/p}.
		\end{equation}
		
		For \(n=2\) and \(2\le p\le3\), take
		\[
		r_j=\lambda_j^{-1}\ (j\le k-2),
		\qquad r_{k-1}=r_k=\lambda_{k-2}^{-1}.
		\]
		Then \eqref{eq:large-p-master-lower} gives
		\[
		\LL_{p,2}^{(k)}
		\gtrsim
		\Big(\prod_{j=1}^{k-3}\lambda_j^{1/2}\Big)
		\lambda_{k-2}^{1-\frac{3}{2p}}
		\lambda_{k-1}^{1/4}
		\lambda_k^{1/4-\frac1{2p}}.
		\]
		
		Next take
		\[
		r_j=\lambda_j^{-1}\ (j\le k-1),
		\qquad r_k=\lambda_{k-1}^{-1}.
		\]
		For every \(d\ge1\), this yields
		\[
		\LL_{p,n}^{(k)}
		\gtrsim
		\Big(\prod_{j=1}^{k-2}\lambda_j^{d/2}\Big)
		\lambda_{k-1}^{\frac{3d}{4}-\frac{d+2}{2p}}
		\lambda_k^{\frac d4-\frac d{2p}}.
		\]
		Finally, taking \(r_j=\lambda_j^{-1}\) for all \(j\) gives
		\[
		\LL_{p,n}^{(k)}
		\gtrsim
		\Big(\prod_{j=1}^{k-1}\lambda_j^{d/2}\Big)
		\lambda_k^{d/2-\frac{d+1}{p}}.
		\]
		
	\end{proof}

	\bibliographystyle{plain}
	
\end{document}